\DeclareMathAlphabet{\mathpzc}{OT1}{pzc}{m}{it}
\newtheorem{theorem}{\bf Theorem}[section]
\newtheorem{maintheorem}{\bf Main Theorem}
\newtheorem{lemma}[theorem]{\bf Lemma}
\newtheorem{propos}[theorem]{\bf Proposition}
\newtheorem{corol}[theorem]{\bf Corollary}
\theoremstyle{definition}
\newtheorem{defi}[theorem]{\bf Definition}
\newtheorem{rmk}[theorem]{\bf Remark}
\newcommand{\A}{\mathbb A}
\newcommand{\Ecal}{\mathcal E}
\newcommand{\Fcal}{\mathcal F}
\newcommand{\Hcal}{\mathcal H}
\newcommand{\Ical}{\mathcal I}
\newcommand{\Lcal}{\mathcal L}
\newcommand{\Mcal}{\mathcal M}
\newcommand{\N}{\mathbb N}
\newcommand{\Ocal}{\mathcal O}
\newcommand{\Pro}{\mathbb P}
\newcommand{\Pcal}{\mathcal P}
\newcommand{\Qcal}{\mathcal Q}
\newcommand{\Scal}{\mathcal S}
\newcommand{\Vcal}{\mathcal V}
\newcommand{\Wcal}{\mathcal W}
\newcommand{\Xcal}{\mathcal X}
\newcommand{\Z}{\mathbb Z}
\newcommand{\Zcal}{\mathcal Z}
\newcommand{\Hom}{\text{Hom}}
\newcommand{\GL}{\text{GL}}
\newcommand{\Sym}{\text{Sym}}
\newcommand{\Spec}{\text{Spec}}
\newcommand\arr{\ifinner\to\else\longrightarrow\fi}
\newcommand{\pr}{\text{pr}}
\newcommand{\ov}{\overline}
\newcommand{\im}{\text{Im}}
\newcommand{\wt}{\widetilde}
\newcommand{\reducible}{\textrm{red}}
\newcommand{\irr}{\text{irr}}
\newcommand{\id}{\text{id}}
\newcommand{\cotd}{\Omega_{\PP(W_1)}(1)}
\newcommand{\sym}{\operatorname{S}}
\newcommand\PP{\mathbb{P}}
\newcommand\cO{\mathcal{O}} 
\newcommand{\ptwo}{\PP^{2}}
\newcommand{\ptwod}{\check{\PP}^{2}}
\newcommand{\hilb}{\operatorname{Hilb}}
\newcommand{\stwo}{\sym^{2}\rT_{\PP(W_1)}(-2)}
\newcommand\rT{\mathrm{T}}
\renewcommand\H{\operatorname{H}}
\newcommand\ZZ{\mathbb{Z}}
\newcommand{\eps}{\epsilon}
\newcommand{\Gr}{\operatorname{Gr}}
\title[The integral Chow ring of $\Mcal_3\smallsetminus\Hcal_3$]{The integral Chow Ring of the stack of smooth non-hyperelliptic curves of genus three}
\date{\today}
\author{Andrea Di Lorenzo}
\author{Damiano Fulghesu}
\author{Angelo Vistoli}
\address[Andrea Di Lorenzo]{Aarhus University, Ny Munkegade 118, bldg. 1530, DK-8000 Aarhus C, Denmark}\email{andrea.dilorenzo@math.au.dk}
\address[Damiano Fulghesu]{Department of Mathematics, Minnesota State University, 1104 7th Ave South, Moorhead, MN 56563, U.S.A.}\email{fulghesu@mnstate.edu}
\address[Angelo Vistoli]{Scuola Normale Superiore, Piazza dei Cavalieri 7, 56126 Pisa, Italy}\email{angelo.vistoli@sns.it}
\thanks{The second author has been partially supported by Scuola Normale Superiore and by Simons Foundation grant \#360311. The third author has been partially supported by research funds from the Scuola Normale Superiore}
\begin{document}

\begin{abstract}
We compute the integral Chow ring of the stack of smooth, non-hyperelliptic curves of genus $3$. We obtain this result by computing the integral Chow ring of the stack of smooth plane quartics, by means of equivariant intersection theory.
\end{abstract}

\maketitle

\section*{Introduction}

Moduli stacks of curves play a prominent role in algebraic geometry, and the Chow groups of these stacks have been studied extensively. Their investigation was started by Mumford in his groundbreaking work \cite{Mum}, where in particular it is shown that for $g\geq 2$ the \emph{rational} Chow groups of $\Mcal_g$ and $\overline{\Mcal}_{g}$ admit an intersection product.

Subsequent research ultimately led to the explicit computation of the rational Chow rings $A^*(\Mcal_g)_{\mathbb{Q}}$ for $2\leq g\leq 6$ (\cite{m4, Iza, PV}), while $A^*(\overline{\Mcal}_g)_{\mathbb{Q}}$ has been computed for $g = 2$ in \cite{Mum} and $g = 3$ in \cite{m3bar}.

Thanks to the work of Totaro, Edidin and Graham \cite{EG} we also have a definition of \emph{integral} Chow rings $A^*(\Mcal_g)$  and $A^*(\overline{\Mcal}_g)$ for the moduli stack of smooth curves of genus $g$; after tensoring these with $\mathbb Q$ we get Mumford's rings.

However, these integral Chow rings turn out to be much harder to compute than their rational version; for example, $A^*(\Mcal_2)_{\mathbb Q} = \mathbb Q$, while the integral structure, while known, is complicated. So far, only $A^*(\Mcal_2)$ (see \cite{Vis98}) for $g=2$ and $A^*(\overline{\Mcal}_2)$ (see \cite{la19}) have been computed. In a slightly different direction, the integral Chow rings of the stack $\Hcal_{g}$ of hyperelliptic curves of genus $g$ have been computed (\cite{EFh, DL19}).

All of these calculations are based on the fact the curves involved are double covers of a rational curve. To go beyond the hyperelliptic case, the first case to look at would, of course, be $A^*(\Mcal_3)$; however, this seems quite hard. In this paper we give a presentation for the Chow ring $A^*(\Mcal_3 \smallsetminus\Hcal_{3})$ of the stack of non-hyperelliptic curves of genus $3$. 

Our main result is the following:
\begin{maintheorem}\label{mt1}
	Assume that the base field has characteristic $\neq 2$, $3$ and let $\Mcal_3\smallsetminus\Hcal_3$ be the stack of smooth, non-hyperelliptic curves of genus $3$. Then we have:
	\[ A^*(\Mcal_3\smallsetminus \Hcal_3)\simeq \ZZ[\lambda_1,\lambda_2,\lambda_3]/(9 \lambda_1, 6(\lambda_1^2+4\lambda_2), \lambda_1\lambda_2-28\lambda_3, \lambda_1^3+28\lambda_3) \]
	where $\lambda_i$ is the degree $i$ Chern class of the Hodge vector bundle. 
\end{maintheorem}

It is not clear how to patch $A^*(\Hcal_3)$ and $A^*(\Mcal_3 \smallsetminus\Hcal_{3})$ together to get a presentation for $A^*(\Mcal_3)$.

Main Theorem \ref{mt1} above follows from the calculation of Chow ring of the stack $\Xcal_{4}$ of plane quartics, up to linear change of coordinates. In general, the study of the quotients $\Xcal_d:=[(\PP(W_d)\smallsetminus Z_d)/\GL_3]$, where $\PP(W_d)$ is the projective space of plane curves of degree $d$ and $Z_d$ is the divisor of singular curves, was started by the second and the third authors in \cite{Fu-Vi}, where they compute $A^*(\Xcal_3)$.

Here we prove the following:

\begin{maintheorem}\label{mt2}
	Assume that the base field has characteristic $\neq 2$, $3$. Then we have:
	\[
	A^* (\Xcal_4) \simeq \ZZ[c_1,c_2,c_3,h_4]/ \left( \alpha_1,\alpha_2,\alpha_3, \delta_{\{ 1,3 \}} \right)
	\]
	where
	\begin{eqnarray*}
		\alpha_1 &=& 27 h_4 - 36 c_1,\\ 
		\alpha_2 &=& 9 h_4^2 - 6 c_1 h_4 - 24 c_2,\\
		\alpha_3 &=& h_{4}^3 - c_1 h_{4}^2 + c_2 h_{4} - 28 c_3,\\
		\delta_{\{ 1,3 \}} &=& 55 h_4^3 - 220 c_1 h_4^2 + (280 c_1^2 + 40 c_2) h_4 + (224 c_3- 96 c_1^3 - 128 c_1 c_2).
	\end{eqnarray*}
\end{maintheorem}

The elements $c_i$ appearing above are the Chern classes of the universal vector bundle of rank $3$ over $B\GL_3$ pulled back along the morphism $\Xcal_4\arr B\GL_3$, and $h_4$ is the restriction of the hyperplane section of $\PP(W_4)$, where $W_{4}$ is the space of quartic forms in three variables. 

As to the relations, $\alpha_{1}$, $\alpha_{2}$ and $\alpha_{3}$ are immediately obtained from the general setup of \cite{Fu-Vi}, while $\delta_{\{1,3\}}$ is the fundamental class of the invariant closed subscheme $\ov{Z}_{\{1,3\}}\subset\PP(W_4)$ of quartics that are union of a cubic and a line.

Most of this paper is devoted to prove that the aforementioned elements generate the whole ideal of relations. This is much harder than in the case of $\Xcal_{3}$; it uses all the ideas in \cite{Fu-Vi}, and also requires the development of some new theory. The main difference is that while a cubic with more than one singular point is reducible, irreducible quartics can have up to three singular points. In particular, to get a handle on cycles supported on the locus of irreducible curves with three singular points we need to do some non-trivial work on the Hilbert scheme of length~$3$ subschemes of $\PP^{2}$, which might have some independent interest.
 
\subsection*{Description of contents}

In Section \ref{stack.smooth.plane.curves} we recall the definition of the stack $\Xcal_d$ and its presentation as a quotient stack. We also introduce an equivariant stratification of the closed invariant subscheme $Z_d\subset\PP(W_d)$ of singular plane curves of degree $d$ whose strata $\ov{Z}_\mu^m$ parametrize singular plane curves having certain prescribed irreducible components (see Definition \ref{defi.Zstrat}). We explicitly describe this equivariant stratification in the case $d=4$ by means of a digraph.

In Section \ref{sec.EquivariantIntersectionTheory} we recall some standard facts of equivariant intersection theory, with a specific focus on $\GL_n$-spaces, which are the ones we will have to deal with in the remainder of the paper.

In Section \ref{Xd} we take the first step towards the determination of $A^*(\Xcal_{4})$. In particular, we prove the fundamental Lemma \ref{basic.principle}, which enables us to reduce the computation of the image of $A^{\GL_3}_*(Z_4)\arr A^{\GL_3}_*(\PP(W_4))$ to the computation of the images of $A^{\GL_3}_*(\ov{Z}_\mu^m)\arr A^{\GL_3}_*(\PP(W_4))$, up to cycles coming from the smaller strata. We also write down explicitly the generators of the ideal $I_{\wt{Z}}=(\alpha_1,\alpha_2,\alpha_3)$ (Lemma \ref{alpha.classes.independent}).

In Section \ref{sec.reducible} we investigate the closed subscheme $Z_\reducible\subset\PP(W_4)$ of reducible quartics: we stratify $Z_\reducible$ and we study each stratum separately. We show in Theorem \ref{main.result.reducible.quartics} that there is basically only one relation coming from $Z_\reducible$ that it is not already contained in $I_{\wt{Z}}$, namely the fundamental class $\delta_{\{1,3\}}$.

In Section \ref{sec.irr} we focus on the locally closed stratum $Z^\irr$ of irreducible singular quartics, which we further stratify by number of nodes. In  Subsection \ref{subsec.twosingpoints} we show that there are no new relations coming from the stratum of irreducible singular curves with only two nodes, using an argument that involves the Hilbert scheme of $0$-dimensional closed subscheme of length $2$ in $\PP^{2}$. In Subsection \ref{subsec.threesingularpoints} we show that there are no additional relations coming from the stratum of irreducible singular curves with three nodes.

In Section \ref{sec.modifications} we prove two results on the Hilbert scheme of $0$-dimensional closed subschemes of $\PP^2$ of length $3$, which we needed in the previous Section in order to perform some computations.

In the last Section we put everything together and we compute $A^*(\Xcal_4)$ (Theorem \ref{main.result.quartics}). Finally, we deduce from this an explicit presentation for $A^*(\Mcal_3\smallsetminus\Hcal_3)$ (Theorem \ref{thm.AM3}).

{\bf Acknowledgements.} The authors would like to thank the referee for carefully reading the paper and suggesting significant improvements.

\section{Stack of smooth plane curves of fixed degree.}\label{stack.smooth.plane.curves}

\subsection{The stack $\Xcal_d$}
Let $k$ be a field. We define the moduli stack $\Xcal_d$ as the stack whose objects are diagrams
\[
\xymatrix{
C \ar@{}[r]|-*[@]{\subset} & \Pro(F) \ar[d] \\
& S 
}
\] 
where $F \to S$ is a vector bundle of rank 3 over a $k$-scheme $S$, $\Pro(F) \to S$ is its projectivization, and $C \subset \Pro(F)$ is a Cartier divisor whose restriction to every fiber is a smooth curve of degree $d$. Here, as everywhere else, we follow \cite{Ful} and use the classical convention for the projectivization of a vector bundle, so our $\mathbb{P}(F)$ would be denoted by $\mathbb{P}(F^{\vee})$ in Grothendieck's convention.

We can give $\Xcal_d$ a structure of (global) quotient stack as follows. Let $W_d:=\Sym^d(E)$ where $E$ is the standard representation of $\GL_3$. Let $X:=[X_0,X_1,X_2]$ be a coordinate system of $\Pro(E)$. We can write
\[
\Xcal_d=\left[ (\Pro(W_d)\smallsetminus Z ) / \GL_3 \right]
\]

where $Z \subset \Pro(W_d)$ is the discriminant locus and the action of $\GL_3$ is given by
\begin{eqnarray*}
(A \cdot [f]) (X) \mapsto f \left( A^{-1} \cdot X \right).
\end{eqnarray*}

In \cite{Fu-Vi} the two authors give the more general definition of the stack $\Xcal_{n,d}$ of hypersurfaces of degree $d$ embedded in $\Pro^{n-1}$. The main goal of this paper is to determine $A^*(\Xcal_4)$ so, in order to simplify the notation, we focus on plane curves for general results.

\subsection{The degeneracy locus}
Let $\Delta_d$ be the degeneracy locus of singular $d$-forms in $W_d$. We define $Z_d:=\Pro(\Delta_d) \subset \Pro(W_d)$ and consider the universal curve $C_d \subset \Pro(W_d)\times \Pro(E)$. We have the following equivariant projections
$$
\xymatrix{
C_d \subset \Pro(W_d)\times \Pro(E) \ar[d]_{\pi_1} \ar[r]^{\qquad \quad \pr_1} &\Pro(E)\\
\Pro(W_d) &
}
$$
The hypersurface $C_d$ is given by the bihomogeneous equation of bidegree $(1,d)$
\begin{equation*}
F(X):= \sum_{v \in \N^3(d)} a_{v} X^{v}
\end{equation*}
where $\N^3(d):=\left \{ v \in \N^3 \left | \; |v|=d  \right. \right\}$. 

\begin{rmk}\label{rmk:Ztilde}
	In $\Pro(W_d)\times \Pro(E)$ we can also consider the subvariety $\widetilde{Z}_d$ given by equations \[ \displaystyle \left \{\frac{\partial F(X)}{\partial X_i}=0 \right\}_{i=0,1,2}.\] Notice that the restriction morphism $\widetilde{Z}_d \xrightarrow{\pi_1} Z_d$ is generically 1:1, since the generic singular curve has exactly one nodal point. 
\end{rmk}

\subsection{Stratification of $Z_d$}
In the following, we define the natural factor-wise stratification for $Z_d$. We will use such stratification to determine generators for the ideal $i_* \left( A^{\GL_3}_*(Z_d) \right)$, where $i$ is the closed embedding $i: Z_d \to \Pro(W_d)$ (see Lemma \ref{basic.principle}).

Let us start by giving a general definition for equivariant stratification.

\begin{defi}\label{definition.stratification}
Let $X$ be a $G$-space. An {\it equivariant stratification of $X$} is a finite family $\left\{ Z_{\tau}\right\}_{\tau \in J}$ of locally closed, pairwise disjoint, and equivariant subspaces of $X$, such that $\displaystyle \bigcup_{\tau \in J} Z_{\tau} = X$ and
\[
\ov{Z}_{\tau} \smallsetminus Z_{\tau} = \bigcup_{\tau' \in J(\tau)} Z_{\tau'}.
\]
for a suitable subset $J(\tau) \subset J$.

We will also equip an equivariant stratification with a partial ordering such that $Z_{\tau'} < Z_{\tau}$ whenever $Z_{\tau'} \subset \ov{Z}_{\tau} \smallsetminus Z_{\tau}$. Moreover, we also define $\partial Z_{\tau}:=\ov{Z}_{\tau} \smallsetminus Z_{\tau}$.
\end{defi}

Let $d$ be a positive integer. We define a {\it weighted partition} of $d$ the datum of a multiset of positive integers $\mu=\{ k_1, k_2, \dots, k_s\}$ and a multiplicity vector $m=(m_1, m_2, \dots, m_s) \in \left( \Z^+ \right)^s$ such that
\[
\sum_{j=1}^{s} k_j m_j =d.
\]
We will always assume $k_1 \leq k_2 \leq \dots \leq k_s$. We will denote the set of all weighted partitions of $d$  as $\Wcal \Pcal_{d}$.

\begin{rmk}
Now, we clarify some details about the notation we use for weighted partitions. Let $\mu=\{ k_1, k_2, \dots, k_s\}$ be a multiset such that $k_1 \leq k_2 \leq \dots \leq k_s$ and let $m=(m_1, m_2, \dots, m_s)$ be a multiplicity vector for $\mu$. We consider the pair $(\mu,m)$, an element of $\Wcal \Pcal_{d}$, where $d= \sum_{j=1}^{s} k_j m_j$. Now, assume that for some positive integers $j$ and $q$ we have $k_{j} = k_{j+1}= \dots =k_{j+q}$. Any rearranging of elements $m_{j}, m_{j+1}, \dots, m_{j+q}$ will represent the same element $(\mu, m)$. As a convention, we will order the weights in such a way that $m_{j} \geq m_{j+1} \geq \dots \geq m_{j+q} $.

As an example, the weighted partitions 
\[ (\{ 1,1,1,2,2,3\}, (1,2,3,2,3,1)),\quad (\{ 1,1,1,2,2,3\}, (1,3,2,3,2,1))\]
are actually the same, which we will write as $(\{ 1,1,1,2,2,3\}, (3,2,1,3, 2,1))$.
\end{rmk}

\begin{defi}\label{defi.Zstrat}
Let $(\mu,m) \in \Wcal \Pcal_{d}$ be a weighted partition of $d$ where $\mu=\{ k_1, k_2, \dots, k_s\}$ and $m=(m_1,m_2, \dots, m_s)$. Then we define $Z_{\mu}^m$ as the set of curves of degree $d$ which are union of distinct irreducible components of degree $k_j$ and multiplicity $m_j$, as $j$ runs from 1 to $s$.
\end{defi}

For example, $Z_{\{ 1,1,1,2,2,3\}}^{(3,2,1,3, 2,1)}$ is the set of plane curves of degree 19 which are union of a triple line, a double line, a reduced line, a triple quadric, a double quadric, and a reduced cubic. Moreover, the lines, quadrics, and the cubic are all different and irreducible.

\begin{defi}
Let $(\mu, m) \in \Wcal \Pcal_{d}$. We define the map
\begin{eqnarray*}
W_{\mu} := \prod_{j=1}^{s} \Pro(W_{k_j}) & \xrightarrow{\pi_{\mu}^{m}} & \Pro(W_d)\\
\left( [f_1], [f_2], \dots, [f_s] \right) & \mapsto &  \left[ f_1^{m_1}f_2^{m_2} \dots f_s^{m_s} \right].
\end{eqnarray*}
\end{defi}

Notice that the map $\pi_{\mu}^{m}$ is proper, finite, and generically unramified.

\begin{lemma}\label{lemma.closure.strata}
Let $(\mu, m) \in \Wcal \Pcal_{d}$. We have the identity:
\[ \pi_{\mu}^{m} \left( W_{\mu} \right) = \ov{Z}_{\mu}^{m}. \]
\end{lemma}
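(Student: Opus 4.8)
The map $\pi_\mu^m$ is proper (being finite), so its image is closed in $\Pro(W_d)$. The plan is therefore to show two inclusions: that $\pi_\mu^m(W_\mu)$ is contained in $\ov{Z}_\mu^m$, and conversely that $\ov{Z}_\mu^m \subseteq \pi_\mu^m(W_\mu)$. The second inclusion is the substantive one; the first will follow from a density/specialization argument.

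First I would identify the image of the open locus. Inside $W_\mu = \prod_j \Pro(W_{k_j})$, consider the open subset $W_\mu^\circ$ consisting of tuples $([f_1],\dots,[f_s])$ where each $f_j$ is irreducible and the $f_j$ are pairwise non-proportional (so that their zero loci are distinct irreducible curves). By Definition \ref{defi.Zstrat}, $\pi_\mu^m(W_\mu^\circ) = Z_\mu^m$ exactly. Since $W_\mu$ is irreducible (a product of projective spaces) and $W_\mu^\circ$ is a dense open subset, and since $\pi_\mu^m$ is proper, we get
\[
\pi_\mu^m(W_\mu) = \pi_\mu^m\bigl(\ov{W_\mu^\circ}\bigr) \subseteq \ov{\pi_\mu^m(W_\mu^\circ)} = \ov{Z_\mu^m},
\]
using that the image of a closure under a continuous closed map is contained in the closure of the image (in fact, for a proper — hence closed — map, $\pi_\mu^m(\ov{W_\mu^\circ}) = \ov{\pi_\mu^m(W_\mu^\circ)}$). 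This already gives both inclusions at once: $\ov{Z}_\mu^m = \ov{\pi_\mu^m(W_\mu^\circ)} = \pi_\mu^m(W_\mu)$, the last equality because $\pi_\mu^m(W_\mu)$ is closed and contains the dense subset $Z_\mu^m$ of $\ov{Z}_\mu^m$, while being contained in $\ov{Z}_\mu^m$.

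The one point that needs care, and which I expect to be the main obstacle, is verifying that $W_\mu^\circ$ is genuinely dense in $W_\mu$, i.e. that the irreducible plane curves of degree $k$ form a dense open subset of $\Pro(W_k)$ for each $k$ — equivalently that a general form of degree $k$ in three variables is irreducible. For $k=1$ this is vacuous; for $k \geq 2$ one argues that the reducible forms are the union, over all $1 \leq a \leq k-1$, of the images of the multiplication maps $\Pro(W_a)\times\Pro(W_{k-a})\to\Pro(W_k)$, each of which is a proper map from a variety of dimension $\binom{a+2}{2}+\binom{k-a+2}{2}-2 < \binom{k+2}{2}-1 = \dim\Pro(W_k)$, hence has image a proper closed subset; their finite union is thus a proper closed subset, whose complement is dense and open. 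The non-proportionality conditions cut out further dense opens in the product, and a finite intersection of dense opens in an irreducible space is dense. This completes the argument. One should also record that the computation of dimensions above uses $\Char k \neq 2, 3$ only insofar as it is assumed throughout; the irreducibility of a general form holds over any field.
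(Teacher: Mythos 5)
Your proof is correct, and no step fails. It shares the paper's essential ingredients---properness of $\pi_{\mu}^{m}$, irreducibility of $W_{\mu}$, and the identification (via unique factorization) of $Z_{\mu}^{m}$ with the image of the locus of tuples of pairwise non-proportional irreducible forms---but it finishes differently. The paper first deduces $\ov{Z}_{\mu}^{m} \subseteq \pi_{\mu}^{m}(W_{\mu})$ from closedness and irreducibility of the image, then observes that $Z_{\mu}^{m}$ is open in $\pi_{\mu}^{m}(W_{\mu})$ and concludes by a dimension argument (a closed subvariety of an irreducible variety having the same dimension must be the whole variety). You instead apply the identity $\pi_{\mu}^{m}\bigl(\ov{W_{\mu}^{\circ}}\bigr) = \ov{\pi_{\mu}^{m}(W_{\mu}^{\circ})}$, valid for any closed continuous map, to the dense open subset $W_{\mu}^{\circ}$, which yields both inclusions at once and bypasses dimension theory entirely. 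A further point in your favor: you explicitly verify that $W_{\mu}^{\circ}$ is dense, i.e.\ that the general form of degree $k$ is irreducible, via the dimension count on the multiplication maps $\Pro(W_a)\times\Pro(W_{k-a})\to\Pro(W_k)$ (the count is right: the image has dimension $\dim\Pro(W_k)-a(k-a)$, which is strictly smaller). The paper's proof takes the nonemptiness and density of $Z_{\mu}^{m}$ in its closure for granted, even though its dimension argument silently requires $Z_{\mu}^{m}\neq\emptyset$. Both routes are sound; yours is marginally more self-contained.
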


\begin{proof}
By definition and by unique factorization, we have the inclusion $ Z_{\mu}^{m} \subset \pi_{\mu}^{m} \left( W_{\mu} \right)$. Since $W_{\mu}$ is irreducible, we have that $\pi_{\mu}^{m} \left( W_{\mu} \right)$ is irreducible. Moreover, because the map $\pi_{\mu}^{m}$ is proper, $\pi_{\mu}^{m} \left( W_{\mu} \right)$ is closed in $\Pro(W_d)$. Therefore, we have the closed embedding $\ov{Z}_{\mu}^{m} \subseteq \pi_{\mu}^{m} \left( W_{\mu} \right)$. Now, we notice that $\left( \pi_{\mu}^{m} \right)^{-1}\left( Z_{\mu}^{m} \right)$ is open in $W_{\mu}$, consequently, $Z_{\mu}^{m}$ is open in $\pi_{\mu}^{m} \left( W_{\mu} \right)$. This implies that $\dim \left( Z_{\mu}^{m} \right) = \dim \left( \pi_{\mu}^{m} \left( W_{\mu} \right) \right)$. Thus (see, for example, \cite[Theorem I.6.1]{Shaf}):
\[ \pi_{\mu}^{m} \left( W_{\mu} \right) = \ov{Z}_{\mu}^{m}.\]
\end{proof}

\begin{rmk}\label{factor-wise.stratification}
With the help of Lemma \ref{lemma.closure.strata} and by using unique factorization, it is straightforward to prove that $\left\{ Z_{\mu}^{m} \right\}_{(\mu,m) \in \Wcal \Pcal_d}$ is an equivariant stratification of $\Pro(W_d)$.

We also notice that by substituting the set $Z_{\{ d\}}$ of the irreducible curves of degree $d$ with the set $Z_{\irr}$ of irreducible singular curves, we get an equivariant stratification of $Z_d$. We call such stratification the {\it factor-wise stratification} of the degeneracy locus.

We have exactly the same kind of stratification for general hypersurfaces.
\end{rmk}

Since the factor-wise stratification of the singular locus $Z_d$ is equipped with a natural partial ordering, we can represent such stratification with a digraph. 

In the remainder of the paper we will be mostly concerned with the case $d=4$. We can represent the factor-wise stratification of $Z_4 \subset \Pro(W_4)$ with the following digraph. On the left we write the codimension of the corresponding strata.
\begin{equation}\label{eq:strat}
\xymatrix{
	\text{Codimension} & & & \\
	1 & & Z_{\irr} \ar@{-}[ddr] \ar@{-}[dddl]& \\
	2 & & &\\
	3 & & & Z_{\{ 1,3\}} \ar@{-}[ddl]\\
	4 & Z_{\{ 2,2 \}} \ar@{-}[dr] \ar@{-}[ddddd]& &\\
	5 & & Z_{\{ 1,1,2 \}} \ar@{-}[dr] \ar@{-}[dd]&\\
	6 & & & Z_{\{ 1,1,1,1\}} \ar@{-}[ddl]\\
	7 & & Z^{(2,1)}_{\{ 1, 2\}} \ar@{-}[d]& \\
	8 & & Z^{(2,1,1)}_{\{ 1, 1, 1\}}  \ar@{-}[ddr]  \ar@{-}[ddl] & \\
	9 & Z^{(2)}_{\{ 2 \}} \ar@{-}[d] & & \\
	10 & Z^{(2,2)}_{\{ 1,1 \}}  \ar@{-}[ddr] & & Z^{(3,1)}_{\{ 1,1\}}  \ar@{-}[ddl] \\
	11 & & & \\
	12 & & Z^{(4)}_{\{ 1 \}} &
}
\end{equation}
Notice that, by definition, we have $Z_{\reducible} = \ov{Z}_{\irr} \smallsetminus Z_{\irr} = \ov{Z}_{\{ 1,3 \}} \cup \ov{Z}_{\{ 2,2\}}$.
Moreover, we set $Z_{\rm binod}$ as the stratum of irreducible singular curves with two nodes, and $Z_{\rm trinod}$ as the stratum of irreducible singular curves with three nodes. The closure of this strata will be denoted $\ov{Z}_{\rm binod}$ and $\ov{Z}_{\rm trinod}$.

\section{Equivariant intersection theory of $\GL_3$-spaces}\label{sec.EquivariantIntersectionTheory}

In this Section we collect some facts on the equivariant intersection rings of some basic $\GL_3$-spaces over a base field $k$ by following the theory developed in \cite{EG}. All these results can also be found in \cite{EF}, \cite{EFh}, and \cite{Fu-Vi}.

Since in this paper we only focus on plane curves, in order to simplify the notation, we consider the case of $\GL_3$-spaces, but all the statements can be easily extended so to hold for $\GL_n$-spaces in general. 

Define
\[ A^{*}_{\GL_3}:=A^{*}_{\GL_3}(\Spec(k))\simeq A^*(B\GL_3) \]
where $B\GL_3$ is the classifying stack of $\GL_{3}$-torsors, which can also be regarded as the classifying stack of rank $3$ vector bundles. It is well known that $B\GL_3\simeq \left[\Spec(k)/\GL_3 \right]$.

The intersection ring $A^{*}_{\GL_3}(\Pro(F))$ is naturally equipped with a $A^*_{\GL_3}$-module structure, hence we start by recalling an explicit presentation of $A^*_{\GL_3}$.

Let $E$ be the standard representation of $\GL_3$, so that the quotient stack $\Ecal:=[E/\GL_3]$ is the universal rank $3$ vector bundle over $B\GL_3$.

Let $T\subset \GL_3$ be the maximal subtorus of diagonal matrices, so that we have $T\simeq\mathbb{G}_m^{\oplus 3}$, and define $L_i$ as the $1$-dimensional representation of $T$ induced by the projection on the $i^{\rm th}$-factor, for $i=1$, $2$ or $3$. The quotient stacks $\Lcal_i:=[L_i/T]$ are then line bundles over the classifying stack $BT$.

In particular, we have the following cartesian diagram:
\begin{equation}
\xymatrix{ \Lcal_1\oplus\Lcal_2\oplus\Lcal_3 \ar[r] \ar[d] & \Ecal \ar[d] \\
			BT \ar[r]^{j} & B\GL_3 }
\end{equation}
where the bottom arrow is induced by the injective homomorphism $T\hookrightarrow\GL_3$.

\begin{propos}[{\cite[Prop. 6]{EG}}]
	Define $\ell_i:=c_1(\Lcal_i)$ in the integral Chow ring of $BT$, and set $c_i:=c_i(\Ecal)$ in the integral Chow ring of $B\GL_3$.
	Then we have:
		\[ A^*(BT)=A^*_T\simeq\ZZ[\ell_1,\ell_2,\ell_3],\quad A^*(B\GL_{3})=A^*_{\GL_3}\simeq\ZZ[c_1,c_2,c_3].\]
	Moreover, the pullback morphism $j^*:A^*_{GL_3}\rightarrow A^*_T$ is injective, and its image corresponds to $(A^*_T)^{S_3}$, where $S_3$ acts on $A^*_T$ by permuting the generators. In particular we have:
		\[j^*c_1=\ell_1+\ell_2+\ell_3,\quad j^*c_2=\ell_1\ell_2+\ell_2\ell_3+\ell_1\ell_3,\quad j^*c_3=\ell_1\ell_2\ell_3. \]
\end{propos}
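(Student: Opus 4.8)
The statement to prove is the classical computation of $A^*_{\GL_3}$ and $A^*_T$, together with the description of the pullback map $j^*$. This is Proposition 6 from Edidin--Graham \cite{EG}, so the proof is really a matter of assembling standard facts from equivariant intersection theory; I outline how I would organize it.

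\medskip

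The plan is to proceed in three steps. First I would compute $A^*_T$. The classifying space of $\G_m$ is the colimit of the projective spaces $\PP^N$ (obtained from the standard approximations of $E\G_m$ by $(\A^{N+1}\smallsetminus 0)$ with the scaling action), whose Chow rings are $\ZZ[\ell]/(\ell^{N+1})$; passing to the limit gives $A^*(B\G_m)=\ZZ[\ell]$ with $\ell=c_1$ of the universal line bundle. Since $T\simeq\G_m^{\oplus 3}$ and $B\,(G\times H)\simeq BG\times BH$, the Chow--K\"unneth formula for products of the approximating spaces (each a product of projective spaces, whose Chow ring is the tensor product by the projective bundle formula) gives $A^*_T\simeq\ZZ[\ell_1,\ell_2,\ell_3]$ with $\ell_i=c_1(\Lcal_i)$, exactly as claimed.

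\medskip

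Second, I would compute $A^*_{\GL_3}$ and establish that $j^*$ is injective with image $(A^*_T)^{S_3}$. Here the key input is the Leray--Hirsch type statement for the fibration $BT\to B\GL_3$, whose fibers are the full flag variety $\GL_3/B$ (equivalently one uses the finite-dimensional approximation $\Gr=\Fl(E_N)\to\Gr_3(E_N)$, a flag bundle over the Grassmannian). The splitting principle / projective bundle formula applied iteratively to this flag bundle shows that $A^*_T$ is a free $A^*_{\GL_3}$-module, so $j^*$ is injective, and that $A^*_{\GL_3}$ is exactly the subring of $S_3$-invariants: the Weyl group $W=N_{\GL_3}(T)/T\simeq S_3$ acts on $BT$ over $B\GL_3$, permuting the $\Lcal_i$, hence permuting the $\ell_i$, and a cycle on $BT$ descends to $B\GL_3$ precisely when it is $W$-invariant. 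Combined with the fundamental theorem on symmetric polynomials, $(A^*_T)^{S_3}=\ZZ[e_1(\ell),e_2(\ell),e_3(\ell)]$ is a polynomial ring on the three elementary symmetric functions.

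\medskip

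Third, I would identify the generators. The pullback of $\Ecal$ to $BT$ is $\Lcal_1\oplus\Lcal_2\oplus\Lcal_3$ by the cartesian diagram in the excerpt, so Whitney additivity for Chern classes gives $j^*c_i(\Ecal)=c_i(\Lcal_1\oplus\Lcal_2\oplus\Lcal_3)=e_i(\ell_1,\ell_2,\ell_3)$, which are the three displayed formulas. Since the $e_i(\ell)$ generate $(A^*_T)^{S_3}$ freely and $j^*$ is injective, it follows that $A^*_{\GL_3}\simeq\ZZ[c_1,c_2,c_3]$ is a polynomial ring on the Chern classes, completing the proof. The only genuinely substantive point — the ``main obstacle'' — is the freeness of $A^*_T$ over $A^*_{\GL_3}$ and the identification of the image of $j^*$ with the $S_3$-invariants; everything else is a bookkeeping exercise with the projective bundle formula and Whitney's formula. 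Since this is precisely \cite[Prop.~6]{EG}, in the paper one may simply cite that reference, but the argument above is the one underlying it.
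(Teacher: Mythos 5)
The paper offers no proof of its own here---it simply cites \cite[Prop.~6]{EG}---and your sketch is the standard argument underlying that reference (approximation of $B\G_m$ by projective spaces, freeness of $A^*_T$ over $A^*_{\GL_3}$ via the flag bundle, identification of the image of $j^*$ with the symmetric polynomials, and Whitney's formula), so it is correct and consistent with the paper's approach. The only minor imprecision is that the fiber of $BT\to B\GL_3$ is $\GL_3/T$ rather than $\GL_3/B$; one passes through the affine fibration $\GL_3/T\to \GL_3/B$, which induces an isomorphism on Chow groups, before applying the flag bundle formula, but this does not affect the argument.
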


We can think of $\ell_1$, $\ell_2$ and $\ell_3$ as the Chern roots of $\Ecal$. For notational convenience, we also introduce the classes
\begin{equation}\label{eq.l}
l_1:=c_1(\Lcal_1^\vee),\quad l_2:=c_1(\Lcal_2^\vee),\quad l_3:=c_1(\Lcal_3^\vee).
\end{equation}
In particular, we have $c_1 =-(l_1 + l_3 + l_n)$,
$c_2 = l_1l_2 + l_1l_3 + l_2l_3$, and
$c_3 = -l_1l_2l_3$.

The following Lemma will allow us to perform computations on $T$-equivariant Chow groups rather than the $\GL_3$-equivariant ones. 

\begin{lemma}[{\cite[Prop.\ 6]{EG}, \cite[Lemma 2.1]{Fu-Vi}}]
	The following are true:
	\begin{enumerate}
		\item Let $G$ be a special group and let $T$ be the maximal subtorus of $G$ with Weyl group $W$. Then if $X$ is a smooth $G$-scheme, the pullback along the morphism $[X/T]\to [X/G]$ induces an injective homomorphism 
		\[ A_G^*(X)\hookrightarrow A_T^*(X),\] 
		whose image coincides with $A^*_T(X)^W$.
		\item 	For every ideal $I\subset A^*_G(X)$ we have:
		\[ IA^*_T(X)\cap A_G^*(X) =  I.\]
		\item Let $f:X\to Y$ be a $G$-equivariant proper morphism between smooth $G$-schemes, and suppose that the image of $f_*:A_T^*(Y)\to A_T^*(X)$ is generated as $A^*_T$-module by $W$-invariant classes $\gamma_1,\ldots,\gamma_n$. Then the image of $f^*:A_G^*(Y)\to A^*_G(X)$ is also generated by $\gamma_1,\ldots,\gamma_n$.
	\end{enumerate}
\end{lemma}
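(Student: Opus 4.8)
The plan is to reduce all three statements to the structure of $A^{*}_{T}(X)$ as a ``flag bundle'' over $A^{*}_{G}(X)$. For part (1) I would pass first to the finite-dimensional approximations of \cite{EG}: in each codimension $i$ one replaces $A^{i}_{G}(X)$ and $A^{i}_{T}(X)$ by $A^{i}\big((X\times U)/G\big)$ and $A^{i}\big((X\times U)/T\big)$, where $U$ is an open subset, with high-codimension complement, of a representation of $G$ on which $G$ --- hence also $T$ --- acts freely. As $G$ is special, $G/T$ has a decomposition into affine cells (via the affine bundle $G/T\to G/B$ over the flag variety of a Borel $B\supseteq T$), and $(X\times U)/T\to(X\times U)/G$ is the $G/T$-bundle associated to the principal $G$-bundle $X\times U\to(X\times U)/G$. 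The flag-bundle formula (freeness of the Chow ring of a Zariski-locally-trivial fibration with cellular fibres over that of the base) then gives that $A^{*}_{T}(X)$ is free over $A^{*}_{G}(X)$ on a Schubert-type basis --- for $G=\GL_3$ one may take the monomials $\ell_{1}^{a_{1}}\ell_{2}^{a_{2}}\ell_{3}^{a_{3}}$ with $0\le a_{i}\le i-1$ --- which contains $1$; in particular $A^{*}_{G}(X)\to A^{*}_{T}(X)$ is split injective and $A^{*}_{T}(X)=A^{*}_{G}(X)\otimes_{A^{*}_{G}}A^{*}_{T}$. Since conjugation by an element of $N_{G}(T)$ is inner in $G$, the Weyl group $W=N_{G}(T)/T$ acts on $[X/T]$ over $[X/G]$, so the image of $A^{*}_{G}(X)$ is contained in $A^{*}_{T}(X)^{W}$; for the reverse inclusion I would feed in the integral identity $(A^{*}_{T})^{W}=A^{*}_{G}$ (the case $X=\Spec k$ of \cite[Prop.~6]{EG}; for $G=\GL_3$ just the theorem that symmetric polynomials in $\ell_{1},\ell_{2},\ell_{3}$ are polynomials in the elementary symmetric ones) together with the tensor decomposition above.

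Part (2) is then formal: the split injection of part (1) has an $A^{*}_{G}(X)$-linear retraction $\rho\colon A^{*}_{T}(X)\to A^{*}_{G}(X)$ (project onto the coefficient of the basis element $1$). The inclusion $I\subseteq IA^{*}_{T}(X)\cap A^{*}_{G}(X)$ is clear; conversely, writing $\xi=\sum_{j} a_{j}\beta_{j}$ with $a_{j}\in I$ and $\beta_{j}\in A^{*}_{T}(X)$, we get $\xi=\rho(\xi)=\sum_{j} a_{j}\,\rho(\beta_{j})\in I$ because $\rho$ is $A^{*}_{G}(X)$-linear, fixes $A^{*}_{G}(X)$ pointwise, and $I$ is an ideal.

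Part (3) is also formal given part (1). Write the decomposition $A^{*}_{T}(-)=\bigoplus_{w\in W}A^{*}_{G}(-)\,e_{w}$ with $e_{\mathrm{id}}=1$; the $e_{w}$ are pulled back from $BT$, hence are $f$-compatible. The projection formula together with flat base change for the flag bundle show that the proper push-forward is ``block diagonal'', $f_{*}(a\,e_{w})=(f_{*}a)\,e_{w}$ for $a\in A^{*}_{G}(X)$, so that $\operatorname{Im}\big(f_{*}\colon A^{*}_{T}(X)\to A^{*}_{T}(Y)\big)=\bigoplus_{w}\operatorname{Im}\big(f_{*}\colon A^{*}_{G}(X)\to A^{*}_{G}(Y)\big)\,e_{w}$. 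On the other hand each $\gamma_{i}$ is $W$-invariant, so by part (1) it lies in $A^{*}_{G}(Y)=A^{*}_{G}(Y)\,e_{\mathrm{id}}$, whence the $A^{*}_{T}$-submodule generated by $\gamma_{1},\dots,\gamma_{n}$ is $\bigoplus_{w}\big(\sum_{i} A^{*}_{G}\gamma_{i}\big)\,e_{w}$. Comparing the $e_{\mathrm{id}}$-components of the two descriptions of $\operatorname{Im}(f_{*})$ gives $\operatorname{Im}\big(f_{*}\colon A^{*}_{G}(X)\to A^{*}_{G}(Y)\big)=\sum_{i} A^{*}_{G}\gamma_{i}$, as wanted.

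The whole argument is routine except for one point, which is where I expect the real difficulty to sit: the \emph{integral} Weyl-invariance statement in part (1), i.e.\ that the image of $A^{*}_{G}(X)$ is the full ring of invariants rather than merely a finite-index subring --- parts (2) and (3) being formal consequences. For the groups actually used in the paper ($\GL_3$ and its standard torus) this input is explicit and elementary, as recorded in the Proposition preceding this Lemma.
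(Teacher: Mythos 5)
The paper itself does not prove this lemma: it is imported verbatim from \cite[Prop.~6]{EG} and \cite[Lemma 2.1]{Fu-Vi}, so there is no internal argument to compare against. Your overall architecture --- finite-dimensional approximations, the Zariski-local triviality coming from specialness, the cellular $G/T$-fibration giving that $A^*_T(X)$ is free over $A^*_G(X)$ on a Schubert-type basis containing $1$, and then parts (2) and (3) as formal consequences via the $A^*_G(X)$-linear retraction and the block-diagonal description of the pushforward --- is the standard route, and your treatments of (2) and of the projection-formula/flat-base-change step in (3) are correct given the freeness statement.

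The gap is in the reverse inclusion $A^*_T(X)^W\subseteq A^*_G(X)$ in part (1), which you propose to deduce from the point case $(A^*_T)^W=A^*_G$ ``together with the tensor decomposition''. Taking $W$-invariants does not commute with the base change $-\otimes_{A^*_G}A^*_G(X)$ unless $A^*_G(X)$ is flat over $A^*_G$ or some other control is available. Concretely, already for $\GL_2$: $A^*_T=\Z[\ell_1,\ell_2]$ is free over $A^*_G=\Z[c_1,c_2]$ with basis $\{1,\ell_1\}$ and $\sigma(\ell_1)=c_1-\ell_1$, so for an $A^*_G$-module $M$ the $W$-invariants of $M\otimes_{A^*_G}A^*_T=M\oplus M\ell_1$ are $\{m_0+m_1\ell_1 \,:\, 2m_1=0,\ c_1m_1=0\}$, which strictly contains $M$ as soon as $M$ has a nonzero element annihilated by $2$ and by $c_1$. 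Hence ``image $=$ full ring of invariants'' is a genuinely geometric statement about equivariant Chow rings --- it is exactly the content of \cite[Prop.~6]{EG} --- and cannot be reduced to $X=\Spec k$ by your tensor argument. You correctly identify this as the crux, but the resolution you offer does not close it; and since your part (3) invokes precisely this direction (``each $\gamma_i$ is $W$-invariant, so by part (1) it lies in $A^*_G(Y)$''), the gap propagates there as well. Everything else in the proposal is sound.
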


The main advantage of working with $T$-equivariant Chow rings is given by the so-called \emph{explicit localization formula}, which we state below.
\begin{theorem}[{\cite[Theorem 2]{EGL}}]\label{thm.loc}
	Define the $A^*_T$-module $ \Qcal:= ((A^*_T)^+)^{-1}A^*_T$, 
	where $(A^*_T)^+$ is the multiplicative system of homogeneous elements of $A^*_T$ of positive degree.
	
	Let $X$ be a smooth $T$-variety and consider the locus $F$ of fixed points for the action of $T$. Let $F=\cup F_j$ be the decomposition of $F$ into irreducible components. For every $\gamma$ in $A^*_T(X)\otimes\Qcal$, we have the identity
	\[ \gamma = \sum_{j} \frac{i_{F_j}^*(\gamma)}{c_{top}^T(N_{F_{j}}X)} \]
	where $i_{F_j}$ is the inclusion of $F_j$ in $X$ and $N_{F_j}X$ is the normal bundle of $F_j$ in $X$.
\end{theorem}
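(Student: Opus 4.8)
The plan is to derive the formula from three ingredients: the equivariant self-intersection (excess intersection) formula, the invertibility of $c_{top}^T(N_{F_j}X)$ after inverting the positive-degree elements of $A^*_T$, and — the only substantial point — the vanishing $A^*_T(X\smallsetminus F)\otimes\Qcal=0$. First I would note that the right-hand side is to be read with the implicit proper pushforwards $(i_{F_j})_*$; these make sense because, $X$ being smooth and $T$ diagonalizable, the fixed locus $F=X^T$ is smooth, so each $i_{F_j}\colon F_j\into X$ is a regular closed immersion and the $F_j$ are pairwise disjoint connected components of $F$. Since $\Qcal$ is a localization of $A^*_T$, tensoring by $\Qcal$ is exact, so it is harmless to do so everywhere.

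The first step is the surjectivity of $\bigoplus_j(i_{F_j})_*\colon\bigoplus_j A^*_T(F_j)\otimes\Qcal\to A^*_T(X)\otimes\Qcal$. This follows at once from the equivariant localization exact sequence $A^*_T(F)\to A^*_T(X)\to A^*_T(X\smallsetminus F)\to 0$ (using smoothness of $X$ to pass freely between operational and homological groups) together with the key vanishing $A^*_T(X\smallsetminus F)\otimes\Qcal=0$. To prove this vanishing I would stratify $X\smallsetminus F$ into finitely many $T$-invariant locally closed strata on each of which the stabilizer is a fixed subgroup scheme of $T$ with identity component a subtorus $T_0$; since there are no $T$-fixed points, $T_0$ is a \emph{proper} subtorus of $T$. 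On such a stratum $S$, after quotienting by the finite component group and by the then-freely-acting $T/T_0$, and using that the extension $1\to T_0\to T\to T/T_0\to 1$ of tori splits, one identifies $[S/T]$ with $\bigl[S/(T/T_0)\bigr]\times BT_0$, where $[S/(T/T_0)]$ is a finite-dimensional scheme up to a quotient by a finite group. Consequently any nonzero character $\chi$ of $T$ that is trivial on $T_0$ is pulled back from the first factor, so $c_1^T(\chi)\in A^1_T$ acts nilpotently on $A^*_T(S)$, because $A^{\gg 0}$ of a finite-dimensional (stacky) scheme vanishes. But $c_1^T(\chi)$ is a nonzero linear form in $A^*_T=\ZZ[\ell_1,\ell_2,\ell_3]$, hence invertible in $\Qcal$; acting both invertibly and nilpotently, it kills $A^*_T(S)\otimes\Qcal$. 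Noetherian induction along the localization sequences then gives $A^*_T(X\smallsetminus F)\otimes\Qcal=0$. This is exactly the content of \cite[Theorem 2]{EGL}, which I would invoke rather than reprove, but the argument above is what it reduces to.

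The second step is to invert the equivariant Euler class. On $F_j$ the torus acts trivially, so the equivariant normal bundle splits as $N_{F_j}X=\bigoplus_\chi N^\chi$ over a finite set of characters $\chi$, none of which is $0$, since the weight-$0$ part of $T_X|_{F_j}$ is exactly $T_{F_j}$. Hence $c_{top}^T(N_{F_j}X)=\prod_\chi c_{top}^T(N^\chi)$, and each factor equals $c_1^T(\chi)^{\mathrm{rk}\,N^\chi}(1+\nu)$ with $\nu$ nilpotent (again because $A^{\gg 0}(F_j)=0$); therefore $c_{top}^T(N_{F_j}X)$ is a unit in $A^*_T(F_j)\otimes\Qcal$. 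Finally, given $\gamma\in A^*_T(X)\otimes\Qcal$, write $\gamma=\sum_j(i_{F_j})_*\beta_j$ by the first step; applying $i_{F_k}^*$, using that $i_{F_k}^*(i_{F_j})_*=0$ for $j\neq k$ (disjoint supports) while $i_{F_k}^*(i_{F_k})_*\beta_k=c_{top}^T(N_{F_k}X)\cdot\beta_k$ by the equivariant self-intersection formula, one gets $\beta_k=i_{F_k}^*\gamma/c_{top}^T(N_{F_k}X)$; substituting back yields the asserted identity $\gamma=\sum_j(i_{F_j})_*\bigl(i_{F_j}^*\gamma/c_{top}^T(N_{F_j}X)\bigr)$.

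The main obstacle is the vanishing $A^*_T(X\smallsetminus F)\otimes\Qcal=0$; everything else is formal bookkeeping with excess intersection and with the nilpotence of positive-degree classes on finite-dimensional schemes. Within that obstacle the delicate points are making the stabilizer stratification work with integral coefficients (constancy of stabilizers after refinement; behaviour in positive characteristic, where one uses that $T$ is linearly reductive so that $X^T$ remains smooth) and the identification $[S/T]\simeq\bigl[S/(T/T_0)\bigr]\times BT_0$ that produces the annihilating character.
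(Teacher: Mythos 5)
Your proposal is correct: the paper gives no proof of this statement, citing it directly from Edidin--Graham, and your argument (vanishing of $A^*_T(X\smallsetminus F)\otimes\Qcal$ via the stabilizer stratification, invertibility of the equivariant Euler class because the normal weights are nonzero and positive-degree classes on $F_j$ are nilpotent, then the self-intersection formula) is precisely the standard proof given in that reference. The only nitpick is that the vanishing/surjectivity step you isolate is Theorem~1 of \cite{EGL} rather than Theorem~2, which is the explicit formula itself.
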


\begin{rmk}\label{rmk.loc}
In particular, when $A^*_T(X)$ is torsion-free as $A^*_T$-module, the localization homomorphism $A^*_T(X)\to A^*_T(X)\otimes\Qcal$ is injective, and for every $T$-equivariant morphism $f:Y\to X$ of smooth $T$-varieties, we have a commutative diagram
\[ \xymatrix{A^*_T(Y) \ar[r]^{f_*} \ar[d] & A^*_T(X) \ar[d] \\ A^*_T(Y)\otimes\Qcal \ar[r] & A^*_T(X)\otimes\Qcal } \] 
If $F_j$ is a point for every $j$, we have:
\begin{equation}\label{eq:loc} f_*\gamma = \sum_j \frac{[f(F_j)]\cdot p_j(\gamma)}{c_{top}^T(TY_{F_j})} \end{equation}
where  $\gamma \in A^*_T(Y)$ and $p_j(\gamma)=i_{F_j}^*\gamma$ in $A^*_T(F_j)=A^*_T$. We will mostly use the localization formula in the form of (\ref{eq:loc}).
\end{rmk}

Let us recall the equivariant version of the projective bundle formula.
\begin{propos}[{\cite[2.3]{Fu-Vi}}]\label{prop:proj bundle formula}
	Let $F$ be a $G$--representation of dimension $n$. Then we have:
	\[ A_{G}^*(\PP(F))\simeq A_{G}^*[h]/(h^n+c_1^{G}(F)h^{n-1}+c_2^{G}(F)h^{n-2}+\ldots +c_n^{G}(F)). \]
	Let $F_1,\ldots,F_s$ be $s$ representations. Then we have:
	\[ A^*_{G}(\PP(F_1)\times \cdots\times \PP(F_s))\simeq A^*_{G}(\PP(F_1))\otimes_{A^*_{G}}\cdots\otimes_{A^*_{G}}A_G(\PP(F_s)). \]
\end{propos}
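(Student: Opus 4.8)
The plan is to deduce both statements from the ordinary projective bundle formula (\cite[Thm.~3.3]{Ful}) by means of the Edidin--Graham finite-dimensional approximation of $A^*_G$, in the form recalled in \cite{EG}. Fix $n = \dim F$. For each integer $i$ choose a $G$-representation $V$ together with a $G$-invariant open $U \subseteq V$ on which $G$ acts freely and with $\mathrm{codim}_V(V\smallsetminus U)$ larger than $\dim\PP(F) - i$; then by definition $A^i_G(\PP(F)) = A^i\big((\PP(F)\times U)/G\big)$ and $A^i_G = A^i(U/G)$, and these groups are independent of the choice. The key observation is that $(\PP(F)\times U)/G = \PP(\mathcal F_U)$, where $\mathcal F_U := (F\times U)/G$ is the rank-$n$ vector bundle on the scheme $B_U := U/G$ whose Chern classes are, by definition of equivariant Chern classes, the images of $c_i^G(F)$ in $A^*(B_U)$ in the relevant degrees.

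First I would prove the formula for $A^*_G(\PP(F))$. Applying \cite[Thm.~3.3]{Ful} to the projective bundle $\PP(\mathcal F_U)\to B_U$ shows that $A^*(\PP(\mathcal F_U))$ is free over $A^*(B_U)$ on $1,\zeta,\dots,\zeta^{n-1}$ with the single relation $\zeta^n + c_1(\mathcal F_U)\zeta^{n-1}+\dots+c_n(\mathcal F_U)=0$, where $\zeta = c_1(\mathcal O_{\PP(\mathcal F_U)}(1))$; the signs match the statement precisely because we use Fulton's convention for $\PP$. Since $\mathcal O_{\PP(F)}(1)$ is canonically $G$-equivariant and $\zeta$ is the restriction to this approximation of the equivariant class $h := c_1^G(\mathcal O_{\PP(F)}(1)) \in A^1_G(\PP(F))$, and since one may choose a single cofinal family of opens $U$ that simultaneously computes $A^{\le i}_G(\PP(F))$, $A^{\le i}_G$ and $A^{\le i}_G(\Spec k)$ for every $i$, letting $i\to\infty$ gives $A^*_G(\PP(F)) \simeq A^*_G[h]/(h^n + c_1^G(F)h^{n-1}+\dots+c_n^G(F))$, a free $A^*_G$-module of rank $n$.

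For the product statement I would argue by induction on $s$, the case $s=1$ being what we just proved. It is convenient to record first the relative version: for any $G$-scheme $X$ with a linearizable action (as all our $X$ are) and any $G$-equivariant vector bundle $\mathcal E$ of rank $m$ on $X$, the same approximation argument yields $A^*_G(\PP_X(\mathcal E)) \simeq A^*_G(X)[h]/\big(\sum_{i=0}^m c_i^G(\mathcal E)h^{m-i}\big)$, free over $A^*_G(X)$ on $1,\dots,h^{m-1}$. Now set $Y = \PP(F_1)\times\cdots\times\PP(F_{s-1})$ and apply this to $X = Y$ and $\mathcal E = \pr^*F_s$, the pullback to $Y$ of the $G$-equivariant bundle attached to the representation $F_s$; then $\PP_Y(\mathcal E) = Y\times\PP(F_s)$ and $c_i^G(\mathcal E)$ is the image of $c_i^G(F_s)\in A^*_G$. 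Hence $A^*_G(Y\times\PP(F_s))$ is the free $A^*_G(Y)$-module on $1,\dots,h_s^{n_s-1}$ with relation $\sum_i c_i^G(F_s)h_s^{n_s-i}=0$, which is exactly $A^*_G(Y)\otimes_{A^*_G}A^*_G(\PP(F_s))$ --- the tensor product being honest, with no completion involved, because $A^*_G(\PP(F_s))$ is free of finite rank over the genuine graded ring $A^*_G$. The inductive hypothesis applied to $Y$ then gives the claimed iterated tensor product.

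The only point that genuinely needs care --- and hence the main obstacle --- is the passage to the limit: one must verify that a single cofinal system of approximations $U$ can be used to compute the equivariant Chow groups of $\Spec k$, of $X$, of $\PP(F)$ and of $\PP_X(\mathcal E)$ in all degrees at once (using that $\dim\PP_X(\mathcal E) = \dim X + m - 1$ is finite, so the required codimension bounds are mutually compatible), and that under the resulting restriction maps the generator $h$ and the classes $c_i^G$ behave functorially. Granting the standard formalism of \cite{EG} this is routine, and the freeness of all the modules involved means there are no torsion or extension subtleties to track.
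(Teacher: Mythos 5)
Your proof is correct and follows the standard route: the paper itself does not prove this proposition but imports it from \cite[2.3]{Fu-Vi}, where it is established by exactly the Edidin--Graham approximation argument you describe --- identify $(\PP(F)\times U)/G$ with $\PP(\mathcal F_U)$ for $\mathcal F_U=(F\times U)/G$ a vector bundle on $U/G$, apply Fulton's projective bundle theorem (your sign check for the ``lines'' convention, giving the all-plus relation $\sum_i c_i^G(F)h^{n-i}=0$, is right), pass to the limit, and handle the product case by the relative version applied to $\pr^*F_s$ over $Y=\PP(F_1)\times\cdots\times\PP(F_{s-1})$, using freeness to identify the result with the tensor product over $A^*_G$. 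The only cosmetic slip is the codimension bound on the approximation: to compute $A^i_G$ one wants $\mathrm{codim}_V(V\smallsetminus U)>i$ (not $>\dim\PP(F)-i$, since equivariant Chow groups are nonzero in degrees above the dimension), but this does not affect the argument.
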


We conclude by mentioning another useful feature of working with $T$-equivariant Chow groups, namely the fact that we can easily compute the cycle classes of complete intersections via the Lemma below.

\begin{lemma}[{\cite[Lemma 2.4]{EF}}]
	Let $W$ be a $T$-representation of finite rank, and let $H\subset\PP(W)$ be a $T$-invariant hypersurface. Suppose that $H$ is defined by a homogeneous equation $F(X)=0$ of degree $d$, and that
	\[ F(\tau^{-1}\cdot X) = \chi(\tau)F(X) \]
	for some $T$-character $\chi$ and for every $\tau\in T$. Then:
	\[ [H]_T = dh + c_1(\chi) \in A^*_T(\PP(W)) \]
	where $c_1(\chi)$ is the $T$-equivariant first Chern class of the $T$-representation defined by $\chi$.
\end{lemma}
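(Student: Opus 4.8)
The plan is to realize $[H]_{T}$ as the $T$-equivariant first Chern class of a suitable $T$-equivariant line bundle on $\PP(W)$ of which $F$ is a tautological section, and then to read that class off from the projective bundle presentation of Proposition~\ref{prop:proj bundle formula}.

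First I would record the equivariant set-up. The $T$-action on $W$ induces one on $\PP(W)$, and the tautological sub-line-bundle, hence $\cO_{\PP(W)}(1)$ and each of its powers, carries a canonical $T$-linearization; with this linearization, $h=c_{1}^{T}\bigl(\cO_{\PP(W)}(1)\bigr)$ is exactly the hyperplane class appearing in Proposition~\ref{prop:proj bundle formula}. Consequently $\H^{0}\bigl(\PP(W),\cO_{\PP(W)}(d)\bigr)$ acquires the structure of a $T$-representation, and the hypothesis $F(\tau^{-1}\cdot X)=\chi(\tau)F(X)$ says precisely that the line spanned by $F$ inside it is a $T$-subrepresentation on which $T$ acts through the character $\chi$. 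Equivalently, multiplication by $F$ defines a $T$-equivariant morphism from the one-dimensional representation given by $\chi$ (viewed as a trivial line bundle on $\PP(W)$ with $T$ acting through $\chi$) into $\cO_{\PP(W)}(d)$; that is, $F$ is a $T$-equivariant global section of $\cO_{\PP(W)}(d)$ equipped with its natural linearization twisted by $\chi$. Since $\PP(W)$ is smooth and $F\neq 0$, this section is regular, and its divisor of zeros is $H$, so in $A_{T}^{1}(\PP(W))$ we get $[H]_{T}=c_{1}^{T}\bigl(\cO_{\PP(W)}(d)\otimes\chi\bigr)$.

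It then remains to compute this class, which is immediate from additivity of $c_{1}$ under tensor product together with the definitions above: $c_{1}^{T}\bigl(\cO_{\PP(W)}(d)\otimes\chi\bigr)=d\,c_{1}^{T}\bigl(\cO_{\PP(W)}(1)\bigr)+c_{1}^{T}(\chi)=dh+c_{1}(\chi)$. The one point that genuinely requires care, and the only real obstacle, is the bookkeeping of conventions: one must keep track of the classical (Fulton) versus Grothendieck normalization of $\PP(W)$, of the sign of $h$, and of whether $T$ acts on functions by $F\mapsto F(\tau^{-1}\cdot X)$ or by $F\mapsto F(\tau\cdot X)$, carefully enough to be sure that the twist one must apply is $\chi$ itself (and not $\chi^{\vee}$) and that it contributes with a $+$ sign; with the conventions fixed in Section~\ref{sec.EquivariantIntersectionTheory} this is indeed the outcome. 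As an independent cross-check one can avoid the linearization discussion altogether and argue via the explicit localization formula of Theorem~\ref{thm.loc}: specialization to the non-equivariant Chow ring forces $[H]_{T}=dh+\lambda$ for a unique $\lambda\in A^{1}_{T}$, and restricting this identity to a $T$-fixed point of $\PP(W)$ that does not lie on $H$, where $h$ restricts to an explicitly known character class and $[H]_{T}$ restricts to $0$ by disjointness, pins down $\lambda$, which one then identifies with $c_{1}(\chi)$ by comparing with the $T$-weight of $F$ in a choice of eigencoordinates.
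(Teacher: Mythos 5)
The paper itself offers no proof of this statement: it is quoted verbatim from \cite[Lemma 2.4]{EF}, so there is no internal argument to compare yours against. Your proof is the standard one (and essentially the one in the cited source): the hypothesis exhibits the line spanned by $F$ as a one-dimensional $T$-subrepresentation of $\H^0\bigl(\PP(W),\cO_{\PP(W)}(d)\bigr)$, hence $F$ as an equivariant regular section of $\cO_{\PP(W)}(d)$ twisted by a character, and $[H]_T$ is then the equivariant first Chern class of that linearized bundle, computed by additivity. That is correct, and you are also right that the whole content of the lemma is the sign bookkeeping: whether the twist is by $\chi$ or by $\chi^{\vee}$, and whether ``$c_1(\chi)$'' means $c_1$ of the representation on which $\tau$ acts by $\chi(\tau)$ or by $\chi(\tau)^{-1}$, are correlated choices that the paper does not make fully explicit (note that it systematically works with $l_i=c_1(\Lcal_i^{\vee})$ rather than $\ell_i=c_1(\Lcal_i)$). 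Your localization cross-check is the right way to pin this down, and it is the normalization actually consistent with the later computations, where $h|_{[Q_i]}=-\chi_i$ and $[\{a_k=0\}]=h+\theta_k$ force the restriction of $[H]_T$ to a fixed point off $H$ to vanish. One small supplement to make that cross-check airtight: a $T$-fixed point of $\PP(W)$ avoiding $H$ need not exist for an arbitrary eigenform (e.g.\ $F=a_1a_2$ when $\dim W=2$); but all degree-$d$ eigenforms of a given character form a linear system, hence are $T$-equivariantly rationally equivalent, and the $\chi$-eigenspace of $\Sym^d W^{\vee}$ always contains a monomial $\prod_v a_v^{n_v}$, so one may reduce to that case, where each linear factor misses a coordinate fixed point and the class is computed factor by factor.
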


Given $Y\subset\PP(W)$ a complete intersection of $s$ invariant hypersurfaces $H_1,\dots,H_s$, by the Lemma above we easily deduce
\[ [Y]_T= (d_1h + c_1(\chi_1))\cdot \cdots \cdot (d_sh + c_1(\chi_s)). \]

\section{Intersection theory on $\Xcal_4$}\label{Xd}

The aim of this Section is to take the first steps towards the computation of $A^*(\Xcal_4)$.
From now on we assume that the base field $k$ has characteristic $\neq 2,3$. 

\begin{rmk}
If the characteristic of the base field is equal to $2$ several things can go wrong. For instance, the subscheme $\widetilde{Z}_4\subset \PP(W_4)\times\PP^2$ is no more generically $1:1$ over $Z_4$, the reason being that the partial derivatives $F_{X_i}$ are not linear independent, because $\sum p_iF_{X_{i}}(p_1,p_2,p_3)=\deg(F)\cdot F=0$. The same formula shows that in this case $\widetilde{Z}_4$ does not coincide with the singular locus of the universal quartic curve.

Moreover, in the proofs of some results (e.g. Lemma \ref{lemma.2Z1111}) we rely on computations performed in \cite{Fu-Vi} for cubic curves, and for these computations to be true we need to assume that the characteristic of the base field is not $3$.  
\end{rmk}

We start by applying Proposition \ref{prop:proj bundle formula} to the $\GL_3$-representation $W_4$.
\begin{corol}
	We have:
	\[ A^*_{\GL_3}(\PP(W_4))\simeq \ZZ[c_1,c_2,c_3,h_4]/(p_4(h_4)), \]
	where $p_4(h_4)$ is a homogeneous polynomial of degree $15$, monic with respect to the variable $h_4$.
\end{corol}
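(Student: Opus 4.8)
The plan is to apply the equivariant projective bundle formula (Proposition~\ref{prop:proj bundle formula}) directly to the $\GL_3$-representation $W_4 = \Sym^4(E)$. Since $E$ is the standard representation of $\GL_3$, $W_4$ is a $\GL_3$-representation whose dimension is $\binom{4+2}{2} = 15$, the number of monomials $X^v$ with $v \in \N^3(4)$. Hence Proposition~\ref{prop:proj bundle formula} immediately gives
\[
A^*_{\GL_3}(\PP(W_4)) \simeq A^*_{\GL_3}[h_4]\big/\big(h_4^{15} + c_1^{\GL_3}(W_4)h_4^{14} + \cdots + c_{15}^{\GL_3}(W_4)\big),
\]
and since $A^*_{\GL_3} \simeq \ZZ[c_1,c_2,c_3]$ by the cited Proposition of \cite{EG}, the quotient ring is $\ZZ[c_1,c_2,c_3,h_4]/(p_4(h_4))$ where $p_4(h_4) \eqdef h_4^{15} + c_1^{\GL_3}(W_4)h_4^{14} + \cdots + c_{15}^{\GL_3}(W_4)$ is visibly monic of degree $15$ in $h_4$ and homogeneous of total degree $15$ once we assign $h_4$ degree $1$ and $c_i^{\GL_3}(W_4)$ degree $i$ (the latter being polynomials in $c_1,c_2,c_3$).

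The only content beyond citing the formula is verifying that $p_4(h_4)$ is homogeneous of degree $15$ as an element of the graded ring $\ZZ[c_1,c_2,c_3,h_4]$ — equivalently, that $c_i^{\GL_3}(W_4)$ is a homogeneous polynomial of degree $i$ in the $c_j$'s. This is standard: the total Chern class $c^{\GL_3}(W_4)$ is a polynomial in $c_1,c_2,c_3$ because $\Sym^4$ is a polynomial functor, and by splitting $E$ into Chern roots $\ell_1,\ell_2,\ell_3$ one has $c^{\GL_3}(W_4) = \prod_{v\in\N^3(4)}\big(1 + (v_1\ell_1 + v_2\ell_2 + v_3\ell_3)\big)$, so the degree $i$ part is a symmetric polynomial of degree $i$ in the $\ell$'s, hence a weighted-homogeneous polynomial of degree $i$ in $c_1,c_2,c_3$. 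Thus each coefficient of $h_4^{15-i}$ in $p_4(h_4)$ has degree $i$, and $p_4(h_4)$ is homogeneous of degree $15$.

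I do not expect any genuine obstacle here: the statement is essentially an unwinding of Proposition~\ref{prop:proj bundle formula} together with the grading bookkeeping above, and one could even record the explicit expressions for the low-degree coefficients $c_1^{\GL_3}(W_4) = 35c_1$ and so on by expanding the product over $v \in \N^3(4)$, though for the purposes of this corollary only the shape of $p_4(h_4)$ — monic of degree $15$ in $h_4$, homogeneous of degree $15$ overall — is needed. The explicit relations $\alpha_1,\alpha_2,\alpha_3$ of Main Theorem~\ref{mt2} will later be extracted from $p_4(h_4)$ and the derivatives $F_{X_i}$ via the general machinery of \cite{Fu-Vi}, but that lies outside the scope of this statement.
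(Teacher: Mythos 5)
Your argument is exactly the paper's: the corollary is stated there as an immediate application of Proposition~\ref{prop:proj bundle formula} to the $15$-dimensional representation $W_4=\Sym^4(E)$, with the homogeneity of $p_4(h_4)$ being the standard grading bookkeeping you spell out via Chern roots. The only slip is in your non-essential aside: $c_1^{\GL_3}(W_4)=\binom{6}{3}c_1=20c_1$, not $35c_1$ (the coefficient $35$ is $\dim\Sym^4$ of a rank-$4$ space, not the first Chern class coefficient here), but this plays no role in the proof.
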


Recall that $\Xcal_4\simeq [(\PP(W_4)\smallsetminus Z_4)/\GL_{3}]$, where $Z_4=\PP(\Delta)$ is the divisor of singular quartics. Then the excision exact sequence for equivariant Chow groups reads as follows:
\[ A^{\GL_3}_*(Z_4) \xrightarrow{i_{*}} A^{\GL_3}_*(\PP(W_4))\simeq \ZZ[c_1,c_2,c_3,h_4]/(p_4(h_4)) \arr A_*(\Xcal_4)\arr 0 \]
We will leverage the next Lemma in order to investigate the image of $i_*$.
\begin{lemma}\label{basic.principle}
Let $X$ be a quasi-projective and equidimensional space with a linearized $G$-action. Let $\left \{ Z_{\tau} \right \}_{\tau \in J}$ be an equivariant stratification of $X$ (see Definition \ref{definition.stratification}). Fix $\tau \in J$ and let $N$ be an $A^*_G$-submodule of $A^G_*(X)$. Assume that for every $\tau' < \tau$ we have $i_{*} \left( A^G_*(\ov{Z}_{\tau'})\right) \subset N$ (where $i$ is the closed embedding $i: \ov{Z}_{\tau'} \to X$) and assume that we have the inclusion $i_{*} \left( A^G_* \left( Z_{\tau} \right) \right) \subset j^*(N)$ (where $i$ is the closed embedding $i: Z_{\tau} \to X \smallsetminus \partial Z_{\tau}$ and $j$ is the open immersion $j: X \smallsetminus \partial Z_{\tau} \to X$), then $i_* \left( A^G_* (\ov{Z}_{\tau})\right) \subset N$.
\end{lemma}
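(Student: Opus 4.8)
The statement is a standard "filling in a stratum" argument using the excision exact sequence, so the plan is to reduce everything to two excision sequences and a diagram chase. First I would set $U := X \smallsetminus \partial Z_\tau$, which is open in $X$, and note that $\ov{Z}_\tau \cap U = Z_\tau$ is closed in $U$ (since $\partial Z_\tau = \ov{Z}_\tau \smallsetminus Z_\tau$ is exactly the part of $\ov{Z}_\tau$ removed), while $\partial Z_\tau$ is closed in $X$. So I have two closed embeddings to play with: $Z_\tau \into U$ and $\partial Z_\tau \into X$, together with the open immersion $j : U \into X$. The key identity I want is that for a class $\beta \in A^G_*(\ov{Z}_\tau)$ pushed forward to $\alpha := i_* \beta \in A^G_*(X)$, the restriction $j^* \alpha \in A^G_*(U)$ is supported on $Z_\tau$ — indeed $j^*\alpha$ is the pushforward of $\beta|_{\ov{Z}_\tau \cap U} = \beta|_{Z_\tau}$ along $Z_\tau \into U$, by compatibility of flat pullback with proper pushforward in a Cartesian square.

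The main step is the diagram chase. Take any $\alpha \in i_*\bigl(A^G_*(\ov{Z}_\tau)\bigr)$, say $\alpha = i_*\beta$. By the hypothesis on $Z_\tau$, the class $j^*\alpha$, viewed as a class pushed forward from $Z_\tau$ into $U$, lies in $j^*(N)$; choose $n \in N$ with $j^*\alpha = j^*n$ in $A^G_*(U)$. Then $\alpha - n$ maps to zero under $j^* : A^G_*(X) \to A^G_*(U)$, so by the excision exact sequence
\[ A^G_*(\partial Z_\tau) \longrightarrow A^G_*(X) \xrightarrow{\ j^*\ } A^G_*(U) \longrightarrow 0 \]
there is $\gamma \in A^G_*(\partial Z_\tau)$ with $(\partial Z_\tau \into X)_* \gamma = \alpha - n$. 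Now $\partial Z_\tau = \bigcup_{\tau' \in J(\tau)} Z_{\tau'}$, hence $\partial Z_\tau = \bigcup \ov{Z}_{\tau'}$ for those $\tau'$ (each of which satisfies $\tau' < \tau$), so $A^G_*(\partial Z_\tau)$ is generated by the images of the $A^G_*(\ov{Z}_{\tau'})$ under pushforward; composing with the closed embedding into $X$ and using the hypothesis $i_*\bigl(A^G_*(\ov{Z}_{\tau'})\bigr) \subset N$ for all $\tau' < \tau$, we get $(\partial Z_\tau \into X)_*\gamma \in N$. Therefore $\alpha = n + (\partial Z_\tau \into X)_*\gamma \in N$, as desired.

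The one genuinely delicate point, and the step I expect to be the main obstacle to write cleanly, is the compatibility claim used to identify $j^*\alpha$ with a pushforward from $Z_\tau$: this needs the Cartesian-square base-change formula (flat pullback commutes with proper pushforward) for equivariant Chow groups, applied to the square with corners $Z_\tau$, $\ov{Z}_\tau$, $U$, $X$. This is legitimate in the Edidin--Graham theory \cite{EG} since equivariant Chow groups are computed as ordinary Chow groups of the (quasi-projective, finite-dimensional) approximations $X \times_G U_k$ and all the maps involved are compatible with the approximation construction; I would just cite this. A secondary bookkeeping point is that $\partial Z_\tau$ may be reducible, so one must be slightly careful that $A^G_*(\partial Z_\tau)$ is generated by pushforwards from the closed pieces $\ov{Z}_{\tau'}$ — this follows from the excision/Mayer–Vietoris-type surjectivity $\bigoplus_{\tau'} A^G_*(\ov{Z}_{\tau'}) \twoheadrightarrow A^G_*(\partial Z_\tau)$ for a closed subscheme covered by those closed subschemes, using that the pushforward along a closed embedding whose image covers the target is surjective. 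Everything else is formal manipulation of the excision sequence.
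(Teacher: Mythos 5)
Your proposal is correct and is essentially the paper's own argument: the paper sets up the same two excision sequences in a commutative diagram, first reduces $i_*\bigl(A^G_*(\partial Z_\tau)\bigr)\subset N$ to the hypothesis on the smaller strata via surjectivity of pushforward from them (phrased there as an equivariant envelope), and then invokes exactly the diagram chase you spell out. The "delicate" base-change compatibility you flag is just the commutativity of that diagram, which the paper asserts without further comment.
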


\begin{proof}
If $\tau$ is minimal, the statement is trivial. If $\tau$ is not minimal, the map
\[
p: \bigsqcup_{\tau' < \tau} Z_{\tau'} \arr \partial Z_{\tau}
\]
is an equivariant envelope, therefore the push-forward 
\[
p_*: A^G_* \left( \bigsqcup_{\tau' < \tau} Z_{\tau'} \right) \arr A^G_* \left( \partial Z_{\tau} \right)
\]
is surjective. Together with the hypothesis, this implies that $i_* \left( A^G_* (\partial Z_{\tau})\right) \subset N$ (where $i$ is the closed embedding $i: \partial Z_{\tau} \to X$).

Now, consider the following commutative diagram of $A^G_*$-modules, where $i$ represents a closed embedding and $j$ represents an open immersion:
\[
\xymatrix{
A^G_* \left( \partial Z_{\tau} \right) \ar[r]^{i_*} \ar[d]^{\id_*} & A^G_*\left( \ov{Z}_{\tau} \right) \ar[r]^{j^*} \ar[d]^{i_*}& A^G_* \left( Z_{\tau} \right) \ar[r] \ar[d]^{i^*}& 0 \\
A^G_* \left( \partial Z_{\tau} \right) \ar[r]^{i_*} & A^G_*\left( X \right) \ar[r]^{j^*} & A^G_* \left( X \smallsetminus \partial Z_{\tau} \right) \ar[r] & 0
}
\]

Moreover, notice that, from \cite[Proposition 5]{EG}, the rows are exact. Let $\gamma$ be a class in $ A^G_*\left( \ov{Z}_{\tau} \right)$. By using a standard diagram chasing argument, we can show that $i_*(\gamma) \in N$.
\end{proof}

\begin{defi}\label{defi:IZtilde}
	Consider the $\GL_3$-variety $\PP(W_4)$.
	\begin{enumerate}
		\item We define the ideal $I_{\wt{Z}}\subset A^*_{\GL_{3}}(\PP(W_4))$ as the image of
		\[ i_*\pi_{1*}:A^{\GL_{3}}_*(\wt{Z}_4)\arr A_*^{\GL_3}(\PP(W_4)),\]
		where $\wt{Z}_4\subset\PP(W_4)\times\PP(E)$ is the singular locus of the universal plane quartic, as defined in Remark \ref{rmk:Ztilde}.\\
		\item We define $\delta_{\mu}^m$ as the equivariant cycle class $[\ov{Z}_{\mu}^m]_{\GL_3}$ in $A^*_{\GL_3}(\PP(W_d))$.
	\end{enumerate}
\end{defi}

Lemma \ref{basic.principle} will be applied to the stratification (\ref{eq:strat}), where the submodule $N$ that we take into account is the ideal $(I_{\wt{Z}},\delta_{\{1,3\}})$. Our final result will be that the image of $i_*$ is contained in $(I_{\wt{Z}},\delta_{\{1,3\}})$, hence these two ideals must coincide.

We can easily determine the classes $\alpha_1,\alpha_2,\alpha_3$ which generate the ideal $I_{\wt{Z}}$. We apply \cite[Theorem 4.5]{Fu-Vi} and we get:

\begin{eqnarray*}
	\alpha_1 &=& 27 h_4 - 36 c_1,\\ 
	\alpha_2 &=& 9 h_4^2 - 6 c_1 h_4 - 24 c_2,\\
	\alpha_3 &=& h_4^3 - c_1 h_4^2 + c_2 h_4 - 28 c_3.\\
\end{eqnarray*}

\begin{lemma}\label{alpha.classes.independent}
	The classes $\alpha_1, \alpha_2,\alpha_3$ are independent generators for the ideal $I_{\wt{Z}}$. Moreover, the polynomial $p_4(h_4)\in I_{\wt{Z}}$.
\end{lemma}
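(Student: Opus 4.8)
The plan is to work with the three generators $\alpha_1, \alpha_2, \alpha_3$ explicitly displayed above (degrees $1$, $2$, $3$ in the grading of $A^*_{\GL_3}(\PP(W_4))$), and to establish two things: (i) that these three elements are a minimal generating set of $I_{\wt Z}$, i.e. none is redundant; and (ii) that the monic degree-$15$ polynomial $p_4(h_4)$ lies in $I_{\wt Z}$. By \cite[Theorem 4.5]{Fu-Vi}, the ideal $I_{\wt Z}$ is the image of $i_*\pi_{1*}$, and that same theorem tells us $I_{\wt Z}$ is generated by $\alpha_1, \alpha_2, \alpha_3$ — so for (i) I only need to argue minimality, and for (ii) I need $p_4(h_4)$ to be expressible in terms of these generators inside $\ZZ[c_1,c_2,c_3,h_4]/(p_4(h_4))$, which really amounts to a statement in the polynomial ring $\ZZ[c_1,c_2,c_3,h_4]$.

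For (i), minimality: the key point is the leading behavior in the variable $h_4$. Note $\alpha_1 = 27h_4 - 36 c_1$ is the unique generator with an $h_4^1$ term; its $h_4$-coefficient is $27$. Similarly $\alpha_2$ is the only candidate whose $h_4$-degree is $2$ with leading coefficient $9$, and $\alpha_3$ has $h_4$-degree $3$ with leading coefficient $1$. If, say, $\alpha_2 \in (\alpha_1, \alpha_3)$, then writing $\alpha_2 = f\alpha_1 + g\alpha_3$ with $f, g \in \ZZ[c_1,c_2,c_3,h_4]$ and comparing $h_4$-degrees forces $g$ to have $h_4$-degree $-1$, i.e. $g=0$; then $\alpha_2 = f\alpha_1$ with $f$ of $h_4$-degree $1$, and comparing leading coefficients gives $9 = 27\cdot(\text{lead of }f)$ in $\ZZ$, impossible. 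An entirely parallel degree-and-leading-coefficient argument rules out $\alpha_1 \in (\alpha_2,\alpha_3)$ (degree $1$ is too small) and $\alpha_3 \in (\alpha_1,\alpha_2)$ (leading coefficient $1$ cannot be a $\ZZ$-combination of multiples of $9$ and $27$, once one checks that any cancellation among $h_4$-powers is impossible for degree reasons — $\alpha_3$ having the top degree, the $h_4^3$ term can only come from $\alpha_2$ times a degree-$1$ factor or $\alpha_1$ times a degree-$2$ factor, all of whose leading coefficients are divisible by $9$). One must be a little careful that the relation $p_4(h_4)=0$ doesn't create extra room, but since $p_4$ has $h_4$-degree $15$, far above the degrees in play, it is irrelevant to these comparisons.

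For (ii), that $p_4(h_4) \in I_{\wt Z}$: the clean way is to recall \emph{why} $p_4(h_4)$ is $0$ in $A^*(\Xcal_4)$. By the excision sequence $A^{\GL_3}_*(Z_4) \xrightarrow{i_*} A^{\GL_3}_*(\PP(W_4)) \to A_*(\Xcal_4) \to 0$, the image of $i_*$ is the full ideal of relations, and $p_4(h_4)$ is already zero in $A^*_{\GL_3}(\PP(W_4))$ itself — so literally $p_4(h_4) = 0$ as an element of $A^*_{\GL_3}(\PP(W_4)) = \ZZ[c_1,c_2,c_3,h_4]/(p_4(h_4))$, hence trivially lies in every ideal, in particular in $I_{\wt Z}$. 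This reading makes the second assertion nearly tautological; the substance of the lemma is really the first. Alternatively, if the intended meaning is the sharper statement that $p_4(h_4)$ lies in the ideal of $\ZZ[c_1,c_2,c_3,h_4]$ generated by lifts of $\alpha_1,\alpha_2,\alpha_3$, then I would produce the explicit division: use $\alpha_1$ to solve $h_4 \equiv \tfrac{4}{3}c_1$ after inverting $3$, which is legitimate because the base field has characteristic $\neq 3$ so $3$ is a unit in the relevant computation — wait, it is not a unit in $\ZZ$, so instead one proceeds by the standard Euclidean reduction: divide $p_4(h_4)$ by the monic-in-$h_4$ polynomial $\alpha_3$ to drop its $h_4$-degree below $3$, then by $\alpha_2$ to drop below $2$, then by $\alpha_1$; the remainder is a polynomial in $c_1,c_2,c_3$ alone that must vanish because $A^*_{\GL_3}$ injects into $A^*(\Xcal_4)$ is false in general, so the cleanest justification remains the tautological one above. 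The main obstacle is thus not computational but interpretive: pinning down precisely which of these two readings is meant and, for the minimality part, handling the $\ZZ$-coefficient bookkeeping so that no combination of the coefficients $9, 27, 1$ and the degree constraints leaves a loophole.
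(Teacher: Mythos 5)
Your argument for the independence of $\alpha_1,\alpha_2,\alpha_3$ is essentially the paper's, just phrased differently: the paper reduces mod $9$ (to see $\alpha_2\notin(\alpha_1)$) and mod $3$ (to see $\alpha_3\notin(\alpha_1,\alpha_2)$) and invokes homogeneity, while you compare the coefficients of the top power of $h_4$, which amounts to the same divisibility obstruction ($9\notin 27\ZZ$, $1\notin 9\ZZ$). One small slip: it is not the $h_4$-degree that forces $g=0$ in $\alpha_2=f\alpha_1+g\alpha_3$ (cancellation among $h_4$-powers is a priori possible), but the \emph{total} grading in which each $\alpha_i$ is homogeneous of degree $i$, so that $g$ would have to be homogeneous of degree $-1$. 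You set up the grading correctly at the start, so this is cosmetic; with that correction the first assertion is fine.

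The second assertion is where there is a genuine gap. You correctly diagnose that the only non-vacuous reading of ``$p_4(h_4)\in I_{\wt Z}$'' is a statement in the polynomial ring $\ZZ[c_1,c_2,c_3,h_4]$: namely that $p_4(h_4)$ lies in the ideal generated by (lifts of) $\alpha_1,\alpha_2,\alpha_3$. This is precisely what is needed later, since the final presentation $A^*(\Xcal_4)\simeq\ZZ[c_1,c_2,c_3,h_4]/(\alpha_1,\alpha_2,\alpha_3,\delta_{\{1,3\}})$ omits $p_4(h_4)$ from the list of relations; under your ``tautological'' reading the second sentence of the lemma carries no information and could not be used for that purpose. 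Your attempted proof of the substantive statement does not close: the Euclidean division by $\alpha_3$ reduces $p_4$ to $h_4$-degree $\le 2$, but further division by $\alpha_2$ and $\alpha_1$ is obstructed over $\ZZ$ by the leading coefficients $9$ and $27$, and the fallback claim that the remainder vanishes because $A^*_{\GL_3}$ injects into $A^*(\Xcal_4)$ is, as you yourself note, unjustified. The paper does not reprove this either; it cites \cite[Prop.~4.8]{Fu-Vi}, which is exactly the missing ingredient. So either quote that proposition or supply the explicit identity expressing $p_4(h_4)$ as a $\ZZ[c_1,c_2,c_3,h_4]$-combination of $\alpha_1,\alpha_2,\alpha_3$; as written, the second half of the lemma is not established.
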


\begin{proof}
	We already know that the classes $\alpha_i$ generate the ideal $I_{\wt{Z}}$, therefore, it is enough to show that they are independent. If we consider the classes mod 9, we see that $\alpha_2$ is independent from $\alpha_1$. On the other hand, if we consider the classes mod 3, we see that $\alpha_3$ is independent from $\alpha_1$ and $\alpha_2$. Since each $\alpha_i$ is homogeneous of degree $i$, this is enough to prove the first statement.
	
	The second statement is \cite[Prop. 4.8]{Fu-Vi}.
\end{proof}

\section{Reducible quartics}\label{sec.reducible}

Let $Z_\reducible \subset \Pro(W_4)$ be the locus of reducible quartics. The goal of this Section is to prove that $i_* \left( A_*^{\GL_3} (Z_\reducible) \right)$ is contained in the ideal generated by $I_{\wt{Z}}$ and the class $\delta_{\{1,3\}}$.

\subsection{The push-forward of classes from $\ov{Z}_{\{1,1,1,1\}}$ is in the ideal $I_{\wt{Z}}$.}

The main goal of this Subsection is to show the following:
\begin{propos}\label{propos.Z1111}
		The image of $i_*:A_{\GL_3}^*(\ov{Z}_{\{1,1,1,1\}})\arr A_{\GL_3}^*(\PP(W_4))$ is contained in $I_{\wt{Z}}$.
\end{propos}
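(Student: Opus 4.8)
The statement concerns the closed stratum $\ov{Z}_{\{1,1,1,1\}}$ of quartics that split as a product of four distinct lines. The natural parametrization is the map $\pi : \PP(W_1)^4 \to \PP(W_4)$ sending $([\ell_1],\dots,[\ell_4]) \mapsto [\ell_1\ell_2\ell_3\ell_4]$, which by Lemma~\ref{lemma.closure.strata} has image $\ov{Z}_{\{1,1,1,1\}}$; it is proper and generically finite (of degree $4! = 24$ onto its image, with ramification along the "diagonals" where two lines coincide). The first step is therefore to understand $A^*_{\GL_3}(\ov{Z}_{\{1,1,1,1\}})$, or rather to reduce to the smooth space $\PP(W_1)^4$: since $\pi$ is a proper equivariant morphism, $\pi_*$ surjects onto $i_*\big(A^{\GL_3}_*(\ov{Z}_{\{1,1,1,1\}})\big)$ up to the image of cycles supported on the singular locus of $\ov Z_{\{1,1,1,1\}}$ — but in fact it is cleaner to argue that it suffices to prove $i_*\pi_*\big(A^*_{\GL_3}(\PP(W_1)^4)\big) \subseteq I_{\wt Z}$, because an equivariant envelope argument (as in the proof of Lemma~\ref{basic.principle}) together with the fact that the deeper strata $Z_{\{1,1,2\}}$, etc., all lie inside $\ov Z_{\reducible}$ and will be handled separately lets one bootstrap; more directly, $\pi$ restricted to the open locus of distinct lines is finite étale of degree $24$ onto $Z_{\{1,1,1,1\}}$, so $\pi_*$ is surjective onto $A^*_{\GL_3}(Z_{\{1,1,1,1\}})$ rationally, and one then invokes Lemma~\ref{basic.principle} to pass to the closure. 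I would set up the computation on $\PP(W_1)^4$ directly.

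\medskip

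Next, $A^*_{\GL_3}(\PP(W_1)^4)$ is, by Proposition~\ref{prop:proj bundle formula}, the ring $A^*_{\GL_3}[h^{(1)},\dots,h^{(4)}]$ modulo the relations $(h^{(j)})^3 + c_1(W_1)(h^{(j)})^2 + c_2(W_1)h^{(j)} + c_3(W_1)$ for each $j$, where $W_1 = E^\vee$ (or $E$, depending on convention) so the Chern classes of $W_1$ are expressible in $c_1,c_2,c_3$. As an $A^*_{\GL_3}$-module it is free, generated by monomials $(h^{(1)})^{a_1}\cdots(h^{(4)})^{a_4}$ with $0 \le a_j \le 2$. The key geometric input is the computation of $i_*\pi_*(1)$ and more generally $i_*\pi_*$ of these module generators: $\pi$ is a product map followed by multiplication, and the pullback of the hyperplane class $h_4$ on $\PP(W_4)$ along $\pi$ is $h^{(1)} + h^{(2)} + h^{(3)} + h^{(4)}$ (since the equation of the product quartic is the product of the linear equations, so its bidegree-$(1,4)$ class pulls back additively). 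So I would compute $\pi_*\big((h^{(1)})^{a_1}\cdots(h^{(4)})^{a_4}\big)$ using the equivariant projection formula and the Gysin pushforward along $\pi$; the class $\pi_*(1) = \delta_{\{1,1,1,1\}}/24$-type expression (the $24$ being absorbed appropriately) is the fundamental class of $\ov Z_{\{1,1,1,1\}}$, and one computes it either by the localization formula of Theorem~\ref{thm.loc}/Remark~\ref{rmk.loc} after restricting to $T$, or by recognizing $\ov Z_{\{1,1,1,1\}}$ as (the image of) a symmetric product and using standard formulas. Because $\ov Z_{\{1,1,1,1\}} \subset \ov Z_{\{1,3\}}$ has codimension $6$ and $h_4$ restricted to it satisfies enough relations, the upshot should be that every $i_*\pi_*$ of a module generator is a multiple of $\alpha_1$, $\alpha_2$, or $\alpha_3$ modulo lower-order terms, hence lies in $I_{\wt Z}$.

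\medskip

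Concretely, I expect the argument to run as follows: restrict everything to $T$-equivariant Chow rings (legitimate by part~(1) of the Lemma after Proposition~\ref{prop:proj bundle formula}, since $\GL_3$ is special), where $A^*_T(\PP(W_1)^4)$ is a free $A^*_T$-module and the localization formula applies cleanly. Express $i_*\pi_*(\gamma)$ via the explicit localization formula~\eqref{eq:loc}, summing over the finitely many $T$-fixed points of $\PP(W_1)^4$ (tuples of coordinate lines). Each fixed-point contribution is a rational function in the $\ell_i$; summing and clearing denominators yields a polynomial expression which I then want to exhibit as lying in the ideal generated by $j^*\alpha_1, j^*\alpha_2, j^*\alpha_3$ inside $A^*_T(\PP(W_4))$ — and by part~(2) of that Lemma, membership in the $T$-equivariant extension of $I_{\wt Z}$ implies membership in $I_{\wt Z}$ itself. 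The cleanest route to the last step is probably to observe that $I_{\wt Z}$ contains $\alpha_1 = 27h_4 - 36c_1$, so modulo $I_{\wt Z}$ one has $27 h_4 \equiv 36 c_1$, hence high powers of $h_4$ times suitable integer coefficients collapse; combined with $\alpha_2$ and $\alpha_3$, every class of the relevant codimension that is "divisible enough" (by $9$, or by $3$, or outright) lands in $I_{\wt Z}$, and the pushforward $i_*\pi_*(\gamma)$ is visibly divisible because $\pi$ has degree divisible by $24 = 2^3 \cdot 3$ and the fixed-point sum produces symmetric functions with the needed divisibility.

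\medskip

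\textbf{The main obstacle.} The hard part will be the explicit computation of the pushforwards $\pi_*\big((h^{(1)})^{a_1}\cdots(h^{(4)})^{a_4}\big)$ and showing the resulting polynomials in $c_1,c_2,c_3,h_4$ lie in $I_{\wt Z} = (\alpha_1,\alpha_2,\alpha_3)$ — the localization sum over $24$-plus fixed points with nontrivial normal-bundle Euler classes is a genuinely delicate symmetric-function calculation, and extracting the correct integral divisibility (the factors of $3$ and $9$ that make things land in the ideal rather than just rationally in it) is exactly the subtle point that distinguishes this from the rational-coefficient story. A secondary subtlety is justifying the reduction from $A^*_{\GL_3}(\ov Z_{\{1,1,1,1\}})$ to $\PP(W_1)^4$ cleanly: $\pi$ is not an isomorphism onto its image, so one must either accept that one is only computing $i_*$ of the image of $\pi_*$ and handle the remainder via the stratification/envelope machinery of Lemma~\ref{basic.principle}, or argue that the cycles not hit by $\pi_*$ are supported on strictly smaller strata already treated.
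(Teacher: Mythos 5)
There is a genuine gap, and it is exactly the issue you relegate to ``a secondary subtlety'' at the end. The map $\pi:\PP(W_1)^4\to\ov{Z}_{\{1,1,1,1\}}$ is proper and surjective but it is \emph{not} an equivariant envelope: no subvariety of $\PP(W_1)^4$ maps birationally onto $\ov{Z}_{\{1,1,1,1\}}$, since the map is irreducible of generic degree $24$. Consequently $\pi_*$ is not surjective onto $A^{\GL_3}_*(\ov{Z}_{\{1,1,1,1\}})$ integrally; over the open stratum one only has $\pi_*\pi^*=24\cdot\mathrm{id}$, so your computation (however the localization sum comes out) can only show that $24\cdot i_*\gamma\in I_{\wt{Z}}$ for every class $\gamma$, modulo the boundary. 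The phrase ``surjective \ldots rationally'' concedes precisely this, and rational surjectivity is useless here: the entire difficulty of the theorem is integral, and the torsion in play is $2$- and $3$-torsion, which is exactly what a degree-$24=2^3\cdot 3$ covering fails to see. Your closing heuristic that the pushforward is ``visibly divisible because $\pi$ has degree divisible by $24$'' works against you, not for you: that divisibility is the reason the image of $\pi_*$ misses the classes you need.

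The paper closes this gap with a two-pronged $\gcd$ argument that has no counterpart in your proposal. First (Lemma \ref{lemma.2Z1111}) it uses the generically $4{:}1$ map $\PP(W_1)\times T_3\to T_4$ together with the already-known result for cubics to show $4\cdot i_*\bigl(A^{\GL_3}_*(\ov{Z}_{\{1,1,1,1\}})\bigr)\subseteq I_{\wt{Z}}$ --- this is the honest version of what your covering argument can deliver. Second, and this is the missing idea, it introduces the incidence variety $D\subseteq\PP(W_4)\times\hilb^2\PP^2$ of pairs (quartic, length-$2$ subscheme of its singular locus): a union of four distinct lines has $\binom{4}{2}=6$ nodes, hence $\binom{6}{2}=15$ length-$2$ subschemes of its singular locus, so $D$ is finite of degree $15$ over $Z_{\{1,1,1,1\}}$, giving $15\cdot i_*\bigl(A^{\GL_3}_*(\ov{Z}_{\{1,1,1,1\}})\bigr)\subseteq I_{\wt{Z}}$ via Proposition \ref{propos.twonodes} (Lemma \ref{lemma.15Z1111}). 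Since $\gcd(4,15)=1$, the two together give the Proposition. Without a second covering of degree coprime to the first (or an actual Chow envelope of $\ov{Z}_{\{1,1,1,1\}}$, which $\PP(W_1)^4$ is not), your argument cannot close. A lesser point: the boundary $\partial\ov{Z}_{\{1,1,1,1\}}$ consists of sums of lines with multiplicities (not of the strata $Z_{\{1,1,2\}}$ etc., which lie \emph{above} $Z_{\{1,1,1,1\}}$ in the stratification), and the paper handles it separately in Lemma \ref{borderT4} by exhibiting sections of $\wt{Z}_4$ over each boundary stratum.
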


First we show that the pushforward of the classes coming from the border of $\ov{Z}_{\{1,1,1,1\}}$ are in $I_{\wt{Z}}$.

\begin{lemma}\label{borderT4}
We have the inclusion $i_* \left( A^{\GL_3}_* ( \partial \ov{Z}_{\{1,1,1,1\}}) \right) \subseteq I_{\wt{Z}}$.
\end{lemma}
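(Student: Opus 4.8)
The strategy is to apply the fundamental Lemma \ref{basic.principle} to the stratum $\ov{Z}_{\{1,1,1,1\}}$ inside $\PP(W_4)$, with the submodule $N$ taken to be the ideal $I_{\wt{Z}}$. By that Lemma, it suffices to show that the pushforward to $\PP(W_4)$ of every equivariant class coming from a stratum $Z_\tau$ strictly below $Z_{\{1,1,1,1\}}$ in the digraph \eqref{eq:strat} already lands in $I_{\wt{Z}}$. Consulting \eqref{eq:strat}, the strata strictly below $Z_{\{1,1,1,1\}}$ are $Z^{(2,1,1)}_{\{1,1,1\}}$, $Z^{(2)}_{\{2\}}$, $Z^{(2,2)}_{\{1,1\}}$, $Z^{(3,1)}_{\{1,1\}}$ and $Z^{(4)}_{\{1\}}$ (together with $\partial Z^{(2,1,1)}_{\{1,1,1\}}$, which is handled by the same recursion). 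Thus the proof reduces to checking the hypothesis of Lemma \ref{basic.principle} stratum by stratum, working upward from the minimal one $Z^{(4)}_{\{1\}}$.

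First I would dispose of the "non-reduced" strata $Z^{(4)}_{\{1\}}$, $Z^{(2,2)}_{\{1,1\}}$, $Z^{(3,1)}_{\{1,1\}}$, $Z^{(2)}_{\{2\}}$ and $Z^{(2,1,1)}_{\{1,1,1\}}$. For each of these the parametrizing space $W_\mu = \prod_j \PP(W_{k_j})$ is a product of projective spaces of lines and conics, so by Proposition \ref{prop:proj bundle formula} its $\GL_3$-equivariant Chow ring is generated as an $A^*_{\GL_3}$-algebra by the hyperplane classes of the factors; pushing forward along the finite map $\pi^m_\mu$ (and then along $\ov{Z}^m_\mu\hookrightarrow\PP(W_4)$), the image is a finitely generated $A^*_{\GL_3}$-module, and one computes explicit generators — using the equivariant localization formula of Theorem \ref{thm.loc} in the torus-equivariant picture, as in Remark \ref{rmk.loc}, to evaluate the pushforward. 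The point to verify in each case is that every such generator is a multiple of $\alpha_1$, $\alpha_2$ or $\alpha_3$; this follows because a curve with a multiple component has its whole singular locus contained in $\wt{Z}_4$ with multiplicity, so the relevant cycles factor through $A^{\GL_3}_*(\wt{Z}_4)$ — concretely the $\GL_3$-orbit of such a configuration supports a cycle that is visibly hit by $i_*\pi_{1*}$. The expected reference for the already-computed cubic-type contributions (the $Z^{(2,1,1)}_{\{1,1,1\}}$ and line-related strata) is \cite{Fu-Vi}; these feed in verbatim modulo the characteristic-$\neq 3$ hypothesis.

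The remaining input is the stratum $Z_{\{1,1,1,1\}}$ itself (the top one among those considered), but for this Lemma we only need the \emph{border} $\partial\ov{Z}_{\{1,1,1,1\}}$, so it is exactly the union of the closures $\ov{Z}_\tau$ for the $\tau<\{1,1,1,1\}$ just treated; by the envelope/surjectivity argument inside the proof of Lemma \ref{basic.principle} (the map from the disjoint union of the lower strata onto $\partial\ov{Z}_{\{1,1,1,1\}}$ is an equivariant envelope, so the pushforward on Chow groups is surjective), the inclusion $i_*\bigl(A^{\GL_3}_*(\partial\ov{Z}_{\{1,1,1,1\}})\bigr)\subseteq I_{\wt{Z}}$ follows at once from the stratum-by-stratum verification. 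The main obstacle I anticipate is the bookkeeping in the localization computations for the strata with a doubled line or doubled conic, where the normal bundle to the torus-fixed points has a nontrivial character decomposition and one must be careful with the multiplicities coming from the non-reduced structure; but conceptually nothing new is needed beyond Theorem \ref{thm.loc}, Proposition \ref{prop:proj bundle formula}, and the cubic computations imported from \cite{Fu-Vi}.
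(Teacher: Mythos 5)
Your overall reduction (treat each stratum of $\partial\ov{Z}_{\{1,1,1,1\}}$ separately and then invoke the envelope/surjectivity argument from the proof of Lemma \ref{basic.principle}) is sound, and the slip of listing $Z^{(2)}_{\{2\}}$ among the strata below $Z_{\{1,1,1,1\}}$ is harmless (a double conic is not a degeneration of four lines, so that stratum does not occur in this border). The genuine gap is in how you propose to control $A^{\GL_3}_*(\ov{Z}^m_\mu)$ for the individual border strata. You compute the image of $A^{\GL_3}_*(W_\mu)\arr A^{\GL_3}_*(\PP(W_4))$ by localization, but for $Z^{(2,1,1)}_{\{1,1,1\}}$ and $Z^{(2,2)}_{\{1,1\}}$ the parametrization $\pi^m_\mu\colon W_\mu\arr\ov{Z}^m_\mu$ is generically $2:1$ (one may permute the two reduced lines, resp.\ the two doubled lines), so it is not an equivariant envelope and $\pi^m_{\mu*}$ is not surjective on integral equivariant Chow groups: for instance the fundamental class of $\ov{Z}^{(2,2)}_{\{1,1\}}$ is hit only with multiplicity $2$. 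Hence verifying $i_*\pi^m_{\mu*}A^{\GL_3}_*(W_\mu)\subseteq I_{\wt{Z}}$ only yields (roughly) $2\,i_*A^{\GL_3}_*(\ov{Z}^m_\mu)\subseteq I_{\wt{Z}}$, which is not enough for an integral statement; this is exactly the phenomenon the paper has to fight in Lemmas \ref{lemma.2Z1111} and \ref{lemma.15Z1111} for the stratum $\ov{Z}_{\{1,1,1,1\}}$ itself, where two coprime multiples are combined.

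The missing idea --- which your remark about cycles ``factoring through $A^{\GL_3}_*(\wt{Z}_4)$'' gestures at but does not develop --- is the one the paper actually uses: over each stratum of $\partial\ov{Z}_{\{1,1,1,1\}}$ the restriction of $\wt{Z}_4$ admits an equivariant section (send a curve of $Z^{(2,1,1)}_{\{1,1,1\}}$ to the node where its two reduced lines meet, and a curve of $Z^{(2,2)}_{\{1,1\}}$ or $Z^{(3,1)}_{\{1,1\}}$ to the intersection point of its two components), while over $\ov{Z}^{(4)}_{\{1\}}$ it is the projectivization of an equivariant vector bundle. Hence $\wt{Z}_4$ is a Chow envelope over $\partial\ov{Z}_{\{1,1,1,1\}}$, so every class pushed forward from the border lifts to $A^{\GL_3}_*(\wt{Z}_4)$ and lies in $I_{\wt{Z}}$ by definition, with no localization computation at all. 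If you insist on a computational route, you must replace $W_\mu$ by a space that genuinely surjects onto the equivariant Chow groups of each stratum; the section argument supplies exactly that.
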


\begin{proof}
Recall that $\wt{Z}_4$ is the singular locus of the universal quartic $\pi_1:C_4\arr \Pro(W_4)$. By definition $I_{\wt{Z}}=\pi_{1*}A^{\GL_3}_*(\wt{Z}_4)$, hence to prove the Proposition it is enough to show that the restriction of $\wt{Z}_4$ over $\partial \ov{Z}_{ \{1,1,1,1\} }$ is a Chow envelope.

First observe that the restriction of $\wt{Z}_4$ over $\ov{Z}_{\{1\}}^{(4)}$, endowed with the reduced structure, is the projectivization of an equivariant vector bundle, thus a Chow envelope.

The restriction of $\wt{Z}_4$ either over $Z_{\{1,1\}}^{(2,2)}$ or $Z_{\{1,1\}}^{(3,1)}$ has a section given by sending a point below to the unique node of the corresponding plane curve. Henceforth, in both cases we have a Chow envelope.

Finally, the restriction of $\wt{Z}_4$ over $Z_{\{1,1,1\}}^{(2,1,1)}$ also has a section, namely the one that sends a point below to the unique node where the two reduced components of the associated plane curve meet. Putting all together, this implies that $\wt{Z}_4$ is a Chow envelope for $\partial \ov{Z}_{\{1,1,1,1\}}$.
\end{proof}

\begin{lemma}\label{lemma.2Z1111}
We have the inclusion
\[
4 i_* \left( A_*^{\GL_3} \left( \ov{Z}_{\{1,1,1,1\}} \right) \right) \subseteq I_{\wt{Z}}.
\]
\end{lemma}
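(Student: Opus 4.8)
The plan is to pull back along the normalization-type map and use a localization-plus-envelope argument. Consider the map $\pi = \pi_{\{1,1,1,1\}}^{(1,1,1,1)}\colon W_{\{1,1,1,1\}}=\PP(W_1)^4\to\PP(W_4)$, which by Lemma \ref{lemma.closure.strata} has image $\ov{Z}_{\{1,1,1,1\}}$ and is finite. The source $\PP(W_1)^4$ is smooth, and the symmetric group $S_4$ acts on it by permuting the four factors; the map $\pi$ is $S_4$-invariant and is generically a $4!$-to-$1$ cover onto $\ov{Z}_{\{1,1,1,1\}}$ off the locus where two of the lines coincide, which is exactly $\partial\ov{Z}_{\{1,1,1,1\}}$. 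The first step is therefore to factor $\pi$ through the quotient and observe that $\ov{Z}_{\{1,1,1,1\}}$ is the image of a degree-$4!$ map, but more usefully: the composite $\PP(W_1)^4\to\ov{Z}_{\{1,1,1,1\}}\hookrightarrow\PP(W_4)$ sends the fundamental cycle to $4!\,\delta_{\{1,1,1,1\}}$ up to lower strata — but since we only want the factor $4$, not $4!$, a cleaner route is needed.

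Here is the cleaner route. The key point is that $\wt Z_4$, the singular locus of the universal quartic, restricted over $\ov{Z}_{\{1,1,1,1\}}$ admits a ``partial section'' of degree $4$ in the following sense: over a general point of $Z_{\{1,1,1,1\}}$, a union of four lines in general position, the six nodes are the pairwise intersections, and there is no canonical single one; however, if we work on the cover $\PP(W_1)^4$, remembering the \emph{ordering} of the four lines, then the \emph{ordered triple} of the other three lines determines the three nodes on the first line, and choosing the first marked line gives a family of three nodes — no, this still isn't a single section. Instead I would argue as follows: consider the degree-$4$ finite map $q\colon \wt Z_4|_{\ov{Z}_{\{1,1,1,1\}}}\to\ov{Z}_{\{1,1,1,1\}}$ obtained by restricting $\pi_1$; over the open stratum this is generically $6$-to-$1$, so this exact numerology must be examined. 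I would instead use the incidence variety approach: let $V\subset\PP(W_1)^4\times\PP(E)$ be defined by requiring the point of $\PP(E)$ to lie on the first two lines; this is a $\PP(W_1)^2\times(\text{pt})$-bundle generically, it maps to $\PP(W_1)^4$ with a section, hence is a Chow envelope of $\PP(W_1)^4$, and it maps to $\wt Z_4$ with generic degree $1$ onto one of the six sheets. Composing, $V\to\PP(W_1)^4\to\ov{Z}_{\{1,1,1,1\}}$ together with the data of which pair of the four lines we chose shows that the union of the $\binom{4}{2}=6$ such incidence varieties Chow-dominates $\wt Z_4|_{\ov Z}$, while each one individually dominates $\ov Z_{\{1,1,1,1\}}$ with a section. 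Thus $\pi_{1*}A^{\GL_3}_*(\wt Z_4|_{\ov Z})$ already surjects onto a submodule containing $i_*A^{\GL_3}_*(\ov Z_{\{1,1,1,1\}})$ — wait, that would give the result with no factor of $4$, contradicting the statement, so the section must fail on the boundary.

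The resolution, and the actual plan, is this: the incidence variety $V$ (choosing lines $1,2$) maps to $\ov Z_{\{1,1,1,1\}}$ with a rational section defined away from $\partial\ov Z_{\{1,1,1,1\}}$ (where lines $1$ and $2$ might coincide, or the chosen node might collide with another); by Lemma \ref{borderT4} the pushforwards from $\partial\ov Z_{\{1,1,1,1\}}$ are already in $I_{\wt Z}$, so modulo $I_{\wt Z}$ each choice of pair $\{a,b\}\subset\{1,2,3,4\}$ gives a well-defined splitting over the open stratum, and the composite $A^{\GL_3}_*(\ov Z_{\{1,1,1,1\}})\to A^{\GL_3}_*(\wt Z_4)\to A^{\GL_3}_*(\ov Z_{\{1,1,1,1\}})$ through the $\{a,b\}$-incidence variety is multiplication by $1$ modulo $I_{\wt Z}$. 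Summing over the $6$ pairs and using that $\pi_{1*}A^{\GL_3}_*(\wt Z_4)=I_{\wt Z}$ by definition gives $6\,i_*(\gamma)\in I_{\wt Z}$ for all $\gamma$; combined with the analogous argument using, say, the incidence of a \emph{single} marked line meeting \emph{one} other (which refines to a factor related to $\binom{3}{1}=3$ per marked line, total $4\cdot 3=12$, or directly the $4$ marked lines), one extracts $\gcd(6,\cdot)$-type information. Concretely, I expect the honest argument to exhibit an explicit correspondence of degree $4$: mark one of the four lines, take the residual cubic (product of the other three), and invoke the results of \cite{Fu-Vi} on $\Xcal_3$ — the cubic union line picture — to see that pushforward from the $4$-fold marked cover differs from $4\,i_*$ by something in $I_{\wt Z}$. \textbf{The main obstacle} is precisely tracking which sections are only rationally defined and controlling the resulting boundary error terms through Lemma \ref{borderT4}, i.e.\ making the ``modulo $I_{\wt Z}$'' bookkeeping rigorous rather than the intersection-theoretic computation itself, which is routine once the correspondences are set up.
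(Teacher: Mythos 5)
Your closing guess --- mark one of the four lines and use the degree-$4$ cover $\pi\colon\PP(W_1)\times T_3\arr T_4$ (where $T_d\subset\PP(W_d)$ denotes the locus of sums of lines), then invoke the cubics results of \cite{Fu-Vi} --- is indeed the paper's argument, but as written the proposal does not contain a proof, and the step you dismiss as routine is in fact the whole content. The identity $4\gamma \equiv \pi_*\ov{\gamma}$ modulo classes pushed forward from $\partial T_4$ is the easy half: it is a special case of \cite[Lemma 5.12]{Fu-Vi}, and the boundary terms are absorbed by Lemma \ref{borderT4}. What actually makes the lemma true is that the pushforward of $\ov{\gamma}$ to $\PP(W_4)$ lies in $I_{\wt{Z}}$, and for this one needs a specific chain of facts: by \cite[Corollary 5.15]{Fu-Vi} the image of $A_*^{\GL_3}(T_3)\arr A_*^{\GL_3}(\PP(W_3))$ is contained in the image of $A_*^{\GL_3}(\wt{Z}_3)$; hence classes from $\PP(W_1)\times T_3$ come from $\PP(W_1)\times\wt{Z}_3$; and the composite $\PP(W_1)\times\wt{Z}_3\arr\PP(W_4)$ factors through $\wt{Z}_4$, because a singular point of the residual cubic is a singular point of the quartic. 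None of this appears in your write-up, and it is not ``modulo-$I_{\wt{Z}}$ bookkeeping'': it is a genuine input from the cubic computation, without which the argument does not close.

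The rest of the proposal consists of false starts that should be discarded. The claim that a pair-incidence variety induces a splitting of $A_*^{\GL_3}(\ov{Z}_{\{1,1,1,1\}})\arr A_*^{\GL_3}(\wt{Z}_4)\arr A_*^{\GL_3}(\ov{Z}_{\{1,1,1,1\}})$ equal to multiplication by $1$ cannot be correct even over the open stratum: a choice of a pair of the four lines is only defined on the ordered cover $\PP(W_1)^4$, which is $24\!:\!1$ over $Z_{\{1,1,1,1\}}$, so there is no section of $\wt{Z}_4|_{Z_{\{1,1,1,1\}}}\arr Z_{\{1,1,1,1\}}$ at all --- only the full degree-$6$ correspondence given by the six nodes. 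That correspondence does yield $6\, i_*\gamma\in I_{\wt{Z}}$ modulo boundary, but $6$ is useless for the purpose at hand: the companion Lemma \ref{lemma.15Z1111} supplies the factor $15$, and $\gcd(6,15)=3\neq 1$, whereas Proposition \ref{propos.Z1111} needs $\gcd(4,15)=1$. So the factor $4$ from the marked-line cover is not optional, and the missing cubics input is precisely the step you must supply to prove the statement.
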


\begin{proof}

Denote by $T_{d}:=\ov{Z}_{\{1,1,\dots, 1\}} \subseteq \Pro(W_d)$ the closed subvariety consisting of sums of lines (which can appear with multiplicity larger than 1), $i\colon Z_{d} \subseteq \Pro(W_d)$ the discriminant together with its inclusion $i$ in $\Pro(W_d)$, and $\widetilde{Z}_{d} \arr Z_{d}$ the usual resolution. We need to show that if $\gamma \in A_*^{\GL_3}(T_{4})$, then $4i_{*}\gamma \in I_{\widetilde{Z}}$.

There is an obvious morphism $\pi\colon \Pro(W_{1}) \times T_{3} \arr T_{4}$, which is generically $4:1$. By considering the special case of \cite[Lemma 5.12]{Fu-Vi} where $\Gamma$ is the trivial group, we can write the following relation
\[
4(\gamma - \gamma') = \pi_{*}\overline{\gamma}
\]

where $\gamma' \in p_*\left( A^{\GL_3}_*(\partial T_4)\right)$ and $\ov{\gamma} \in A^{\GL_3}_* (\Pro(W_{1}) \times T_{3})$ (here we denote with $p$ the closed embedding of the border $\partial T_4$ in $T_4$). In other words, we can write $4\gamma$ as the sum of (4 times) a class $\gamma'$ coming from the border $\partial T_4$ and the pushforward of a class $\ov{\gamma}$ coming from $\Pro(W_{1}) \times T_{3}$.

Since from Proposition \ref{borderT4}, we know that $i_*\left( A^{\GL_3}_*(\partial T_4) \right) \in I_{\wt{Z}}$, if we show that the pushforward of $\overline{\gamma}$ in $\Pro(W_{4})$ is in $I_{\widetilde{Z}}$, then we know that $4i_{*}\gamma$ in $I_{\widetilde{Z}}$. 

The composite $\Pro(W_{1}) \times T_{3} \arr \Pro(W_{4})$ factors through $\Pro(W_{1}) \times \Pro(W_{3})$. From the result for cubics (see \cite[Corollary 5.15]{Fu-Vi}), we know that the image of $A^{\GL_{3}}_*(T_{3}) \arr A^{\GL_3}_*(\Pro(W_{3}))$ is contained in the image of $A^{\GL_3}_*(\widetilde{Z}_{3}) \arr A^{\GL_3}_*(\Pro(W_{3}))$; it follows that the image of $A^{\GL_3}_*(\Pro(W_{1}) \times T_{3}) \arr A^{\GL_3}_*(\Pro(W_{1}) \times \Pro(W_{3}))$ is contained in the image of $A^{\GL_3}_*(\Pro(W_{1}) \times \widetilde{Z}_{3}) \arr \A(\Pro(W_{1}) \times \Pro(W_{3}))$. But the composite $\Pro(W_{1}) \times \widetilde{Z}_{3} \arr \Pro(W_{1})\times \Pro(W_{3}) \arr \Pro(W_{4})$ factors through $\widetilde{Z}_{4}$, which proves the result.

\end{proof}

In order to prove Proposition \ref{propos.Z1111} we need some additional theory  which will also be useful when discussing the case of irreducible singular curves. For the sake of readability, we preferred to postpone the detailed development of such theory to Subsection \ref{subsec.twosingpoints}. For the moment, we will content ourselves with a brief sketch of those results that are strictly necessary only for the present Subsection.

Let $\hilb^2\PP^2$ be the Hilbert scheme of $0$-dimensional subschemes in $\PP^2$ of length $2$. Consider the invariant set $D\subset\PP(W_4)\times \hilb^2\PP^2$ formed by the pairs $([Q],[S])$ where $S$ is schematically contained in the singular locus of the plane quartic $Q$. We show in Subsection \ref{subsec.twosingpoints} that $D$ is actually a smooth closed subscheme of $\PP(W_4)\times\hilb^2\PP^2$, and the image of the pushforward $p_*:A^{\GL_3}_*(D)\arr A^{\GL_3}_*(\PP(W_4))$ is contained in $I_{\wt{Z}}$ (Proposition \ref{propos.twonodes}).

\begin{lemma} \label{lemma.15Z1111}
	We have the inclusion
	\[
	15 \cdot i_* \left( A_*^{\GL_3} \left( \ov{Z}_{\{1,1,1,1\}} \right) \right) \subseteq I_{\wt{Z}}.
	\]
\end{lemma}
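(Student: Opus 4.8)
The plan is to combine Lemma \ref{lemma.2Z1111} with a second divisibility statement of a coprime modulus, so that a Bézout-type argument upgrades the two partial inclusions to the full inclusion in Proposition \ref{propos.Z1111}. Since $\gcd(4,15)=1$, once I know that both $4\,i_*(A^{\GL_3}_*(\ov{Z}_{\{1,1,1,1\}}))\subseteq I_{\wt{Z}}$ and $15\,i_*(A^{\GL_3}_*(\ov{Z}_{\{1,1,1,1\}}))\subseteq I_{\wt{Z}}$, I can write $1 = 4a + 15b$ for integers $a,b$, and then for any $\gamma\in A^{\GL_3}_*(\ov{Z}_{\{1,1,1,1\}})$ we get $i_*\gamma = a\,(4\,i_*\gamma) + b\,(15\,i_*\gamma)\in I_{\wt{Z}}$, which is exactly Proposition \ref{propos.Z1111}. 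So the present Lemma is the missing half; the factor $15$ is chosen precisely because a quartic union of four general lines has $\binom{4}{2}=6$ nodes, but we will extract the relevant count via the Hilbert scheme of length-$3$ subschemes, and $\binom{6}{3} = 20$ won't appear — rather, the natural map will be generically $15:1$, matching the number of ways a length-$3$ subscheme of the node set can be chosen... let me instead say: the number in play is the generic degree of an auxiliary correspondence, which will turn out to be $15$.

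The strategy for producing the factor $15$ mirrors the structure of the proof of Lemma \ref{lemma.2Z1111}, but now uses the length-$3$ (rather than length-$2$) incidence variety. First I would recall the setup sketched before the statement: in Subsection \ref{subsec.twosingpoints} one introduces $D\subset\PP(W_4)\times\hilb^2\PP^2$, the locus of pairs $([Q],[S])$ with $S$ contained scheme-theoretically in the singular locus of $Q$, and shows $p_*A^{\GL_3}_*(D)\subseteq I_{\wt{Z}}$. I would use the analogous incidence variety $D_3\subset\PP(W_4)\times\hilb^3\PP^2$ of pairs $([Q],[S])$ with $S$ inside $\mathrm{Sing}(Q)$; the projection $p\colon D_3\to\PP(W_4)$ has image contained in $Z_4$, and I claim its pushforward also lands in $I_{\wt Z}$ (this should follow from the same argument, or be stated as a companion of Proposition \ref{propos.twonodes}; indeed $D_3$ maps to $\widetilde{Z}_4$ generically since a length-$3$ subscheme of $\mathrm{Sing}(Q)$ picks out in particular singular points). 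Now over $\ov{Z}_{\{1,1,1,1\}}$ — a general quartic $\ell_1\ell_2\ell_3\ell_4$ having exactly $6$ nodes — there is a rational section of $D_3\to\PP(W_4)$ over this stratum given by choosing $3$ of the $6$ nodes; but to make this $\GL_3$-equivariant and honest I would instead argue on the level of the finite surjection $q\colon D_3\times_{\PP(W_4)}\ov{Z}_{\{1,1,1,1\}}\to\ov{Z}_{\{1,1,1,1\}}$, which is generically finite of some degree $N$. Then $q_*q^* = N\cdot\mathrm{id}$ on $A^{\GL_3}_*(\ov{Z}_{\{1,1,1,1\}})$ modulo cycles supported on the border $\partial\ov{Z}_{\{1,1,1,1\}}$, and by Lemma \ref{borderT4} those border cycles push forward into $I_{\wt Z}$; since $q^*\gamma$ comes from $D_3$ whose pushforward to $\PP(W_4)$ lies in $I_{\wt Z}$, we conclude $N\cdot i_*\gamma\in I_{\wt Z}$. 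It remains to compute $N = \binom{6}{3} = 20$... but $20$ is not coprime to $4$, so I must be more careful: the correct auxiliary scheme should be one whose generic fiber over the stratum has cardinality coprime-friendly with $4$, and $15 = \binom{6}{2}$ is the count of unordered pairs of nodes, which is what the length-$2$ Hilbert scheme $D$ sees. So the right statement is: $q\colon D\times_{\PP(W_4)}\ov{Z}_{\{1,1,1,1\}}\to\ov{Z}_{\{1,1,1,1\}}$ is generically $\binom{4}{2}\cdot\dots$ — I would verify the generic degree is exactly $15$ by noting a general member has $6$ nodes and $\binom{6}{2} = 15$ distinct length-$2$ subschemes supported on two of them, with the degenerate (length-$2$ at a single node) locus being lower-dimensional.

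So the key steps, in order, are: (1) recall that $p_*A^{\GL_3}_*(D)\subseteq I_{\wt Z}$ from the forthcoming Proposition \ref{propos.twonodes}; (2) form the fiber product $D_{\{1,1,1,1\}} := D\times_{\PP(W_4)}\ov{Z}_{\{1,1,1,1\}}$ and its projection $q$ to $\ov{Z}_{\{1,1,1,1\}}$, and show $q$ is generically finite of degree $15$ by the node count above; (3) apply the projection formula, $q_*q^*\gamma = 15\gamma + (\text{correction supported on }\partial\ov{Z}_{\{1,1,1,1\}})$, valid because $q$ is an isomorphism over the open locus and pushforward along a birational-onto-a-component envelope corrects by lower strata; (4) push forward to $\PP(W_4)$: the main term gives $15\,i_*\gamma$, the term from $q^*\gamma$ factors through $D$ hence lands in $I_{\wt Z}$ by step (1), and the correction term lands in $I_{\wt Z}$ by Lemma \ref{borderT4}; conclude $15\,i_*\gamma\in I_{\wt Z}$. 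The main obstacle I anticipate is step (2)–(3): making the ``generic degree $15$'' argument fully rigorous in the equivariant Chow setting, since $D_{\{1,1,1,1\}}$ may be reducible or non-reduced over the boundary and I need to control the components mapping to $\partial\ov{Z}_{\{1,1,1,1\}}$; the clean way is to appeal to an envelope-type statement (as in the proof of Lemma \ref{lemma.2Z1111}, invoking the analogue of \cite[Lemma 5.12]{Fu-Vi}) rather than a naive projection formula, so that the boundary contributions are automatically pushforwards from $\partial\ov{Z}_{\{1,1,1,1\}}$ and handled by Lemma \ref{borderT4}. The rest is bookkeeping.
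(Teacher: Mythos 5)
Your proposal is correct and matches the paper's argument: the paper likewise uses the degree-$15$ finite covering $D|_{Z_{\{1,1,1,1\}}}\to Z_{\{1,1,1,1\}}$ (fifteen being $\binom{6}{2}$, the number of length-$2$ subschemes of the six nodes of a general union of four lines) to show $15\gamma$ lifts to $A^{\GL_3}_*(D)$, whose pushforward lies in $I_{\wt Z}$ by Proposition \ref{propos.twonodes}/Corollary \ref{cor.twonodes}, with the boundary handled via Lemma \ref{borderT4}. The only cosmetic difference is that the paper works over the open stratum and invokes Lemma \ref{basic.principle} rather than writing a projection formula with an explicit boundary correction term, which is exactly the ``envelope-type'' cleanup you anticipated.
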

\begin{proof}
	We already know from Lemma \ref{borderT4} that $i_*A^{\GL_3}_*(\partial \ov{Z}_{\{1,1,1,1\}})$ is contained in $I_{\wt{Z}}$. By Lemma \ref{basic.principle} it is enough to prove that $15 \cdot i_* A_*^{\GL_3}(Z_{\{1,1,1,1\}})$ is contained in the image of $I_{\wt{Z}}$ along the quotient morphism \[A_{\GL_3}^*(\PP(W_4))\arr A_{\GL_3}^*(\PP(W_4)\setminus \partial \ov{Z}_{\{1,1,1,1\}}).\]
	
	Observe that $D|_{Z_{\{1,1,1,1\}}}\arr Z_{\{1,1,1,1\}}$ is finite of degree $15$: this implies that for any cycle $\gamma$ in $A^{\GL_3}_*(Z_{\{1,1,1,1\}})$ the element $15\gamma$ is in the image of \[A^{\GL_3}_*(D|_{Z_{\{1,1,1,1\}}})\arr A^{\GL_3}_*(Z_{\{1,1,1,1\}}).\]
	
	Moreover, the morphism 
	\[ D|_{Z_{\{1,1,1,1\}}}\arr Z_{\{1,1,1,1\}} \arr \PP(W_4)\setminus \partial \ov{Z}_{\{1,1,1,1\}}\]
	obviously factors through $D|_{\PP(W_4)\setminus \partial \ov{Z}_{\{1,1,1,1\}}}$. By Corollary \ref{cor.twonodes} the image of \[ A^{\GL_3}_*(D|_{\PP(W_4)\setminus \partial \ov{Z}_{\{1,1,1,1\}}})\arr A^{\GL_3}_*(\PP(W_4)\setminus \partial \ov{Z}_{\{1,1,1,1\}})\] is contained in the image of $I_{\wt{Z}}$. Putting all together, we deduce that $i_*(15\gamma)$ is in the image of $I_{\wt{Z}}$.
\end{proof}
Combining Lemma \ref{lemma.2Z1111} and Lemma \ref{lemma.15Z1111} we immediately deduce Proposition \ref{propos.Z1111}.

\subsection{The push-forward of classes from $\ov{Z}_{\{2,2\}}$ is in the ideal $I_{\wt{Z}}$.}

The main goal of this Subsection is to prove the following:
\begin{propos}\label{propos.Z22}
	The image of $i_*:A_{\GL_3}^*(\ov{Z}_{\{2,2\}})\arr A_{\GL_3}^*(\PP(W_4))$ is contained in $I_{\wt{Z}}$.
\end{propos}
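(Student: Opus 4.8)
The plan is to run the same machinery as in the proof of Proposition \ref{propos.Z1111}: use Lemma \ref{basic.principle} applied to the stratification (\ref{eq:strat}) for the stratum $Z_{\{2,2\}}$, so that it suffices to prove (i) that $i_*A^{\GL_3}_*(\partial\ov{Z}_{\{2,2\}})\subseteq I_{\wt{Z}}$ and (ii) that the image of $i_*A^{\GL_3}_*(Z_{\{2,2\}})$ in $A^*_{\GL_3}(\PP(W_4)\smallsetminus\partial\ov{Z}_{\{2,2\}})$ lies in the image of $I_{\wt{Z}}$. For step (i), I would observe that according to (\ref{eq:strat}) the boundary $\partial\ov{Z}_{\{2,2\}}$ is contained in $\ov{Z}_{\{1,1,2\}}\cup\ldots$, i.e.\ in the union of the smaller strata below $Z_{\{2,2\}}$, all of which consist of \emph{reducible} quartics with a linear component; since a quartic of the form $\ell\cdot g$ with $\ell$ a line is singular along the whole intersection of $\ell$ with $\{g=0\}$ — in particular $\wt{Z}_4$ admits a section over these strata given by any one of those singular points, or by the node where two reduced components meet — the restriction of $\wt{Z}_4$ over each such stratum is a Chow envelope, hence their pushforwards land in $I_{\wt{Z}}$ by the same argument as in Lemma \ref{borderT4}. (Alternatively, one can invoke Proposition \ref{propos.Z1111}, which already handles $\ov{Z}_{\{1,1,1,1\}}$, and handle the remaining intermediate strata $\ov{Z}_{\{1,1,2\}}$, $\ov{Z}^{(2,1)}_{\{1,2\}}$, etc., each of which has a linear component and therefore a section of $\wt{Z}_4$.)

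The heart of the argument is step (ii), the locus $Z_{\{2,2\}}$ of quartics that are a product $Q = C_1 C_2$ of two \emph{distinct smooth} conics. The key point is that such a quartic is singular exactly at the points $C_1\cap C_2$, which is a length-$4$ subscheme of $\PP^2$ when the conics meet transversally, and more generally the scheme-theoretic intersection is always finite of length $4$ (two distinct conics share no component). I would exploit this by introducing, in analogy with the length-$2$ Hilbert scheme $D$ used in Lemma \ref{lemma.15Z1111}, the incidence variety $D_2\subset\PP(W_4)\times\hilb^2\PP^2$ of pairs $([Q],[S])$ with $S$ schematically contained in the singular locus of $Q$. Over $Z_{\{2,2\}}$ the fiber of $D_2$ is $\hilb^2$ of the length-$4$ scheme $C_1\cap C_2$, which is finite of degree $\binom{4}{2}=6$; hence for any $\gamma\in A^{\GL_3}_*(Z_{\{2,2\}})$ the class $6\gamma$ comes from $A^{\GL_3}_*(D_2|_{Z_{\{2,2\}}})$. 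Composing $D_2|_{Z_{\{2,2\}}}\to Z_{\{2,2\}}\to\PP(W_4)\smallsetminus\partial\ov{Z}_{\{2,2\}}$ and factoring through $D_2$ restricted to the open set, Proposition \ref{propos.twonodes} / Corollary \ref{cor.twonodes} tells us the resulting image sits inside the image of $I_{\wt{Z}}$; so $6\cdot i_*A^{\GL_3}_*(Z_{\{2,2\}})\subseteq I_{\wt{Z}}$ modulo the boundary. To get rid of the factor $6$, I would run a parallel ``resolution'' argument as in Lemma \ref{lemma.2Z1111}: the double cover $\Sym^2\PP(W_2)\dashrightarrow T$ (or rather the obvious generically $2{:}1$ map $\PP(W_2)\times\PP(W_2)\to\ov{Z}_{\{2,2\}}$, symmetric in the two factors) lets us write, via the trivial-group case of \cite[Lemma 5.12]{Fu-Vi}, $2(\gamma-\gamma') = \pi_*\ov\gamma$ with $\gamma'$ from the boundary and $\ov\gamma$ from $\PP(W_2)\times\PP(W_2)$; since that product maps into $\PP(W_4)$ factoring through $\wt{Z}_4$ (a product $C_1C_2$ of conics is singular along $C_1\cap C_2$, giving the needed section), the pushforward of $\ov\gamma$ lies in $I_{\wt{Z}}$, so $2\cdot i_*A^{\GL_3}_*(Z_{\{2,2\}})\subseteq I_{\wt{Z}}$ modulo the boundary as well. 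Combining the factor-$6$ and factor-$2$ inclusions, $\gcd(6,2)=2$ — which is not enough on its own; so instead I would aim directly for the factor-$2$ statement plus the factor-$3$ statement, or refine the $\hilb^2$ count.

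Here is where the main obstacle lies, and how I would resolve it. The factor-$2$ argument above is essentially free, coming from the symmetry of the two conics. To kill the remaining $2$-torsion I need an \emph{odd} multiple of $i_*A^{\GL_3}_*(Z_{\{2,2\}})$ inside $I_{\wt{Z}}$; the $\hilb^2$-of-four-points count gives $6$, which is even, so that alone is useless, but I can instead use the full singular scheme: the map sending $([Q],[S])$ with $S$ the \emph{entire} length-$4$ singular scheme is a section over the transversal locus, giving degree $1$ there, while the non-transversal conics form a smaller stratum of $Z_{\{2,2\}}$ to be handled separately (tangent conics: singular scheme has a length-$2$ and a length-$1$ piece, or a single length-$4$ curvilinear scheme) — on each such piece one again finds a canonical section of $\wt{Z}_4$, either at the tangency point or at a transversal point, so its contribution is in $I_{\wt{Z}}$ by a Chow-envelope argument. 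In other words, I would stratify $Z_{\{2,2\}}$ itself by the intersection type of the two conics, observe $\wt{Z}_4$ has a section over the open transversal stratum (four distinct nodes, pick any), deduce directly $i_*A^{\GL_3}_*(Z^{\circ}_{\{2,2\}})\subseteq I_{\wt{Z}}$ modulo smaller strata, and then descend through the finitely many degenerate strata — each of which is a product of smaller projective spaces with a point marked, hence again admits a section of $\wt{Z}_4$. The technical nuisance, and the step I expect to absorb the most work, is checking that $D_2$ (or the relevant incidence varieties) are smooth along $Z_{\{2,2\}}$ and its degenerations so that Proposition \ref{propos.twonodes} genuinely applies — i.e.\ that being ``contained in the singular locus'' is the right scheme structure and behaves well under the degenerations of a pair of conics to a pair of tangent conics or to a double conic (the latter already lies in $\partial\ov{Z}_{\{2,2\}}$). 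Once that smoothness/flatness bookkeeping is in place, assembling the pieces via Lemma \ref{basic.principle} is routine, and Proposition \ref{propos.Z22} follows.
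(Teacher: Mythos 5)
Your overall strategy (resolve the open stratum, control its boundary, assemble via Lemma \ref{basic.principle}) is the right philosophy, but both mechanisms you propose for the open stratum $Z_{\{2,2\}}$ break down, so no \emph{odd} multiple of $i_*A^{\GL_3}_*(Z_{\{2,2\}})$ is ever actually placed in $I_{\wt{Z}}$. The factor-$2$ argument fails because the map $\PP(W_2)\times\PP(W_2)\to\PP(W_4)$ does \emph{not} factor through $\wt{Z}_4$: a product of two \emph{smooth} conics carries no distinguished singular point. Lemma \ref{lemma.2Z1111} worked only because the cubic result of \cite{Fu-Vi} lets one replace $T_3$ by $\wt{Z}_3$, which comes with a marked singular point; there is no analogue for a smooth conic. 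Likewise, ``four distinct nodes, pick any'' is not a section: the incidence variety of a product of two conics together with a chosen intersection point is irreducible of degree $4$ over the stratum, so $\wt{Z}_4$ restricted there is neither a Chow envelope nor admits a rational section. Your fallback via the full length-$4$ singular scheme would require an analogue of Proposition \ref{propos.twonodes} for an incidence variety in $\PP(W_4)\times\hilb^4\PP^2$, which is \emph{not} a projective bundle (the rank of $W_4\to \H^0(\Ocal_{\PP^2}/I(S)^2(4))$ jumps over rectilinear $S$); this is not bookkeeping but work on the scale of Sections \ref{subsec.threesingularpoints}--\ref{sec.modifications}. Finally, your boundary step omits the double-conic stratum $Z^{(2)}_{\{2\}}$, which lies in $\partial\ov{Z}_{\{2,2\}}$, has no linear component, is not contained in $\ov{Z}_{\{1,1,1,1\}}$, and over which $\wt{Z}_4$ is a conic bundle with no evident section.

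The paper's proof is genuinely different and sidesteps all of this: instead of marking a singular point of the quartic, it marks the unordered pair of conics, i.e.\ it uses $\hilb^2\PP(W_2)$, the Hilbert scheme of length-$2$ subschemes of the \emph{five-dimensional space of conics}. By Lemma \ref{lemma:Hilb2=PSym2} this is the projective bundle $\PP(\Sym^2\Scal^{\vee})$ over $\Gr_1(\PP(W_2))$, hence smooth with free equivariant Chow ring; the multiplication map $f:\hilb^2\PP(W_2)\to\PP(W_4)$ restricts to an isomorphism over $Z_{\{2,2\}}\cup Z_{\{1,1,2\}}\cup Z^{(2,1)}_{\{1,2\}}$, and $\im(f_*)$ is computed generator by generator via localization at the $45$ torus-fixed points (Proposition \ref{propos.Z22open}). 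The double-conic stratum is then handled by a separate localization computation for the squaring map $\PP(W_2)\to\PP(W_4)$ (Proposition \ref{propos.Z2}). To repair your argument you would need a substitute for this resolution, or for the explicit computations it enables.
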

We start with a preliminary result.
\begin{propos}\label{propos.Z2}
	The image of $i_*:A_{\GL_3}^*(\ov{Z}_{\{2\}}^{(2)})\arr A_{\GL_3}^*(\PP(W_4))$ is contained in $I_{\wt{Z}}$.
\end{propos}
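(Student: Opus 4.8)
The plan is to analyze the stratum $\ov{Z}_{\{2\}}^{(2)}$ of double conics via its natural parametrization. Let me think about the geometry here: $\ov{Z}_{\{2\}}^{(2)}$ parametrizes quartics of the form $2C$ where $C$ is a conic (possibly singular, since we take the closure). There is a natural map $\pi \colon \PP(W_2) \to \PP(W_4)$ sending $[g] \mapsto [g^2]$, and by Lemma \ref{lemma.closure.strata} this map is a closed embedding onto $\ov{Z}_{\{2\}}^{(2)}$ — in fact it should be an isomorphism onto its image, since squaring is injective on classes of forms (up to scalar) in characteristic $\neq 2$. So $\ov{Z}_{\{2\}}^{(2)} \simeq \PP(W_2)$ as $\GL_3$-varieties.

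First I would compute $A^*_{\GL_3}(\PP(W_2))$ using the projective bundle formula (Proposition \ref{prop:proj bundle formula}): it is $A^*_{\GL_3}[h_2]/(p_2(h_2))$ where $h_2$ is the hyperplane class, free as a module over $A^*_{\GL_3}$ on $1, h_2, \dots, h_2^5$. Thus the image of $i_* \colon A^*_{\GL_3}(\ov{Z}_{\{2\}}^{(2)}) \to A^*_{\GL_3}(\PP(W_4))$ is generated as an $A^*_{\GL_3}$-module by the classes $i_*(\pi_*(1)) = \delta_{\{2\}}^{(2)}$ and $i_*(\pi_*(h_2^k))$ for $k = 1, \dots, 5$, where $i \colon \ov{Z}_{\{2\}}^{(2)} \into \PP(W_4)$. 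By Lemma 2.2(3) (the $\GL_3$-to-$T$ reduction) it suffices to show each of these generators lies in $I_{\wt{Z}}$ after passing to $T$-equivariant Chow groups, or — more practically — to compute them directly in $A^*_{\GL_3}(\PP(W_4))$ and verify membership in the ideal $(\alpha_1, \alpha_2, \alpha_3)$.

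The key computation is to identify $\pi_*(h_2^k) \in A^*_{\GL_3}(\PP(W_4))$. I would do this via localization (Theorem \ref{thm.loc} / Remark \ref{rmk.loc}): the map $\pi \colon \PP(W_2) \to \PP(W_4)$ is $T$-equivariant, both spaces have torsion-free $T$-equivariant Chow rings (they are projective bundles over $BT$), their $T$-fixed points are the coordinate points corresponding to the monomial eigenvectors, and $\pi$ sends the fixed point $[X^v] \in \PP(W_2)$ to $[X^{2v}] \in \PP(W_4)$. Applying the localization formula \eqref{eq:loc} expresses $\pi_*(h_2^k)$ as an explicit sum over the six fixed monomials of $W_2$, with denominators given by the $T$-equivariant tangent weights, which are differences of the relevant $\ell_i$-combinations. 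Alternatively — and this is cleaner — since $C = \{g = 0\}$ is singular exactly along $\{g = 0\}$ itself being nonreduced, \emph{every} point of $C$ is a singular point of the quartic $2C$, so the restriction of $\wt{Z}_4$ over $\ov{Z}_{\{2\}}^{(2)}$ contains the whole universal conic; pulling back $\wt{Z}_4 \to Z_4$ over $\ov{Z}_{\{2\}}^{(2)} \simeq \PP(W_2)$ gives (at least) the universal conic $C_2 \subset \PP(W_2) \times \PP(E)$, which maps onto $\ov{Z}_{\{2\}}^{(2)}$ — so if this restriction is a Chow envelope, then $i_*(A^*_{\GL_3}(\ov{Z}_{\{2\}}^{(2)})) \subseteq \pi_{1*}A^*_{\GL_3}(\wt{Z}_4) = I_{\wt{Z}}$ directly, in the spirit of Lemma \ref{borderT4}. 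The universal conic $C_2 \to \PP(W_2)$ has a section only generically, but a length-considerations / envelope argument using the two branches at a node should suffice; failing that, one falls back on the explicit localization computation.

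The main obstacle I expect is verifying that the restriction of $\wt{Z}_4$ over the full closure $\ov{Z}_{\{2\}}^{(2)}$ — which includes degenerate conics (line pairs, double lines) — is genuinely a Chow envelope onto $\ov{Z}_{\{2\}}^{(2)}$, i.e. that it is \emph{surjective} with the right dimension behavior over the boundary strata where $C$ itself degenerates (there the singular locus of $2C$ jumps). If the envelope argument is awkward near those degenerations, the safe route is the brute-force localization computation of the six module generators $\pi_*(h_2^k)$ followed by checking — e.g. by reducing the coefficients modulo $9$ and modulo $3$ as in the proof of Lemma \ref{alpha.classes.independent} — that each lies in $(\alpha_1, \alpha_2, \alpha_3)$; this is a finite symbolic calculation with no conceptual content, but it is where the bookkeeping risk lies.
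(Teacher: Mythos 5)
Your primary route is exactly the paper's proof: parametrize $\ov{Z}_{\{2\}}^{(2)}$ by the squaring map $\pi\colon\PP(W_2)\to\PP(W_4)$, reduce to the six module generators $\pi_*h_2^k$ for $k=0,\dots,5$, compute them by localization over the six monomial fixed points of $\PP(W_2)$ (with tangent weights $\prod_{k\neq i}(\chi_k-\chi_i)$ and the image points expressed as complete intersections in $\PP(W_4)$), and verify membership in $I_{\wt{Z}}$ by a direct symbolic computation. The alternative Chow-envelope idea you sketch is not what the paper does here, but since you correctly fall back on the localization calculation, the proposal matches the paper's argument.
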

\begin{proof}
	We have to compute the image of the pushforward of cycles along the square morphism $\pi:\PP(W_2)\arr\PP(W_4)$: this ideal will be generated by the cycles $\pi_*h_2^i$ for $i=0,1,\dots,5$. We proceed by localization (Theorem \ref{thm.loc} and Remark \ref{rmk.loc}).
	
	The $T$-fixed points in $\PP(W_2)$, where $T$ is the torus of diagonal matrices, correspond to the monomials of degree $2$ in three variables. We adopt the following notation:
	\begin{eqnarray*}
		Q_1=X^2 & Q_4=Y^2 \\
		Q_2=XY & Q_5=YZ \\
		Q_3= XZ & Q_6=Z^2
	\end{eqnarray*}
	Using the generators $l_1$, $l_2$ and $l_3$ introduced in (\ref{eq.l}), let $\chi_i$ indicate the character associated to the monomial $Q_i$, so that we have:
	\begin{eqnarray*}
		c_1(\chi_1)=2 l_1 & c_1(\chi_4)=2 l_2 \\
		c_1(\chi_2)= l_1+l_2 & c_1(\chi_5)= l_2+l_3 \\
		c_1(\chi_3)= l_1+l_3 & c_1(\chi_6)=2 l_3 
	\end{eqnarray*}
	where $c_1(\chi_i)$ means the $T$-equivariant first Chern class of the $T$-representation defined by $\chi_i$. For the sake of simplicity, in what follows we will use the notation $\chi$ to denote both the character and its associated first Chern class.

	Use the coordinates $(a_1:\dots:a_6)$ to indicate the point $[a_1Q_1+\cdots + a_6Q_6]$ in $\PP(W_2)$. The action of $T$ on these coordinates is given by the formula $t\cdot a_i=-\chi_i(t)a_i$. Then we have:	
	\[ T(\PP(W_2))_{[Q_i]}\simeq \Big\langle \dfrac{a_k^\vee}{a_i^\vee} \Big\rangle_{k\neq i} \Longrightarrow \quad c_{top}^T(T\PP(W_2)_{[Q_i]})=\prod_{k\neq i} (\chi_k -\chi_i).  \]
	From this we also deduce that the restriction of $h_2$ to the equivariant Chow ring of the fixed point $[Q_i]$ is equal to $-\chi_i$.
	
	Finally, set:
	\begin{align*}
	F_1&=X^4    & F_5&=X^2YZ  & F_9&=XYZ^2   & F_{13}&=Y^2Z^2 \\
	F_2&=X^3Y   & F_6&=X^2Z^2 & F_{10}&=XZ^3 & F_{14}&= YZ^3 \\
	F_3&=X^3Z   & F_7&=XY^3   & F_{11}&=Y^4  & F_{15}&=Z^4 \\
	F_4&=X^2Y^2 & F_8&=XY^2Z  & F_{12}&=Y^3Z 
	\end{align*}
	The characters associated to the monomials above are the following:
	\begin{align*}
	\theta_1&=4l_1     & \theta_5&=2l_1+l_2+l_3  & \theta_9&=l_1+l_2+2l_3   & \theta_{13}&=2l_2+2l_3 \\
	\theta_2&=3l_1+l_2 & \theta_6&=2l_1+2l_3 & \theta_{10}&=l_1+3l_3 & \theta_{14}&= l_2+3l_3 \\
	\theta_3&=3l_1+l_3 & \theta_7&=l_1+3l_2   & \theta_{11}&=4l_2  & \theta_{15}&=4l_3 \\
	\theta_4&=2l_1+2l_2& \theta_8&=l_1+2l_2+l_3  & \theta_{12}&=3l_2+l_3
	\end{align*}
	Define $\phi:\{1,2,\dots,6\}\arr\{1,2,\dots,15\}$ as the unique function such that $Q_i^2=F_{\phi(i)}$. Then:
	\[ [\pi(Q_i)]=\prod_{k\neq \phi(i)} (h_4+\theta_k). \]
	We can now apply the localization formula to compute the generators of $\im(\pi_*)$, that is:
	\begin{equation*}
	\pi_*h_2^d = \sum_{i=1}^6 \frac{(h_2|_{[Q_i]})^d\cdot [\pi(Q_i)]}{c_{top}^T(T\PP(W_2)_{[Q_i]})}
			   = \sum_{i=1}^6 \frac{(-\chi_i)^d\cdot \left(\prod_{k\neq \phi(i)} (h_4+\theta_k)\right)}{\prod_{k\neq i} (\chi_k -\chi_i)}.
	\end{equation*}  
	A direct computation with Mathematica shows that all these generators are contained in $I_{\wt{Z}}$.
\end{proof}

Let ${\rm Hilb}^2\PP(W_2)$ be the Hilbert scheme of $0$-dimensional subscheme in $\PP(W_2)$ of length $2$. This scheme inherits a $\GL_{3}$-action from $\PP(W_2)$, and consequently it also inherits a $T$-action, where $T\subset\GL_{3}$ is the usual subtorus of diagonal matrices.
\begin{lemma}\label{lemma:Hilb2=PSym2}
	Let $\Scal$ be the tautological rank $2$ bundle over ${\rm Gr}_1(\PP(W_2))$, the Grassmannian of projective lines in $\PP(W_2)$. Then we have an equivariant morphism:
	\[ {\rm Hilb}^2\PP(W_2)\simeq \PP(\Sym^2\Scal^{\vee}). \]
	In particular, the scheme ${\rm Hilb}^2\PP(W_2)$ is smooth and its $\GL_{3}$-equivariant Chow ring is a free module over $A^*_{\GL_3}$.
\end{lemma}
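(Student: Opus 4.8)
The plan is to identify $\hilb^2\PP(W_2)$ with the relative Hilbert scheme of length $2$ subschemes of the universal line over $\Gr_1(\PP(W_2))=\Gr(2,W_2)$, and then to compute the latter as an explicit projective bundle. The geometric input is that every length $2$ closed subscheme $S\subset\PP(W_2)$ has a well-defined linear span $\langle S\rangle$, which is a projective line: such an $S$ is either a pair of distinct reduced points or a double point $\Spec k[\epsilon]/(\epsilon^2)$, and in both cases the restriction map $\H^0(\PP(W_2),\Ocal(1))\arr\H^0(S,\Ocal_S(1))$ is surjective onto a $2$-dimensional space, so $S$ imposes exactly two conditions on linear forms. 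First I would promote this to families: over $\hilb^2\PP(W_2)$ the universal subscheme $\Zcal\subset\hilb^2\PP(W_2)\times\PP(W_2)$ is finite and flat of degree $2$, and the evaluation map $W_2^\vee\otimes\Ocal\arr\pr_{1*}\big(\Ocal(1)|_{\Zcal}\big)$ is fibrewise surjective, hence has locally free cokernel of rank $2$ and locally free kernel $\Kcal\subset W_2^\vee\otimes\Ocal$ of rank $4$. Its annihilator $\Scal':=\Kcal^{\perp}\subset W_2\otimes\Ocal$ is a rank $2$ subbundle, defining a morphism $\sigma\colon\hilb^2\PP(W_2)\arr\Gr_1(\PP(W_2))$ with $\sigma^*\Scal=\Scal'$; the $\PP^1$-subbundle $\PP(\Scal')\subset\hilb^2\PP(W_2)\times\PP(W_2)$, which is cut out exactly by the vanishing of the sections in $\Kcal$, contains $\Zcal$ by construction.

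Next I would verify that $\sigma$ refines to an isomorphism $\hilb^2\PP(W_2)\xrightarrow{\ \sim\ }\hilb^2\big(\PP(\Scal)/\Gr_1(\PP(W_2))\big)$ onto the relative Hilbert scheme of the $\PP^1$-bundle $\PP(\Scal)\arr\Gr_1(\PP(W_2))$, and I would do this at the level of functors. For a scheme $T$, a finite flat degree $2$ family $\Zcal_T\subset T\times\PP(W_2)$ produces, exactly as above, a $\PP^1$-subbundle $\PP(\Scal_T)\subset T\times\PP(W_2)$ containing $\Zcal_T$, so $\Zcal_T$ becomes a relative length $2$ subscheme of the pullback of $\PP(\Scal)$; conversely, a relative length $2$ subscheme of $\PP(\Scal)$ pulled back along some $T\arr\Gr_1(\PP(W_2))$ is in particular such a family inside $T\times\PP(W_2)$, and its relative span is again the given $\PP^1$-bundle, because \emph{every} length $2$ subscheme of $\PP^1$ spans $\PP^1$ (no nonzero linear form vanishes on a degree $2$ divisor of $\PP^1$). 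These two constructions are mutually inverse. Finally, relative length $2$ subschemes of a $\PP^1$-bundle $\pi\colon\PP(\Scal)\arr\Gr_1(\PP(W_2))$ are exactly the relative effective Cartier divisors of relative degree $2$, hence are classified by line subbundles of $\pi_*\Ocal_{\PP(\Scal)}(2)$; with the projectivization conventions of \cite{Ful} one has $\pi_*\Ocal_{\PP(\Scal)}(2)\simeq\Sym^2\Scal^\vee$, whence
\[ \hilb^2\PP(W_2)\simeq\hilb^2\big(\PP(\Scal)/\Gr_1(\PP(W_2))\big)\simeq\PP(\Sym^2\Scal^\vee). \]
All the constructions above are built functorially from the $\GL_3$-representation $W_2=\Sym^2 E$, so every isomorphism here is $\GL_3$-equivariant.

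It then remains to read off the consequences. The Grassmannian $\Gr_1(\PP(W_2))=\Gr(2,W_2)$ is smooth and $\Sym^2\Scal^\vee$ is locally free, so $\PP(\Sym^2\Scal^\vee)$, and hence $\hilb^2\PP(W_2)$, is smooth. For the Chow ring, the equivariant Grassmann bundle formula applied to the $\GL_3$-representation $W_2$ gives that $A^{*}_{\GL_3}(\Gr(2,W_2))$ is a free $A^{*}_{\GL_3}$-module (with a basis given by the Schur polynomials in the Chern roots of $\Scal$ indexed by partitions contained in a $2\times 4$ rectangle), and then the equivariant projective bundle formula (Proposition \ref{prop:proj bundle formula}) gives that $A^{*}_{\GL_3}\big(\PP(\Sym^2\Scal^\vee)\big)$ is free over $A^{*}_{\GL_3}(\Gr(2,W_2))$; composing the two, $A^{*}_{\GL_3}(\hilb^2\PP(W_2))$ is free over $A^{*}_{\GL_3}$.

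I expect the main obstacle to be the family-level construction of the span morphism $\sigma$: the pointwise statements are all elementary, but making ``the span of $S$'' into an honest morphism of schemes requires a cohomology-and-base-change argument ensuring that $\pr_{1*}(\Ocal(1)|_{\Zcal})$ is locally free of constant rank $2$ with formation commuting with base change, and one then has to match the two Hilbert functors carefully. Once this bookkeeping is in place, the identification with $\PP(\Sym^2\Scal^\vee)$ and the freeness statement are formal.
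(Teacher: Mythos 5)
Your proof is correct, and its first half is essentially the paper's: both arguments push forward $\Ocal(1)$ restricted to the universal length-$2$ subscheme, use cohomology and base change to get a rank-$2$ locally free quotient of $W_2^\vee\otimes\Ocal$, and thereby produce the ``span'' morphism to $\Gr_1(\PP(W_2))$. (One wording quibble: since your evaluation map is surjective, its cokernel is zero; what you mean is that the \emph{target} $\pr_{1*}(\Ocal(1)|_{\Zcal})$ is locally free of rank $2$.) Where you diverge is in how the isomorphism with $\PP(\Sym^2\Scal^\vee)$ is closed. The paper constructs an explicit morphism $\psi\colon\hilb^2\PP(W_2)\to\PP(\Sym^2\Scal^\vee)$ by taking the rank-$1$ kernel of the surjection $\Sym^2\bigl(\pr_{1*}(\Ocal_{\Zcal}\otimes\pr_2^*\Ocal(1))\bigr)\to\pr_{1*}(\Ocal_{\Zcal}\otimes\pr_2^*\Ocal(2))$, and then invokes Zariski's Main Theorem, checking that $\psi$ is finite and birational onto a smooth target. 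You instead identify functors of points: $\hilb^2\PP(W_2)$ with the relative Hilbert scheme of the universal line over the Grassmannian (the paper itself uses this reinterpretation later, for $\hilb^2\ptwo$ in Proposition \ref{def.D}), and the latter with the bundle of degree-$2$ relative Cartier divisors, i.e.\ $\PP(\pi_*\Ocal_{\PP(\Scal)}(2))\simeq\PP(\Sym^2\Scal^\vee)$. Your route buys an isomorphism directly, with no need to verify finiteness or birationality and no appeal to ZMT, at the cost of the extra bookkeeping needed to check that the two functorial constructions are mutually inverse (which hinges, as you note, on every length-$2$ subscheme of $\PP^1$ spanning $\PP^1$); you also spell out the freeness of the equivariant Chow ring via the Grassmann- and projective-bundle formulas, which the paper leaves implicit. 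Both arguments are sound.
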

\begin{proof}
	The idea of the proof is the following: given any $0$-dimensional subscheme $Z$ of length $2$, there exists a unique line $F$ containing $Z$. Moreover, there exists a unique quadratic form on $F$ that vanishes on $Z$: these two data, the line and the quadratic form defined on it, completely determine the subscheme $Z$, and vice versa.
	
	We now proceed to give a formal proof of the Lemma: let $\Zcal\subset {\rm Hilb}^2\PP(W_2)\times\PP(W_2)$ be the universal subscheme, and let $\Ical$ be the sheaf of ideals associated to $\Zcal$. Consider the two projections
	\[ \xymatrix{
	& {\rm Hilb}^2\PP(W_2)\times\PP(W_2) \ar[dl]^{\pr_1} \ar[dr]_{\pr_2} & \\
	{\rm Hilb}^2\PP(W_2) & & \PP(W_2)}
	\]
	An easy argument involving the cohomology and base change theorem shows that the sequence:
	\[ 0\arr \pr_{1*}(\Ical\otimes\pr_2^*\Ocal(1))\arr \pr_{1*}\pr_2^*\Ocal(1)\arr \pr_{1*}(\Ocal_{\Zcal}\otimes\pr_2^*\Ocal(1))\arr 0\]
	is an exact sequence of locally free sheaves. 
	
	The sheaf in the middle is isomorphic to $W_2^{\vee}\otimes \Ocal$ and the one on the right has rank $2$, hence we get a well defined morphism $\varphi:{\rm Hilb}^2\PP(W_2)\arr {\rm Gr}_1(\PP(W_2))$ such that $\varphi^*\Scal\simeq \pr_{1*}(\Ocal_{\Zcal}\otimes\pr_2^*\Ocal(1))^{\vee}$.
	
	Consider the morphism of locally free sheaves:
	\[ \Sym^2(\pr_{1*}(\Ocal_{\Zcal}\otimes\pr_2^*\Ocal(1))) \arr \pr_{1*}(\Ocal_{\Zcal}\otimes\pr_2^*\Ocal(2)) \]
	This is a surjective morphism of locally free sheaves from a rank $3$ locally free sheaf to a rank $2$ locally free sheaf. Therefore, the kernel has rank $1$ and its embedding in $\Sym^2(\pr_{1*}(\Ocal_{\Zcal}\otimes\pr_2^*\Ocal(1)))\simeq \varphi^*\Scal^{\vee}$ determines a morphism $\psi:{\rm Hilb}^2\PP(W_2)\arr \PP(\Sym^2\Scal^{\vee})$.
	
	The fact that $\psi$ is an isomorphism follows from the Zariski Main Theorem, as the morphism $\psi$ is birational and finite onto a smooth scheme.
\end{proof}
	
Consider the proper morphism $f:{\rm Hilb}^2\PP(W_2)\arr\PP(W_4)$ that is obtained by composing the Hilbert-Chow morphism ${\rm Hilb}^2\PP(W_2)\to\Sym^2\PP(W_2)$ with the multiplication map $\Sym^2\PP(W_2)\to\PP(W_4)$.

\begin{propos}\label{propos.Z22open}
	The ideal $\im(f_*)$ is contained in $I_{\wt{Z}}$.
\end{propos}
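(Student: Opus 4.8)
The plan is to exploit the explicit description of $\mathrm{Hilb}^2\PP(W_2)\simeq\PP(\Sym^2\Scal^\vee)$ from Lemma \ref{lemma:Hilb2=PSym2} together with the localization formula, reducing everything to the computation of the push-forward along the finite map $\pi\colon\PP(W_2)\to\PP(W_4)$ already handled in Proposition \ref{propos.Z2}. Concretely, I would first stratify $\mathrm{Hilb}^2\PP(W_2)$ according to whether the length-$2$ subscheme $S$ is reduced or not. The non-reduced locus is the image of the diagonal $\PP(W_2)\to\mathrm{Hilb}^2\PP(W_2)$; on this locus the map $f$ is, up to the factor coming from the multiplicity, the square map $\pi$, so the corresponding push-forwards land in $\im(\pi_*)\subseteq I_{\wt Z}$ by Proposition \ref{propos.Z2}. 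So the real content is the open (reduced) stratum, parametrizing pairs of distinct points $\{[Q_1],[Q_2]\}$ with image $[Q_1Q_2]\in\PP(W_4)$.

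The second step is to write down the $T$-fixed points of $\mathrm{Hilb}^2\PP(W_2)$. Since $\Scal$ is the tautological bundle over $\Gr_1(\PP(W_2))$, a $T$-fixed line in $\PP(W_2)$ is spanned by two of the six monomial points $[Q_i]$, and a $T$-fixed point of $\PP(\Sym^2\Scal^\vee)$ over it is one of the three monomials in those two coordinates; the ones that matter for the reduced stratum are the "mixed" monomials, corresponding to the unordered pairs $\{i,j\}$ with $i\ne j$. Thus the relevant $T$-fixed points are indexed by $\binom{6}{2}=15$ pairs, the image under $f$ of the pair $\{i,j\}$ being $[Q_iQ_j]$, a monomial $F_{k}$ of degree $4$ whose class in $A^*_T(\PP(W_4))$ is $\prod_{l\ne k}(h_4+\theta_l)$ in the notation of Proposition \ref{propos.Z2}. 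I would then compute, via Lemma \ref{lemma:Hilb2=PSym2}, the tangent weights at each fixed point: the tangent space splits as (tangent to $\Gr_1(\PP(W_2))$ at the line $\langle Q_i,Q_j\rangle$) $\oplus$ (tangent to the fibre $\PP(\Sym^2\Scal^\vee)$), both of which are elementary to express in terms of the characters $\chi_1,\dots,\chi_6$. With these weights in hand, the localization formula (Theorem \ref{thm.loc}, Remark \ref{rmk.loc}) expresses $f_*(\xi)$ for $\xi$ a monomial in the two tautological classes $h:=c_1(\cO_{\PP(\Sym^2\Scal^\vee)}(1))$ and the Chern classes of $\Scal$ as an explicit sum over the $15$ fixed points; since $A^*_{\GL_3}(\mathrm{Hilb}^2\PP(W_2))$ is a free $A^*_{\GL_3}$-module (Lemma \ref{lemma:Hilb2=PSym2}) generated by finitely many such monomials, it suffices to check that each of these generators' images lies in $I_{\wt Z}$.

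The final step is the verification itself: for each generator, combine its contribution from the $15$ mixed fixed points with the contributions from the $6$ diagonal fixed points (which, as noted, are governed by $\pi_*$ and hence already known to lie in $I_{\wt Z}$), and check — by a direct symbolic computation, as in Proposition \ref{propos.Z2} — that the result is an $A^*_{\GL_3}$-combination of $\alpha_1,\alpha_2,\alpha_3$. By Lemma \ref{basic.principle}(2) applied with $X=\PP(W_4)$ and $G=\GL_3$, it is equivalent to check the analogous containment after pulling back to $A^*_T(\PP(W_4))$, which is where the localization computation naturally lives; membership in $I_{\wt Z}A^*_T(\PP(W_4))$ can be tested by reduction modulo $3$ and modulo $9$ exactly as in Lemma \ref{alpha.classes.independent}. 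The main obstacle I anticipate is purely bookkeeping: correctly identifying the tangent weights of $\PP(\Sym^2\Scal^\vee)$ at the $T$-fixed points and keeping track of which pair $\{i,j\}$ maps to which monomial $F_k$, so that the denominators $c_{top}^T$ and the image classes $[f(F_j)]$ in the localization sum are matched up correctly; once the formula is set up, the algebra is routine and can be discharged by Mathematica, just as in the proof of Proposition \ref{propos.Z2}.
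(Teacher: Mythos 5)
Your overall skeleton --- identify ${\rm Hilb}^2\PP(W_2)$ with $\PP(\Sym^2\Scal^{\vee})$ via Lemma \ref{lemma:Hilb2=PSym2}, take a finite set of module generators of its equivariant Chow ring, push them forward by localization over the torus-fixed points, and verify membership in $I_{\wt{Z}}$ symbolically --- is exactly the paper's proof. But your treatment of the non-reduced locus contains a genuine error. The non-reduced locus of ${\rm Hilb}^2\PP(W_2)$ is not the image of a ``diagonal'' $\PP(W_2)\to{\rm Hilb}^2\PP(W_2)$ (no such map exists); it is the exceptional divisor of the Hilbert--Chow morphism, isomorphic to $\PP(T_{\PP(W_2)})$, of dimension $9$, and it carries $6\times 5=30$ torus-fixed points --- precisely the two ``pure'' monomials $a_i^2$, $a_j^2$ over each of the $15$ fixed lines that you yourself describe in your second paragraph. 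Your count of ``$6$ diagonal fixed points'' in the final step is therefore wrong: the localization sum for $f_*\xi$ must run over all $45$ fixed points, and a sum over only $15+6$ of them does not compute $f_*\xi$ (and in general is not even an integral class).

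Relatedly, you cannot set the non-reduced fixed points aside on the grounds that their contributions ``are governed by $\pi_*$ and hence already lie in $I_{\wt{Z}}$''. The individual terms of the localization formula live in $A^*_T(\PP(W_4))\otimes\Qcal$ and carry denominators; only the total sum over \emph{all} fixed points is an integral class, so the partial sum over the fixed points lying in a closed subvariety is not the pushforward of anything and has no reason to lie in $I_{\wt{Z}}$. What is true is that $f$ restricted to the non-reduced divisor $E$ factors through $\pi\colon\PP(W_2)\to\PP(W_4)$, so $f_*$ of classes \emph{supported on} $E$ lands in $\im(\pi_*)\subseteq I_{\wt{Z}}$ by Proposition \ref{propos.Z2}; but the generators you must push forward are global classes on ${\rm Hilb}^2\PP(W_2)$, and excision only says that $f_*\xi$ modulo $\im(\pi_*)$ depends on the restriction of $\xi$ to the reduced locus --- it does not let you drop the $30$ fixed points in $E$ from the computation. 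Two smaller points: membership in $I_{\wt{Z}}$ is not tested by reduction mod $3$ and mod $9$ (that argument in Lemma \ref{alpha.classes.independent} only establishes independence of the generators, not an ideal-membership criterion), and the $T$-versus-$\GL_3$ reduction you invoke is part (2) of the lemma quoted from \cite{EG} and \cite{Fu-Vi}, not of Lemma \ref{basic.principle}. Once the fixed-point set and the corresponding tangent weights are corrected, the rest of your plan coincides with the paper's argument.
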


\begin{proof}
	From Lemma \ref{lemma:Hilb2=PSym2} we have an equivariant isomorphism ${\rm Hilb}^2\PP(W_2)\simeq\PP(\Sym^2\Scal^{\vee})$, where $\Scal$ is the tautological rank $2$ sheaf over the Grassmannian ${\rm Gr}_1(\PP(W_2))$.

	Let $\Qcal$ be the tautological \emph{quotient} sheaf over ${\rm Gr_1}(\PP(W_2))$, and let $\sigma_i$ be the $i^{\rm th}$ Chern class of $\Qcal$. Set $\tau:=c_1^{\GL_3}(\Ocal_{\PP(\Sym^2\Scal^{\vee})}(1))$. Consider the $A^*_{GL_3}$-module:
	\[ M:=A^*_{GL_3}\otimes\Sym^2\langle 1,\sigma_1,\sigma_2,\sigma_3,\sigma_4 \rangle\otimes \Sym^2\langle 1,\tau \rangle. \]
	The plain Chow ring of the Grassmannian is well known and the Chow groups of the Grassmannian are equivariantly formal. In particular, the natural morphism $M\arr A^*_{\GL_{3}}(\PP(\Sym^2\Scal^{\vee}))$ is surjective.
	
	We have in this way reduced the computation of $\im(f_*)$ to the computation of $f_*\xi$ for every generator $\xi$ of the $A^*_{GL_{3}}$-module $M$. We will do these computations by localization.
	
	Recall the notation introduced in the proof of Proposition \ref{propos.Z2} for the monomials representing $T$-invariant points of $\PP(W_2)$:
	\begin{eqnarray*}
		Q_1=X^2 & Q_4=Y^2 \\
		Q_2=XY & Q_5=YZ \\
		Q_3= XZ & Q_6=Z^2
	\end{eqnarray*}
	Let $T$ be the subtorus of diagonal matrices in $\GL_3$: then the fixed locus of ${\rm Hilb}^2\PP(W_2)$ consists of $45$ fixed points, which can be divided into two distinct classes
	\begin{itemize}
		\item The first class is formed by those subschemes supported on two distinct $T$-fixed points of $\PP(W_2)$. We will use the notation $(Q_i,Q_j)$ to indicate the subscheme supported on the points $[Q_i]$ and $[Q_j]$. There are $15$ fixed points of this type. \\
		\item The second class is formed by those subschemes supported on only one $T$-fixed point of $\PP(W_2)$ and schematically contained in a line joining another $T$-fixed point. We will use the notation $(Q_i>Q_j)$ to indicate the subscheme supported on the point $[Q_i]$ and schematically contained in the line that joins $[Q_i]$ and $[Q_j]$. There are $30$ fixed points of this type.\\
	\end{itemize} 
	To apply the localization techniques, we need the following ingredients:
	\begin{itemize}
		\item The equivariant top Chern class of the tangent space of ${\rm Hilb}^2\PP(W_2)$ at each fixed point.\\
		\item The fundamental class of the fixed points of $\PP(W_4)$ that are in the image of $f$.\\
		\item An explicit expression for the restriction of $\tau$, $\sigma_1$, $\sigma_2$, $\sigma_3$ and $\sigma_4$ to the equivariant Chow ring of each fixed point.\\
	\end{itemize}
	Let $\chi_i$ indicate the character associated to the monomial $Q_i$, as in the proof of Proposition \ref{propos.Z2}.
	
	Let $(Q_i,Q_j)$ be a fixed point of the first class, i.e. it represents the closed and reduced subscheme supported on the points $[Q_i]$ and $[Q_j]$ of $\PP(W_2)$. We have:
	\[ T({\rm Hilb}^2(\PP(W_2)))_{(Q_i,Q_j)}\simeq T(\PP(W_2))_{[Q_i]}\oplus T(\PP(W_2))_{[Q_j]}. \]

	This implies that:
	\[ c_{top}^T(T{\rm Hilb}^2\PP(W_2)_{(Q_i,Q_j)})=\prod_{k\neq i} (\chi_k - \chi_i) \prod_{l\neq j} (\chi_l - \chi_j). \]
	
	Let $(Q_i>Q_j)$ be a fixed point of the second type. If we restrict to the affine open subscheme $U$ of $\PP(W_2)$ where $a_i\neq 0$, the ideal associated to $(Q_i>Q_j)$ is the following:
	\[ I_{(Q_i>Q_j)}=\left( \frac{a_j^2}{a_i^2},\frac{a_k}{a_i} \right)_{k\neq j,i}. \]
	Let $R$ be ring of functions of $U$. Then from the usual description of the tangent space of the Hilbert scheme, we deduce that:
	\begin{align*} T{\rm Hilb}^2(\PP(W_2))_{(Q_i>Q_j)}&=\Hom_{R/I}(I/I^2,R/I)\\
	&=\Big\langle \frac{a_j^2}{a_i^2}^\vee\otimes 1,\frac{a_k}{a_i}^{\vee}\otimes 1,\frac{a_j^2}{a_i^2}^\vee\otimes \frac{a_j}{a_i},\frac{a_k}{a_i}^\vee \otimes \frac{a_j}{a_i} \Big\rangle_{k\neq i,j}.
	\end{align*}
	From this we deduce:
	\[ c_{top}^T(T{\rm Hilb}^2\PP(W_2)_{(Q_i>Q_j)})=2(\chi_i-\chi_j)^2\prod_{k\neq i,j} (\chi_k - \chi_i)(\chi_k-\chi_j). \]
	The next step is the computation of the classes $[f((Q_i,Q_j))]$ and $[f((Q_i>Q_j))]$ in the equivariant Chow ring of $\PP(W_4)$. We have $f((Q_i,Q_j))=[Q_iQ_j]$ and $f((Q_i>Q_j))=[Q_i^2]$.
	
	Just as in the proof of Proposition \ref{propos.Z2}, denote:
	\begin{align*}
	F_1&=X^4    & F_5&=X^2YZ  & F_9&=XYZ^2   & F_{13}&=Y^2Z^2 \\
	F_2&=X^3Y   & F_6&=X^2Z^2 & F_{10}&=XZ^3 & F_{14}&= YZ^3 \\
	F_3&=X^3Z   & F_7&=XY^3   & F_{11}&=Y^4  & F_{15}&=Z^4 \\
	F_4&=X^2Y^2 & F_8&=XY^2Z  & F_{12}&=Y^3Z 
	\end{align*}
	and let $\theta_i$ be the character associated to the monomial $F_i$.
	Let $\mu:\{1,2,\dots,6\}^{\times 2}\arr \{1,2,\dots,15\}$ be the function such that $f((Q_i,Q_j))=F_{\mu(i,j)}$ and $f((Q_i>Q_j))=F_{\mu(i,i)}$ for $i\neq j$.
	Then we have:
	\begin{align*}
	[f((Q_i,Q_j))]&=\prod_{k\neq \mu(i,j)} (h_4+\theta_k), &[f((Q_i>Q_j))]&=\prod_{l\neq \mu(i,i)} (h_4+\theta_l).
	\end{align*}
	
	Next we study the restriction of $\Qcal$, the pullback to ${\rm Hilb}^2(\PP(W_2))$ of the quotient tautological sheaf over ${\rm Gr}_1(\PP(W_2))$, to the fixed points.	
	Take a fixed point of the first class, say $(Q_i,Q_j)$ with $i\neq j$: then $\Qcal|_{(Q_i,Q_j)}$ is equal by construction to the $4$-dimensional quotient vector space $W_2/\langle Q_i,Q_j \rangle$. We deduce that $c_p^T(\Qcal|_{(Q_i,Q_j)})=\sym_p(\{\chi_k\}_{k\neq i,j})$, where $\sym_p(\cdot)$ denotes the elementary symmetric polynomial of degree $p$.
	
	Exactly the same formula holds for the fixed points $(Q_i>Q_j)$ of the second class.
	
	The last ingredient missing is the computation of the first Chern class of the line bundle $\Ocal_{\PP(\Sym^2\Scal^{\vee})}(1)$ restricted to the fixed points. Let $F_{ij}$ be the unique line in $\PP(W_2)$ joining $[Q_i]$ and $[Q_j]$. We have:
	\[ \Ocal_{\PP(\Sym^2\Scal^{\vee})}(-1)|_{(Q_i,Q_j)}\simeq  \ker \left(\Sym^2 H^0(F_{ij},\Ocal_{F_{ij}}(1))\arr H^0(Z_{ij},\Ocal_{F_{ij}}(2)|_{Z_{ij}})\right) \]
	where $Z_{ij}$ is the subscheme represented by the point $(Q_i,Q_j)$.
	
	The vector space $H^0(F_{ij},\Ocal_{F_{ij}}(1))$ is generated by the restriction of the global sections $a_i$ and $a_j$ of $H^0(\PP(W_2),\Ocal(1))$. We deduce that $\Ocal_{\PP(\Sym^2\Scal^{\vee})}(-1)|_{(Q_i,Q_j)}$ is generated by the unique (up to scalar) quadratic form in $a_i$ and $a_j$ that vanishes on $Z_{ij}$, i.e. the form $a_i\cdot a_j$.
	
	Henceforth, the rank $1$ vector space $\Ocal_{\PP(\Sym^2\Scal^{\vee})}(1)|_{(Q_i,Q_j)}$ is generated by $(a_i\cdot a_j)^{\vee}$ and we have $c_1^T(\langle (a_i\cdot a_j)^{\vee}\rangle )=\chi_i+\chi_j$.
	
	A similar argument shows that $c_1^T(\Ocal_{\PP(\Sym^2\Scal^{\vee})}(1)|_{(Q_i>Q_j)})=2\chi_i$.
	
	We are ready to use the localization techniques to prove that $\im(f_*)\subset I_{\wt{Z}}$. The former ideal is generated by the elements
	\begin{multline*}
		f_*(\tau^a \sigma_1^b \sigma_2^c \sigma_3^c \sigma_4^d)=\sum_{(Q_i,Q_j)} \frac{(\tau^a \sigma_1^b \sigma_2^c \sigma_3^c \sigma_4^d)|_{(Q_i,Q_j)}}{c_{top}^T(T{\rm Hilb}^2(\PP(W_2))_{(Q_i,Q_j)})}[f((Q_i,Q_j))] \\ + \sum_{(Q_i>Q_j)} \frac{(\tau^a \sigma_1^b \sigma_2^c \sigma_3^c \sigma_4^d)|_{(Q_i>Q_j)}}{c_{top}^T(T{\rm Hilb}^2(\PP(W_2))_{(Q_i>Q_j)})}[f((Q_i>Q_j))]
	\end{multline*}
	where the expression $\tau^a \sigma_1^b \sigma_2^c \sigma_3^c \sigma_4^d$ is in the image of $A_T^*\otimes M\arr A_T^*({\rm Hilb}^2\PP(W_2))$.
	
	A direct computation with Mathematica shows that each of these cycles is in $I_{\wt{Z}}$.
\end{proof}

Observe that the proper morphism $f:{\rm Hilb}^2\PP(W_2)\arr\PP(W_4)$ restricts to an isomorphism over $Z_{\{2,2\}} \cup Z_{1,1,2} \cup Z_{1,2}^{(2,1)}$. Applying Lemma \ref{basic.principle} we deduce that the ideal of cycles coming from the stratum $\ov{Z}_{\{2\}}$ is equal to the sum of the image of the pushforward morphism
\[ f_*: A_*^{\GL_{3}}({\rm Hilb}^2\PP(W_2))\arr A_*^{\GL_{3}}(\PP(W_4)) \]
plus the ideal of cycles that come from $\ov{Z}_{\{1,1,1,1\}}$ and $\ov{Z}_{\{2\}}^{(2)}$.

We already know from Proposition \ref{propos.Z1111} that the cycles coming from $\ov{Z}_{\{1,1,1,1\}}$ are contained in $I_{\wt{Z}}$. By Proposition \ref{propos.Z22open} and Proposition \ref{propos.Z2} the same thing holds for the image of $f_*$ and for the stratum $\ov{Z}_{\{2\}}^{(2)}$. Therefore, we have proved Proposition \ref{propos.Z22}.

\subsection{The push-forward of classes from $\ov{Z}_{\{1,3\}}$ is in the ideal $(I_{\wt{Z}}, \delta_{\{ 1,3 \}})$.}

First, we determine the class $\delta_{\{1,3\}}$. By using the formula from \cite[Proposition 3.4]{Fu-Vi}, after straightforward computations, we get
\[
\delta_{\{ 1,3 \}} = 55 h_4^3 - 220 c_1 h_4^2 + (280 c_1^2 + 40 c_2) h_4 + (224 c_3- 96 c_1^3 - 128 c_1 c_2).
\]

\begin{lemma}
The classes $\alpha_1, \alpha_2, \alpha_3, \delta_{\{ 1, 3 \}}$ are a set of independent generators for the ideal $(\alpha_1, \alpha_2, \alpha_3, \delta_{\{ 1, 3 \}})$.
\end{lemma}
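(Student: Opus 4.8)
The plan is to leverage Lemma~\ref{alpha.classes.independent}, which already establishes that $\alpha_1,\alpha_2,\alpha_3$ are independent generators of $I_{\wt Z}$, and to show that adjoining $\delta_{\{1,3\}}$ does not collapse anything: that is, $\delta_{\{1,3\}}\notin I_{\wt Z}$ and, more precisely, no nontrivial $A^*_{\GL_3}$-linear combination of $\alpha_1,\alpha_2,\alpha_3$ equals a scalar multiple of $\delta_{\{1,3\}}$ beyond the obvious degree reasons. Since $\delta_{\{1,3\}}$ is homogeneous of degree $3$, and $\alpha_1,\alpha_2,\alpha_3$ are homogeneous of degrees $1,2,3$ respectively, independence of the four classes amounts to showing that $\delta_{\{1,3\}}$ is not in the degree-$3$ part of the ideal generated by $\alpha_1,\alpha_2,\alpha_3$, i.e.\ that there is no relation
\[
\delta_{\{1,3\}} = a\,\alpha_1 + b\,\alpha_1^2 + c\,c_1\alpha_1 + e\,\alpha_2 + f\,c_1\alpha_1 + g\,\alpha_3
\]
with the $A^*_{\GL_3}$-coefficients of the appropriate degrees (degree $2$ multiplying $\alpha_1$, degree $1$ multiplying $\alpha_1^2$-type and $\alpha_2$, degree $0$ multiplying $\alpha_3$); more cleanly, one works in the graded quotient ring $A^*_{\GL_3}(\PP(W_4))/I_{\wt Z}$ and shows that the image of $\delta_{\{1,3\}}$ is nonzero there.

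The computational heart is as follows. First I would recall from Lemma~\ref{alpha.classes.independent} that $p_4(h_4)\in I_{\wt Z}$, so that $A^*_{\GL_3}(\PP(W_4))/I_{\wt Z}$ is a quotient of $\ZZ[c_1,c_2,c_3,h_4]/(\alpha_1,\alpha_2,\alpha_3)$. Using $\alpha_1 = 27h_4 - 36c_1$ one can try to eliminate $h_4$, but $27$ is not invertible over $\ZZ$, so instead I would reduce modulo a well-chosen prime. Reducing modulo $5$, the class $\delta_{\{1,3\}} = 55h_4^3 - 220c_1h_4^2 + (280c_1^2+40c_2)h_4 + (224c_3 - 96c_1^3 - 128c_1c_2)$ becomes $\delta_{\{1,3\}}\equiv 4c_3 - c_1^3 - 3c_1c_2 \pmod 5$ (all $h_4$ terms vanish), while $\alpha_1\equiv 2h_4 - c_1$, $\alpha_2 \equiv 4h_4^2 - c_1h_4 - 4c_2$, $\alpha_3\equiv h_4^3 - c_1h_4^2 + c_2h_4 - 3c_3 \pmod 5$. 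Over $\mathbb{F}_5$ the element $2$ is invertible, so from $\alpha_1$ we get $h_4 = 3c_1$ in the quotient $\mathbb{F}_5[c_1,c_2,c_3,h_4]/(\alpha_1,\alpha_2,\alpha_3)$; substituting into $\alpha_2$ yields a relation $c_2 = \lambda c_1^2$ for an explicit $\lambda\in\mathbb{F}_5$, and then $\alpha_3$ yields $c_3 = \mu c_1^3$ for an explicit $\mu$. The resulting ring is $\mathbb{F}_5[c_1,c_2,c_3]/(c_2 - \lambda c_1^2, c_3 - \mu c_1^3) \cong \mathbb{F}_5[c_1]$, which is a domain; one then checks that the image of $\delta_{\{1,3\}}$, namely $4\mu c_1^3 - c_1^3 - 3\lambda c_1^3 = (4\mu - 1 - 3\lambda)c_1^3$, is a nonzero element of $\mathbb{F}_5[c_1]$. (If $5$ happens not to work because $4\mu-1-3\lambda\equiv 0$, one repeats the argument with $7$ or $11$; the point is that some small prime coprime to the relevant leading coefficients does the job, and this is a finite check.) This shows $\delta_{\{1,3\}}\notin I_{\wt Z}$, and since all four classes are homogeneous of distinct or matching degrees handled by the grading, it shows they form an independent generating set for $(\alpha_1,\alpha_2,\alpha_3,\delta_{\{1,3\}})$.

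The main obstacle I anticipate is purely bookkeeping: getting the reduction of $\delta_{\{1,3\}}$ and of the $\alpha_i$ modulo the chosen prime exactly right, and confirming that the eliminated ring really is a polynomial ring (a domain) so that nonvanishing of a single monomial $(4\mu-1-3\lambda)c_1^3$ genuinely certifies $\delta_{\{1,3\}}\neq 0$ in the quotient. There is a subtlety worth flagging explicitly in the write-up: ``independent generators'' here should be interpreted in the graded sense, meaning that no generator lies in the ideal generated by the others (equivalently, the associated graded pieces are not redundant); the degree argument already separates $\alpha_1,\alpha_2$ from each other and from $\delta_{\{1,3\}},\alpha_3$, so the only content is the degree-$3$ statement $\delta_{\{1,3\}}\notin (\alpha_1,\alpha_2,\alpha_3)$ together with the already-proven $\alpha_3\notin(\alpha_1,\alpha_2)$, and one should check symmetrically that $\alpha_3\notin (\alpha_1,\alpha_2,\delta_{\{1,3\}})$ — but this is immediate, since modulo the prime used above the ideals $(\alpha_1,\alpha_2,\alpha_3)$ and $(\alpha_1,\alpha_2,\delta_{\{1,3\}})$ are visibly distinct in degree $3$ (one kills $c_1^3$ up to the relations, the other does not).
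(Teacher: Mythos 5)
Your high-level framework is right and matches the paper's: by the grading, the only content beyond Lemma \ref{alpha.classes.independent} is that $\delta_{\{1,3\}}\notin(\alpha_1,\alpha_2,\alpha_3)$ in degree $3$ (together with the symmetric check for $\alpha_3$), and you propose to detect this after reduction modulo a prime. However, the computational core fails, and not just by bad luck at $p=5$. Carrying out your own elimination over $\mathbb{F}_5$ gives $h_4=3c_1$, then $\lambda=2$ and $\mu=3$, so your test quantity is $4\mu-1-3\lambda=12-1-6=5\equiv 0\pmod 5$: the image of $\delta_{\{1,3\}}$ vanishes in $\mathbb{F}_5[c_1]$. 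Your fallback of trying $7$, $11$, etc.\ cannot rescue this, because of the identity (recorded in the paper as a Remark immediately after this lemma)
\[
3\delta_{\{ 1,3\}} = \left( 9 h_4^2 - 20 c_1 h_4 + 8 c_1^2 \right)\alpha_1 - 2(3h_4 - 8c_1)\alpha_2 -24\alpha_3,
\]
which shows $3\delta_{\{1,3\}}\in I_{\wt{Z}}$. Hence for \emph{every} prime $p\neq 3$ the element $3$ is invertible mod $p$ and $\delta_{\{1,3\}}$ lies in $(\alpha_1,\alpha_2,\alpha_3)\otimes\mathbb{F}_p$; no such prime can certify non-membership. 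This is a genuine gap: the "finite check over some small prime" does not terminate successfully.

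The only prime that can work is $p=3$, and there your elimination strategy is unavailable, since $\alpha_1\equiv\alpha_2\equiv 0\pmod 3$ (so you cannot solve $\alpha_1$ for $h_4$). But this degeneration is exactly what makes the mod-$3$ check easy, and it is how the paper argues: since $\alpha_1$ and $\alpha_2$ vanish mod $3$, the degree-$3$ part of $(\alpha_1,\alpha_2,\alpha_3)\otimes\mathbb{F}_3$ consists only of the integer multiples of $\alpha_3\bmod 3$, so one just verifies that $\delta_{\{1,3\}}\equiv h_4^3+2c_1h_4^2+(c_1^2+c_2)h_4+c_1c_2+2c_3$ is not $0$, $\alpha_3$, or $2\alpha_3$ modulo $3$ (it differs from $\alpha_3\equiv h_4^3+2c_1h_4^2+c_2h_4+2c_3$ by $c_1^2h_4+c_1c_2\neq 0$, and has the wrong leading coefficient to be $0$ or $2\alpha_3$). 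The same computation disposes of the symmetric check $\alpha_3\notin(\alpha_1,\alpha_2,\delta_{\{1,3\}})$. If you replace your mod-$5$ elimination with this direct mod-$3$ comparison, the rest of your write-up goes through.
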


\begin{proof}
By performing computations mod 3 and by arguing as in Lemma \ref{alpha.classes.independent}, we are reduced to check that the class $\delta_{\{ 1, 3 \}}$ mod 3 is not an integral multiple of the class $\alpha_3$ mod 3. A simple computation shows that this is not possible.
\end{proof}

\begin{rmk}
Since the restriction map $\pi_1: \wt{Z}|_{\ov{Z}_{\{ 1, 3\}}} \to \ov{Z}_{\{ 1, 3\}}$ is generically 3:1, we expect the inclusion $3\delta_{\{ 1,3\}} \in I_{\wt{Z}}$. We can verify this directly by checking that
\[
3\delta_{\{ 1,3\}} = \left( 9 h_4^2 - 20 c_1 h_4 + 8 c_1^2 \right)\alpha_1 - 2(3h_4 - 8c_1)\alpha_2 -24\alpha_3.
\]
\end{rmk}

We are ready to prove the main result of this Subsection, which is the following:
\begin{propos}\label{propos.Z13}
	The image of $i_*:A_{\GL_3}^*(\ov{Z}_{\{1,3\}})\arr A_{\GL_3}^*(\PP(W_4))$ is contained in $(I_{\wt{Z}},\delta_{\{ 1, 3 \}})$.
\end{propos}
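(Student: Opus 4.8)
The plan is to mimic the strategy already used for the strata $\ov{Z}_{\{1,1,1,1\}}$ and $\ov{Z}_{\{2,2\}}$, using Lemma~\ref{basic.principle} applied to the stratification~(\ref{eq:strat}): since all strata strictly below $\ov{Z}_{\{1,3\}}$ are among $\ov{Z}_{\{2,2\}}$, $\ov{Z}_{\{1,1,2\}}$, $\ov{Z}_{\{1,1,1,1\}}$, $\ov{Z}^{(2,1)}_{\{1,2\}}$, $\ov{Z}^{(2,1,1)}_{\{1,1,1\}}$, $\ov{Z}^{(2)}_{\{2\}}$, $\ov{Z}^{(2,2)}_{\{1,1\}}$, $\ov{Z}^{(3,1)}_{\{1,1\}}$ and $\ov{Z}^{(4)}_{\{1\}}$, and Propositions~\ref{propos.Z1111} and~\ref{propos.Z22} together with Lemma~\ref{basic.principle} already give that $i_*A^{\GL_3}_*(\ov{Z}_{\tau'})\subset I_{\wt Z}\subset (I_{\wt Z},\delta_{\{1,3\}})$ for every such $\tau'<\{1,3\}$, it suffices to prove that the image of $i_*\colon A^{\GL_3}_*(Z_{\{1,3\}})\arr A^{\GL_3}_*(\PP(W_4)\smallsetminus\partial\ov{Z}_{\{1,3\}})$ is contained in the image of $(I_{\wt Z},\delta_{\{1,3\}})$ under the restriction $A^{\GL_3}_*(\PP(W_4))\arr A^{\GL_3}_*(\PP(W_4)\smallsetminus\partial\ov{Z}_{\{1,3\}})$.

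First I would identify the open stratum: the multiplication morphism $\pi\colon \PP(W_1)\times\PP(W_3)\arr\PP(W_4)$, $([\ell],[C])\mapsto [\ell C]$, restricts to an isomorphism onto $Z_{\{1,3\}}$ (a quartic which is a line times a cubic determines, by unique factorization, the line and the cubic uniquely when the two have no common component, which is the generic and in fact the defining condition of this open stratum). Hence $i_*A^{\GL_3}_*(Z_{\{1,3\}})$, after restriction away from $\partial\ov{Z}_{\{1,3\}}$, is spanned over $A^*_{\GL_3}$ by the restricted pushforwards $\pi_*(h_1^a h_3^b)$ with $a\in\{0,1\}$ and $0\le b\le 9$ (using the projective bundle formula of Proposition~\ref{prop:proj bundle formula} for $\PP(W_1)\times\PP(W_3)$, noting $\dim\PP(W_1)=2$ and $\dim\PP(W_3)=9$). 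Next I would compute these classes by localization: passing to $T$-equivariant Chow groups via the Lemma after Theorem~\ref{thm.loc}, the $T$-fixed points of $\PP(W_1)\times\PP(W_3)$ are pairs (monomial of degree $1$, monomial of degree $3$), their tangent characters and the fundamental classes $[\pi(\text{fixed point})]=[\ell^{\text{mon}}\cdot C^{\text{mon}}]$ in $A^*_T(\PP(W_4))$ are written down exactly as in the proof of Proposition~\ref{propos.Z2}, and formula~(\ref{eq:loc}) then expresses each $\pi_*(h_1^a h_3^b)$ explicitly. One then checks — a direct computation with Mathematica, in the same spirit as the other subsections — that each such generator lies in $(I_{\wt Z},\delta_{\{1,3\}})$; the generator $b=0$, i.e. $\pi_*(1)=[\ov{Z}_{\{1,3\}}]_{\GL_3}=\delta_{\{1,3\}}$, accounts for the new relation, and the remaining ones should reduce into $I_{\wt Z}$ modulo $\delta_{\{1,3\}}$ (consistent with the observation that $\pi_1\colon\wt Z|_{\ov{Z}_{\{1,3\}}}\to\ov{Z}_{\{1,3\}}$ is generically $3:1$, so that higher-degree classes are $3$-torsion multiples of things already accounted for).

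The main obstacle I anticipate is not the localization bookkeeping per se but making the reduction-away-from-the-boundary argument airtight: one must be sure that the closed complement $\partial\ov{Z}_{\{1,3\}}=\ov{Z}_{\{2,2\}}\cup\ov{Z}_{\{1,1,2\}}\cup\cdots$ really is covered, with its reduced structure, by strata all of which have already been handled, and that the diagram chase in Lemma~\ref{basic.principle} applies verbatim — in particular that $\pi$ is an isomorphism (not merely birational and finite) onto the locally closed subscheme $Z_{\{1,3\}}$, so that the pushforward from $A^{\GL_3}_*(\PP(W_1)\times\PP(W_3))$ genuinely surjects, after restriction, onto $i_*A^{\GL_3}_*(Z_{\{1,3\}})$. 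Once that is in place, combining this Proposition with Propositions~\ref{propos.Z1111} and~\ref{propos.Z22} and applying Lemma~\ref{basic.principle} once more to the top stratum shows $i_*A^{\GL_3}_*(Z_{\reducible})\subset(I_{\wt Z},\delta_{\{1,3\}})$, which is the stated goal of Section~\ref{sec.reducible}.
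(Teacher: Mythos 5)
Your route is essentially the paper's: reduce to the open stratum via Lemma~\ref{basic.principle} (the paper leaves this step implicit, but your explicit justification is correct, and indeed $\partial\ov{Z}_{\{1,3\}}=\ov{Z}_{\{1,1,2\}}\subset\ov{Z}_{\{2,2\}}$ is already covered by Proposition~\ref{propos.Z22}), use the multiplication map $\pi_{\{1,3\}}\colon\PP(W_1)\times\PP(W_3)\arr\PP(W_4)$, and push forward module generators by localization. Two points. First, a small but genuine slip: $\PP(W_1)\simeq\PP^2$, so $A^*_{\GL_3}(\PP(W_1)\times\PP(W_3))$ is freely generated by $h_1^ah_3^b$ with $0\le a\le 2$, not $a\in\{0,1\}$; omitting the $h_1^2h_3^b$ generators would leave part of $\im(\pi_{\{1,3\}*})$ unaccounted for. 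Second, the paper avoids computing the roughly thirty classes $\pi_{\{1,3\}*}(h_1^ah_3^b)$ altogether by observing that $\pi_{\{1,3\}}^*(h_4)=h_1+h_3$, so $h_3=\pi_{\{1,3\}}^*h_4-h_1$ and the projection formula reduces every generator to a combination of $h_4$-multiples of $\pi_{\{1,3\}*}(h_1^a)$ for $a=0,1,2$; hence only $\delta_{\{1,3\}}=\pi_{\{1,3\}*}(1)$, $\pi_{\{1,3\}*}(h_1)$ and $\pi_{\{1,3\}*}(h_1^2)$ need to be computed and checked against $(I_{\wt{Z}},\delta_{\{1,3\}})$. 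Your plan would still work with the corrected range for $a$, just at the cost of many more localization computations; your closing aside about the generically $3\!:\!1$ map $\wt{Z}_4|_{\ov{Z}_{\{1,3\}}}\arr\ov{Z}_{\{1,3\}}$ only predicts $3\delta_{\{1,3\}}\in I_{\wt{Z}}$ (as in the paper's Remark) and is not itself an argument that the higher generators land in the ideal.
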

\begin{proof}
Consider the multiplication morphism
\[ \pi_{\{1,3\}}:\PP(W_1)\times\PP(W_3)\arr \PP(W_4).  \]
Observe that $\pi_{\{1,3\}}^*(h_4)=h_1+h_3$, where $h_i$ denotes the hyperplane section of $\PP(W_i)$. This simple fact, together with the projection formula, implies that the image of $\pi_{\{1,3\}*}$ is generated by $\delta_{\{ 1, 3 \}}$, $\pi_{\{1,3\}*}(h_1)$ and $\pi_{\{1,3\}*}(h_1^2)$.

These last two generators can be computed by localization. Set $$ \chi_{ijk}:= il_1 + jl_2+ kl_3 .$$
Then we have:
\begin{itemize}
	\item $c_{top}^T(T(\PP(W_1)\times\PP(W_3))_{[X^iY^jZ^k],[X^lY^mZ^n]})=\prod (\chi_{abc}-\chi_{ijk})\prod (\chi_{def}-\chi_{lmn})$, where the product is taken over the triples of natural numbers $(a,b,c)\neq (i,j,k)$ and $(d,e,f)\neq (l,m,n)$ such that $a+b+c=1$, $d+e+f=3$. \\
	\item $[\pi_{\{1,3\}}([X^iY^jZ^k],[X^lY^mZ^n])]=\prod (h_4+\chi_{abc})$, where the product is taken over the triples of natural numbers $(a,b,c)\neq (i+l,j+m,k+n)$ such that $a+b+c=4$.\\
	\item $h_1^d|_{A^*_T([X^iY^jZ^k],[X^lY^mZ^n])}=(-\chi_{ijk})^d$.\\
\end{itemize}
The localization formula tells us the following:
\[ \pi_{\{1,3\}*}h_1^d=\sum \frac{(h_1^d|_{A^*_T([X^iY^jZ^k],[X^lY^mZ^n])})\cdot[\pi_{\{1,3\}}([X^iY^jZ^k],[X^lY^mZ^n])]}{c_{top}^T(T(\PP(W_1)\times\PP(W_3))_{[X^iY^jZ^k],[X^lY^mZ^n]})}.  \]
With the aid of Mathematica, one can explicitly write down the expressions above in terms of $h_4$, $c_1$, $c_2$ and $c_3$, and then check that both cycles are contained in $(\alpha_1,\alpha_2,\alpha_3,\delta_{\{1,3\}})$.
\end{proof}

Putting together Proposition \ref{propos.Z22} and Proposition \ref{propos.Z13} we deduce the following intermediate result.

\begin{theorem}\label{main.result.reducible.quartics}
Assume that the base field has characteristic different from 2 and 3. Then
\[
i_*\left( A^{\GL_3} _{*} ( \ov{Z}_\reducible) \right) \subset \left( \alpha_1,\alpha_2,\alpha_3, \delta_{\{ 1,3 \}} \right)
\]

where

\begin{eqnarray*}
\alpha_1 &=& 27 h_4 - 36 c_1,\\ 
\alpha_2 &=& 9 h_4^2 - 6 c_1 h_4 - 24 c_2,\\
\alpha_3 &=& h^3 - c_1 h^2 + c_2 h - 28 c_3,\\
\delta_{\{ 1,3 \}} &=& 55 h_4^3 - 220 c_1 h_4^2 + (280 c_1^2 + 40 c_2) h_4 + (224 c_3- 96 c_1^3 - 128 c_1 c_2).
\end{eqnarray*}
\end{theorem}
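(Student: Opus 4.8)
The plan is to deduce Theorem \ref{main.result.reducible.quartics} directly from the two propositions proven earlier in the section, using the decomposition of the reducible locus into irreducible closed pieces. Recall that from the digraph (\ref{eq:strat}) and the remark immediately following it, we have $Z_{\reducible} = \ov{Z}_{\irr}\smallsetminus Z_{\irr} = \ov{Z}_{\{1,3\}}\cup\ov{Z}_{\{2,2\}}$ as sets; with the reduced scheme structure this is an equality of closed subschemes, and in particular $\ov{Z}_{\reducible} = \ov{Z}_{\{1,3\}}\cup\ov{Z}_{\{2,2\}}$ since both $\ov{Z}_{\{1,3\}}$ and $\ov{Z}_{\{2,2\}}$ are already closed. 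So the reducible locus is covered by exactly two of the strata closures that we have already analyzed.

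The key step is then to invoke the fact that the natural map $\ov{Z}_{\{1,3\}}\sqcup\ov{Z}_{\{2,2\}}\arr \ov{Z}_{\reducible}$ is a (proper, surjective, birational onto each component) $\GL_3$-equivariant envelope. Concretely, on the open stratum $Z_{\{1,3\}}$ the map $\ov{Z}_{\{1,3\}}\arr\ov{Z}_{\reducible}$ is an isomorphism, and $Z_{\{2,2\}}$ lies in $\ov{Z}_{\{2,2\}}$, while all lower strata $Z_{\{1,1,2\}}, Z_{\{1,1,1,1\}}$, etc., lie in both (or at least in $\ov{Z}_{\{1,1,1,1\}}\subset\ov{Z}_{\{2,2\}}$). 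Thus the disjoint union surjects onto $\ov{Z}_{\reducible}$ and is generically bijective over each irreducible component, so by \cite[Lemma 3]{EG} (equivariant envelopes) the pushforward
\[
A^{\GL_3}_*(\ov{Z}_{\{1,3\}})\oplus A^{\GL_3}_*(\ov{Z}_{\{2,2\}}) \arr A^{\GL_3}_*(\ov{Z}_{\reducible})
\]
is surjective. Consequently the image of $i_*: A^{\GL_3}_*(\ov{Z}_{\reducible})\arr A^{\GL_3}_*(\PP(W_4))$ is the sum of the images of $i_*$ from $\ov{Z}_{\{1,3\}}$ and from $\ov{Z}_{\{2,2\}}$.

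Finally I would assemble the pieces: by Proposition \ref{propos.Z22} the image of $i_*: A^{\GL_3}_*(\ov{Z}_{\{2,2\}})\arr A^{\GL_3}_*(\PP(W_4))$ is contained in $I_{\wt Z} = (\alpha_1,\alpha_2,\alpha_3)$, and by Proposition \ref{propos.Z13} the image of $i_*: A^{\GL_3}_*(\ov{Z}_{\{1,3\}})\arr A^{\GL_3}_*(\PP(W_4))$ is contained in $(I_{\wt Z},\delta_{\{1,3\}})$. Adding these two submodules and using $I_{\wt Z}\subset (I_{\wt Z},\delta_{\{1,3\}})$, we conclude that $i_*\bigl(A^{\GL_3}_*(\ov{Z}_{\reducible})\bigr)\subset (\alpha_1,\alpha_2,\alpha_3,\delta_{\{1,3\}})$, which is the assertion; the explicit formulas for the $\alpha_i$ and $\delta_{\{1,3\}}$ are the ones already computed above. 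There is no serious obstacle here: the only point requiring a word of care is verifying that the two-term disjoint union really is an envelope, i.e.\ that every point of $\ov{Z}_{\reducible}$ is hit and that over a dense open of each of the two components the map is an isomorphism — but this is immediate from the stratification picture, since a general point of $\ov{Z}_{\{1,3\}}$ is a line plus an irreducible cubic (hence in no other stratum closure) and a general point of $\ov{Z}_{\{2,2\}}$ is a sum of two distinct irreducible conics (likewise), so the hard analytic work has all been done in the preceding propositions.
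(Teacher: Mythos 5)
Your proposal is correct and follows essentially the same route as the paper: the paper's proof of Theorem \ref{main.result.reducible.quartics} is precisely to combine Proposition \ref{propos.Z22} and Proposition \ref{propos.Z13} via the decomposition $\ov{Z}_{\reducible}=\ov{Z}_{\{1,3\}}\cup\ov{Z}_{\{2,2\}}$. Your explicit remark that the disjoint union of the two closed pieces is an equivariant envelope (so the image of $i_*$ is the sum of the two images) is exactly the implicit justification the paper relies on.
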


\section{Irreducible quartics}\label{sec.irr}

We now stratify the space of irreducible quartics by number of singular points.

\subsection{Quartics with two singular points.} \label{subsec.twosingpoints}

Let $H:=\hilb^{2}_{\ptwo}$ be the Hilbert scheme of 0-dimensional subschemes of $\ptwo$ of length 2, and let $p$ be a point of $H$. If we call $\Sigma(p)$ the corresponding 0-scheme of length 2 in $\ptwo$, the ideal $I \left( \Sigma(p) \right)$ is generated by a linear form $L$ and a quadratic form $Q$ (which is not a multiple of $L$). 

Let $l$ be the line of equation $L=0$ and let $q$ be the conic of equation $Q=0$. Notice that the line $l$ is uniquely determined by $p$ and this implies that there is a well defined morphism $\phi: H \to \PP(W_1)$. On the other hand, the conic $q$ is in the equivalence class of any quadratic form whose equation can be written as a linear combination of $Q$ and $LF$, where $F$ is any linear form.

We see the line $l$ as a point in $\Pro(W_1)$. Let $L$ be a linear form in three variables such that $[L]=l$. The linear form $L$ can be seen as a (non-zero) element of the fiber of the tautological bundle $\Ocal_{\PP(W_1)}(-1)$ over $l$.

Summarizing, we get the following exact sequence of locally free sheaves over $\PP(W_1)$:
\begin{equation}\label{es.hilb2}
0 \to \cO_{\PP(W_1)}(-1) \otimes W_1 \xrightarrow{\varphi} \cO_{\PP(W_1)} \otimes W_2 \to V \to 0
\end{equation}
where $\varphi$ is the morphism defined on the fiber over $l$ as
\[
\varphi(l, L \otimes F) = (l, LF ).
\]
The quotient sheaf $V$ is a locally free sheaf of rank 3. Notice that $V$ parametrizes the pairs $(l, [Q]_l)$ where $l$ is a line in $\ptwo$ and $[Q]_l$ is the equivalence class of all quadratic forms up to multiples of any linear form associated to $l$. Exactly the same argument used to prove Lemma \ref{lemma:Hilb2=PSym2}, after substituting $\PP(W_2)$ with $\PP^2$, shows the following:

\begin{lemma}\label{lemma.Hilb2bis}
	We have an isomorphism of $\GL_3$-equivariant projective bundles:
	\[
	\xymatrix{
		\PP(V) \ar[rr]^{\cong} \ar[rd] & & H \ar[ld]_{\phi}\\
		& \PP(W_1) &
	}
	\]
	In particular $A^*_{\GL_3}(H)$ is freely generated as $A^*_{\GL_3}$-module by the elements $h_1^is^j$, where $h_1:=c_1(\Ocal_{\PP(W_1)}(1))$, $s:=c_1(\Ocal_{\Pro(V)}(1))$ and $0\leq i \leq 2$, $0\leq j \leq 2$.
\end{lemma}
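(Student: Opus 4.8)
The plan is to mimic the proof of Lemma~\ref{lemma:Hilb2=PSym2}, replacing the ambient projective space $\PP(W_2)$ by $\ptwo = \PP(E)$, so that the role of "the unique line through a length-$2$ subscheme" is played by the morphism $\phi \colon H \to \PP(W_1)$ and the role of "the unique quadratic form on that line vanishing on the subscheme" is played by a section of $\PP(V)$, where $V$ is the rank-$3$ bundle of~\eqref{es.hilb2}. More precisely, I would proceed as follows.

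First I would record the geometric dictionary: a point $p \in H$ determines a length-$2$ subscheme $\Sigma(p) \subset \ptwo$, which spans a unique line $l = l(p)$, giving the morphism $\phi$; and on $l$ there is a unique (up to scalar) quadratic form vanishing on $\Sigma(p)$, well-defined as an element of $\PP(V)_{l}$ since $V$ is precisely the bundle whose fiber over $l$ is $W_2 / L\cdot W_1$ (quadratic forms modulo multiples of a linear equation of $l$). This produces a $\GL_3$-equivariant morphism $H \to \PP(V)$ over $\PP(W_1)$, and conversely a point of $\PP(V)$ over $l$ — a line together with a quadratic form on it modulo the line's equation — cuts out a length-$2$ subscheme of $l \subset \ptwo$, giving the inverse map at the level of points. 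To make this functorial I would, exactly as in Lemma~\ref{lemma:Hilb2=PSym2}, work with the universal subscheme $\Zcal \subset H \times \ptwo$ and its ideal sheaf, use cohomology and base change to see that $\pr_{1*}\bigl(\Ocal_{\Zcal}\otimes \pr_2^*\Ocal(1)\bigr)$ is locally free of rank $2$ and that $\pr_{1*}\pr_2^*\Ocal(1) = W_1^\vee \otimes \Ocal$ surjects onto it, producing the morphism $\phi \colon H \to \PP(W_1) = \Gr_2(\ptwo)$ with $\phi^*$ of the tautological bundle identified with the rank-$2$ quotient; then a surjection from $\Sym^2$ of the rank-$2$ bundle onto $\pr_{1*}\bigl(\Ocal_{\Zcal}\otimes \pr_2^*\Ocal(2)\bigr)$ (rank $2$) has a line-bundle kernel, and this kernel is exactly (the dual of) $\phi^*V$, so its inclusion defines $\psi \colon H \to \PP(V)$ over $\PP(W_1)$. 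Finally, $\psi$ is a morphism between smooth varieties (both $\PP(V)$ and $H = \hilb^2_{\ptwo}$ are smooth, the latter classically) which is birational (it is an isomorphism on the open dense locus of reduced subschemes) and finite onto its target; by Zariski's Main Theorem it is an isomorphism.

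Once the isomorphism $H \cong \PP(V)$ over $\PP(W_1)$ is established, the statement about $A^*_{\GL_3}(H)$ is formal: apply the equivariant projective bundle formula (Proposition~\ref{prop:proj bundle formula}) twice, once to $\PP(W_1) \to B\GL_3$ and once to $\PP(V) \to \PP(W_1)$, to conclude that $A^*_{\GL_3}(H)$ is a free $A^*_{\GL_3}$-module with basis $h_1^i s^j$, $0 \le i \le 2$, $0 \le j \le 2$, where $h_1 = c_1(\Ocal_{\PP(W_1)}(1))$ and $s = c_1(\Ocal_{\PP(V)}(1))$.

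The main obstacle is the identification of the kernel line bundle of the multiplication map $\Sym^2\bigl(\pr_{1*}(\Ocal_{\Zcal}\otimes\pr_2^*\Ocal(1))\bigr) \to \pr_{1*}(\Ocal_{\Zcal}\otimes\pr_2^*\Ocal(2))$ with $\phi^*V^\vee$ (up to twist), i.e.\ checking that the map $\psi$ so constructed genuinely lands in $\PP(V) \subset \PP(\Sym^2 \phi^*\Scal^\vee)$ and is compatible with the presentation~\eqref{es.hilb2} of $V$ as a quotient of $\Ocal_{\PP(W_1)} \otimes W_2$; this is a diagram-chase with base-change identifications. Everything else — smoothness of $\hilb^2_{\ptwo}$, birationality, finiteness, and the final cohomological bookkeeping — is either standard or a verbatim repetition of the argument already given for Lemma~\ref{lemma:Hilb2=PSym2}.
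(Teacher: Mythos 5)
Your proposal is correct and matches the paper exactly: the paper gives no separate proof of Lemma~\ref{lemma.Hilb2bis}, stating only that it follows by ``exactly the same argument'' as Lemma~\ref{lemma:Hilb2=PSym2} with $\PP(W_2)$ replaced by $\ptwo$, which is precisely the argument you spell out (universal subscheme, cohomology and base change, the multiplication map $\Sym^2$ of the rank-$2$ pushforward onto the rank-$2$ quotient, and Zariski's Main Theorem). Your observation that the one genuinely new ingredient is matching the resulting $\Sym^2$-bundle with the bundle $V$ of~\eqref{es.hilb2} (via $W_2/L\cdot W_1 \cong \H^0(l,\Ocal_l(2)) \cong \Sym^2\H^0(l,\Ocal_l(1))$) is a fair and correct reading of the small gap the paper leaves implicit.
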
 

\begin{propos}\label{def.D}
There is a projective subbundle $D \subseteq \PP(W_{4}) \times H$ over $H$ whose geometric points correspond to pairs $(p, C)$, where $\Sigma(p) \subseteq \ptwo$ is a 0-dimensional subscheme of length 2, and $C \subset \ptwo$ is a quartic curve whose sheaf of ideals contains $I \left( \Sigma(p) \right)^{2}$. \end{propos}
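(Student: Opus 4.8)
\medskip

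The plan is to exhibit $D$ as the projectivization of a locally free subsheaf (or rather, a quotient sheaf) on $H$, built exactly in the same spirit as the sheaf $V$ of (\ref{es.hilb2}), but one degree up. First I would set up the universal object: let $\Zcal\subset H\times\ptwo$ be the universal length-$2$ subscheme with ideal sheaf $\Ical_{\Zcal}$, and let $\pr_1\colon H\times\ptwo\to H$, $\pr_2\colon H\times\ptwo\to\ptwo$ be the two projections. The key point is to understand the sheaf $\pr_{1*}\bigl(\Ical_{\Zcal}^{2}\otimes\pr_2^*\cO_{\ptwo}(4)\bigr)$ and show it is locally free of the expected rank, fitting into an exact sequence
\[
0\to \pr_{1*}\bigl(\Ical_{\Zcal}^{2}\otimes\pr_2^*\cO(4)\bigr)\to \pr_{1*}\pr_2^*\cO(4)\simeq W_4^{\vee}\otimes\cO_H\to \pr_{1*}\bigl(\cO_{2\Zcal}\otimes\pr_2^*\cO(4)\bigr)\to 0,
\]
where $2\Zcal$ denotes the subscheme cut out by $\Ical_{\Zcal}^2$. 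The scheme $2\Zcal$ is flat of degree $6$ over $H$ — this can be checked on the two strata of $H$ (reduced subschemes $\{x,y\}$, where $\Ical^2$ defines $2x+2y$ of degree $1+2+1+2$... more precisely length $3$ at each point, total $6$; and the nonreduced ones, which behave the same by flatness/semicontinuity, or by the explicit local description near $(Q_i>Q_j)$-type points). Granting flatness of $2\Zcal\to H$ and the vanishing $\H^1(\ptwo,\Ical_{\Sigma}^2(4))=0$ for every length-$2$ subscheme $\Sigma$ (which holds because a quartic has plenty of room to be singular along two points — e.g. $\deg=4\ge 2\cdot 2$, or by a direct Koszul/\v{C}ech computation), cohomology and base change gives that all three terms above are locally free, of ranks $15-6=9$, $15$, $6$ respectively. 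Then $D:=\PP$ of the dual of the rank-$9$ sheaf $\pr_{1*}(\Ical_{\Zcal}^2\otimes\pr_2^*\cO(4))$, embedded in $\PP(W_4)\times H$ via the inclusion into $W_4^{\vee}\otimes\cO_H$, is a projective subbundle over $H$ whose fiber over $p$ is exactly $\PP$ of the space of quartic forms $F$ with $F\in I(\Sigma(p))^2$, as required. Equivariance is automatic since every sheaf in sight is constructed functorially from the $\GL_3$-linearized data $\Zcal\subset H\times\ptwo$.

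\medskip

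Carrying this out, the main obstacle is the flatness of $2\Zcal\to H$ together with the cohomology-and-base-change step: one must be careful that $\Ical_{\Zcal}^2$ — the \emph{square} of the ideal — is the right thing and behaves well in families (squaring ideals does not commute with base change in general, so one works with the subscheme defined fiberwise or checks directly that the formation of $\Ical_{\Zcal}^2$ commutes with the base changes to points). The cleanest route is probably to use the isomorphism $H\simeq\PP(V)$ from Lemma \ref{lemma.Hilb2bis} to reduce everything to an explicit bundle computation over $\PP(W_1)$: a length-$2$ subscheme is the intersection of a line $l$ (a point of $\PP(W_1)$) with a conic, and $\Ical_\Sigma^2$ is generated by $L^2$, $LQ$, $Q^2$, so the fiber of $D$ over $p$ consists of quartics of the form $L^2 A + LQ B + Q^2 c$ with $A$ a quadratic form, $B$ a linear form, $c$ a scalar — this already shows the fiber dimension is $(\dim W_2 - \dim\langle L\rangle\cdot\dim W_1)+(\dim W_1)+1$ after quotienting by the relation $LQ\cdot(\text{lin})=L^2\cdot(\text{something})+\cdots$, matching rank $9$. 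Globalizing these generators over $\PP(V)\to\PP(W_1)$ gives an explicit presentation of the rank-$9$ bundle, bypassing delicate flatness arguments. Once $D$ is realized as a projective bundle, the remaining claims in the later results — smoothness of $D$, freeness of $A^*_{\GL_3}(D)$ over $A^*_{\GL_3}$, and computability of pushforwards by localization — follow formally, and that is presumably what the subsequent Proposition \ref{propos.twonodes} exploits.
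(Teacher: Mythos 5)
Your proposal is correct and follows essentially the same route as the paper: the paper also defines $D=\PP(F)$ with $F=\ker\bigl(W_4\otimes\cO_H\to\pi_*\cO_{S_1}(4)\bigr)$, where $S_1$ is the first infinitesimal neighborhood of the universal subscheme, establishes flatness of degree $6$ by checking the fiberwise length is constantly $6$, and gets local freeness of rank $9$ from the surjectivity of $\H^0(\ptwo,\cO(4))\to\H^0(\ptwo,(\cO_\Sigma/I(\Sigma)^2)(4))$. Your "cleaner alternative" of working over $\PP(W_1)$ via $H\simeq\PP(V)$ is also precisely how the paper sets up the universal family, realizing $H$ as the relative Hilbert scheme of the incidence $\PP^1$-bundle $I\to\PP(W_1)$.
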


\begin{proof}
Let $E$ be the standard representation of $\GL_3$ and define $\PP^2:=\PP(E)$. Let $I \subseteq \PP(W_1) \times \PP^2$ be the incidence relation. The projection $I \arr \PP(W_1)$ makes $I$ into a $\PP^{1}$-bundle on $\PP(W_1)$. More precisely, we have an isomorphism:
\[
I \cong \PP\bigl(\cotd\bigr).
\]
The embedding $I \subseteq \PP(W_1) \times \ptwo$ is induced by the natural $\GL_{3}$-equivariant embedding
\[
\cotd \subseteq E \otimes \Ocal_{\PP(W_1)}
\]
of locally free sheaves on $\PP(W_1)$.

The Hilbert scheme $H$ can be alternatively described as the relative Hilbert scheme $\hilb^{2}_{I/\PP(W_1)}$ of length two subschemes of the fibers of the projection 
\[ I = \PP \left( \Omega_{\PP(W_1)}(1) \right) \arr \PP(W_1).\]
In other words, the natural map $\hilb^{2}_{I/\PP(W_1)} \to H$, induced by the projection $I \to \PP^2$, is an isomorphism, essentially because every length two subscheme of $\ptwo$ is contained in a unique line.

The universal family $S \subseteq H\times_{\PP(W_1)}I \subseteq H \times \ptwo$ can be described as follows: consider the projectivization $\PP\bigl(\stwo\bigr)$ of the sheaf of quadratic forms on $\cotd$. Then 
\begin{equation}
S \subseteq I \times_{\PP(W_1)}H \cong \PP(\cotd) \times_{\PP(W_1)} \PP\bigl(\stwo\bigr)
\end{equation}
is the universal hypersurface of degree~$2$.

Denote by $S_{1}$ the first infinitesimal neighborhood of $S$ in $H \times \ptwo$. If $\Sigma \subseteq \PP^{2}$ is a length two subscheme of $\ptwo$ corresponding to a point $p$ of $H$, the fiber of $S_{1} \arr H$ over $p$ is the subscheme of $\ptwo$ with sheaf of ideals $I(\Sigma)^{2}$. It is easy to see that the dimension over $k(p)$ of $\H^{0}\bigl(S_{1}(p), \cO(4)\bigr)$, where $S_{1}(p)$ is the fiber of $S_{1}$ over $p$, is always $6$; hence the projection $S \arr H$ is finite and flat of degree~$6$.

Consider the two projections
\[
\xymatrix{
H \times \ptwo \ar[r]^{\pr_2} \ar[d]_{\pi} & \ptwo \\
H & \\
}
\]

Let $F$ be the pushforward $\pi_{*}\bigl(I(S)^{2} \otimes \pr_{2}^{*}\cO_{\ptwo}(4)\bigr)$. The pushforward to $H$ of the exact sequence
   \[
   0 \arr I(S)^{2} \otimes \pr_{2}^{*}\cO_{\ptwo}(4) \arr
   \pr_{2}^{*}\cO_{\ptwo}(4) \arr \cO_{S_{1}}\otimes\pr_{2}^{*}\cO_{\ptwo}(4) \arr 0
   \]
of sheaves on $H \times \ptwo$ gives an exact sequence
   \[
   0 \arr F \arr W_{4}\otimes\cO_{H} \arr \pi_{*}\cO_{S_{1}}(4)
   \]
where by $\cO_{S_{1}}(4)$ we denote the restriction of $\pr_{2}^{*}\cO_{\ptwo}(4)$ to $S_{1}$. 

A straightforward argument shows that if $\Sigma \subseteq \ptwo$ is a length two subscheme, the restriction homomorphism $\H^{0}\bigl(\ptwo, \cO_{\ptwo}(4)\bigr) \arr \H^{0}\bigl(\ptwo, (\cO_{\Sigma}/I(\Sigma)^2)(4)\bigr)$ is surjective. Hence the homomorphism $W_{4}\otimes\cO_{H} \arr \pi_{*}\cO_{S_{1}}(4)$ is surjective. Since $\pi_{*}\cO_{S_{1}}(4)$ is a rank~$6$ locally free sheaf, we have that $F$ is a locally free sheaf of rank $9$.

Set $D := \PP(F) \subseteq \PP(W_{4}) \times H$; then $D$ is a smooth integral variety of dimension $12$. Its geometric points correspond to pairs $(p, C)$, where $\Sigma(p) \subseteq \ptwo$ is a length two subscheme, and $C \subset \ptwo$ is a quartic whose sheaf of ideals contains $I(\Sigma(p))^{2}$. So, generically, $D$ parametrizes quartics with exactly two singular points.
\end{proof}

The variety $D$ possesses two desirable features: first, the projection $D\arr \PP(W_4)$ factorizes by construction through $\ov{Z}_{\rm binod}$, the closed subscheme of quartics having at least two singular points, and it is 1:1 over $Z_{\rm binod}$. Second, we can easily determine the generators of $A^*_{\GL_{3}}(D)$ as $A^*_{\GL_{3}}$-module, as it is a projective bundle over $H$.

We have seen in Lemma \ref{lemma.Hilb2bis} that the equivariant Chow ring of $H$ is generated, as an $A^*_{\GL_{3}}$-module, by the elements $h_1^is^j$, where $h_1:=c_1(\Ocal_{\PP(W_1)}(1))$, $s:=c_1(\Ocal_{\Pro(V)}(1))$ and $0\leq i \leq 2$, $0\leq j \leq 2$.
Consider the diagram
$$\xymatrix{
	& D \ar[dl]_{p} \ar[dr]^{q} & \\
	\Pro(W_4)& &H}$$
Then the following Lemma easily follows by applying the projection formula.
\begin{lemma}
	The image of $p_*:A^*_{\GL_{3}}(D)\to A^*_{\GL_{3}}(\Pro(W_4))$ is generated as an ideal by the elements 
	\[
	\gamma_{ij}:=p_*q^*h_1^is^j,
	\]
	where $0\leq i \leq 2$, $0\leq j \leq 2$.
\end{lemma}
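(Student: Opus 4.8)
The plan is to deduce this from three ingredients that are already available: the equivariant projective bundle formula applied to $D=\PP(F)\to H$, the description of $A^*_{\GL_3}(H)$ in Lemma~\ref{lemma.Hilb2bis}, and the projection formula for the proper map $p$. The guiding idea is that every $A^*_{\GL_3}$-module generator of $A^*_{\GL_3}(D)$ is, up to powers of $p^*h_4$, pulled back from $H$, so that its pushforward is a multiple of one of the $\gamma_{ij}$.

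First I would recall from Proposition~\ref{def.D} that $F$ is locally free of rank $9$ on $H$, so that, writing $\xi:=c_1(\Ocal_{\PP(F)}(1))$ for the relative hyperplane class, the equivariant projective bundle formula gives that $A^*_{\GL_3}(D)$ is free over $A^*_{\GL_3}(H)$ on $1,\xi,\dots,\xi^{8}$. Feeding in Lemma~\ref{lemma.Hilb2bis}, which says $A^*_{\GL_3}(H)$ is free over $A^*_{\GL_3}$ on the classes $q^*(h_1^is^j)$ with $0\le i,j\le 2$, I conclude that $A^*_{\GL_3}(D)$ is generated as an $A^*_{\GL_3}$-module by the products $q^*(h_1^is^j)\,\xi^{k}$ with $0\le i,j\le 2$ and $0\le k\le 8$.

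The second step is the identification $\xi=p^*h_4$. This follows by unwinding the construction of $D$: by definition $D=\PP(F)\subseteq\PP(W_4)\times H$ and the morphism $p\colon D\to\PP(W_4)$ is the one induced by the composite $\Ocal_{\PP(F)}(-1)\hookrightarrow F\hookrightarrow W_4\otimes\Ocal_H$, whence $p^*\Ocal_{\PP(W_4)}(-1)\cong\Ocal_{\PP(F)}(-1)$. With this in hand, the projection formula gives
\[
p_*\bigl(q^*(h_1^is^j)\,\xi^{k}\bigr)=p_*\bigl(q^*(h_1^is^j)\cdot p^*h_4^{k}\bigr)=h_4^{k}\cdot p_*q^*(h_1^is^j)=h_4^{k}\,\gamma_{ij}.
\]
Since $p_*$ is $A^*_{\GL_3}$-linear, $\im(p_*)$ is spanned as an $A^*_{\GL_3}$-module by the elements $h_4^{k}\gamma_{ij}$, each of which lies in the ideal generated by $\{\gamma_{ij}\}_{0\le i,j\le 2}$; conversely each $\gamma_{ij}$ lies in $\im(p_*)$, which is itself an ideal of $A^*_{\GL_3}(\PP(W_4))$ by the projection formula. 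Hence $\im(p_*)=(\gamma_{ij})_{0\le i,j\le 2}$, as claimed.

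There is no real obstacle here: the only point that requires a moment's care is the identification $\xi=p^*h_4$, which is a bookkeeping matter once one traces through the definition of the embedding $D\subseteq\PP(W_4)\times H$ in the proof of Proposition~\ref{def.D}; everything else is the routine combination of the projective bundle formula, Lemma~\ref{lemma.Hilb2bis}, and the projection formula.
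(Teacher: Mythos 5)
Your argument is correct and is exactly the one the paper intends: the paper's proof consists of the single remark that the lemma ``easily follows by applying the projection formula,'' and your write-up supplies precisely the missing details (the projective bundle formula for $D=\PP(F)\to H$, the identification $\xi=p^*h_4$ coming from $F\subseteq W_4\otimes\Ocal_H$ being a subbundle, and Lemma~\ref{lemma.Hilb2bis} for the module generators of $A^*_{\GL_3}(H)$). No discrepancy with the paper's approach.
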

We can use the localization formula to explicitly compute the elements $\gamma_{ij}$, so to prove the following.
\begin{propos}\label{propos.twonodes}
	The image of $p_*:A^*_{\GL_{3}}(D)\to A^*_{\GL_{3}}(\Pro(W_4))$ is contained in $I_{\wt{Z}}$.
\end{propos}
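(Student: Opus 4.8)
The plan is to compute each generator $\gamma_{ij} = p_*q^*h_1^is^j$ of $\im(p_*)$ by the equivariant localization formula (Theorem \ref{thm.loc} and Remark \ref{rmk.loc}) on the smooth variety $D$, and then to verify algebraically that each resulting class lies in the ideal $I_{\wt{Z}} = (\alpha_1,\alpha_2,\alpha_3)$ of $A^*_{\GL_3}(\PP(W_4))$. Since $D = \PP(F)$ is a projective bundle over $H$, and $H \cong \PP(V)$ is itself a projective bundle over $\PP(W_1)$ by Lemma \ref{lemma.Hilb2bis}, the $T$-equivariant Chow ring of $D$ is torsion-free over $A^*_T$, so by Remark \ref{rmk.loc} the localization map is injective and $p_*$ can be computed fixed-point by fixed-point.

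First I would enumerate the $T$-fixed points of $D$. The $T$-fixed lines $l \in \PP(W_1)$ are the three coordinate lines $[X]$, $[Y]$, $[Z]$; over each, the fixed points of $H \cong \PP(V)$ correspond to the $T$-fixed length-$2$ subschemes contained in that line (either two coordinate points on the line, or a length-$2$ subscheme supported at one coordinate point and pointing along the line); and over each such fixed point of $H$, the fixed points of $D = \PP(F)$ are the $T$-eigenlines in the rank-$9$ fiber $F(p)$, which is the space of quartic forms vanishing to order $\geq 2$ along $\Sigma(p)$ — explicitly a span of nine monomials of degree $4$. For each fixed point I would record three ingredients, exactly as in the proof of Proposition \ref{propos.Z22open}: the $T$-character of the corresponding monomial in $W_4$ (giving the restriction of $h_4$ and, via the product over the other fourteen monomials, the class $[f(\text{pt})] = \prod_k(h_4+\theta_k)$ of its image in $\PP(W_4)$); the restrictions of $h_1$ and $s$ to that fixed point (the $h_1$-eigenvalue is read off from $l$, the $s$-eigenvalue from the chosen quadratic form on $l$, as in Lemma \ref{lemma:Hilb2=PSym2}); and the equivariant top Chern class $c_{top}^T(TD_{\text{pt}})$, which factors as the tangent weights of $\PP(W_1)$ at $l$, times the tangent weights of the Hilbert-scheme fiber at $\Sigma(p)$ (handled by the $\Hom_{R/I}(I/I^2,R/I)$ computation as in Proposition \ref{propos.Z22open}), times the tangent weights of the projective bundle $\PP(F)$ at the chosen monomial line.

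Then the localization formula gives
\[
\gamma_{ij} = p_*q^*h_1^is^j = \sum_{\text{fixed pts } P} \frac{(h_1^is^j)|_P \cdot [f(P)]}{c_{top}^T(TD_P)},
\]
a sum of rational functions in $l_1,l_2,l_3$ (and $h_4$) which, a priori by general theory, lies in $A^*_T(\PP(W_4)) = \ZZ[l_1,l_2,l_3,h_4]/(p_4(h_4))$; symmetrizing over $S_3$ (equivalently, checking the output is already symmetric) lands it in $A^*_{\GL_3}(\PP(W_4)) = \ZZ[c_1,c_2,c_3,h_4]/(p_4(h_4))$. For each of the nine pairs $(i,j)$ with $0 \le i,j \le 2$ one then tests membership in $I_{\wt{Z}}$: since $p_4(h_4) \in I_{\wt{Z}}$ by Lemma \ref{alpha.classes.independent}, this is a finite linear-algebra/Gröbner-basis computation in $\ZZ[c_1,c_2,c_3,h_4]$, reducing each $\gamma_{ij}$ modulo $\alpha_1,\alpha_2,\alpha_3$ and checking the remainder vanishes. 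This is precisely the kind of "direct computation with Mathematica" already invoked in Propositions \ref{propos.Z2} and \ref{propos.Z22open}.

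The main obstacle is bookkeeping rather than conceptual: correctly identifying the fixed locus of $D$ (in particular, for each $T$-fixed $\Sigma(p)$, writing down which nine degree-$4$ monomials span the fiber $F(p)$ of quartics singular along $\Sigma(p)$, including the infinitesimal-direction fixed points where $\Sigma(p)$ is non-reduced and "order $\geq 2$ along $\Sigma(p)$" must be interpreted scheme-theoretically via $I(\Sigma(p))^2$), and getting all the tangent weights right — especially the Hilbert-scheme tangent weights at non-reduced fixed points and the relative tangent weights of the rank-$9$ bundle $\PP(F)$. Once the fixed-point data table is assembled correctly, the remaining steps — summing the localization contributions and reducing modulo $I_{\wt{Z}}$ — are mechanical and can be delegated to a computer algebra system, yielding the claimed inclusion $\im(p_*) \subseteq I_{\wt{Z}}$.
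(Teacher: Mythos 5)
Your proposal is correct and follows essentially the same route as the paper: enumerate the $T$-fixed points of $D$ (the nine fixed length-two subschemes of $\ptwo$ paired with the degree-$4$ monomials lying in $I(\Sigma(p))^2$), assemble the tangent weights, the restrictions of $h_1$ and $s$, and the classes of the image points in $\PP(W_4)$, then apply the localization formula and check membership of each $\gamma_{ij}$ in $I_{\wt{Z}}$ by machine computation. The only cosmetic difference is that you split $c_{top}^T(TD_P)$ along the three-step tower $\PP(F)\to\PP(V)\to\PP(W_1)$ while the paper uses the two-factor decomposition $c_{top}^T(TH_p)\,c_{top}^T(T\PP(V_p)_C)$ with $TH_p=\Hom(\Ical_p/\Ical_p^2,\Ocal_p)$; these agree.
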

\begin{proof}
	Let $T\subset \GL_{3}$ be the maximal torus of diagonal matrices. Then the $T$-fixed locus of $H$ consists of the nine points $p_1,\dots,p_9$ parametrizing the subschemes of $\ptwo$ whose associated homogeneous ideals are:
	$$\begin{matrix*}
	I_{p_1}=(X,YZ)&I_{p_2}=(Y,XZ)&I_{p_3}=(Z,XY) \\
	I_{p_4}=(X,Y^2)&I_{p_5}=(X^2,Y)&I_{p_6}=(Y,Z^2)\\
	I_{p_7}=(Z,Y^2)&I_{p_8}=(X,Z^2)&I_{p_9}=(X^2,Z)
	\end{matrix*}$$
	From this we see that the $T$-fixed locus of $D$ consists of the pairs $F=(C,p)$ where $p$ is a $T$-fixed point of $H$ and $C$ is a quartic whose defining equation is a monomial contained in $I_p^2$.	
	We need to compute $c^T_{top}(TD_F)$ for every fixed point $F=(C,p)$ of $D$. Observe that $$c_{top}^T(TD_F)=c_{top}^T(TH_p)c_{top}^T(T\Pro(V_p)_C)$$ where $\Pro(V_p)\subset\Pro(W_4)$ is the subspace of quartics containing the first infinitesimal neighborhood of $\Sigma(p)$. We are going to compute separately the two top Chern classes above.
	
	Recall that $TH_p=\Hom(\Ical_p/\Ical_p^2,\Ocal_p)$. Suppose that the homogeneous ideal associated to $p$ is $(X,YZ)$: then the subscheme $\Sigma(p)$ is the union of the two reduced points $q=[0:1:0]$ and $r=[0:0:1]$ and we have:
	$$ \Hom(\Ical_p/\Ical_p^2,\Ocal_p)=\Hom \left( \left \langle \dfrac{X}{Y},\dfrac{Z}{Y} \right \rangle,k \right )\oplus\Hom \left( \left \langle \dfrac{X}{Z},\dfrac{Y}{Z} \right \rangle,k \right). $$
	Recall that, once a base $\{e_1,\dots,e_n\}$ of a vector space $V$ is fixed, there is a well defined base $\{e_1^\vee,\dots,e_n^\vee\}$ of dual elements for $V^\vee$.
	Then we can say that:
	$$ TH_p=\left \langle \dfrac{X}{Y}^\vee,\dfrac{Z}{Y}^\vee,\dfrac{X}{Z}^\vee,\dfrac{Y}{Z}^\vee \right \rangle. $$
	From this we immediately deduce:
	$$ c_{top}^T(TH_{p_1})=(l_2-l_1)(l_2-l_3)(l_3-l_1)(l_3-l_2). $$
    With the same arguments we deduce:
	\begin{align*}
	c_{top}^T(TH_{p_2})&=(l_3-l_1)(l_3-l_2)(l_1-l_2)(l_1-l_3); \\
	c_{top}^T(TH_{p_3})&=(l_2-l_1)(l_2-l_3)(l_1-l_2)(l_1-l_3).
	\end{align*}
	Consider now the subscheme associated to $p_4$, for which we have:
	$$ \Hom ( \Ical_{p_4}/\Ical_{p_4}^2,\Ocal_{p_4}  )=\Hom \left( \left \langle \dfrac{X}{Z},\dfrac{Y^2}{Z^2} \right \rangle, \left \langle 1,\dfrac{Y}{Z} \right \rangle \right). $$
	From this we deduce
	$$ TH_{p_4}=\left \langle \dfrac{X}{Z}^\vee\otimes 1,\dfrac{Y^2}{Z^2}^\vee \otimes 1, \dfrac{X}{Z}^\vee\otimes \dfrac{Y}{Z},\dfrac{Y^2}{Z^2}^\vee \otimes \dfrac{Y}{Z} \right \rangle $$
	so that we obtain 
	$$c_{top}^T(TH_{p_4})=(l_3-l_1)(2l_3-2l_2)(l_2-l_1)(l_3-l_2).$$
	The remaining Chern classes are computed in a similar way. The final result is:
	\begin{align*}
	c_{top}^T(TH_{p_5})&=2(l_3-l_2)(l_1-l_2)(l_3-l_1)(l_3-l_1);\\
	c_{top}^T(TH_{p_6})&=2(l_1-l_2)(l_3-l_2)(l_1-l_3)(l_1-l_3);\\
	c_{top}^T(TH_{p_7})&=2(l_1-l_3)(l_2-l_3)(l_1-l_2)(l_1-l_2);\\
	c_{top}^T(TH_{p_8})&=2(l_2-l_1)(l_3-l_1)(l_2-l_3)(l_2-l_3);\\
	c_{top}^T(TH_{p_9})&=2(l_2-l_3)(l_1-l_3)(l_2-l_1)(l_2-l_1).
	\end{align*}
	Let ${(X^iY^jZ^k)^\vee}$ be homogeneous coordinates for $\Pro(W_4)$. Fix a point $p\in H$ with associated homogeneous ideal $I_p$ and a monomial $f=X^aY^bZ^c$ which is contained in $I_p^2$. Let $C:=\{f=0\}$. Then we have:
	$$ T\Pro(V_p)_C=\left \langle \dfrac{(X^iY^jZ^k)}{(X^aY^bZ^c)} \text{ where } X^iY^jZ^k\in I_p^2, (i,j,k)\neq(a,b,c) \right \rangle. $$
	Therefore:
	$$ c_{top}^T(T\Pro(V_p)_C)=\prod ((i-a)l_1+(j-b)l_2+(k-c)l_3) $$
	where the product is taken over all $(i,j,k)$ such that $i,j,k\geq0$, $i+j+k=4$, $(i,j,k)\neq (a,b,c)$ and $X^iY^jZ^k\in I_p^2$.
	
	Now we compute the class of the fixed point $[C]$ of $\Pro(W_4)$, where $C$ is defined as before. Observe that such a point is a complete intersection, defined by the homogeneous equations:
	$$ (X^iY^jZ^k)^\vee=0 \text{ for } (i,j,k)\neq (a,b,c).$$
	Applying \cite[Lemma 2.4]{EFh} we deduce:
	$$[C]=\prod (h_4+il_1+jl_2+kl_3)$$
	where the product is taken over all the $(i,j,k)$ such that $i,j,k\geq0$, $i+j+k=4$, $(i,j,k)\neq (a,b,c)$.
	
	We need then to compute the restriction of $h_1$ and $s$ to the fixed points of $H$. Recall that $h_1=c_1(\Ocal_{\Pro(W_1)}(1))$ and $s=c_1(\Ocal_{\Pro(V)}(1))$. Observe that the restriction of the line bundle $\Ocal_{\Pro(W_1)}(-1)$ to a point $p$ of $H$ is generated by an equation for the unique line passing through $\Sigma(p)$ or, in other terms, by the generator of degree $1$ of $I_p$. This implies:
	$$\begin{matrix*}
	c_1^T(\Ocal_{\Pro(W_1)}(1)(p_1))=-l_1 & c_1^T(\Ocal_{\Pro(W_1)}(1)(p_4))=-l_1 &  c_1^T(\Ocal_{\Pro(W_1)}(1)(p_7))=-l_3 \\
	c_1^T(\Ocal_{\Pro(W_1)}(1)(p_2))=-l_2  & c_1^T(\Ocal_{\Pro(W_1)}(1)(p_5))=-l_2 &  c_1^T(\Ocal_{\Pro(W_1)}(1)(p_8))=-l_1 \\
	c_1^T(\Ocal_{\Pro(W_1)}(1)(p_3))=-l_3 & c_1^T(\Ocal_{\Pro(W_1)}(1)(p_6))=-l_2  & c_1^T(\Ocal_{\Pro(W_1)}(1)(p_9))=-l_3
	\end{matrix*}$$

	Similarly, it is easy to see that the restriction of the line bundle $\Ocal_{\Pro(V)}(1)$ to a fixed point $p$ of $H$ is generated by an equation for the unique monomial of degree $2$ that generates $I_p$. This implies:
	$$\begin{matrix*}
	c_1^T(\Ocal_{\Pro(V)}(1)(p_1))=-l_2-l_3 & c_1^T(\Ocal_{\Pro(V)}(1)(p_4))=-2l_2 & c_1^T(\Ocal_{\Pro(V)}(1)(p_7))=-2l_2 \\
	c_1^T(\Ocal_{\Pro(V)}(1)(p_2))=-l_1-l_3  & c_1^T(\Ocal_{\Pro(V)}(1)(p_5))=-2l_1 &  c_1^T(\Ocal_{\Pro(V)}(1)(p_8))=-2l_3  \\
	c_1^T(\Ocal_{\Pro(V)}(1)(p_3))=-l_1-l_2  & c_1^T(\Ocal_{\Pro(V)}(1)(p_6))=-2l_3 & c_1^T(\Ocal_{\Pro(V)}(1)(p_9))=-2l_1
	\end{matrix*}$$
	Applying the projection formula we get:
	$$p_*q^*h_1^is^j=\sum \frac{c_1^T(\Ocal_{\Pro(W_1)}(1)(p))^i c_1^T(\Ocal_{\Pro(V)}(1)(p))^j}{c_{top}^T(TD_{C,p})}[C]$$
	where the sum is taken over all the fixed points $(C,p)$ of $D$. A straightforward computation with Mathematica concludes the proof.
\end{proof}
Again with the aid of Mathematica, from Proposition \ref{propos.twonodes} we deduce the following:
\begin{corol}\label{cor.twonodes}
	The image of $A^{\GL_{3}}_*(Z_{\rm binod})\arr A^{\GL_{3}}_*(\Pro(W_4)\smallsetminus \ov{Z}_{\rm trinod})$ is contained in the restriction to the latter of the ideal $(I_{\wt{Z}},\delta_{\{ 1, 3 \}})$.
\end{corol}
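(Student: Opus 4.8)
The plan is to compare $\ov{Z}_{\rm binod}$, over the open set $U:=\Pro(W_4)\smallsetminus\ov{Z}_{\rm trinod}$, with the two morphisms into $\Pro(W_4)$ that we already control: the projection $p\colon D\arr\Pro(W_4)$ studied in Proposition \ref{propos.twonodes} and the closed embedding $i\colon\ov{Z}_{\{1,3\}}\into\Pro(W_4)$ controlled by Proposition \ref{propos.Z13}. Write $D_U:=p^{-1}(U)$; from the construction of $D$ in Proposition \ref{def.D} the induced morphism $p_U\colon D_U\arr \ov{Z}_{\rm binod}\cap U$ is proper and surjective and is an isomorphism over the dense open stratum $Z_{\rm binod}$, a two–nodal quartic being recovered from, and recovering, the length-$2$ scheme of its two nodes. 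Note also that $\ov{Z}_{\{1,3\}}\subseteq\ov{Z}_{\rm binod}$, so $\ov{Z}_{\{1,3\}}\cap U$ is a closed subvariety of $\ov{Z}_{\rm binod}\cap U$.

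The geometric input I will need is that, over $U$, the morphism $p_U$ is birational onto $\ov{Z}_{\rm binod}\cap U$ away from $\ov{Z}_{\{1,3\}}\cap U$; more precisely, that $p_U$ is generically one–to–one over every stratum of $\ov{Z}_{\rm binod}\cap U$ that is not contained in $\ov{Z}_{\{1,3\}}$. To establish this I would list the boundary strata of $Z_{\rm binod}$ that are not absorbed into $\ov{Z}_{\rm trinod}$: a quartic lies in $\ov{Z}_{\rm binod}$ exactly when its $\delta$-invariant is at least $2$, and among these the ones that are \emph{not} limits of irreducible three–nodal quartics are, besides the two–nodal ones, the irreducible quartics of total $\delta$-invariant exactly $2$ (a single $A_3$ or $A_4$ point, a pair of cusps, a node and a cusp), over each of which the length-$2$ subscheme of the singular scheme defining the corresponding point of $D$ is uniquely determined, together with the generic line–plus–cubic, which survives in $U$ because $\dim\ov{Z}_{\{1,3\}}=\dim\ov{Z}_{\rm trinod}=11$ while $\ov{Z}_{\{1,3\}}\neq\ov{Z}_{\rm trinod}$, and over which there are three admissible subschemes $\Sigma$. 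Granting this, the excision sequence gives
\[ A^{\GL_3}_*(\ov{Z}_{\rm binod}\cap U)=\im\bigl((p_U)_*\bigr)+\im\bigl(A^{\GL_3}_*(\ov{Z}_{\{1,3\}}\cap U)\arr A^{\GL_3}_*(\ov{Z}_{\rm binod}\cap U)\bigr), \]
since every subvariety of the target not contained in $\ov{Z}_{\{1,3\}}$ is dominated birationally by a subvariety of $D_U$, while the remaining classes are supported on $\ov{Z}_{\{1,3\}}\cap U$.

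Granting the claim, the rest is formal. Pushing the displayed equality forward to $U$ and using that pullback along the open immersion $U\into\Pro(W_4)$ commutes with proper pushforward, the image of $A^{\GL_3}_*(\ov{Z}_{\rm binod}\cap U)$ in $A^{\GL_3}_*(U)$ is the sum of the restriction to $U$ of $\im\bigl(p_*\colon A^{\GL_3}_*(D)\arr A^{\GL_3}_*(\Pro(W_4))\bigr)$ and of the restriction to $U$ of $\im\bigl(i_*\colon A^{\GL_3}_*(\ov{Z}_{\{1,3\}})\arr A^{\GL_3}_*(\Pro(W_4))\bigr)$; by Proposition \ref{propos.twonodes} the first lies in $I_{\wt{Z}}$, and by Proposition \ref{propos.Z13} the second lies in $(I_{\wt{Z}},\delta_{\{1,3\}})$. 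It is Proposition \ref{propos.Z13} in its full strength that is used here, not merely the class $\delta_{\{1,3\}}$, because $\im(i_*)$ on $\ov{Z}_{\{1,3\}}$ also contains $\pi_{\{1,3\}*}h_1$ and $\pi_{\{1,3\}*}h_1^2$. Since $A^{\GL_3}_*(Z_{\rm binod})$ is a quotient of $A^{\GL_3}_*(\ov{Z}_{\rm binod}\cap U)$, its image in $A^{\GL_3}_*(U)$ lies in the restriction of $(I_{\wt{Z}},\delta_{\{1,3\}})$, as asserted. The only quantities that must be written out are the generators $\gamma_{ij}=p_*q^*h_1^is^j$ of $\im(p_*)$ and the generators $\delta_{\{1,3\}},\pi_{\{1,3\}*}h_1,\pi_{\{1,3\}*}h_1^2$ of $\im(\pi_{\{1,3\}*})$, both already computed by localization; this is where Mathematica enters.

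The hard part will be the geometric claim of the second paragraph. It rests on a classification of the planar singularities of small $\delta$-invariant that can occur on plane quartics, on the deformation-theoretic fact that an $A_3$ (tacnode) degenerates to two nodes but never to three — which is precisely what keeps the tacnode locus and its relatives outside $\ov{Z}_{\rm trinod}$ — and on the dimension coincidence that singles out $\ov{Z}_{\{1,3\}}$ as the only reducible stratum persisting in $U$, thereby explaining why $\delta_{\{1,3\}}$, and nothing more, has to be adjoined to $I_{\wt{Z}}$.
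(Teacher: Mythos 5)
Your argument has the structure that the paper intends but never writes down: the paper's entire proof of this corollary is the sentence preceding it (``with the aid of Mathematica, from Proposition \ref{propos.twonodes} we deduce\dots''), so the glue you supply --- that $D_U\sqcup(\ov{Z}_{\{1,3\}}\cap U)\arr\ov{Z}_{\rm binod}\cap U$ is an equivariant Chow envelope, after which Propositions \ref{propos.twonodes} and \ref{propos.Z13} finish the job --- is exactly what is missing, and your formal steps (excision surjectivity onto the open restrictions, compatibility of proper pushforward with restriction to an open, reading the pushforward from the locally closed stratum through a lift to its closure in $U$) are sound. Two caveats. First, everything hinges on the geometric claims you defer to your last paragraph, and as stated they are only half of what is needed: besides checking that the tacnodal, $A_4$, bicuspidal and node-plus-cusp strata and the generic line-plus-cubic are \emph{not} in $\ov{Z}_{\rm trinod}$, and that the length-$2$ singular subscheme is unique over each $\delta=2$ stratum, you must also prove the reverse adjacencies, namely that every irreducible quartic of total $\delta$-invariant $3$ (configurations such as $A_1+A_3$, $A_5$, $D_4$, $E_6$, three cusps, \dots) and all of $\ov{Z}_{\{2,2\}}$ \emph{do} lie in $\ov{Z}_{\rm trinod}$; this is where the deformation theory really enters (these configurations must be shown to deform to three nodes \emph{inside} the linear system of quartics), and neither you nor the paper proves it. If you prefer not to establish $\ov{Z}_{\{2,2\}}\cap U=\emptyset$, the argument survives unchanged by adding $\ov{Z}_{\{2,2\}}\cap U$ to the envelope and invoking Proposition \ref{propos.Z22}. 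Second, since the base field may have positive characteristic, ``generically one-to-one over each stratum'' must be upgraded to ``an isomorphism over a dense open of each stratum'' for the envelope property to give surjectivity of the pushforward; over $Z_{\rm binod}$ and the $\delta=2$ strata this holds because the inverse is furnished by the (finite flat, length-$2$) singular subscheme of the family, but it should be said explicitly.
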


\subsection{Quartics with three singular points.} \label{subsec.threesingularpoints}

Let $H:=\hilb^3\ptwo$ be the Hilbert scheme of $0$-dimensional subschemes of $\ptwo$ of length $3$. It is well known (see for instance \cite{fog68}) that $H$ is a smooth scheme of dimension $6$ and that there exists a universal family
$$ \xymatrix{
	\Sigma \ar[d] \ar@{}[r]|-*[@]{\subset} & H\times\ptwo \\
	H }$$
Let $\Sigma_1$ be the first infinitesimal neighborhood of $\Sigma$, i.e. the closed subscheme of $H\times\ptwo$ defined by the ideal $I_\Sigma^2$.

Recall that a $0$-dimensional, closed subscheme $S$ of $\PP^2$ is said to be \emph{curvilinear} if every connected component of $S$ can be embedded in a plane curve.
We set 
\[ Y := 
\left\{
\begin{matrix}
[S]\in H \text{ such that } S\subset \PP^2 \text{ has dimension 0, length 3,} \\
	\text{ is supported on one point and it is not curvilinear}  
\end{matrix} \right\} \] 
This $Y$ is a closed subset of $H$; we take it with the reduced scheme structure. If $S$ is such that $[S]\in Y$, then locally $I_S=(x,y)^2$. Observe that $Y$ is isomorphic to $\ptwo$ and is regularly embedded in $H$. Notice that the morphism
$$ \xymatrix{
	\Sigma_1|_{H\setminus Y} \ar[d] \ar@{}[r]|-*[@]{\subset} & H\times\ptwo \\
		H\setminus Y }$$
is a flat morphism and its fibers are $0$-dimensional closed subschemes of $\ptwo$ of length $9$. 

On the other hand, the fiber of $\Sigma_1$ over a point in $Y$ corresponds to a $0$-dimensional closed subscheme of $\ptwo$ of length $10$, supported on one point, whose defining ideal is locally equal to $(x,y)^4$. Our goal is to modify $H$ so that the family $\Sigma_1|_{H\setminus Y}$ can be extended to a family which is flat over the whole base.

\begin{propos}\label{propos.Htilde}
	Let $\wt{H}$ be the blow-up of $H$ along $Y$. Then there exists a flat family $\wt{\Sigma}_1\arr\wt{H}$ of $0$-dimensional, length $9$ closed subschemes of $\PP^2$ that extends $\Sigma_1|_{H\setminus Y}\arr H\setminus Y$. 
\end{propos}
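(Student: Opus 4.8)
The plan is to reduce the statement to an explicit local computation near $Y$ and then to glue by a uniqueness argument. Away from $Y$ there is nothing to do, since $\Sigma_1|_{H\smallsetminus Y}\arr H\smallsetminus Y$ is already flat of relative length $9$; the whole issue is local around $Y$, which is exactly the locus where the relative length of $\Sigma_1$ jumps (to $10$). Since $Y\cong\ptwo$ is a single $\GL_3$-orbit, the whole configuration is $\GL_3$-equivariant, and $Y$ meets each of the three standard affine charts $\A^2\subset\ptwo$, it suffices to construct the flat extension in a (Zariski or étale) neighbourhood of the single point $[\mathfrak{m}^2]\in\hilb^3\A^2$, with $\mathfrak{m}=(x,y)$; the translation subgroup then spreads it along $Y$ and $\GL_3$-equivariance does the rest. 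For a workable local model of $\hilb^3\A^2$ near $[\mathfrak{m}^2]$ I would use Hilbert--Burch: a colength-$3$ ideal near $\mathfrak{m}^2$ is perfect of grade $2$ and is cut out by the $2\times 2$ minors $I_2(M)$ of a $3\times 2$ matrix $M$ of affine-linear forms in $x,y$ over the parameter ring, with $M_0=\left(\begin{smallmatrix}x&0\\ y&x\\ 0&y\end{smallmatrix}\right)$ giving $I_2(M_0)=\mathfrak{m}^2$. Quotienting the space of such matrices by the gauge action of $\GL_3\times\GL_2$ and choosing a slice yields a smooth $6$-dimensional chart $U\ni[\mathfrak{m}^2]$ for $H$, in which $Y$ is the codimension-$4$ locus where $I_2(M)=\mathfrak{m}_p^2$ for a point $p$; I would record explicit coordinates $\epsilon,\delta$ along $Y$ (the support of the fat point) and $u_1,\dots,u_4$ transverse to it, together with the universal ideal $I_\Sigma=I_2(M)$ over $U$. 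Since $Y$ is regularly embedded in the smooth $H$, the scheme $\wt H=\mathrm{Bl}_YH$ is again smooth, and the four standard charts of the blow-up over $U$ (in which $(u_1,\dots,u_4)$ becomes $(u_1,u_1v_2,u_1v_3,u_1v_4)$ and cyclic permutations) cover a full neighbourhood of the exceptional divisor $E$.

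On each such chart I would write down $I_\Sigma^2=I_2(M)^2$ by hand and then exhibit the modified ideal $\wt I_\Sigma\subset\cO_{\wt H\times\A^2}$: it is generated by the pullback of $I_\Sigma^2$ together with one extra element of degree $3$ in $x,y$ whose coefficients are explicit linear expressions in the exceptional coordinates $v_i$. Geometrically, as the normal coordinates degenerate the length-$9$ scheme $V(I_\Sigma^2)$ does not limit to $V(\mathfrak{m}^4)$ (which has colength $10$) but to $V(\mathfrak{m}^4)$ enlarged by a line in $\mathfrak{m}^3/\mathfrak{m}^4$, and that line varies linearly with the blow-up direction; this matches the fact that $E$ is a $\PP^3$-bundle over $Y$, just as the space of lines in the $4$-dimensional $\mathfrak{m}^3/\mathfrak{m}^4$ is a $\PP^3$. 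With $\wt I_\Sigma$ in hand I would verify that $\cO_{\wt H\times\A^2}/\wt I_\Sigma$ is flat of relative length $9$ over $\wt H$ by producing an explicit $\cO_{\wt H}$-module basis of rank $9$ valid over the whole chart --- equivalently, by the local criterion over the reduced, irreducible base $\wt H$, one only has to check that the fibres over $E$ still have length $9$, a finite length count. Finally the local families glue: $\wt H$ is irreducible with $\wt H\smallsetminus E\cong H\smallsetminus Y$ schematically dense, and a flat family of finite subschemes of $\ptwo$ extending a given one over a dense open is unique when it exists (separatedness of the Hilbert functor, plus reducedness of the base); hence the charted families agree on overlaps and patch to a global flat family $\wt\Sigma_1\arr\wt H$ restricting to $\Sigma_1$ over $\wt H\smallsetminus E$. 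This family is automatically $\GL_3$-equivariant, since both $\mathrm{Bl}_YH$ and the flat limit are canonical.

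I expect the main obstacle to be the middle step: correctly identifying and verifying the single extra generator that repairs flatness. The conceptual content is that $\cO_{H\times\ptwo}/I_\Sigma^2$ fails to be flat \emph{exactly} along the smooth, codimension-$4$ jump locus $Y$, and one must show that a \emph{single} blow-up along $Y$ already flattens it --- this is not formal (generic flattening of Raynaud--Gruson would a priori allow a larger or iterated modification), so it has to be seen from the explicit local model, whose blow-up charts do cover all of $E$. In practice the computation of $I_2(M)^2$ and of its degeneration is somewhat intricate, and the gauge-fixing needed to pin down a clean slice $U$ is fiddly; $\GL_3$-equivariance can be used to lighten the work, e.g.\ by testing the degeneration only at finitely many points of $E$, since the stabiliser of a point of $Y\cong\ptwo$ is a parabolic whose Levi acts on the normal $\PP^3\cong\PP(\mathfrak{m}^3/\mathfrak{m}^4)$ --- binary cubics up to linear change of variables --- with only finitely many orbits.
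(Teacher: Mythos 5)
Your strategy is viable but genuinely different from the paper's. The paper does not construct the extended family by hand: it takes the closure $\Gamma\subset\wt{H}\times\hilb^9\ptwo$ of the graph of the rational map $\wt{H}\dashrightarrow\hilb^9\ptwo$ induced by $\Sigma_1$, and proves (Proposition \ref{propos.isomorphism}) that $\Gamma\arr\wt{H}$ is an isomorphism; the flat family is then the pullback of the universal family on $\hilb^9\ptwo$, so flatness is free. The only computation needed is the valuative Lemma \ref{lemma.finiteness of the fiber}: for an arc through $[S]\in Y$ whose generic fibre is three distinct points, the limit of $\Sigma_1$ is $(f)+(x,y)^4$ where $f=t^{-3}\,l_{pq}l_{pr}l_{qr}$ is the binary cubic determined solely by the normal direction of the arc. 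Quasi-finiteness of $\Gamma\arr\wt{H}$ over the exceptional divisor then follows, and Zariski's Main Theorem over the normal $\wt{H}$ finishes. You instead propose to exhibit the modified ideal $\wt{I}_\Sigma$ explicitly on blow-up charts of a Hilbert--Burch local model and verify flatness by a rank-$9$ basis; your gluing via uniqueness of flat extensions over a reduced base, and your reduction to finitely many points of $E$ by equivariance, are both sound. The underlying geometric identification --- the limit is $(f)+\mathfrak{m}^4$ with $f$ the cubic attached to the normal direction, matching $E$ being the $\PP^3$-bundle $\PP(\mathfrak{m}^3/\mathfrak{m}^4)$ --- is exactly the same in both arguments; the difference is that the paper only ever evaluates it along arcs with reduced generic fibre, while you must realize it as a regular section over an entire blow-up chart.

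That last point is the one real risk in your plan, and you flag it yourself: the ``single extra cubic generator'' must be written as a regular function of the chart coordinates that specializes correctly not only along $E$ and over the open locus of three distinct points, but also over the intermediate strata (curvilinear schemes supported on one or two points), where the ``product of three lines'' formula is not directly available and must be re-expressed through the Hilbert--Burch entries. Nothing suggests this fails --- the fibrewise colength count ($\mathfrak{m}^4$ has colength $10$ and adding one cubic drops it to $9$) confirms one generator suffices --- but until that section is actually produced and the rank-$9$ basis checked, the proof is a detailed program rather than a complete argument. The paper's graph-closure route is precisely a device for avoiding this universal computation.
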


\begin{rmk}\label{rmk.fiber of extension}
	Let $(x,y)^2$ be a local description of the ideal of $S\subset \PP^2$, so that $[S]\in Y$: then the fiber of the normal bundle $N_{Y,H}$ at $[S]$ can be identified with the space of homogeneous forms $f$ of degree $3$ in $x$ and $y$, and the fiber of $\Sigma_1$ over a point $([S],[f])$ in the exceptional divisor of $\wt{H}$ corresponds to the subscheme of $\PP^2$ whose ideal can be locally described as $(f,(x,y)^4)$.
\end{rmk}

We postpone the proof of Proposition \ref{propos.Htilde} to Section \ref{sec.modifications}. Consider the following diagram:
$$\xymatrix{
	\wt{H}\times\ptwo \ar[r]^{\hspace{0.3cm}\pr_2} \ar[d]^{\pr_1} & \ptwo \\
	\wt{H} }$$
Then over $\wt{H}\times\ptwo$ we have the short exact sequence of sheaves:
$$ 0\arr I_{\wt{\Sigma}_1} \arr \Ocal_{\wt{H}\times\ptwo} \arr \Ocal_{\wt{\Sigma}_1} \arr 0 $$
We can twist it by $\pr_2^*\Ocal(4)$ and push it forward along $\pr_1$, obtaining in this way the exact sequence:
$$0 \arr \pr_{1*}(I_{\wt{\Sigma}_1}\otimes\pr_2^*\Ocal(4)) \arr \pr_{1*}\pr_2^*\Ocal(4) \arr \pr_{1*}(\Ocal_{\wt{\Sigma}_1}\otimes\pr_2^*\Ocal(4)) $$
We have $\pr_{1*}\pr_2^*\Ocal(4)\simeq\Ocal_{\wt{H}\times\ptwo}\otimes W_4$, therefore it is a free sheaf of rank $15$, and by Grauert theorem we deduce that $\pr_{1*}(\Ocal_{\wt{\Sigma}_1}\otimes\pr_2^*\Ocal(4))$ is locally free of rank $9$.

Observe that the fiber over a fixed point in $\wt{H}$ of the kernel of the morphism $$\varphi:\pr_{1*}\pr_2^*\Ocal(4) \arr \pr_{1*}(\Ocal_{\wt{\Sigma}_1}\otimes\pr_2^*\Ocal(4))$$ can be interpreted as the space of ternary forms of degree $4$ that vanish when restricted to a fixed $0$-dimensional closed subscheme $S$ of $\ptwo$ of length $3$ or, in other terms, such that the plane quartic curves defined by the forms contain $S$: this condition forces the curve to be singular at the support of $S$.
\begin{rmk}
If $\varphi$ was surjective, then $\Fcal:=\ker(\varphi)$ would have been a locally free subsheaf of $\Ocal_{\wt{H}\times\ptwo}\otimes W_4$. Therefore, we would have obtained that there is a projective subbundle $\Pro(\Fcal)$ of $\wt{H}\times\Pro(W_4)$ whose projection on $\Pro(W_4)$ is injective over the locally closed subscheme parametrizing trinodal irreducible quartics. 

This would have enabled us to compute the relations in the equivariant Chow ring of $\Pro(W_4)$ that comes from this locus, using an argument which resembles the one we used for the case of curves with two nodes.

Unfortunately the morphism $\varphi$ is not surjective everywhere: nevertheless, there is a way to fix this problem by performing an elementary modification of vector bundles.
\end{rmk}
\begin{defi}
	We say that a $0$-dimensional subscheme $S$ of $\ptwo$ is \textit{rectilinear} if there exists a line that contains $S$.
\end{defi}
Let $H_r$ be the subset of $H$ whose points correspond to rectilinear subschemes, and let $\wt{H}_r$ be the preimage of $H_r$ in $\wt{H}$. Then we have the following:
\begin{propos}\label{propos.extension}
	The set $\wt{H}_r$ is a Cartier divisor and there exists a vector subbundle $\Fcal \subset W_4\times\wt{H}$ over $\wt{H}$ such that:
	\begin{enumerate}
		\item There exists an isomorphism $\Fcal|_{\wt{H}\setminus \wt{H}_r}\simeq\ker(\varphi)|_{\wt{H}\setminus \wt{H}_r}$.\\
		\item Let $[S]\in \wt{H}_r$ and let $l$ be the unique (up to scalar) linear form contained in $I_S$. Then:
		\[ \Fcal([S]) = \left\{ l^2q \text{ where } q\in H^0(\PP^2,\Ocal(2)) \right\} \subset H^0(\PP^2, I_S^2(4)). \]
	\end{enumerate}
\end{propos}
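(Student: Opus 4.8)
The plan is to construct $\Fcal$ as an elementary modification of the trivial bundle $W_4\otimes\Ocal_{\wt{H}}$ along $\wt{H}_r$, governed by the cokernel of the evaluation morphism $\varphi$, and then to read off its fibres over $\wt{H}_r$ from an explicit local model. The whole argument breaks into: showing $\wt{H}_r$ is Cartier, analysing $\varphi$ along it, and identifying the modified bundle.

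First I would show that $\wt{H}_r$ is a Cartier divisor. On $\wt{H}$ the pullback $\wt{\Sigma}$ of the universal length-$3$ family is finite and flat of degree $3$, so $\pr_{1*}\bigl(\Ocal_{\wt{\Sigma}}\otimes\pr_2^*\Ocal_{\ptwo}(1)\bigr)$ is locally free of rank $3$, and restriction of linear forms gives a $\GL_3$-equivariant map of rank-$3$ bundles
\[ \psi\colon W_1\otimes\Ocal_{\wt{H}}\arr\pr_{1*}\bigl(\Ocal_{\wt{\Sigma}}\otimes\pr_2^*\Ocal_{\ptwo}(1)\bigr). \]
A point $[S]$ lies in the degeneracy locus of $\psi$ exactly when some line contains $S$, that is, exactly when $S$ is rectilinear; since three general points impose independent conditions on lines, $\det\psi$ is a nonzero section of a line bundle, and its zero locus (with the reduced structure) is $\wt{H}_r$. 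As $\wt{H}$ is smooth, this exhibits $\wt{H}_r$ as an effective Cartier divisor.

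Next I would analyse $\varphi\colon W_4\otimes\Ocal_{\wt{H}}\arr \mathcal{G}:=\pr_{1*}\bigl(\Ocal_{\wt{\Sigma}_1}\otimes\pr_2^*\Ocal_{\ptwo}(4)\bigr)$, a map of bundles of ranks $15$ and $9$, proving: (a) $\varphi$ is surjective on $\wt{H}\smallsetminus\wt{H}_r$; (b) along $\wt{H}_r$ the rank of $\varphi$ drops by exactly $1$ and $\operatorname{coker}\varphi$ is the pushforward of a line bundle on $\wt{H}_r$. For (a) it is enough to check fibrewise that $\dim\{F\in W_4 : F|_{\wt{\Sigma}_1([S])}=0\}=6$ whenever $[S]\notin\wt{H}_r$; the cases needing care are the points of the exceptional divisor of $\wt{H}\to H$, where, by Remark \ref{rmk.fiber of extension}, the fibre of $\wt{\Sigma}_1$ has ideal locally $(f,(x,y)^4)$ with $f$ a cubic, and a direct monomial count shows that the vanishing still imposes $9$ independent conditions. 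For (b), over the generic point of $\wt{H}_r$ the scheme $S$ consists of three distinct collinear points on a line $l$, and by B\'ezout any quartic singular at all three of them must contain $l$; hence $H^0(I_S^2(4))=l\cdot\{\text{cubics through }S\}$ has dimension $7$ along $\wt{H}_r$, the single extra condition degenerating with the same first order as $\det\psi$. Together with the local models for $\wt{\Sigma}_1$ this pins down $\operatorname{coker}\varphi$ as a line bundle on $\wt{H}_r$ and confirms that blowing up $Y$ has confined all non-surjectivity of $\varphi$ to $\wt{H}_r$.

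Granting (a) and (b), the elementary modification of $W_4\otimes\Ocal_{\wt{H}}$ along $\wt{H}_r$ determined by $\operatorname{coker}\varphi$ produces a rank-$6$ subbundle $\Fcal\subset W_4\otimes\Ocal_{\wt{H}}$ agreeing with $\ker\varphi$ off $\wt{H}_r$, which is assertion (1). For assertion (2), observe that $\Fcal$, being a subbundle, is contained in $\ker\varphi$, so its fibre over $[S]\in\wt{H}_r$ is a $6$-dimensional subspace of the $7$-dimensional space $H^0(I_S^2(4))$, and it equals the flat limit in $\Gr(6,W_4)$ of the $6$-planes $H^0(I_{S_t}^2(4))$ along a general one-parameter degeneration $S_t\to S$ with $S_t$ non-rectilinear. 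Carrying this degeneration out explicitly (three points sliding onto $l$) one checks that every quartic in the family acquires $l^2$ as a factor in the limit, so the limiting $6$-plane is $\{l^2q : q\in H^0(\ptwo,\Ocal(2))\}$, which is assertion (2). The main obstacle is step (b): identifying the scheme structure of $\operatorname{coker}\varphi$ along $\wt{H}_r$ — and hence that $\ker\varphi$ really admits a modification to a genuine subbundle with the prescribed fibres — is the one point that is not a routine dimension count and that genuinely uses the explicit local descriptions of $\wt{\Sigma}_1$ made available by the blow-up.
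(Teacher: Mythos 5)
Your overall architecture (Cartier-ness of $\wt{H}_r$ via the degeneracy locus of the rank-$3$ evaluation map $\psi$; fibrewise surjectivity of $\varphi$ off $\wt{H}_r$; an elementary modification along a divisor; identification of the limiting fibre by degenerating three points onto a line) matches the paper's, and your step (a) and your computation of the limit $6$-plane are sound in outline --- the latter is essentially the content of the paper's Lemma \ref{lemma.limiting vector space}. The genuine gap is exactly at the step you yourself flag as non-routine, namely (b). To apply the elementary-modification lemma along $\wt{H}_r$ you need $\operatorname{coker}\varphi\simeq i_*\Lcal$ for an invertible sheaf $\Lcal$ on $\wt{H}_r$: not merely that the cokernel has $1$-dimensional fibres along $\wt{H}_r$ (which does follow from $\dim H^0(\PP^2,I_S^2(4))=7$ at rectilinear $S$), but that the ideal sheaf of $\wt{H}_r$ --- with the scheme structure coming from $\det\psi$ --- annihilates $\operatorname{coker}\varphi$. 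Your phrase ``the single extra condition degenerating with the same first order as $\det\psi$'' asserts this without argument, and it cannot be extracted from a determinantal heuristic: for a map of bundles of ranks $15$ and $9$, the locus where the rank drops to $8$ has expected codimension $7$, not $1$, so $D_1(\varphi)$ carries no obvious divisorial scheme structure to compare with $\det\psi$ in the first place. Without this, the modified sheaf need not be locally free and assertion (1) fails.

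The paper resolves precisely this point by a different mechanism, which is the real content of its Section \ref{sec.modifications}: it first produces a rank-$6$ locally free subsheaf $\Vcal\subset\ker\varphi$ globally, as the closure $\Gamma\subset\wt{H}\times\Gr(6,15)$ of the graph of $[S]\mapsto\ker\varphi([S])$ over $\wt{H}\smallsetminus\wt{H}_r$; the arc-by-arc limit computation (your ``three points sliding onto $l$,'' carried out uniformly so that the limit is independent of the arc) combines with Zariski's Main Theorem to show $\Gamma\arr\wt{H}$ is an isomorphism, so $\Vcal$ exists. Only then does the induced map $\ov{\varphi}\colon(W_4\otimes\Ocal_{\wt{H}})/\Vcal\arr\pr_{1*}(\Ocal_{\wt{\Sigma}_1}\otimes\pr_2^*\Ocal(4))$ become a map of rank-$9$ bundles, whose determinant endows $D_1(\varphi)$ with a Cartier structure; Grauert then makes the cokernel invertible on it and the elementary modification applies. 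To repair your proof you must either carry out honestly the local first-order computation of $\operatorname{coker}\varphi$ along $\wt{H}_r$, or adopt the Grassmannian-closure construction of $\Vcal$.
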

For the sake of readability we opt to take this result for granted, and to proceed with the computations. A proof of Proposition \ref{propos.extension} can be found in Section \ref{sec.modifications}.

Consider now the subvariety $\Zcal:=\Pro(\Fcal)$ of $\wt{H}\times\Pro(W_4)$, so that we have the diagram
\begin{equation}\label{eq:Z}\xymatrix{
	& \Zcal \ar[dl]_{p} \ar[dr]^{q} & \\
	\Pro(W_4)& &\wt{H}}
\end{equation}
Clearly, the image of $p$ is contained in the closure of the trinodal locus. Moreover, if we restrict to the locally closed subscheme of irreducible trinodal quartics, the morphism $p$ becomes injective. Therefore, we have put ourselves in a situation that resembles the basic setup exploited for the computation of the relations coming from the stratum of irreducible binodal quartics.
\begin{lemma}\label{lemma.generators trinodal}
	We have:
	\begin{enumerate}
		\item The image of $p_*:A^*_{\GL_{3}}(\Zcal)\to A^*_{\GL_{3}}(\Pro(W_4))$ is generated as an ideal by the elements $\eta_i:=p_*q^*\xi_i$, where the cycles $\xi_i$ are the generators of $A^*_{\GL_{3}}(\wt{H})$ as an $A^*_{\GL_{3}}$-algebra.\\
		\item The image of $A_*^{\GL_3}(\ov{Z}_{\rm binod})\arr A_*^{\GL_3}(\PP(W_4))$ is contained in the sum of the ideals $(I_{\wt{Z}},\delta_{\{ 1, 3 \}})$ and $\im(p_*)$.
	\end{enumerate}
\end{lemma}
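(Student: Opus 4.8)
The plan is to prove the two parts separately: (1) is a formal consequence of the projection formula, and (2) is another application of Lemma \ref{basic.principle}.

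For (1), note first that $\im(p_*)$ is an ideal of $A^*_{\GL_3}(\PP(W_4))$ because $p$ is proper and the projection formula gives $p_*(\omega\cdot p^*\beta)=p_*(\omega)\cdot\beta$. By construction $\Fcal\subseteq W_4\otimes\Ocal_{\wt H}$ is a subbundle of the trivial one, so $\Zcal=\PP(\Fcal)$ sits inside $\wt H\times\PP(W_4)$ and $\Ocal_{\PP(\Fcal)}(1)$ is the restriction of $\Ocal_{\PP(W_4)}(1)$; equivalently, the relative hyperplane class of the projective bundle $q\colon\Zcal\to\wt H$ equals $p^*h_4$. Hence $A^*_{\GL_3}(\Zcal)$ is free over $A^*_{\GL_3}(\wt H)$ on $1,p^*h_4,\dots,(p^*h_4)^{r-1}$ with $r=\operatorname{rk}\Fcal$, and $A^*_{\GL_3}(\wt H)$ is a finitely generated $A^*_{\GL_3}$-module: the Chow ring of $H=\hilb^3\ptwo$ is finite free over $\ZZ$, the blow-up formula along the smooth centre $Y$ keeps $A^*(\wt H)$ finite free, and equivariant formality transfers this to the equivariant ring. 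So $A^*_{\GL_3}(\Zcal)$ is generated as an $A^*_{\GL_3}$-module by classes $q^*\xi_i\cdot(p^*h_4)^j$, and $p_*\bigl(q^*\xi_i\cdot(p^*h_4)^j\bigr)=\eta_i\,h_4^j$ lies in the ideal generated by the $\eta_i$; this gives (1).

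For (2), I would apply Lemma \ref{basic.principle} to the refinement of the stratification (\ref{eq:strat}) in which $Z_\irr$ is split by number of nodes, taking $\tau=Z_{\rm binod}$ and $N=(I_{\wt{Z}},\delta_{\{1,3\}})+\im(p_*)$. The open-stratum hypothesis is Corollary \ref{cor.twonodes}: since $\ov{Z}_{\rm trinod}\subseteq\partial\ov{Z}_{\rm binod}$, the set $\PP(W_4)\setminus\partial\ov{Z}_{\rm binod}$ is open in $\PP(W_4)\setminus\ov{Z}_{\rm trinod}$, and restricting the inclusion of Corollary \ref{cor.twonodes} along it shows $i_*A^{\GL_3}_*(Z_{\rm binod})\subseteq j^*(N)$. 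For the boundary hypothesis, the strata $\tau'<Z_{\rm binod}$ split into two groups. Those parametrizing reducible quartics have closure inside $\ov{Z}_\reducible=\ov{Z}_{\{1,3\}}\cup\ov{Z}_{\{2,2\}}$, so Theorem \ref{main.result.reducible.quartics} puts their pushforwards into $(\alpha_1,\alpha_2,\alpha_3,\delta_{\{1,3\}})=(I_{\wt{Z}},\delta_{\{1,3\}})\subseteq N$. The remaining strata parametrize irreducible quartics with singularities worse than two nodes; each is a degeneration of $Z_{\rm trinod}$, so its closure lies in $\ov{Z}_{\rm trinod}$, which by the discussion preceding the Lemma equals $p(\Zcal)$ ($p$ is proper and an isomorphism over the dense open $Z_{\rm trinod}$). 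It thus suffices to show $i_*A^{\GL_3}_*(\ov{Z}_{\rm trinod})\subseteq\im(p_*)$, i.e.\ that $p\colon\Zcal\to\ov{Z}_{\rm trinod}$ is a $\GL_3$-equivariant Chow envelope; I would verify this stratum by stratum as in Lemma \ref{borderT4}, producing a section of $p$ over the cuspidal, tacnodal, etc.\ loci of $\ov{Z}_{\rm trinod}$ from the explicit description of $\Fcal$ in Proposition \ref{propos.extension} and the modification results of Section \ref{sec.modifications}. Then Lemma \ref{basic.principle} gives $i_*A^{\GL_3}_*(\ov{Z}_{\rm binod})\subseteq N$, which is (2).

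The hard part is the envelope statement for $p\colon\Zcal\to\ov{Z}_{\rm trinod}$. A proper birational morphism is only an envelope after tensoring with $\Q$, so the integral claim forces a genuine analysis of the fibres of $p$ over the boundary of $\ov{Z}_{\rm trinod}$ and the construction of sections there --- exactly the point where the geometry of the blow-up $\wt H$ and Propositions \ref{propos.Htilde} and \ref{propos.extension} come in. A secondary, more mechanical difficulty is to enumerate all strata of the refined stratification lying strictly below $Z_{\rm binod}$ and to match each to the right generators of $N$.
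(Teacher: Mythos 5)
Part (1) of your proposal is correct and is exactly the paper's argument: since $\Fcal\subseteq W_4\otimes\Ocal_{\wt H}$, the relative hyperplane class of $\Zcal=\PP(\Fcal)\arr\wt H$ is $p^*h_4$, and the projection formula turns the resulting module generators $q^*\xi\cdot (p^*h_4)^j$ of $A^*_{\GL_3}(\Zcal)$ into elements $\eta\, h_4^j$ of the ideal generated by the $\eta_i$.

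Part (2) has a genuine gap in the treatment of the boundary strata of $Z_{\rm binod}$. You assert that every boundary stratum of irreducible quartics ``with singularities worse than two nodes'' has closure inside $\ov{Z}_{\rm trinod}$. This is false: since the $\delta$-invariant is upper semicontinuous, every curve in $\ov{Z}_{\rm trinod}$ has geometric genus $\leq 0$, whereas the loci of irreducible quartics with a single tacnode, with a node and a cusp, or with two cusps all have $\delta=2$ and geometric genus $1$. Each of these loci lies in $\ov{Z}_{\rm binod}\smallsetminus Z_{\rm binod}$ (the configurations deform equigenerically to two nodes), is not contained in $\ov{Z}_\reducible$, and is not contained in $\ov{Z}_{\rm trinod}$. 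Your scheme assigns them to neither of your two cases. The paper absorbs them through the variety $D$ of Proposition \ref{def.D}: over each of these loci a quartic $C$ still determines a \emph{unique} length-two subscheme $S$ with $I(S)^2\subseteq I(C)$, so $D$ is birational over them and their classes land in $I_{\wt Z}$ by Proposition \ref{propos.twonodes}. You only invoke the weaker Corollary \ref{cor.twonodes}, and only for the open stratum, so this mechanism is missing from your argument.

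The second problem is the reduction you propose for the strata that \emph{do} lie in $\ov{Z}_{\rm trinod}$: the statement that $p\colon\Zcal\arr\ov{Z}_{\rm trinod}$ is a Chow envelope is not merely deferred, it is false as stated. The locus $\ov{Z}_{\{2,2\}}$ is contained in $\ov{Z}_{\rm trinod}$ (smoothing one node of a four-nodal union of two conics produces irreducible trinodal quartics), and over its generic point the fibre of $p$ consists of the four length-three subschemes of the four nodes, which are permuted transitively by monodromy; hence no subvariety of $\Zcal$ maps birationally onto $\ov{Z}_{\{2,2\}}$. The paper never needs an envelope statement for $p$: it lifts to $\Zcal$ only those cycles whose support meets the open stratum $Z_{\rm trinod}$ (where $p$ is an isomorphism because the singular subscheme of an irreducible trinodal quartic has length exactly three), and every cycle supported on $\partial Z_{\rm trinod}$ is delegated either to Theorem \ref{main.result.reducible.quartics} or to a locus where $D$ is still birational. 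If you want to keep an envelope-style argument you must at least remove $\ov{Z}_\reducible$ first, and you would still owe the verification over loci such as the tricuspidal one; the cleaner route is the paper's combination of Lemma \ref{basic.principle}, Proposition \ref{propos.twonodes}, and the isomorphism of $\Zcal$ over $Z_{\rm trinod}$ alone.
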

\begin{proof}
	Due to the fact that $\Zcal$ is a projective bundle over $\wt{H}$ and a projective subbundle of $\Pro(W_4)\times\wt{H}$, we have that $A^*_{\GL_{3}}(\Zcal)$ is generated as an $A^*_{\GL_{3}}(\wt{H})$-algebra by $p^*h$. Then from the compatibility formula combined with the projection formula we easily deduce (1).
	
	Point (2) follows from the fact that we already know (see Proposition \ref{propos.twonodes}) that the image of $A^{\GL_3}_*(\ov{Z}_{\rm binod})\arr A_*^{\GL_3}(\PP(W_4))$ is contained in $(I_{\wt{Z}},\delta_{\{ 1, 3 \}})$ plus the ideal generated by the cycles coming from $Z_{\rm trinod}$ (i.e. those that are not contained in $Z_\reducible$). Cycles of this type can all be lifted to cycles in $A_{\GL_3}^*(\Zcal)$ because $$\Zcal|_{Z_{\rm trinod}}\arr Z_{\rm trinod}$$ is an isomorphism. Indeed, points in this locally closed subscheme correspond to irreducible quartics whose singular subscheme has length $3$, as if this was not the case then the curve would be reducible. This concludes the proof of the Lemma.
\end{proof}
\begin{rmk}\label{rmk.generators of the generic locus}
	We only have to care of those generators of $A^*_{\GL_{3}}(\wt{H})$ that are not entirely supported on the exceptional divisor $E$.
	
	Indeed, let $V\subset E$ be an subvariety: for every point $\Spec(k(v))$ of $V$, the corresponding quartic over $\Spec(k(v))$ must have a triple point. It follows that either it has only one singular point, or it is reducible, because the unique line passing through the triple point and another singular point would intersect the quartic in at least five points (counted with multiplicity). Therefore $p_*q^*[V]$ is not supported on the locally closed subscheme $Z_{\rm trinod}$ of irreducible quartics with exactly three nodes.
\end{rmk}
The last Remark in particular tells us that, among the generators of $A^*_{\GL_{3}}(\wt{H})$ as $A^*_{\GL_{3}}$-algebra, we can restrict our attention to those that come from $H$.

Let $T\subset \GL_{3}$ be the maximal subtorus of diagonal matrices.
Consider the diagram
$$\xymatrix{
	& \Sigma \ar[dl]_{\pr_1} \ar[dr]^{\pr_2} & \\
	H & & \ptwo }$$
where $\Sigma$ is the universal subscheme, and define $\Ecal_i:=\pr_{1*}\pr_2^*\Ocal(i)$. Then we have:
\begin{theorem}\cite[Theorem 1.1]{ES93}\label{theorem.generators hilbert}
	The sheaves $\Ecal_i$, for $0\leq i\leq 2$, are locally free of rank $3$, and $A^*(H)$ is generated as a $\Z$-algebra by the Chern classes of $\Ecal_i$. In particular $A^*(H)$ is generated as a $\Z$-module by the monomials in $c_j(\Ecal_i)$ of degree less or equal to $6$, where $\deg(c_j(\Ecal_i))=j$.
\end{theorem}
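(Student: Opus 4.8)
The plan is to prove the three assertions in turn: local freeness of the $\Ecal_i$ is essentially formal, the additive structure of $A^*(H)$ comes from a cell decomposition, and the generation statement is the real content, which I would attack by localization with a geometric fallback.

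For local freeness: the universal family $\Sigma\arr H$ is finite and flat of degree $3$, and for every $i\geq0$ the sheaf $\Ocal_\Sigma\otimes\pr_2^*\Ocal(i)$ is an invertible sheaf on $\Sigma$, hence flat over $H$ and finite over $H$. Its pushforward $\Ecal_i$ is therefore locally free, of rank equal to the common fibre dimension $\dim\H^0(S,\Ocal_S(i))$; since $\Ocal(i)$ restricts to a trivial line bundle on each local component of a finite scheme, this dimension equals $\mathrm{length}(S)=3$ for every length-$3$ subscheme $S\subset\ptwo$. In particular $\Ecal_0,\Ecal_1,\Ecal_2$ are locally free of rank $3$.

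For the additive structure I would exhibit an affine paving. The scheme $H=\hilb^3\ptwo$ is smooth, projective and connected of dimension $6$, and inherits the $\GL_3$-action from $\ptwo$, hence an action of the maximal torus $T$; a sufficiently general one-parameter subgroup $\G_m\arr T$ has the same finite fixed locus, namely the monomial ideals, of which there are $22$ by G\"ottsche's formula $\chi(\hilb^3\ptwo)=22$. The associated Bia\l ynicki--Birula decomposition is a decomposition of $H$ into affine cells indexed by these $22$ points, so $A^*(H)$ is a free $\Z$-module of rank $22$, concentrated in codimensions $0,\dots,6$ with $A^6(H)\simeq\Z$. Granting the ring-generation statement, the final clause is then automatic: a monomial in the $c_j(\Ecal_i)$ of total degree $>6$ lies in $A^{>6}(H)=0$, so the monomials of degree $\leq6$ already span $A^*(H)$ over $\Z$.

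The core is that the $c_j(\Ecal_i)$ with $0\leq i\leq2$ generate $A^*(H)$ as a $\Z$-algebra, and I would run this $T$-equivariantly. Since $H$ has an affine paving it is equivariantly formal: $A^*_T(H)$ is a free module over $A^*_T:=A^*_T(\Spec k)$, the restriction map $A^*_T(H)\into\bigoplus_{[S]\in H^T}A^*_T$ is injective with image cut out by the usual congruences along the one-dimensional $T$-orbits, and $A^*(H)=A^*_T(H)\otimes_{A^*_T}\Z$. The restriction of $c_j(\Ecal_i)$ to a fixed point $[S]$ is the $j$-th elementary symmetric function in the $T$-weights of $\H^0(S,\Ocal_S(i))$, which are completely explicit for a monomial ideal; the task is to check that the $\Z$-subalgebra of $\bigoplus_{[S]}A^*_T$ generated by $A^*_T$ together with these restrictions contains the image of $A^*_T(H)$, equivalently that after killing $A^*_T$ the reductions of the $c_j(\Ecal_i)$ span $A^*(H)$, which can be verified codimension by codimension against the rank-$22$ table of Chow groups. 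As a more geometric alternative I would use the excision sequence for the closed rectilinear locus $H_r\subset H$ (a $\PP^3$-bundle over $\ptwod$, on which $\Ecal_1$ becomes nontrivial and reconstructs the collinear divisor), together with the morphism from the open non-rectilinear complement to $\Gr(3,W_2)$ given by $[S]\mapsto\H^0(I_S(2))$ (whose Chow ring is generated by classes pulled back from $\Ecal_2$) and the Hilbert--Chow morphism $H\arr\Sym^3\ptwo$. In either incarnation the expected main obstacle is exactly the surjectivity of the tautological subalgebra onto $A^*(H)$ — i.e. showing that no cell-closure class escapes it — and it is the small value $n=3$, with its manageable list of $22$ cells and fixed points, that makes this verification feasible; everything else is formal.
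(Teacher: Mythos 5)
The first thing to note is that the paper does not prove this statement at all: it is imported verbatim from Ellingsrud--Str\o mme \cite[Theorem 1.1]{ES93}, so there is no internal proof to compare against. Your handling of the peripheral assertions is correct: since $\Sigma\arr H$ is finite and flat of degree $3$, each $\Ecal_i=\pr_{1*}\pr_2^*\Ocal(i)$ is indeed locally free of rank $3$; the Bia\l ynicki--Birula decomposition for a generic one-parameter subgroup does give an affine paving of $H$ by $22$ cells (matching the $22$ monomial ideals listed later in the paper), hence $A^*(H)$ is free of rank $22$ with $A^{>6}(H)=0$; and the deduction of the module-generation clause from the algebra-generation clause is immediate from $\dim H=6$.

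The gap is that the actual content of the theorem --- surjectivity of the $\Z$-subalgebra generated by the $c_j(\Ecal_i)$ onto $A^*(H)$ --- is never established. You reduce it to ``a finite check codimension by codimension against the rank-$22$ table'' and stop; but that check \emph{is} the theorem, and it is not a formality: one must express the degree-$k$ monomials in the tautological Chern classes in the basis of cell closures (or pair them against the opposite cells via localization) and verify that they span $A^k(H)\simeq\Z^{b_k}$ \emph{integrally} for each $k\leq 6$, which is exactly where a priori torsion cokernels could appear. The alternative geometric route you sketch (excision against the rectilinear locus $H_r$, the map to $\Gr(3,W_2)$, Hilbert--Chow) is likewise only named, not executed. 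For comparison, Ellingsrud and Str\o mme do not argue by brute force: they prove a general criterion for a smooth projective variety with a $\G_m$-action with isolated fixed points, to the effect that if the negative part of the tangent space at each fixed point is suitably dominated by the fibers of the given equivariant bundles, then every Bia\l ynicki--Birula cell closure occurs as a component of a degeneracy locus of maps between those bundles, so its class lies in the tautological subring; they then verify this weight condition for the $\Ecal_i$ at the monomial ideals. To make your proposal a proof you would need either to carry out your finite verification explicitly or to supply an argument of that degeneracy-locus type.
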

A useful Corollary is the following:
\begin{lemma}\label{lemma.formality}
	The equivariant Chow ring $A^*_{T}(H)$ is generated as an $A^*_{T}$-module by the monomials in $c_j^T(\Ecal_i)$ of degree less than or equal to $6$. The same statement holds if we replace $T$ with $\GL_3$.
\end{lemma}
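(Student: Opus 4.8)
The plan is to deduce Lemma~\ref{lemma.formality} directly from Theorem~\ref{theorem.generators hilbert} by the standard ``equivariant formality'' mechanism for spaces whose ordinary Chow ring is generated by Chern classes of vector bundles. Concretely, recall that $H=\hilb^3\ptwo$ carries the $T$-action inherited from $\GL_3$, that the sheaves $\Ecal_i=\pr_{1*}\pr_2^*\Ocal(i)$ for $0\le i\le 2$ are $T$-equivariant (indeed $\GL_3$-equivariant, since the universal family $\Sigma\subseteq H\times\ptwo$ and the line bundle $\pr_2^*\Ocal(1)$ are $\GL_3$-equivariant), and hence their $T$-equivariant Chern classes $c_j^T(\Ecal_i)\in A^*_T(H)$ are defined and restrict to the ordinary $c_j(\Ecal_i)$ under the forgetful map $A^*_T(H)\to A^*(H)$.

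The first step is to produce a finite $T$-equivariant approximation: fix $n\gg 0$ and let $U\subseteq V$ be a $T$-invariant open subset of a $T$-representation on which $T$ acts freely and with $\mathrm{codim}(V\smallsetminus U)$ large; then $A^k_T(H)=A^k(H\times_T U)$ for $k$ in the relevant range, and $H\times_T U\to U/T$ is a fiber bundle with fiber $H$. The second step is to run the Leray--Hirsch / projective-bundle-style argument: since $A^*(H)$ is generated as a $\Z$-module by monomials in the $c_j(\Ecal_i)$ of total degree $\le 6$ (here one uses that $H$ is smooth projective of dimension $6$, so $A^k(H)=0$ for $k>6$), the classes $c_j^T(\Ecal_i\times_T U)$ on $H\times_T U$ restrict on each fiber to a $\Z$-module generating set; by the usual argument (e.g.\ a spectral-sequence or filtration argument, or simply Nakayama over the base together with properness), the monomials in $c_j^T(\Ecal_i)$ of degree $\le 6$ generate $A^*_T(H)$ as a module over $A^*_T=A^*(U/T)$. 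This gives the statement for $T$.

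For the $\GL_3$ case one invokes the first part of the localization Lemma~\cite[Prop.~6]{EG} (cited earlier): since $\GL_3$ is special with maximal torus $T$ and Weyl group $W=S_3$, the pullback $A^*_{\GL_3}(H)\hookrightarrow A^*_T(H)$ is injective with image $A^*_T(H)^W$. The classes $c_j^T(\Ecal_i)$ are in fact restrictions of $W$-invariant classes, namely the $\GL_3$-equivariant Chern classes $c_j^{\GL_3}(\Ecal_i)\in A^*_{\GL_3}(H)$ (the sheaves $\Ecal_i$ being $\GL_3$-equivariant). Hence the module generators for $A^*_T(H)$ over $A^*_T$ that we produced are images of the corresponding classes in $A^*_{\GL_3}(H)$; since $A^*_{\GL_3}\hookrightarrow A^*_T$ as well and $A^*_{\GL_3}(H)$ is a submodule of $A^*_T(H)$, a short descent argument (take a $\GL_3$-equivariant class, write its $T$-restriction in terms of the generators with $A^*_T$-coefficients, average over $W$ to make the coefficients $W$-invariant hence lie in $A^*_{\GL_3}$, and conclude) shows the monomials in $c_j^{\GL_3}(\Ecal_i)$ of degree $\le 6$ generate $A^*_{\GL_3}(H)$ as an $A^*_{\GL_3}$-module.

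I expect the main obstacle to be purely bookkeeping rather than conceptual: one must be a little careful that the degree bound ``$\le 6$'' survives the passage from ordinary to equivariant Chow groups. This is where smoothness and projectivity of $H$ (dimension $6$) enter --- it guarantees $A^{>6}(H)=0$, so that on each fiber only monomials of degree $\le 6$ are needed, and the Leray--Hirsch argument then propagates exactly this bound to the equivariant ring, with all higher-degree equivariant classes expressible via lower-degree fiber classes and $A^*_T$-coefficients (which carry the remaining degree). The $W$-averaging step in the $\GL_3$ case is routine since we work with $\ZZ$ coefficients and $W=S_3$ acts by permutation, but one should note it does not require inverting $|W|$ because we only use that $A^*_{\GL_3}(H)=A^*_T(H)^W$ as a direct statement, not an averaging projector.
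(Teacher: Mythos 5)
Your proposal is correct and follows essentially the same route as the paper, whose entire proof consists of invoking the $T$-equivariant formality of $\hilb^3\ptwo$ for the first claim and the identification $A^*_{\GL_3}(H)=A^*_T(H)^{S_3}$ for the second. The one step you label ``the usual argument'' is worth making explicit, since Leray--Hirsch/graded Nakayama is not automatic for Chow groups: one needs that $A^*_T(H)\otimes_{A^*_T}\Z\to A^*(H)$ is an isomorphism, which for $\hilb^3\ptwo$ comes from the Bia{\l}ynicki-Birula affine cell decomposition of Ellingsrud--Str\o mme (this is precisely what ``$T$-equivariantly formal'' encodes); once that is in place, lifts of any set of $\Z$-module generators of $A^*(H)$ generate $A^*_T(H)$ over $A^*_T$, and your $W$-invariance descent for $\GL_3$ is exactly the paper's second sentence.
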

\begin{proof}
	The first part is a consequence of the Hilbert scheme of points in $\PP^2$ being $T$-equivariantly formal. The second part follows directly from the identification $A^*_{\GL_3}(H)=A^*_T(H)^{S_3}$.
\end{proof}
Let $p$ and $q$ be the morphisms appearing in (\ref{eq:Z}) and let $\sigma:\wt{H}\arr H$ be the blow-down morphism. 
Lemmas \ref{lemma.generators trinodal} and \ref{lemma.formality}, combined with Remark \ref{rmk.generators of the generic locus} and Theorem \ref{theorem.generators hilbert}, readily imply that the set of cycles $p_*q^*\sigma^*\xi$, where $\xi$ is a polynomial of degree less or equal to $6$ in the algebra $\Z[c_j^{\GL_3}(\Ecal_i)]$, generate $\im(p_*)$ modulo the ideal $(I_{\wt{Z}},\delta_{\{ 1, 3 \}})$ (with a little abuse of notation, we use the same name for $\Ecal_i$ and its pullback along the blow-down morphism $\wt{H}\arr H$).

Actually, more is true. Define $\eta(i_1,\dots,i_7)$ as
\[ p_*q^*[c_1^{\GL_3}(\Ecal_0)^{i_1} c_2^{\GL_3}(\Ecal_0)^{i_2} c_1^{\GL_3}(\Ecal_1)^{i_3} c_2^{\GL_3}(\Ecal_2)^{i_4} c_3^{\GL_3}(\Ecal_1)^{i_5} c_2^{\GL_3}(\Ecal_2)^{i_6} c_3^{\GL_3}(\Ecal_2)^{i_7}]. \] 
Then we have the following Lemma:
\begin{lemma}\label{lemma.generators of im(p_*)}
	The ideal $\im(p_*)$ is generated modulo cycles in $(I_{\wt{Z}},\delta_{\{1,3\}})$ by the cycles $\eta(i_1,\dots,i_7)$ for $i_j \geq 0$ and $0\leq i_1+2i_2+i_3+2i_4+3i_5+2i_6+3i_7 \leq 6$.
\end{lemma}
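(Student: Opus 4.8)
The plan is to deduce this from the statement obtained just above, namely that, modulo $(I_{\wt{Z}},\delta_{\{1,3\}})$, the ideal $\im(p_*)$ is generated by the classes $p_*q^*\sigma^*\xi$ with $\xi$ running over the monomials of degree $\le 6$ in the \emph{nine} classes $c_j^{\GL_3}(\Ecal_i)$, $i\in\{0,1,2\}$, $j\in\{1,2,3\}$, where $\sigma\colon\wt{H}\to H$ is the blow-down and $\Ecal_i=\pr_{1*}\pr_2^*\Ocal_{\ptwo}(i)$ for the projections $\pr_1\colon\Sigma\to H$, $\pr_2\colon\Sigma\to\ptwo$ of the universal subscheme on $H=\hilb^3\ptwo$. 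The whole content of the lemma is then the reduction from these nine classes to the seven occurring in $\eta(i_1,\dots,i_7)$, i.e. the elimination of $c_3^{\GL_3}(\Ecal_0)$ and $c_1^{\GL_3}(\Ecal_2)$; so I would first record two structural relations among the $\Ecal_i$.

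The first is $c_3^{\GL_3}(\Ecal_0)=0$. I would prove it by observing that the adjunction unit gives a canonical $\GL_3$-equivariant morphism $\Ocal_H\to\pr_{1*}\Ocal_\Sigma=\Ecal_0$ which, fibrewise over $[S]\in H$, is the inclusion $k\into\H^0(S,\Ocal_S)$ of the constant functions; being injective on every fibre, it exhibits $\Ocal_H$, with its trivial linearization, as a $\GL_3$-equivariant sub-bundle of $\Ecal_0$ whose quotient is locally free of rank $2$. Whitney's formula together with $c_i^{\GL_3}(\Ocal_H)=0$ for $i>0$ then identifies $c^{\GL_3}(\Ecal_0)$ with the total Chern class of a rank-$2$ bundle, whence $c_3^{\GL_3}(\Ecal_0)=0$. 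The second is $c_1^{\GL_3}(\Ecal_2)=2\,c_1^{\GL_3}(\Ecal_1)-c_1^{\GL_3}(\Ecal_0)$. For this I would use that $\pr_1\colon\Sigma\to H$ is finite and flat of degree $3$: for such a morphism and a line bundle $\Lcal$ on the source one has $\det\pr_{1*}\Lcal\cong\det\pr_{1*}\Ocal_\Sigma\otimes\mathrm{Nm}_{\pr_1}(\Lcal)$, with the norm multiplicative in $\Lcal$; taking $\Lcal=\pr_2^*\Ocal_{\ptwo}(i)$ gives $\det\Ecal_i\cong\det\Ecal_0\otimes M^{\otimes i}$ for the fixed $\GL_3$-equivariant line bundle $M=\mathrm{Nm}_{\pr_1}(\pr_2^*\Ocal_{\ptwo}(1))$, so $c_1^{\GL_3}(\Ecal_i)=c_1^{\GL_3}(\Ecal_0)+i\,c_1^{\GL_3}(M)$ is affine-linear in $i$, which yields the relation.

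With these two identities in hand the rest is bookkeeping: substituting them rewrites every degree-$\le 6$ monomial in the nine classes as a $\Z$-linear combination of degree-$\le 6$ monomials in the seven classes $c_1(\Ecal_0),c_2(\Ecal_0),c_1(\Ecal_1),c_2(\Ecal_1),c_3(\Ecal_1),c_2(\Ecal_2),c_3(\Ecal_2)$, whose degrees $1,2,1,2,3,2,3$ are exactly the weights in the bound $i_1+2i_2+i_3+2i_4+3i_5+2i_6+3i_7\le 6$. Since the two relations live in $A^*_{\GL_3}(H)$, they persist after applying the ring homomorphism $q^*\sigma^*$; and since $p_*$ is $A^*_{\GL_3}(\Pro(W_4))$-linear by the projection formula, pushing forward then shows that $\im(p_*)$, modulo $(I_{\wt{Z}},\delta_{\{1,3\}})$, is generated as an ideal by the classes $\eta(i_1,\dots,i_7)$ in the stated range.

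I do not expect a real obstacle here: the difficult input — that the nine Chern-class monomials already suffice — is provided by the preceding discussion, and what remains are two standard facts about the tautological sheaves on $\hilb^3\ptwo$. The only points deserving care are that these be genuine equalities in the \emph{integral} equivariant Chow ring rather than merely up to torsion, and that the substitutions respect the total-degree bound $6$; if anything needs rechecking it is the determinant-of-pushforward identity, where one should verify flatness of $\pr_1$ and $\GL_3$-equivariance of all the sheaves and maps involved.
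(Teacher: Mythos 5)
Your argument is correct and follows essentially the same route as the paper: both reduce to the nine Chern-class monomials via the preceding results, kill $c_3^{\GL_3}(\Ecal_0)$ by exhibiting the trivial equivariant subbundle $\Ocal_H\hookrightarrow\Ecal_0$, and eliminate $c_1^{\GL_3}(\Ecal_2)$ via the relation $2c_1(\Ecal_1)-c_1(\Ecal_2)=c_1(\Ecal_0)$. The only difference is that you prove this last relation with the norm/determinant-of-pushforward formula for the finite flat degree-$3$ map $\Sigma\to H$ (which has the merit of being manifestly equivariant), whereas the paper simply cites Ellingsrud--Str\o mme.
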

\begin{proof}
	First observe that there is an injective morphism of vector bundles $\Ocal\hookrightarrow\Ecal_0$, where $\Ocal$ is the equivariantly trivial vector bundle: this implies that $c_3^{\GL_3}(\Ecal_0)=0$.
	
	Moreover, from \cite[pg. 345]{ES87}, we know that $2c_1(\Ecal_1)-c_1(\Ecal_2)=c_1(\Ecal_0)$.
	From Remark \ref{rmk.generators of the generic locus} and Theorem \ref{theorem.generators hilbert} we deduce the Lemma.
\end{proof}
We have reduced ourselves to the computation of the cycles $\eta(\underline{i})$, whose explicit expression in terms of the elements $c_i$ and $h_4$ can be obtained applying the localization formula. The final result is the following:
\begin{propos}\label{propos.class trinodal}
	We have $\eta(\underline{i})\in I_{\wt{Z}}$ for $i_j \geq 0$ and $0\leq i_1+2i_2+i_3+2i_4+3i_5+2i_6+3i_7 \leq 6$.
\end{propos}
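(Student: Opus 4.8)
The plan is to compute each cycle $\eta(\underline{i}) = p_{*}q^{*}\sigma^{*}\xi$ directly by $T$-equivariant localization on $\Zcal$, exactly in the spirit of the proof of Proposition \ref{propos.twonodes}. By Lemma \ref{lemma.generators of im(p_*)} only the finitely many $\underline{i}$ in the stated range need to be treated, so the task reduces to making the localization formula \eqref{eq:loc} explicit for the morphisms $p$ and $q$ in \eqref{eq:Z}. The first step is to describe the $T$-fixed locus of $\Zcal$. Since $q\colon\Zcal=\PP(\Fcal)\arr\wt{H}$ is a projective bundle, every $T$-fixed point of $\Zcal$ lies over a $T$-fixed point $w$ of $\wt{H}$, and over such a $w$ the fibre $\Fcal_{w}$ is a $T$-subrepresentation of $W_{4}$, hence a coordinate subspace; its projectivization has one $T$-fixed point for each monomial quartic it contains. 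The $T$-fixed points of $\wt{H}$ split into two families: the nineteen $T$-fixed points of $H=\hilb^{3}\ptwo$ that do not lie on $Y$ (the monomial ideals of colength $3$ supported at the coordinate points of $\ptwo$), and the $T$-fixed points lying on the exceptional divisor $E=\PP(N_{Y,H})$ over the three $T$-fixed points of $Y\cong\ptwo$; by Remark \ref{rmk.fiber of extension} the fibre of $N_{Y,H}$ there is the space of binary cubic forms, which contributes four fixed points over each corner of $Y$.

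For each fixed point $w$ of $\wt{H}$ one reads off the fibre $\Fcal_{w}\subset W_{4}$ using Proposition \ref{propos.extension}: over $w\in\wt{H}\smallsetminus\wt{H}_{r}$ this fibre is the space of quartics contained in $I_{S}^{2}$, where $[S]$ is the image in $H$ of $w$ (with $I_{S}$ its possibly non-reduced ideal; on $E$ the relevant length-nine scheme has local ideal $(f,(x,y)^{4})$ by Remark \ref{rmk.fiber of extension}), while over $w\in\wt{H}_{r}$ it is $\{\,l^{2}q\,\}$ with $l$ the distinguished linear form of $S$. In each case the monomial quartics in $\Fcal_{w}$ are listed at once, giving the fixed points $F=(w,[C])$ of $\Zcal$. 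The remaining ingredients of \eqref{eq:loc} are then: (i) the fundamental class $[C]\in A^{*}_{T}(\PP(W_{4}))$, computed as a complete intersection by \cite[Lemma 2.4]{EFh} exactly as in Proposition \ref{propos.twonodes}; (ii) the restriction of $q^{*}\sigma^{*}c_{j}^{T}(\Ecal_{i})$ to $F$, which by cohomology and base change equals $c_{j}^{T}(\Ecal_{i})|_{[S]}$ and is therefore the elementary symmetric function of degree $j$ in the three $T$-characters of a monomial basis of $\H^{0}(S,\cO(i))$; and (iii) the equivariant top Chern class $c_{top}^{T}(T\Zcal_{F})=c_{top}^{T}(T\wt{H}_{w})\cdot c_{top}^{T}\bigl(T\PP(\Fcal_{w})_{[C]}\bigr)$, the second factor being a product of characters read off from the monomials in $\Fcal_{w}$ other than $C$. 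For $w$ off $Y$ the tangent space $T\wt{H}_{w}=TH_{[S]}=\Hom(I_{S}/I_{S}^{2},\cO_{S})$ is handled exactly as in Proposition \ref{propos.twonodes}; for $w\in E$ one uses the blow-up formula $c_{top}^{T}(T\wt{H}_{w})=c_{1}^{T}(N_{E,\wt{H}}|_{w})\cdot c_{top}^{T}(TE_{w})$ together with $TE_{w}=TY_{[S]}\oplus T\PP(N_{Y,H,[S]})_{w}$ and the identification of $N_{Y,H,[S]}$ with binary cubics.

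With all of this data assembled, substituting into \eqref{eq:loc} and reducing modulo $p_{4}(h_{4})$ expresses each $\eta(\underline{i})$ as an explicit polynomial in $h_{4},c_{1},c_{2},c_{3}$, and one checks by a finite symbolic computation (in Mathematica) that every such polynomial lies in $I_{\wt{Z}}=(\alpha_{1},\alpha_{2},\alpha_{3})$. I expect the main obstacle to be the bookkeeping on the exceptional divisor and along the rectilinear locus rather than the computation off $Y$, which is a routine extension of the binodal case: on $E$ one must use the correct length-nine scheme $(f,(x,y)^{4})$ and the blow-up tangent space, and on $\wt{H}_{r}$ one must use the modified fibre $\{\,l^{2}q\,\}$ of Proposition \ref{propos.extension} rather than $\ker\varphi$, both when enumerating the monomial quartics and when computing the tangent directions; one must also correctly account for the degenerations of the universal length-three scheme (supported on two points, or on one) when restricting the classes $c_{j}^{T}(\Ecal_{i})$. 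Once these subtleties are in place the verification is purely mechanical.
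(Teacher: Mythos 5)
Your proposal is correct and follows essentially the same route as the paper: localization on $\Zcal$ after enumerating the $31$ $T$-fixed points of $\wt{H}$ (the $19$ monomial ideals off $Y$ plus four points of the projectivized normal bundle over each corner of $Y$), reading off the fibre of $\Fcal$ at each fixed point via Proposition \ref{propos.extension} and Remark \ref{rmk.fiber of extension}, assembling the tangent-space Chern classes (including the blow-up formula on the exceptional divisor, which is the content of Lemma \ref{lemma.tangent of blow-up}) and the restrictions of $c_j^T(\Ecal_i)$ as symmetric functions of the characters of a monomial basis of $\H^0(S,\cO(i))$, and concluding with a finite Mathematica verification. The subtleties you flag (the length-nine scheme $(f,(x,y)^4)$ on $E$, the modified fibre $l^2q$ on $\wt{H}_r$, and the degenerate supports when restricting $\Ecal_i$) are exactly the ones the paper works through.
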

In order to apply the localization formula, we need some preliminary computations. First we find all the points of $\wt{H}$ that are fixed by the action of $T\subset \GL_{3}$:
\begin{lemma}\label{lemma.fixed points of Htilde}
	The fixed locus of $\wt{H}$ consists of $31$ fixed points.
\end{lemma}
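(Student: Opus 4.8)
The strategy is to first enumerate the $T$-fixed points of $H = \hilb^3\ptwo$, then determine which of them lie on the blown-up locus $Y$ and count the extra fixed points introduced on the exceptional divisor $E$. A $T$-fixed point of $H$ corresponds to a length-$3$ monomial ideal in some chart, i.e. (after choosing which coordinate point $[1:0:0]$, $[0:1:0]$ or $[0:0:1]$ the subscheme is supported near, or distributing the three points of length among the three coordinate points) a multiset of monomial ideals. Concretely, a $T$-fixed subscheme of length $3$ is a disjoint union of $T$-fixed punctual subschemes, one at each of the three torus-fixed points of $\ptwo$, with lengths summing to $3$; and a punctual $T$-fixed subscheme at a coordinate point of length $\ell$ corresponds to a staircase (monomial ideal of colength $\ell$ in $k[x,y]$). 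I would tabulate: length-$1$ punctual $=$ the reduced point ($1$ choice); length-$2$ punctual $=$ ideal $(x,y^2)$ or $(x^2,y)$ ($2$ choices); length-$3$ punctual $=$ $(x,y^3)$, $(x^3,y)$, or $(x^2,xy,y^2)=(x,y)^2$ ($3$ choices). Summing over the partitions $3 = 1+1+1 = 2+1 = 3$ of the length among the (at most three) coordinate points of $\ptwo$ and keeping track of which coordinate points are used, I expect to arrive at a specific count of $T$-fixed points of $H$ — and exactly those supported at a single coordinate point with ideal $(x,y)^2$ lie in $Y\cong\ptwo$, so there are three of them (one per coordinate point).

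Next, since $\wt H$ is the blow-up of $H$ along the smooth center $Y$, and $Y^T$ consists of those three fixed points, each such point is replaced in $\wt H$ by a $\PP(N_{Y,H})$-fiber. By Remark \ref{rmk.fiber of extension} the normal fiber at such a point is the $4$-dimensional space of degree-$3$ binary forms in the local coordinates $x,y$; the induced $T$-action on this space has weights given by the four monomials $x^3, x^2y, xy^2, y^3$, which are pairwise distinct characters of $T$ (this uses that the two local coordinates have different $T$-weights, which holds at every coordinate point of $\ptwo$). Hence each blown-up point contributes exactly $4$ new $T$-fixed points on $E$, while the old point is removed. The fixed points of $H$ lying outside $Y$ map bijectively to fixed points of $\wt H$.

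Putting this together, $\#(\wt H)^T = \#H^T - 3 + 3\cdot 4 = \#H^T + 9$. The arithmetic of the enumeration in the first step should give $\#H^T = 22$, yielding $31$. The main obstacle — and the place where care is required — is the combinatorial bookkeeping of the first step: one must correctly count length-$3$ $T$-fixed subschemes of $\ptwo$, being careful not to over- or under-count configurations spread across the three coordinate points versus concentrated at one, and to verify (for the second step) that the only ones inside $Y$ are the three copies of $(x,y)^2$. Everything after that — the weight computation on $N_{Y,H}$ and the conclusion that the weights are distinct — is routine, given that at each coordinate point of $\ptwo$ the two affine coordinates carry distinct torus characters.
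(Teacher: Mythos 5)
Your proposal is correct and follows essentially the same route as the paper: enumerate the $22$ torus-fixed monomial ideals of $H$, observe that exactly the three punctual non-curvilinear ones (local ideal $(x,y)^2$) lie in $Y$, and replace each by the $4$ isolated fixed points of $\PP(N_{Y,H})$ coming from the four distinct weights on the space of binary cubics, giving $22-3+12=31$. The only cosmetic difference is that the paper lists the $22$ ideals explicitly and derives the normal-space weights directly from the deformation-theoretic description of $TH_p$ and $TY_p$, whereas you count via partitions and invoke Remark~\ref{rmk.fiber of extension} for the identification $N_{Y,H}\cong\sym^3$ of the local coordinates.
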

\begin{proof}
	Observe that the fixed locus of $H$ consists of $22$ fixed points $p_1,\dots,p_{22}$ corresponding to the subschemes with homogeneous ideals:
	\begin{align*}
	I_{p_1}&=(XY,XZ,YZ) & I_{p_7}&=(Z^2,XY,ZY) & I_{p_{13}}&=(X^2Y,Z)& I_{p_{19}}&=(X^3,Z) \\
	I_{p_2}&=(X^2,YZ,XY)& I_{p_8}&=(X^2,XY,Y^2)& I_{p_{14}}&=(X,YZ^2)& I_{p_{20}}&=(X,Z^3) \\
	I_{p_3}&=(X^2,YZ,XZ)& I_{p_9}&=(X^2,XZ,Z^2)& I_{p_{15}}&=(Z,XY^2)& I_{p_{21}}&=(Y,Z^3) \\
	I_{p_4}&=(Y^2,XZ,XY)& I_{p_{10}}&=(Y^2,YZ,Z^2) & I_{p_{16}}&=(XZ^2,Y)& I_{p_{22}}&=(Y^3,Z) \\
	I_{p_5}&=(Y^2,XZ,YZ)& I_{p_{11}}&=(X,Y^2Z) & I_{p_{17}}&=(X^3,Y)& \\
	I_{p_6}&=(Z^2,XY,XZ)& I_{p_{12}}&=(X^2Z,Y) & I_{p_{18}}&=(X,Y^3) 
	\end{align*}
	The $31$ fixed points of $\wt{H}$ are given by the $19$ fixed points $p_1,\dots,p_7,p_{11},\dots,p_{22}$ plus other $12$ fixed points contained in the exceptional divisor which we will describe in the next lines. 
	
	Recall from Remark \ref{rmk.fiber of extension} that the normal bundle of the non-curvilinear locus of $H$ at a point can be identified with the space of homogeneous cubic forms in the local coordinates of the point. This suggests that there are exactly twelve fixed points in $\wt{H}$ lying  over the points $p_8$, $p_9$, $p_{10}$ of $H$, corresponding to the pairs $(p_i,f)$ where $f$ is a monomial of degree $3$ in the local coordinates of $\Sigma(p)$.
	
	More precisely, recall that:
	$$ TH_p=\Hom_{\Ocal_{\Sigma(p)}}(\Ical_{\Sigma(p)}/\Ical_{\Sigma(p)}^2,\Ocal_{\Sigma(p)}) $$
	Let $p$ be a point in the non-curvilinear locus $H_{nc}\subset H$ and let $u$, $v$ be local coordinates near the support of $\Sigma(p)$. An element $\varphi$ of $\Hom_{\Ocal_{\Sigma(p)}}(\Ical_{\Sigma(p)}/\Ical_{\Sigma(p)}^2,\Ocal_{\Sigma(p)})$ is completely determined by its values at $u^2$, $uv$, $v^2$. Moreover, in order to be a morphism of $\Ocal_{\Sigma(p)}$-modules, such values must be of the form $au+bv$ for some $(a,b)\in k^{\oplus 2}$. From this we deduce that:
	$$ TH_p=\left \langle u^{2\vee}\otimes u,u^{2\vee}\otimes v,uv^{\vee}\otimes u,uv^{\vee}\otimes v,v^{2\vee}\otimes u,v^{2\vee}\otimes v \right\rangle $$
	To each element $\varphi$ of $TH_p$ corresponds the first order deformation of the subscheme $\Sigma(p)$ locally defined by the ideal:
	$$ (u^2+\epsilon\varphi(u^2),uv+\epsilon\varphi(uv),v^2+\epsilon\varphi(v^2)) $$
	A basis for the vector subspace $T(H_{nc})_p \subset TH_p$ is given by:
	$$ \left\langle u^{2\vee}\otimes 2u + uv^\vee\otimes v, uv^\vee\otimes u + v^{2\vee}\otimes 2v \right\rangle. $$
	This is because an element in the subspace above corresponds to the deformation whose local ideal is: 
	$$ (u^2+2a\epsilon u,uv+\epsilon(bu+av),v^2+2b\epsilon v)=(u+a\epsilon ,v+b\epsilon )^2. $$
	Thus a base for the normal space at $p$ is given by (equivalence classes of) the elements:
	$$ \left\{ \overline{u^{2\vee}\otimes u},\overline{u^{2\vee}\otimes v},\overline{v^{2\vee}\otimes u},\overline{v^{2\vee}\otimes v} \right\} .$$
	As a side remark, observe that the cubic form associated to an element $\varphi$ in the normal space at $p$ is the degree $3$ polynomial that appears among the generators of $(u^2+\epsilon\varphi(u^2),uv+\epsilon\varphi(uv),v^2+\epsilon\varphi(v^2))^2$ .
	In the end, we deduce that the fixed points of $\wt{H}$ contained in the exceptional divisors are all of the form $(p_i,\varphi)$, where $8\leq i\leq 10$ and $\varphi$ is one of the elements of the basis above.
\end{proof}	
\begin{lemma}\label{lemma.fixed points of Z}
	The fixed points of $\Zcal$ are of the form $([C],p)$ where $C=\{f=0\}$ is a plane quartic, $f$ is a monomial, $p$ is a fixed point of $\wt{H}$ and either:
	\begin{enumerate}
		\item  $p$ is not contained in the exceptional divisor, is non-rectilinear and $f\in I_p^2$ (fixed point of type $(1)$).
		\item $p=(p_i,g)$ is contained in the exceptional divisor of $\wt{H}$ and $f\in (I_{p_i}^2,g)$ (fixed point of type $(2)$).
		\item $I_p$ contains a degree $1$ monomial $l$ and $f\in (l^2)$ (fixed point of type $(3)$).
	\end{enumerate} 
\end{lemma}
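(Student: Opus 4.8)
The plan is to use the description of $\Zcal$ as $\PP(\Fcal)$ inside $\wt{H}\times\PP(W_4)$, with $\Fcal$ a subbundle of the $T$-equivariantly trivial bundle $W_4\times\wt{H}$. A point of $\Zcal$ is then a pair $(p,[f])$ with $p\in\wt{H}$ and $0\neq f\in\Fcal(p)\subseteq W_4$, and it is $T$-fixed precisely when $p$ is a $T$-fixed point of $\wt{H}$ and $[f]$ is a $T$-fixed point of $\PP(\Fcal(p))$. Thus the statement reduces to two things: the list of $T$-fixed points of $\wt{H}$, already obtained in Lemma \ref{lemma.fixed points of Htilde}, and an identification of the $T$-fixed locus of $\PP(\Fcal(p))$ for each such $p$.

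First I would note that the degree-$4$ monomials form a basis of $W_4$ consisting of $T$-eigenvectors whose characters $il_1+jl_2+kl_3$ are pairwise distinct; hence any $T$-subrepresentation of $W_4$, in particular every fiber $\Fcal(p)$ over a $T$-fixed point $p$, is spanned by the set of degree-$4$ monomials it contains, and the $T$-fixed points of $\PP(\Fcal(p))$ are exactly the classes $[f]$ with $f$ a monomial in $\Fcal(p)$. It then remains to compute $\Fcal(p)$ on the three families of fixed points of $\wt{H}$. For the $19$ fixed points not on the exceptional divisor one splits according to rectilinearity: the points $p_1,\dots,p_7$ are non-rectilinear (their homogeneous ideals contain no linear form, and any length-$3$ scheme in the closure of the locus of collinear triples is contained in a line), so Proposition \ref{propos.extension}(1) gives $\Fcal(p)=\ker(\varphi)(p)=\H^0(\ptwo,I_p^2(4))$ and the monomials of $\Fcal(p)$ are those lying in $I_p^2$ — this is type $(1)$; the points $p_{11},\dots,p_{22}$ are rectilinear, with $I_p$ containing a unique linear monomial $l$, and Proposition \ref{propos.extension}(2) gives $\Fcal(p)=l^2\cdot\H^0(\ptwo,\cO(2))$, whose monomials are exactly those divisible by $l^2$ — this is type $(3)$. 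For the $12$ fixed points $(p_i,g)$ on the exceptional divisor, which lie over the non-curvilinear, hence non-rectilinear, points $p_8,p_9,p_{10}$, the fiber of $\wt{\Sigma}_1$ has ideal locally $(g,(x,y)^4)$ by Remark \ref{rmk.fiber of extension}, so $\Fcal=\ker(\varphi)$ there as well, and unwinding this local description shows that the monomials of $\Fcal((p_i,g))$ are precisely those of the homogeneous ideal $(I_{p_i}^2,g)$ — this is type $(2)$.

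The delicate point is the exceptional-divisor case. One needs that the exceptional divisor $E$ is disjoint from the rectilinear locus $\wt{H}_r$, so that $\Fcal$ equals $\ker(\varphi)$ there rather than being governed by the rectilinear formula of Proposition \ref{propos.extension}(2); this holds because $E$ lies over the locus of non-curvilinear subschemes, which are not contained in any line, whereas every point of the closure of the rectilinear locus parametrizes a subscheme of a line. One also needs the local-to-global translation of the ideal of $\wt{\Sigma}_1$ at $(p_i,g)$ into the degree-$4$ part of $(I_{p_i}^2,g)$, which is a short direct computation: a quartic dehomogenizes into $(g)+(x,y)^4$ near the support point if and only if it lies in $(I_{p_i}^2,g)$, using that $(x,y)^4$ already contains every local degree-$4$ form. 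Everything else is bookkeeping against the explicit lists of ideals $I_{p_1},\dots,I_{p_{22}}$ recorded in Lemma \ref{lemma.fixed points of Htilde}.
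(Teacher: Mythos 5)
Your proof is correct and follows essentially the same route as the paper, which simply asserts that the lemma ``easily follows from Lemma \ref{lemma.fixed points of Htilde} and from the construction of $\Zcal$''; you have supplied exactly the details that assertion leaves implicit (fixed points of a projective subbundle lie over fixed points of the base, distinctness of the monomial characters in $W_4$, and the identification of the fibers $\Fcal(p)$ in the three cases via Proposition \ref{propos.extension} and Remark \ref{rmk.fiber of extension}). No gaps.
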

\begin{proof}
	It easily follows from Lemma \ref{lemma.fixed points of Htilde} and from the construction of $\Zcal$.
\end{proof}
Observe that we have
$$ c_{top}^T(T\Zcal_{([C],p)})=c_{top}^T(T\wt{H}_p)c_{top}^T(T\Pro(V_p)_{[C]}) $$
where $\Pro(V_p)\subset \Pro(W_4)$ is the projective subspace of quartics $[Q]$ such that $([Q],p)$ is in $\Zcal$. Recall also that:
$$ TH_p=\Hom_{\Ocal_{\Sigma(p)}}(\Ical_{\Sigma(p)}/\Ical_{\Sigma(p)}^2,\Ocal_{\Sigma(p)}). $$
There are basically four cases that we need to consider in order to compute the top Chern classes of the tangent spaces of the fixed points of $\wt{H}$.
\begin{enumerate}
	\item Suppose that $p$ is the unique fixed point whose associated subscheme is supported on three distinct points. Then we have:
	$$ TH_p=\left\langle \dfrac{X}{Z}^\vee,\dfrac{Y}{Z}^\vee,\dfrac{X}{Y}^\vee,\dfrac{Z}{Y}^\vee,\dfrac{Y}{X}^\vee,\dfrac{Z}{X}^\vee \right\rangle .$$
	From this immediately follows:
	$$ c_{top}(T\wt{H}_p)=(l_3-l_1)(l_3-l_2)(l_2-l_1)(l_2-l_3)(l_1-l_2)(l_1-l_3).$$ 
	\item Suppose that $\Sigma(p)$ is supported on two distinct points whose local coordinates are respectively $(u,v)$ and $(s,t)$. Suppose moreover that the ideal of $\Sigma(p)$ near the first point is equal to $(u^2,v)$. Then we have:
	$$ TH_p=\left\langle u^{2\vee}\otimes 1, u^{2\vee}\otimes u, v^\vee\otimes 1,v^\vee\otimes u, s^\vee, t^\vee \right\rangle. $$
	Therefore:
	\begin{align*}
	c_{top}^T(T\wt{H}_p)=2\chi_u^2(-\chi_v)(\chi_u-\chi_v)\chi_s\chi_t.
	\end{align*} 
	Applying this formula we get the top Chern class of the tangent space of $p_i$ for $2\leq i \leq 7$ and $11\leq i \leq 16$.
	\item Suppose that $\Sigma(p)$ is curvilinear and supported on one point, whose local coordinates are $(u,v)$. Then the ideal of $\Sigma(p)$ near the support is equal to $(u^3,v)$. From this we deduce:
	$$ TH_p=\left\langle u^{3\vee}\otimes 1,u^{3\vee}\otimes u,u^{3\vee}\otimes u^2,v^\vee\otimes 1,v^\vee\otimes u,v^\vee\otimes u^2 \right\rangle. $$
	Thus we get:
	$$ c_{top}^T(T\wt{H}_p)=-6\chi_u^3(-\chi_v)(\chi_u-\chi_v)(2\chi_u-\chi_v). $$
	This formula gives us the top Chern classes of the tangent spaces of $p_i$ for $17\leq i\leq 22$.
\end{enumerate} 
The fourth case left to compute is the top Chern class of the tangent space of the fixed points contained in the exceptional divisor of $\wt{H}$. To do this we need a description as a $T$-representation of the tangent space of a $T$-fixed point in an equivariant blow-up scheme.
	\begin{lemma}\label{lemma.tangent of blow-up}
		Let $X$ be a smooth $T$-scheme and $Y\subset X$ a $T$-invariant, regularly embedded subscheme. Let $x$ be a $T$-fixed point of $X$, contained in $Y$, and let $\{\varphi_1,\dots,\varphi_n\}$ be a base for $TX_x$ such that, with respect to the induced action of $T$ on $TX_x$, we have $t\cdot\varphi_i=\chi_i(t)\varphi_i$. Suppose moreover that $\varphi_1,\dots,\varphi_m$ is a base for $TY_x\subset TX_x$. Let $\wt{X}$ be the blow-up of $X$ at $Y$. Then:
		$$ T\wt{X}_{(x,[\varphi_j])}=\left\langle
		\varphi_1,\dots,\varphi_m,\varphi_{m+1}\otimes \varphi_j^\vee,\dots,\widehat{\varphi_j \otimes \varphi_j^\vee},\dots,\varphi_n\otimes\varphi_j^\vee \right\rangle. $$
		In particular: $$c_{top}^T(T\wt{X}_{(x,[\varphi_j])})=c_1^T(\chi_1)\cdots c_1^T(\chi_m)(c_1^T(\chi_{m+1})-c_1^T(\chi_j))\cdots (c_1^T(\chi_n)-c_1^T(\chi_j)). $$
	\end{lemma}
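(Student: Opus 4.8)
The plan is to reduce the statement to a weight computation on the standard local model of a blow-up, namely the blow-up of affine space along a coordinate linear subspace carrying the diagonal torus action. First I would produce $T$-equivariant local coordinates at $x$. Since $x$ is a $T$-fixed point of the smooth scheme $X$ and $T$ is a torus, the cotangent space $\mathfrak{m}_x/\mathfrak{m}_x^2$ is a direct sum of characters, so one may choose a regular system of parameters $x_1,\dots,x_n\in\mathfrak{m}_x$ that are $T$-semi-invariant and whose classes modulo $\mathfrak{m}_x^2$ form the basis $\varphi_1^\vee,\dots,\varphi_n^\vee$ dual to $\varphi_1,\dots,\varphi_n$. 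Because $Y$ is $T$-invariant and (as the hypothesis on $TY_x$ forces) smooth of dimension $m$ at $x$, and is regularly embedded, one can arrange in addition that $x_{m+1},\dots,x_n$ generate $I_Y$ in $\Ocal_{X,x}$: they are independent modulo $\mathfrak{m}_x^2$ and cut out a smooth codimension-$(n-m)$ subscheme through $x$, which pins down $I_Y$ locally. Passing to a $T$-invariant affine neighbourhood $U$ of $x$, the semi-invariant coordinates assemble into a $T$-equivariant étale morphism $U\to TX_x\cong\mathbb{A}^n$ under which $Y\cap U$ is the scheme-theoretic preimage of the linear subspace $L\subseteq\mathbb{A}^n$ with ideal $(\varphi_{m+1}^\vee,\dots,\varphi_n^\vee)$.

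Next I would use that blow-ups commute with flat base change: since étale maps are flat and $Y\cap U$ is the preimage of $L$, the blow-up $\wt{X}\times_X U$ is $T$-equivariantly identified with $(\mathrm{Bl}_L\mathbb{A}^n)\times_{\mathbb{A}^n}U$. So it suffices to analyze $\mathrm{Bl}_L\mathbb{A}^n$, with its diagonal $T$-action, near the $T$-fixed point lying over the origin and corresponding to the line $[\varphi_j]$ in $\mathbb{P}(N_{L/\mathbb{A}^n,0})=\mathbb{P}(\langle\varphi_{m+1},\dots,\varphi_n\rangle)$, where $j\in\{m+1,\dots,n\}$. That point is the origin of the standard affine chart of the blow-up with coordinate functions $x_1,\dots,x_m$, $x_j$ (which cuts out the exceptional divisor), and $x_i/x_j$ for $i\in\{m+1,\dots,n\}\smallsetminus\{j\}$. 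Since each $x_k$ carries the $T$-weight dual to $\varphi_k$ and $x_i/x_j$ carries the weight dual to the difference of $\varphi_i$ and $\varphi_j$, dualizing back to the tangent space gives a weight basis of $T\wt{X}_{(x,[\varphi_j])}$ consisting of $\varphi_1,\dots,\varphi_m$, the tangent vector $\varphi_j$ normal to the exceptional divisor, and the vectors $\varphi_i\otimes\varphi_j^\vee$ for $i\in\{m+1,\dots,n\}\smallsetminus\{j\}$ — which is the displayed description. Taking the product of the first equivariant Chern classes of these one-dimensional weight spaces then yields
\[ c_{top}^T\bigl(T\wt{X}_{(x,[\varphi_j])}\bigr)=c_1^T(\chi_1)\cdots c_1^T(\chi_m)\,c_1^T(\chi_j)\prod_{m+1\le i\le n,\ i\ne j}\bigl(c_1^T(\chi_i)-c_1^T(\chi_j)\bigr). \]

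The only step with real content is the $T$-equivariance in the first paragraph: that around a fixed point of a smooth $T$-scheme (with $T$ a torus, hence linearly reductive) there is a $T$-invariant affine open on which a semi-invariant regular system of parameters defines a $T$-equivariant étale map to the tangent space, and that the $T$-invariant, regularly embedded, smooth subscheme $Y$ is there cut out by a sub-collection of these parameters. This is standard, but it is where the argument rests; once it is available, the remainder is the elementary weight bookkeeping on $\mathrm{Bl}_L\mathbb{A}^n$ carried out above. I would also remark that the hypothesis that $\varphi_1,\dots,\varphi_m$ spans $TY_x$ is used precisely to guarantee that $Y$ is smooth of dimension $m$ at $x$, so that the coordinates split as above; this is satisfied in every application of the Lemma in Section~\ref{sec.modifications} and in Lemma~\ref{lemma.fixed points of Htilde}, where $Y$ is the smooth non-curvilinear locus.
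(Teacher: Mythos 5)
Your argument is correct, and it is exactly the standard computation the paper has in mind when it declares the proof ``pretty standard'' and omits it: choose semi-invariant parameters at the fixed point, use that blow-ups commute with the (flat) \'etale chart to reduce to $\mathrm{Bl}_{L}\mathbb{A}^{n}$ for a coordinate subspace $L$, and read off the weights in the affine chart where $x_{j}\neq 0$.

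One point needs flagging, however: your conclusion does \emph{not} literally agree with the displayed statement, and you should not have asserted that it does. Your weight basis contains the extra vector $\varphi_{j}$ (dual to the coordinate $x_{j}$ that cuts out the exceptional divisor in that chart), and your expression for the top Chern class contains the corresponding extra factor $c_{1}^{T}(\chi_{j})$; neither appears in the Lemma as printed. A dimension count settles this in your favour: $\wt{X}$ is smooth of dimension $n$, so $T\wt{X}_{(x,[\varphi_{j}])}$ is $n$-dimensional, whereas the printed basis has only $n-1$ elements (and the printed product would even vanish if the $i=j$ factor were not tacitly omitted). So your formula is the correct one and the Lemma's display is missing the factor $c_{1}^{T}(\chi_{j})$ --- presumably a typo, but one you should state explicitly rather than paper over. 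A last minor remark: the hypothesis that $\varphi_{1},\dots,\varphi_{m}$ is a basis of $TY_{x}$ does not by itself force $Y$ to be smooth of dimension $m$ at $x$ (it only controls the embedding dimension), so smoothness of $Y$ at $x$ is really an additional implicit hypothesis; you note this yourself, and it is harmless since in the only application $Y\simeq\PP^{2}$.
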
	
The proof is pretty standard and we omit it. Let $p$ be a fixed point of $H$ such that $\Sigma(p)$ is non-curvilinear and supported on one point. Let $(u,v)$ be local coordinates near the support of $\Sigma(p)$. Then we have seen in the proof of Lemma \ref{lemma.fixed points of Htilde} that
$$ TH_p=\left\langle u^{2\vee}\otimes 2u + uv^\vee\otimes v, uv^\vee\otimes u + v^{2\vee}\otimes v,u^{2\vee}\otimes u,u^{2\vee}\otimes v,v^{2\vee}\otimes u,v^{2\vee}\otimes v \right\rangle. $$
We have also seen that there are exactly four fixed points of $\wt{H}$ lying over $p$: they are $(p,[u^{2\vee}\otimes u])$, $(p,[u^{2\vee}\otimes v])$, $(p,[v^{2\vee}\otimes u])$, $(p,[v^{2\vee}\otimes v])$. Using Lemma \ref{lemma.tangent of blow-up} we can easily compute the top Chern classes of the tangent spaces of these points.

The computation of $c^T_{top}(T\Pro(V_p)_{[C]})$, where $C$ is a plane quartic defined by a degree $4$ monomial $X^aY^bZ^c$, is divided in three cases:
\begin{enumerate}
	\item Suppose $p\in\wt{H}$ is a fixed point of type $(1)$ (see lemma \ref{lemma.fixed points of Z}), then:
	$$ T\Pro(V_p)_{[C]}=\left\langle \dfrac{X^iY^jZ^k}{X^aY^bZ^c} \text{ where }X^iY^jZ^k \in I_{\Sigma(p)}^2 \right\rangle. $$
	Therefore:
	$$ c_{top}^T(T\Pro(V_p)_{[C]})=\prod ((i-a)l_1+(j-b)l_2+(k-c)l_3) $$
	where the product is taken over the triples $(i,j,k)$ such that $i,j,k\geq 0$, $i+j+k=4$, $(i,j,k)\neq (a,b,c)$, $X^iY^jZ^k \in I_{\Sigma(p)}^2$.\\
	\item Suppose $p=(p_i,g)$ is a fixed point of type $(2)$, then:
	$$ T\Pro(V_p)_{[C]}=\left\langle \dfrac{X^iY^jZ^k}{X^aY^bZ^c} \text{ where }X^iY^jZ^k \in (I_{\Sigma(p_i)}^2,g) \right\rangle. $$
	Therefore:
	$$ c_{top}^T(T\Pro(V_p)_{[C]})=\prod ((i-a)l_1+(j-b)l_2+(k-c)l_3). $$
	where the product is taken over the triples $(i,j,k)$ such that $i,j,k\geq 0$, $i+j+k=4$, $(i,j,k)\neq (a,b,c)$, $X^iY^jZ^k \in (I_{\Sigma(p_i)}^2,g)$.\\
	\item Suppose $p$ is a fixed point of type $(3)$, and let $l$ be a degree $1$ monomial contained in $I_{\Sigma(p)}$, then:
	$$ T\Pro(V_p)_{[C]}=\left\langle \dfrac{X^iY^jZ^k}{X^aY^bZ^c} \text{ where }X^iY^jZ^k \in (l^2) \right\rangle. $$
	Therefore:
	$$ c_{top}^T(T\Pro(V_p)_{[C]})=\prod ((i-a)l_1+(j-b)l_2+(k-c)l_3) $$
	where the product is taken over the triples $(i,j,k)$ such that $i,j,k\geq 0$, $i+j+k=4$, $(i,j,k)\neq (a,b,c)$, $X^iY^jZ^k \in (l^2)$.
\end{enumerate}
Recall that in the proof of Proposition \ref{propos.twonodes} we already computed the classes of the fixed points of $\Pro(W_4)$. Therefore, we are in position to prove Proposition \ref{propos.class trinodal}.
\begin{proof}[Proof of Proposition \ref{propos.class trinodal}]
	In order to apply the localization formula we need to compute the equivariant Chern roots of the restriction of $\Ecal_0$, $\Ecal_1$ and $\Ecal_2$ to the fixed points of $\wt{H}$. It is actually enough to perform these computations for the fixed points of $H$: given a fixed point $p$ in the exceptional divisor of $\wt{H}$, the Chern roots of the restriction of $\Ecal_i$ to this point are equal to the Chern roots of $\Ecal_i$ restricted to the image of $p$ via the blow-down morphism.
	
	In the following, we denote $S_i$ the subscheme parametrized by the point $p_i$ in $H$, where $p_i$ is as in the proof of Lemma \ref{lemma.fixed points of Htilde}. We start by computing the Chern roots of $\Ecal_0(p_i)$.
	
	We have that $\Ecal_0(p_1)$ is isomorphic $T$-equivariantly to the direct sum of the residue fields of the three points in the support of $S_1$, therefore the Chern roots of $\Ecal_0(p_1)$ are all equal to zero. 
	
	The $T$-representation $\Ecal_0(p_2)$ is isomorphic to the direct sum of the residue fields of the two points in the support of $S_2$ plus the representation generated by the rational function $\frac{X}{Z}$. Therefore the only non-zero Chern root of the $T$-representation $\Ecal_0(p_2)$ is equal to $l_1-l_3$. The Chern roots of $\Ecal_0(p_i)$ for $i=3,4,5,6,7$ are obtained in the same way, after having appropriately exchanged the roles of $X$, $Y$ and $Z$.
	
	The $T$-representation $\Ecal_0(p_8)$ is isomorphic to the direct sum of the residue field of the unique point in the support of $S_8$ plus the representation generated by the rational functions $\frac{X}{Z}$ and $\frac{Y}{Z}$. This implies that the non-zero Chern roots of the $T$-representation $\Ecal_0(p_8)$ are $l_1-l_3$ and $l_2-l_3$. The Chern roots of $\Ecal_0(p_i)$ for $i=9,10$ are obtained in the same way, after having appropriately exchanged the roles of $X$, $Y$ and $Z$.
	
	The $T$-representation $\Ecal_0(p_{11})$ is isomorphic to the direct sum of the residue fields of the points in the support of $S_{11}$ plus the representation generated by the rational function $\frac{Y}{Z}$. Therefore, the non-zero Chern root of $\Ecal_0(p_{11})$ is equal to $l_2-l_3$. The Chern roots of $\Ecal_0(p_i)$ for $i=12,\cdots,16$ are obtained in the same way, after having appropriately exchanged the roles of $X$, $Y$ and $Z$.
	
	The $T$-representation $\Ecal_0(p_{17})$ is isomorphic to the direct sum of the residue field of the unique point in the support of $S_{17}$ plus the representation generated by the rational functions $\frac{X}{Z}$ and $\frac{X^2}{Z^2}$. Therefore, the non-zero Chern roots of $\Ecal_0(p_{17})$ are equal to $l_1-l_2$ and $2(l_1-l_2)$. The Chern roots of $\Ecal_0(p_i)$ for $i=18,\cdots,22$ are obtained in the same way, after having appropriately exchanged the roles of $X$, $Y$ and $Z$.
	
	We now compute the Chern roots of $\Ecal_1(p_1)$.
	
	The representation $\Ecal_1(p_1)$ is generated by the restriction to this point of the linear forms $X$, $Y$ and $Z$. Therefore, the Chern roots are $l_1$, $l_2$ and $l_3$.
	
	The linear forms $X$, $Y$ and $Z$ also form a basis for $\Ecal_1(p_i)$, where $i=2,\ldots,10$. We deduce that the Chern roots of all these representations are $l_1$, $l_2$ and $l_3$.
	
	Consider now the subscheme $S_{11}$: a basis of $H^0(S_{11},\Ocal(1))$ is given by the restrictions of the global sections $Y$ and $Z$ plus the restriction of the local section $y\cdot Z$, which is defined in the $Z\neq 0$ chart of $\PP^2$. We deduce that the Chern roots of $\Ecal_1(p_{11})$ are equal to $l_2$, $l_3$ and $l_2$. The Chern roots of $\Ecal_1(p_i)$ for $i=12,\dots,16$ are obtained in a similar fashion, after permuting $X$, $Y$ and $Z$ accordingly.
	
	The subscheme $S_{17}$ is all contained in the $Z\neq 0$ chart of $\PP^2$: looking at the structure sheaf of $S_{17}$, we deduce that $H^0(S_{17},\Ocal(1))$ is generated by $Z$, $x\cdot Z$ and $x^2\cdot Z$: this implies that the Chern roots of $\Ecal_1(p_{17})$ are $l_3$, $l_1$ and $2l_1-l_3$. Again, the Chern roots of $\Ecal_1(p_i)$ for $i=18,\dots,22$ can be deduced with a similar argument, after having exchanged the roles of $X$, $Y$ and $Z$.
	
	We now compute the Chern roots of $\Ecal_2(p_i)$.	
	It is easy to see that the representation $\Ecal_2(p_1)$ is isomorphic to the representation of quadratic forms that do not vanish on all the three points in the support of $S_1$: this representation is generated by $X^2$, $Y^2$ and $Z^2$. Therefore, the Chern roots of $\Ecal_2(p_1)$ are $2l_1$, $2l_2$ and $2l_3$.
	
	With a similar argument, we see that the generators of the representation $\Ecal_2(p_2)$ are equal to $Y^2$, $XZ$ and $Z^2$. Therefore, the Chern roots of $\Ecal_2(p_2)$ are equal to $2l_2$, $l_1+l_3$ and $2l_3$. The Chern roots of $\Ecal_2(p_i)$ for $i=3,4,5,6,7$ are obtained exchanging the roles of $X$, $Y$ and $Z$ in an appropriate way.
	
	The generators of the representation $\Ecal_2(p_8)$ are equal to $XZ$, $YZ$ and $Z^2$. Therefore, the Chern roots of $\Ecal_2(p_8)$ are equal to $l_1+l_3$, $l_2+l_3$ and $2l_3$. The Chern roots of $\Ecal_2(p_i)$ for $i=9,10$ are obtained exchanging the roles of $X$, $Y$ and $Z$ in an appropriate way.
	
	The generators of the representation $\Ecal_2(p_{11})$ are equal to $Y^2$, $YZ$ and $Z^2$. Therefore, the Chern roots of $\Ecal_2(p_{11})$ are equal to $2l_2$, $l_2+l_3$ and $2l_3$. The Chern roots of $\Ecal_2(p_i)$ for $i=12,\cdots,16$ are obtained exchanging the roles of $X$, $Y$ and $Z$ in an appropriate way.
	
	Finally, the generators of the representation $\Ecal_2(p_{17})$ are equal to $Y^2$, $YZ$ and $Z^2$. Therefore, the Chern roots of $\Ecal_2(p_{17})$ are equal to $2l_2$, $l_2+l_3$ and $2l_3$. The Chern roots of $\Ecal_2(p_i)$ for $i=18,\cdots,22$ are obtained exchanging the roles of $X$, $Y$ and $Z$ in an appropriate way.
	
	We now have all the ingredients necessary to write down the explicit localization formula for the cycles we are interested in. More precisely, we have:
	\[ \eta(\underline{i})=\sum_{(C,p)} \frac{\text{polynomial in the equivariant Chern classes of } \Ecal_j(p)}{c_{top}^T(TZ_{C,p})}\cdot [C] \]
	where the sum is taken over all the fixed points $(C,p)$ contained in $\Zcal\subset \PP(W_4)\times \wt{H}$. A straightforward computation with Mathematica concludes the proof.	
\end{proof}
\begin{corol}\label{corol.image in ideal}
	The image of $A^{\GL_3}_*(\ov{Z}_{\rm binod})\arr A^{\GL_3}_*(\PP(W_4))$ is contained in $(I_{\wt{Z}},\delta_{\{ 1, 3 \}})$.
\end{corol}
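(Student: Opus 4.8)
The plan is simply to assemble the results established above. By part~(2) of Lemma~\ref{lemma.generators trinodal}, the image of $i_*\colon A^{\GL_3}_*(\ov{Z}_{\rm binod})\arr A^{\GL_3}_*(\PP(W_4))$ is contained in the sum of the ideal $(I_{\wt{Z}},\delta_{\{1,3\}})$ and the ideal $\im(p_*)$, where $p\colon\Zcal\arr\PP(W_4)$ is the projection from $\Zcal=\PP(\Fcal)$ appearing in diagram~(\ref{eq:Z}). So the only thing left to do is to show that $\im(p_*)\subseteq(I_{\wt{Z}},\delta_{\{1,3\}})$, after which the corollary follows at once.

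For this I would invoke Lemma~\ref{lemma.generators of im(p_*)}, which, using Remark~\ref{rmk.generators of the generic locus} (only generators of $A^*_{\GL_3}(\wt{H})$ pulled back from $H$, not those supported on the exceptional divisor, can contribute to $\im(p_*)$ modulo $(I_{\wt{Z}},\delta_{\{1,3\}})$), Theorem~\ref{theorem.generators hilbert} and Lemma~\ref{lemma.formality} (so that the classes $c_j^{\GL_3}(\Ecal_i)$ suffice as generators), together with the relations $c_3^{\GL_3}(\Ecal_0)=0$ and $2c_1(\Ecal_1)-c_1(\Ecal_2)=c_1(\Ecal_0)$, reduces the claim to showing that the finitely many cycles $\eta(\underline{i})$ with $i_j\geq 0$ and $0\leq i_1+2i_2+i_3+2i_4+3i_5+2i_6+3i_7\leq 6$ all lie in $(I_{\wt{Z}},\delta_{\{1,3\}})$. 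But this is exactly the content of Proposition~\ref{propos.class trinodal}, which in fact gives the stronger statement $\eta(\underline{i})\in I_{\wt{Z}}$. Chaining the three facts together yields $\im(p_*)\subseteq I_{\wt{Z}}+(I_{\wt{Z}},\delta_{\{1,3\}})=(I_{\wt{Z}},\delta_{\{1,3\}})$, hence the corollary.

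The genuine work has already been carried out elsewhere: at the level of this corollary there is no real obstacle, as it is a formal consequence of Lemmas~\ref{lemma.generators trinodal} and~\ref{lemma.generators of im(p_*)} and of Proposition~\ref{propos.class trinodal}. The hard part sits inside Proposition~\ref{propos.class trinodal}, whose proof rests on the explicit localization computation over the $31$ torus-fixed points of $\wt{H}$ catalogued in Lemma~\ref{lemma.fixed points of Htilde}, the fixed-point description of $\Zcal$ in Lemma~\ref{lemma.fixed points of Z}, the equivariant blow-up tangent-space formula (Lemma~\ref{lemma.tangent of blow-up}), and the Chern-root tables for $\Ecal_0,\Ecal_1,\Ecal_2$ at those points. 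The one point I would be careful to verify in writing up the present corollary is that the reduction is genuinely exhaustive, i.e.\ that every class in $A^{\GL_3}_*(\ov{Z}_{\rm binod})$ not already controlled by the binodal analysis of Proposition~\ref{propos.twonodes} and Corollary~\ref{cor.twonodes} is supported on the stratum $Z_{\rm trinod}$ and therefore lifts to a class on $\Zcal$ via the isomorphism $\Zcal|_{Z_{\rm trinod}}\simeq Z_{\rm trinod}$ exploited in the proof of Lemma~\ref{lemma.generators trinodal}.
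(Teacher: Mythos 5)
Your proof is correct and follows exactly the paper's own argument: part (2) of Lemma \ref{lemma.generators trinodal} reduces the claim to showing $\im(p_*)\subseteq(I_{\wt{Z}},\delta_{\{1,3\}})$, and this follows from Lemma \ref{lemma.generators of im(p_*)} together with Proposition \ref{propos.class trinodal}. The corollary is indeed a formal assembly of these prior results, with all the substantive work residing in the localization computation behind Proposition \ref{propos.class trinodal}, just as you observe.
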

\begin{proof}
	By Lemma \ref{lemma.generators trinodal} we know that the image of $A^{\GL_3}_*(\ov{Z}_{\rm binod})\arr A^{\GL_3}_*(\PP(W_4))$ is contained in $(I_{\wt{Z}},\delta_{\{ 1, 3 \}})$ plus $\im(p_*)$. By Proposition \ref{propos.class trinodal}, combined with Lemma \ref{lemma.generators of im(p_*)}, the latter ideal is contained in $(I_{\wt{Z}},\delta_{\{ 1, 3 \}})$. This concludes the proof.
\end{proof}

\section{Geometry of ${\rm Hilb}^3\PP^2$}\label{sec.modifications}

This Section is devoted to prove some results on $H:={\rm Hilb}^3\PP^2$ that were used in the previous Section.
\subsection{Extension of $\Sigma_1$ to $\wt{H}$}
As before, we denote $\Sigma$ the universal family over $H$ and $\Sigma_1$ its first infinitesimal neighborhood. 

Recall that $Y$ is the closed subvariety of $H$ whose points $[S]$ correspond to $0$-dimensional subschemes $S\subset \PP^2$ of length $3$, supported on one point and non-curvilinear. Observe that $Y\simeq\PP^2$.

Let $\wt{H}$ be the blow-up of $H$ along $Y$ and let us denote the pullback of $\Sigma_1$ to $\wt{H}$ with the same name. The goal of this Subsection is to prove Proposition \ref{propos.Htilde}.

Observe that $\Sigma_1$ induces a rational morphism $\wt{H}\dashrightarrow\hilb^9\ptwo$ which is defined away from the exceptional divisor $E$ in $\wt{H}$. Let $\Gamma$ be the closure of the graph of this rational morphism inside $\wt{H}\times\hilb^9\ptwo$, so that we have
$$\xymatrix{
	\Gamma \subset \wt{H}\times\hilb^9\ptwo \ar[r]^{\hspace{0.5cm}q} \ar[d]^{p} & \hilb^9\ptwo \\
	\wt{H} & }$$
Observe that over $\Gamma$ there is a well defined flat family of $0$-dimensional closed subscheme of $\ptwo$ of length $9$, given by the restriction to $\Gamma$ of $q^*\Upsilon$, where $\Upsilon$ is the universal family over $\hilb^9\ptwo$.
\begin{propos}\label{propos.isomorphism}
	The projection $\Gamma\arr \wt{H}$ is an isomorphism.
\end{propos}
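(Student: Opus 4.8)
The plan is to show that $p\colon\Gamma\arr\wt H$ is a proper, birational morphism onto a normal scheme with finite fibres, and then to conclude by Zariski's Main Theorem. Properness is automatic, since $\Gamma$ is closed in $\wt H\times\hilb^9\ptwo$ and $\hilb^9\ptwo$ is projective; $\Gamma$ is integral, being the closure of the graph of a rational map out of the integral scheme $\wt H$ (note that $H$ is irreducible by Fogarty \cite{fog68}, hence so is its blow-up $\wt H$); and $p$ restricts to an isomorphism over the open locus $\wt H\smallsetminus E$ where the rational map $\wt H\dashrightarrow\hilb^9\ptwo$ is defined, so $p$ is birational. Moreover $\wt H$ is smooth, being the blow-up of the smooth scheme $H$ along the smooth centre $Y\cong\ptwo$, hence normal; so as soon as we know that $p$ is quasi-finite we may conclude that $p$ is finite, and a finite birational morphism onto a normal integral scheme is an isomorphism. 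Since $p$ is already an isomorphism off $E$, it remains only to show that $p$ has finite fibres over the points of the exceptional divisor $E$.

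Fix $e=([S],[f])\in E$ lying over $[S]\in Y$. As $\GL_3=\operatorname{Aut}(\ptwo)$ acts transitively on $Y\cong\ptwo$ — the points of $Y$ are exactly the fat points $V(\mathfrak m_P^2)$, $P\in\ptwo$ — we may assume $S$ is supported at $P=[0:0:1]$ with local ideal $\mathfrak m_P^2=(x,y)^2$ in suitable affine coordinates $x,y$. From the description of $T_{[S]}H$ and $T_{[S]}Y$ obtained in the proof of Lemma \ref{lemma.fixed points of Htilde}, the fibre $N_{Y/H}([S])$ is canonically the $4$-dimensional space of cubic forms in $x,y$, so $f$ is the class of such a cubic form and $E$ is a $\PP^3$-bundle over $Y$. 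The claim to be proved is that $p^{-1}(e)$ is the single reduced point corresponding to the length $9$ subscheme $T_f\subset\ptwo$ supported at $P$ with local ideal $\bigl(f,\mathfrak m_P^4\bigr)$ (whose colength is $1+2+3+3=9$), in accordance with Remark \ref{rmk.fiber of extension}. By construction of $\Gamma$, a point $(e,[T])$ belongs to $p^{-1}(e)$ exactly when $[T]$ is a limit $\lim\bigl[\Sigma_1(\eta)\bigr]$ for some arc $\eta\to e$ in $\wt H\smallsetminus E$, so the claim says precisely that this flat limit exists, is unique, and equals $T_f$.

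To establish this I would compute in an explicit chart. Choose a versal deformation of $S$ inside $H$ around $[S]$; inside it $Y$ is cut out by four equations which vanish to first order exactly along the normal directions identified above with cubic forms. Blowing up these four equations and passing to the standard affine charts of the resulting blow-up, one computes directly the flat limit, along the exceptional divisor, of the square of the ideal $I_\Sigma$ defining the universal family: once the blow-up has absorbed the first-order ambiguity, the first infinitesimal neighbourhood of the deforming length $3$ scheme degenerates uniformly to the subscheme with local ideal $\bigl(f,\mathfrak m_P^4\bigr)$ — the collinear degenerations, and the generic degeneration of three distinct points merging along the triangle $\ell_{12}\ell_{13}\ell_{23}$ (where $f$ is the product of the three leading linear forms), are good consistency checks contained in this computation. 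This produces, over a Zariski neighbourhood $U\subset\wt H$ of the chosen exceptional fibre, a flat family of length $9$ subschemes of $\ptwo$ restricting to $\Sigma_1$ over $U\smallsetminus E$; it therefore defines a morphism $U\arr\hilb^9\ptwo$ extending the rational map, whose graph is exactly $\Gamma|_U$, so that $p$ restricts to an isomorphism over $U$. Finitely many affine charts of the blow-up over a single point of $Y$, together with their $\GL_3$-translates, cover a neighbourhood of $E$; combined with the isomorphism over $\wt H\smallsetminus E$ this shows that $p$ is everywhere an isomorphism.

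The main obstacle is the explicit flat-limit computation of the preceding paragraph: one must check that passing to the blow-up $\wt H$ genuinely resolves the non-flatness of $\Sigma_1$, i.e. that the limiting ideal along the exceptional divisor depends only on the cubic form $f$ and on no higher-order data of the arc, and that the limit has length exactly $9$ with no spurious embedded structure. Semicontinuity of the $\mathfrak m_P$-adic Hilbert function of $\Ocal/I_\Sigma^2$ along the family forces the limiting ideal to have the shape $\mathfrak m_P^4+\langle f\rangle$ from below, and the chart computation shows this bound is attained and identifies $f$ with the blow-up direction. All the remaining steps — properness, irreducibility, the reduction through Zariski's Main Theorem, and the passage from a flat family over $U$ to an isomorphism over $U$ — are formal.
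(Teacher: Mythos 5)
Your reduction is exactly the paper's: $p\colon\Gamma\arr\wt H$ is proper and birational onto the smooth (hence normal) scheme $\wt H$, so by Zariski's Main Theorem everything comes down to quasi-finiteness of the fibres over the exceptional divisor $E$; and your identification of the expected fibre over $([S],[f])$ as the single subscheme with local ideal $(f,\mathfrak m_P^4)$ agrees with Remark \ref{rmk.fiber of extension} (the length count $1+2+3+3=9$ is right). The difference is in how that quasi-finiteness is actually established. The paper does it by invoking Lemma \ref{lemma.finiteness of the fiber}: any point of the fibre of $\Gamma$ over $([S],\nu)\in E$ is of the form $\wt\gamma(\Spec(k))$ for an arc $\gamma$ with $\gamma(\Spec(k))=[S]$ and normal direction proportional to $\nu$, so a fibre with more than one point would produce two such arcs with linearly dependent nonzero normal directions but distinct limits, which the Lemma rules out. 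The Lemma in turn is proved by the concrete device of tracking the cubic $f(t)=t^{-3}\,l_{p q}(t)\,l_{p r}(t)\,l_{q r}(t)$ cut out by the three lines through the three moving points, showing $f(0)$ together with $\mathfrak m_P^4$ determines the length-$9$ limit and that $f(0)$ depends only on the normal direction.

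As written, your argument has a gap precisely at this point: the entire content of the proposition is the assertion that the flat limit of $\Sigma_1$ along an arc into $E$ depends only on the point of $E$ reached (equivalently, only on the cubic $f$ and not on higher-order data of the arc), and you defer this to an explicit chart computation on the blow-up that you describe but do not perform — indeed you flag it yourself as ``the main obstacle.'' The semicontinuity remark only pins the limit ideal down to $\mathfrak m_P^4+\langle g\rangle$ for \emph{some} cubic $g$ (one needs to contain $\mathfrak m_P^4$ plus length $9$), and does not show that $g$ is determined by, let alone equal to, the blow-up direction $f$; that identification is the nontrivial step. Note also that for quasi-finiteness one does not need to compute the limit explicitly: it suffices to know that two arcs with the same normal direction have the same limit, which is weaker than your target and is exactly what Lemma \ref{lemma.finiteness of the fiber} supplies. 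Since that Lemma is already available, the clean fix is to cite it (as the paper does) rather than redo the degeneration in charts; if you do insist on the chart computation, the three-lines construction above is essentially the calculation you would have to carry out.
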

To prove the Proposition above, we need a technical result. Consider the following situation: let $R$ be a complete DVR with uniformizer $t$ and let $K$ (resp. $k$) be the fraction field (resp. the residue field) of $R$. Fix a point $[S]$ in $Y\subset H$ which corresponds to a $0$-dimensional closed subscheme $S\subset \ptwo$ of length $3$, supported on only one closed point, non curvilinear. Let $\gamma:\Spec(R)\arr H$ be a morphism such that:
\begin{enumerate}
	\item [(i)] the induced morphism $\gamma_K:\Spec(K)\arr H$ factorizes through the open subscheme $U\subset H$ whose points correspond to triplets of distinct closed points in $\ptwo$;
	\item [(ii)] $\gamma(\Spec(k))=[S]$.
\end{enumerate}
Call $\nu(\gamma)$ the induced tangent vector $\Spec(k[\epsilon])\arr H$ and let $[\nu(\gamma)]$ be the image of $\nu(\gamma)$ through the quotient morphism $TH_{[S]}\arr N_{[S]}$, where $TH_{[S]}$ (resp. $N_{[S]}$) denotes the tangent vector space of $H$ (resp. the normal vector space of $Y$ in $H$) at $[S]$.

Given a morphism $\gamma$ as above, we can define a lifting $\wt{\gamma}:\Spec(R)\arr \hilb^9\ptwo$ as follows: let $\wt{\gamma}_K$ be the composition of $\gamma_K$ with the morphism $U\arr \hilb^9\ptwo$ induced by $\Sigma_1|_U$. By the properness of $\hilb^9\ptwo$ there exists a unique extension of $\wt{\gamma}_K$ to a morphism $\wt{\gamma}:\Spec(R)\to \hilb^9\ptwo$. We are ready to state the technical Lemma:
\begin{lemma}\label{lemma.finiteness of the fiber}
	Fix a point $[S]$ in $Y$ and let $\gamma_1$ and $\gamma_2$ be as above. Suppose moreover that $[\nu(\gamma_1)]$ and $[\nu(\gamma_2)]$ are non-zero and linearly dependent. Then $\wt{\gamma_1}(\Spec(k))=\wt{\gamma_2}(\Spec(k))$.
\end{lemma}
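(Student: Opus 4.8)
The plan is to reduce to a local, explicit computation. The statement is about a point $[S]\in Y$, hence we may work étale-locally on $\ptwo$ around the single support point of $S$ and choose coordinates $u,v$ so that $I_S = (u,v)^2$ in the local ring. First I would describe the relevant tangent and normal spaces explicitly: as recalled in the proof of Lemma \ref{lemma.fixed points of Htilde}, $TH_{[S]} = \Hom_{\Ocal_S}(I_S/I_S^2, \Ocal_S)$ has a basis consisting of the six elements $u^{2\vee}\otimes u$, $u^{2\vee}\otimes v$, $uv^\vee\otimes u$, $uv^\vee\otimes v$, $v^{2\vee}\otimes u$, $v^{2\vee}\otimes v$, with $TY_{[S]}$ spanned by $u^{2\vee}\otimes 2u + uv^\vee\otimes v$ and $uv^\vee\otimes u + v^{2\vee}\otimes 2v$; thus $N_{[S]} = N_{Y,H,[S]}$ is identified, via the side remark in that proof, with the space of binary cubic forms $f(u,v)$, where the class of a tangent vector $\nu$ corresponds to the degree-$3$ part appearing among the generators of the square of the perturbed ideal. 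Concretely, a path $\gamma$ as in the hypotheses can be written, after possibly a base change (allowed since $R$ is a complete DVR and we only care about the closed fibre of $\wt\gamma$), as a one-parameter family of ideals $I_t \subset R$, generic fibre supported on three distinct points, whose associated binary cubic is $f_{\gamma} = [\nu(\gamma)]$.

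Second, I would compute the flat limit of $\{I_t^2\}$ inside $\hilb^9\ptwo$. Away from $t=0$ the fibre of $\Sigma_1$ is the length-$9$ scheme cut out by $I_t^2$; its flat limit as $t\to 0$ is a length-$9$ subscheme $\wt\gamma(\Spec k)$ contained in the length-$10$ scheme $(u,v)^4$. The content of the lemma is that this limit depends only on the line $k\cdot f_\gamma \subset N_{[S]}$, i.e. only on $[\nu(\gamma)]$ up to scalar. The natural way to see this is to identify the limit ideal: I expect it to be exactly $(f_\gamma) + (u,v)^4$ — precisely the ideal predicted in Remark \ref{rmk.fiber of extension} — which manifestly depends only on $f_\gamma$ up to scalar, since $(f_\gamma)=(c f_\gamma)$ for $c\neq 0$. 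So the heart of the argument is: (a) show the flat limit of $I_t^2$ is contained in $(f_\gamma,(u,v)^4)$; (b) show both have colength $9$, forcing equality; then the conclusion is immediate because $\gamma_1,\gamma_2$ with proportional $[\nu]$ give the same $(f,(u,v)^4)$.

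For step (a), I would argue as follows: write generators of $I_t$ perturbing $u^2,uv,v^2$; forming $I_t^2$ and clearing denominators in $t$, the lowest-order surviving relations as $t\to 0$ are the six products of the $(u,v)^2$ generators (giving $(u,v)^4$) together with one extra cubic relation whose leading term is a nonzero multiple of $f_\gamma$ — this is exactly the mechanism in the side remark of Lemma \ref{lemma.fixed points of Htilde}, now carried out over $R$ rather than over $k[\epsilon]$. A clean way to organize this without messy computation is to use flatness: $(f_\gamma) + (u,v)^4$ defines a length-$9$ subscheme, and the family over $\Spec R$ whose special fibre is this and whose generic fibre is $I_t^2$ — if one can exhibit it as a flat family — must by uniqueness of flat limits (properness of $\hilb^9$) be $\wt\gamma$. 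For step (b), the colength of $(f,(u,v)^4)$ is $10 - 1 = 9$ since $f$ is a single cubic not in $(u,v)^4$ and the quotient $(u,v)^3/(f\cdot k) $ drops the dimension-$4$ degree-$3$ piece by one.

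\textbf{Main obstacle.} The delicate point is controlling the flat limit of $I_t^2$ rigorously: a priori the limit could pick up embedded components or fail to be the naive ideal $(f_\gamma,(u,v)^4)$ if the family $\gamma$ is chosen perversely (e.g. tangent to $Y$ to higher order, or with $[\nu]=0$ — excluded by hypothesis — or degenerating in some non-obvious way within the normal cone). The hypothesis that $[\nu(\gamma)]\neq 0$ is exactly what guarantees the extra cubic relation is present with a nonzero leading coefficient $f_\gamma$; controlling that the limit is \emph{not larger} (i.e. colength $\leq 9$ forces equality with the length-$9$ scheme $(f_\gamma,(u,v)^4)$) is where some care with the normal cone / order of vanishing in $t$ is needed. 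I would handle this by passing to the normal cone of $Y$ in $H$: the data $([S],[\nu(\gamma)])$ is a point of the exceptional divisor $E\subset\wt H$, and $\wt\gamma$ factors through $\Gamma$; since $\Gamma\to\wt H$ is proper and (to be shown in Proposition \ref{propos.isomorphism}) has finite — indeed singleton — fibres over $E$, it suffices to identify \emph{one} arc through each point of $E$, namely the standard one-parameter degeneration realizing the cubic $f$, and check its limit is $(f,(u,v)^4)$; all other arcs with proportional normal vector then have the same limit by this finiteness. Thus Lemma \ref{lemma.finiteness of the fiber} is the finiteness input feeding into Proposition \ref{propos.isomorphism}, and the real computational kernel is the single explicit degeneration, which is short.
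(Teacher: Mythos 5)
Your main argument is correct and its skeleton coincides with the paper's: both identify $N_{Y,H,[S]}$ with binary cubic forms in the local coordinates $u,v$; both pin down $\wt\gamma(\Spec k)$ by observing that its ideal contains $(u,v)^4$ and has colength $9$, so that exhibiting a single cubic in the limit ideal determines it completely; and both conclude because that cubic depends only on the first-order data of the arc modulo $TY_{[S]}$, hence only on $[\nu(\gamma)]$ up to scalar. Where you genuinely differ is in how the cubic is produced. The paper passes to the degree-$6$ \'etale cover to label the three points $p(t),q(t),r(t)$ of the generic fibre, forms the three lines through pairs of them, and shows that $t^{-3}$ times the product of their equations specializes to a cubic $f(0)\in I_{\wt S}\smallsetminus (u,v)^4$ involving only the first-order coefficients of the points; the translation ambiguity (the $TY$ directions) drops out because only differences of coordinates enter. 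You instead work directly with generators $g_1=u^2+tL+\cdots$, $g_2=uv+tM+\cdots$, $g_3=v^2+tN+\cdots$ of $I_t$ and use the relation $g_1g_3-g_2^2\equiv t\,(u^2N_1+v^2L_1-2uvM_1)\pmod{t^2}$, so that $(g_1g_3-g_2^2)/t$ lies in $I_t^2\otimes K$, is regular over $R$, and specializes to the cubic attached to $\nu(\gamma)$. This is arguably cleaner: it avoids labelling the points (hence the \'etale cover and any base change of $R$), and the invariance under $TY_{[S]}$ becomes the one-line check that $u^2N+v^2L-2uvM$ vanishes on $(L,M,N)=(2au,av+bu,2bv)$. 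Two small repairs: in your step (a) the inclusion should read that the limit ideal \emph{contains} $(f_\gamma,(u,v)^4)$ --- that is what your mechanism actually produces, and it is the direction that combines with the colength count to force equality --- and you should state explicitly that the hypothesis $[\nu(\gamma)]\neq 0$ is exactly what makes the order-$t$ coefficient of $g_1g_3-g_2^2$ nonzero.

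The one thing you must delete is the closing suggestion to invoke the singleton fibres of $\Gamma\to\wt H$ over the exceptional divisor: that finiteness is precisely what Proposition \ref{propos.isomorphism} deduces \emph{from} this Lemma, so appealing to it here is circular. Fortunately it is also unnecessary: your steps (a) and (b) already determine the limit for an arbitrary arc with nonzero normal vector, without reducing to a standard one.
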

\begin{rmk}\label{rmk.normal directions are cubic forms}
	Before giving a complete proof of the Lemma, which is rather technical and involved, let us briefly sketch what is the geometric idea behind it: indeed, what we are going to show is that the normal vectors of $Y$ in $H$ at $[S]$ can be identified with binary forms of degree $3$. 
	
	Roughly, the reason for this to be true is that a deformation of $S$ along a generic normal direction corresponds to a family of three distinct points approaching the support of $S$ in a non-curvilinear way. In turn, three distinct points determine three distinct lines. When the points degenerate to the support of $S$, the lines collapse to three lines passing through the support of $S$. The product of the equations defining these three lines is exactly the binary form of degree $3$ that we identify with the normal direction.
	
	This binary form $f$ has also the property that, if we take a degeneration of the three doubled points along any path and we denote $\wt{S}$ the limiting $0$-dimensional, length $9$ subscheme of $\ptwo$, then $I_{\wt{S}}=(f)+(x,y)^4$, i.e. the limit $\wt{S}$ depends not on the whole degeneration but only on the normal direction determined by the degeneration, which is exactly the content of Lemma \ref{lemma.finiteness of the fiber}.	
\end{rmk}
\begin{proof}[Proof of Lemma \ref{lemma.finiteness of the fiber}]
	Let $[\wt{S}_i]:=\wt{\gamma_i}(\Spec(k))$, so that $\wt{S}_i$ for $i=1,2$ are $0$-dimensional closed subschemes of $\ptwo$ of length $9$, supported on only one point. We want to show that $\wt{S}_1=\wt{S}_2$. Without loss of generality we can suppose that these schemes are supported on the origin in $\mathbb{A}^2$. It is easy to see that, if  $I_S=(x,y)^2$, then $I_{\wt{S}_i}\supsetneq (x,y)^4$ for $i=1,2$. By hypotheses we know that $\wt{S}_i$ has length $9$, thus if we find any other element in $I_{\wt{S}_i}$ which is not in $(x,y)^4$ we completely determine the ideal $I_{\wt{S}_i}$, and therefore the scheme $\wt{S}_i$.
	
	Observe that over $U\subset H$ there is a degree $6$ \'{e}tale cover $V$, which is the open subscheme of $\ptwo\times\ptwo\times\ptwo$ of triplets made of distinct points. By pulling back $V$ along $\gamma_{K,i}$ we obtain a degree $6$ \'{e}tale cover $V_K$ of $\Spec(K)$: by choosing a section of $V_K\arr \Spec(K)$, we obtain a morphism $\Spec(K)\arr V$. In other terms, we can distinguish the three points $p^i(t)$, $q^i(t)$ and $r^i(t)$ of $\ptwo_K$ whose union corresponds to the $0$-dimensional, length $3$ closed subscheme $\gamma_{K,i}^*\Sigma\subset \ptwo_K$.
	By hypotheses, we can write:
	\begin{align*}
	p^i(t)&=[tp^i_1+t^2(\cdots):tp^i_2+t^2(\cdots):1],\\
	q^i(t)&=[tq^i_1+t^2(\cdots):tq^i_2+t^2(\cdots):1],\\
	r^i(t)&=[tr^i_1+t^2(\cdots):tr^i_2+t^2(\cdots):1].
	\end{align*}
	Let $l_{p^iq^i}(t)$ be an equation for the unique line passing through the points $p^i(t)$ and $q^i(t)$, and define in a similar way the polynomials $l_{p^ir^i}$ and $l_{q^ir^i}$. We have:
	\begin{align*}
	l_{p^iq^i}(t)&=t[(p^i_2-q^i_2)x-(p^i_1-q^i_1)y]+t^2[\cdots],\\
	l_{p^ir^i}(t)&=t[(p^i_2-r^i_2)x-(p^i_1-r^i_1)y]+t^2[\cdots], \\
	l_{q^ir^i}(t)&=t[(q^i_2-r^i_2)x-(q^i_1-r^i_1)y]+t^2[\cdots]. 
	\end{align*}
	Then the cubic defined by the equation $f^i(t)=t^{-3}l_{p^iq^i}(t)\cdot l_{p^ir^i}(t)\cdot l_{q^ir^i}(t)$ has three nodes in the three points, thus $f^i(t)$ is in the ideal of $\gamma_{K,i}^*\Sigma_1=\wt{\gamma}_{K,i}^*\Upsilon$, where $\Upsilon$ is the universal family over $\hilb^9\ptwo$. Therefore, the polynomial $f^i(0)$ is contained in $I_{\wt{S}_i}$ and by construction it is not in $(x,y)^4$. Therefore, to prove the Lemma is enough to show that $f^1(0)=f^2(0)$.

	An element $v$ of $TH_{[S]}$ may be identified with a triplet of linear forms in two variables $(L,M,N)$, where the ideal of $v^*\Sigma$ is equal to $(x^2+\eps L,xy+\eps M,y^2+\eps N)$. An element $v$ of $TY_{[S]}$ can then be seen as a pair $(a,b)$, where the ideal of $v^*\Sigma$ is equal to $((x+\eps a)^2,(x+\eps a)(y+\eps b),(y+\eps b)^2)$. This implies that two tangent vectors $v=(L,M,N)$ and $v'=(L',M',N')$ have the same image in $N_{[S]}$ if and only if there exists a pair $(a,b)$ such that $v'^*\Sigma=((x+\eps a)^2+\eps L,(x+ \eps a)(y+\eps b)+\eps M,(y+\eps b)^2+\eps N)$.
	
	Let $(x^2+L^i(t),xy+M^i(t),y^2+N^i(t))$ be the ideal of $\gamma_i^*\Sigma$, where:
	\begin{align*}
	L^i(t)&=tL^i_1+t^2(\cdots),\\
	M^i(t)&=tM^i_1+t^2(\cdots),\\
	N^i(t)&=tN^i_1+t^2(\cdots).
	\end{align*}
	Then the condition that $\nu(\gamma_1)=\nu(\gamma_2)$ implies that the ideal of $\gamma_2^*\Sigma$ is generated by
	\begin{align*}
	&(x+ta+t^2(\cdots))^2+tL^1_1+t^2(\cdots),\\
	&(x+ta+t^2(\cdots))(y+tb+t^2(\cdots))+tM^1_1+t^2(\cdots),\\
	&(y+tb+t^2(\cdots))^2+tN^1_1+t^2(\cdots).
	\end{align*}
	This readily implies that:
	\begin{align*}
	p^2_j(t)&=t(p^1_j+s_j)+t^2(\cdots),\\
	q^2_j(t)&=t(q^1_j+s_j)+t^2(\cdots),\\
	r^2_j(t)&=t(r^1_j+s_j)+t^2(\cdots).
	\end{align*}
	Plugging these expressions for $p^2(t)$, $q^2(t)$ and $r^2(t)$ in $f^2(t)$ immediately shows that $f^1(0)=f^2(0)$. This concludes the proof of the Lemma.
\end{proof}

The technical Lemma \ref{lemma.finiteness of the fiber} allows us to prove Proposition \ref{propos.isomorphism}.
\begin{proof}[Proof of Proposition \ref{propos.isomorphism}]
	It is enough to show that $\Gamma\to\wt{H}$ is birational and finite: this allows us to conclude by normality of $\wt{H}$. The only thing we need to prove is finiteness. Actually, we already know that $\Gamma\to \wt{H}$ is proper, so that is enough to check quasi-finiteness, that has to be shown only for the fibers over the points in the exceptional divisor of $\wt{H}$.
	
	Suppose that there exists a point $([S],\nu)$ in the exceptional divisor such that the fiber has positive dimension. Using the same notation as in the proof of lemma \ref{lemma.finiteness of the fiber}, this means that we can find two morphisms $\gamma_1,\gamma_2:\Spec(R)\to H$ such that $\gamma_i(\Spec(k))=[S]$, the induced normal vectors $[\nu(\gamma_i)]=v$ and $\wt{\gamma}_1(\Spec(k))\neq\wt{\gamma}_2(\Spec(k))$. By Lemma \ref{lemma.finiteness of the fiber}, this is not possible.
\end{proof}
Proposition \ref{propos.Htilde} is an immediate corollary to Proposition \ref{propos.isomorphism}.
\begin{rmk}
	The same arguments used to prove Lemma \ref{lemma.finiteness of the fiber} and Proposition \ref{propos.isomorphism} can be used to prove that there is an isomorphism between $\wt{H}$ and the blow-up of $\hilb^3\ptwod$ along the same locus, and such isomorphism does not depend on the choice of an identification $\ptwo\simeq\ptwod$. The isomorphism $\wt{H}\simeq \wt{\hilb}^3\ptwod$ sends the exceptional divisor of $\wt{H}$ to the closed subscheme whose points correspond to $0$-dimensional, length $3$ closed subscheme of $\ptwod$ that are contained in a line.
\end{rmk}
\subsection{Construction of $\Zcal$}
Consider the diagram:
$$\xymatrix{
	\wt{H}\times\ptwo \ar[r]^{\hspace{0.3cm}\pr_2} \ar[d]^{\pr_1} & \ptwo \\
	\wt{H} }$$
Recall that in Section \ref{sec.irr} we considered a morphism of locally free sheaves
$$\varphi:\pr_{1*}\pr_2^*\Ocal(4) \arr \pr_{1*}(\Ocal_{\wt{\Sigma}_1}\otimes\pr_2^*\Ocal(4)).$$
The fiber over a fixed point in $\wt{H}$ of the kernel of $\varphi$ can be interpreted as the space of ternary forms of degree $4$ that vanish when restricted to a fixed $0$-dimensional closed subscheme $S$ of $\ptwo$ of length $9$ or, in other terms, such that the plane quartic curves defined by the forms contain $S$: this condition forces the curve to be singular at the support of $S$.

The morphism $\varphi$ is not surjective everywhere. In this Subsection we fix this problem by performing an elementary modification of vector bundles, so to construct a variety $\Zcal$ birational to $\ker(\varphi)$ which is a projective subbundle of $\wt{H}\times\PP(W_4)$. The final result is a proof of Proposition \ref{propos.extension}.
\begin{defi}
	We say that a $0$-dimensional subscheme $S$ of $\ptwo$ is \textit{rectilinear} if there exists a line that contains $S$.
\end{defi}
Let $H_r$ be the subset of $H$ whose points correspond to rectilinear subschemes. We can put on $H_r$ the structure of a regularly embedded closed subscheme as follows: consider the diagram:
$$ \xymatrix{
	 H\times\ptwo \ar[r]^{p} \ar[d]^{q} & \ptwo \\
	 H}$$
Then we have the morphism of locally free sheaves:
$$ \psi: q_*p^*\Ocal(1) \arr q_*(\Ocal_{\Sigma}\otimes p^*\Ocal(1)). $$
It is easy to see that $\psi$ is generically surjective. Moreover, the closed subscheme where the rank of $\psi$ is not maximal coincides with $H_r$, that in this way inherits a scheme structure. Observe that $H_r$ is defined as the vanishing locus of the determinant of $\psi$, thus it is a Cartier divisor.

Let us denote $\wt{H}_r$ the pullback of $H_r$ to $\wt{H}$. Then we have:
\begin{lemma}\label{lemma.surjectivity}
	The set of points of $\wt{H}$ where the morphism $$\varphi:\pr_{1*}\pr_2^*\Ocal(4) \arr \pr_{1*}(\Ocal_{\wt{\Sigma}_1}\otimes\pr_2^*\Ocal(4))$$ is not surjective coincides with the set of points of $\wt{H}_r$. In particular, the closed subscheme $D_1(\varphi)$ has codimension $1$ in $\wt{H}$.
\end{lemma}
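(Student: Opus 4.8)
The plan is to work fibrewise: over a point $[\wt S]\in\wt H$, cohomology and base change identify the map of fibres of $\varphi$ with the restriction homomorphism $\H^0(\PP^2,\cO_{\PP^2}(4))\to\H^0(Z,\cO_Z(4))$, where $Z\subset\PP^2$ is the length-$9$ subscheme cut out by $\wt\Sigma_1$ over $[\wt S]$ (the fibre of $\pr_{1*}\pr_2^*\cO(4)$ is $W_4$, and that of $\pr_{1*}(\cO_{\wt\Sigma_1}\otimes\pr_2^*\cO(4))$ is $\H^0(Z,\cO_Z(4))$ since $\wt\Sigma_1\to\wt H$ is finite). By Nakayama, $\varphi$ is surjective at $[\wt S]$ iff this restriction map is surjective, i.e. iff $\H^1(\PP^2,I_Z(4))=0$; and since $Z$ is zero-dimensional of length $9$ and $\H^1(\PP^2,\cO_{\PP^2}(4))=0$, the long exact sequence of $0\to I_Z(4)\to\cO_{\PP^2}(4)\to\cO_Z(4)\to 0$ gives $h^1(I_Z(4))=h^0(I_Z(4))-6$. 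So $[\wt S]\in D_1(\varphi)$ exactly when the space of quartics through $Z$ has dimension $>6$. I would also record that $\wt H_r$ is disjoint from the exceptional divisor $E$: a rectilinear length-$3$ subscheme lies on a smooth rational curve, hence is curvilinear, whereas $E$ lies over the non-curvilinear locus $Y$. Consequently, over every point of $\wt H_r$ we have $Z=V(I_\Sigma^2)$ with $\Sigma$ a rectilinear length-$3$ scheme.

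First I would prove the inclusion $\wt H_r\subseteq D_1(\varphi)$. If $\Sigma$ is rectilinear, contained in the line $\{L=0\}$, then $I_\Sigma=(L,C)$ with $L$ a linear form and $C$ a cubic with $L\nmid C$, so $I_Z=I_\Sigma^2=(L^2,LC,C^2)$. In degree $4$ this ideal equals $L^2\cdot\H^0(\PP^2,\cO(2))\oplus k\cdot LC$, which is the direct sum of a $6$-dimensional and a $1$-dimensional space (directness because $L\nmid C$), so $h^0(I_Z(4))=7>6$ and $h^1(I_Z(4))=1$. Hence $\varphi$ is not surjective along $\wt H_r$.

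Conversely, I would show $D_1(\varphi)\subseteq\wt H_r$, i.e. that $h^1(I_Z(4))=0$ whenever the underlying length-$3$ scheme $\Sigma$ is \emph{not} rectilinear. The non-rectilinear cases are exactly: (a) $\Sigma$ is three non-collinear reduced points, so $Z=2p_1+2p_2+2p_3$; (b) $\Sigma$ is supported at two points, with the length-$2$ component at $p_1$ pointing in a direction avoiding $p_2$, so $Z$ has a length-$6$ double structure at $p_1$ and a double point at $p_2$; (c) $[\wt S]\in E$, so by Remark \ref{rmk.fiber of extension} $Z$ is locally $V\bigl(f,(x,y)^4\bigr)$ for a binary cubic $f$. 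In each case one checks directly that every line $\ell$ satisfies $\operatorname{length}(\ell\cap Z)\le 4<6$: in (a) a line meets at most two of the double points, each in length $\le 2$; in (b) the only line meeting the structure at $p_1$ in length $>2$ is the line of that component, which by hypothesis misses $p_2$; in (c) a line through the support cuts out $(u^3)$ or $(u^4)$ there, of length $\le 4$. Since $\operatorname{length}(Z)=9=2\cdot 4+1$, the classical Castelnuovo-type bound for zero-dimensional subschemes of $\PP^2$ — a length-$\le 2d+1$ subscheme imposes independent conditions on degree-$d$ forms unless it meets some line in length $\ge d+2$ — yields $h^1(I_Z(4))=0$. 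Alternatively, since the non-rectilinear $Z$ form finitely many $\GL_3$-orbits (one in (a), one in (b), and the orbits of binary cubics in (c)), the vanishing can be verified orbit by orbit by a direct localization computation of the type used elsewhere in the paper.

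Putting the two inclusions together gives $D_1(\varphi)=\wt H_r$ as sets. Since $H_r$ is the vanishing locus of $\det\psi$ it is an effective Cartier divisor, hence so is its preimage $\wt H_r=\sigma^{-1}(H_r)$; being nonempty and a proper closed subscheme of the irreducible variety $\wt H$, it is pure of codimension $1$, and therefore so is $D_1(\varphi)$. The only non-formal input is the vanishing $h^1(I_Z(4))=0$ in the non-rectilinear case, and this is the step I expect to be the main obstacle: it requires either invoking the classical postulation bound with a careful reference or carrying out the finite orbit-by-orbit check; everything else is bookkeeping with the cohomology sequence and the geometry of $\wt H_r$ and $E$ already established.
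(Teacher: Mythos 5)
Your proposal is correct in its overall strategy but reaches the conclusion by a genuinely different route on the key direction. For the rectilinear inclusion $\wt{H}_r\subseteq D_1(\varphi)$ you and the paper do essentially the same thing: the paper exhibits the $7$-dimensional subspace $l\cdot V$ of the kernel ($V$ the cubics through $S$), which is literally your $L^2\cdot\H^0(\cO(2))\oplus k\cdot LC$. The divergence is in the converse. The paper proves surjectivity off $\wt{H}_r$ by an explicit construction: for three non-collinear points it writes down nine quartics of the form $l_0^3l_i$ (cube of the line through two of the points times a suitably chosen line) whose images visibly span the $9$-dimensional target, and then waves at the remaining strata (``an opportune modification'', ``obvious''). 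You instead reduce, via cohomology and base change, to the vanishing of $h^1(I_Z(4))$ and invoke the classical postulation bound: a length-$9$ scheme imposes independent conditions on quartics unless some line meets it in length $\geq 6$, which you rule out by bounding $\deg(Z\cap L)\leq 4$ in each stratum. Your route is more uniform and conceptual; its cost is that the scheme-theoretic (non-reduced) version of the Castelnuovo bound is not completely off-the-shelf and needs a careful citation — it does hold (it follows, e.g., from Davis' theorem on Hilbert functions of zero-dimensional subschemes of $\ptwo$: if $h^1(I_Z(4))>0$ and $\deg Z=9$, the $h$-vector forces two consecutive values equal to $1$ in degrees $\leq 5$, whence a line $L$ with $\deg(Z\cap L)\geq 6$), but you are right to flag this as the step requiring the most care. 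Your fallback of checking the finitely many orbits directly is also sound and is closer in spirit to what the paper actually does.

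One concrete gap in your case analysis: the non-rectilinear points of $\wt{H}\smallsetminus E$ are not exhausted by your cases (a) and (b). You must also treat length-$3$ subschemes that are \emph{curvilinear and supported at a single point} but not contained in a line, i.e.\ locally of the form $(y-x^2,\,x^3)$ (contained in a smooth conic). These lie in $H\smallsetminus Y$, hence off the exceptional divisor, and are non-rectilinear, so they belong to the locus where you must prove surjectivity. The omission is harmless for your method — for such $Z=V(I_\Sigma^2)$ the tangent line meets $Z$ in length $4$ and every other line in length $\leq 2$, so the same bound applies — but as written the enumeration is incomplete. (The paper's own proof silently skips this stratum as well.) With that case added and a reference supplied for the postulation bound, your argument is complete.
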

\begin{proof}
	Fix a closed point $x$ in $\wt{H}\setminus\wt{H}_r$: we want to show that $\varphi(x)$ is surjective. Suppose that $x$ corresponds to a subscheme of $\ptwo$ supported on three distinct points $p$, $q$ and $r$, that by hypothesis are not contained in a line.
	
	Choose an equation $l_0$ for the unique line that passes through $q$ and $r$. Then choose three linear forms $l_i$ for $i=1,2,3$ such that the line of equation $l_1=0$ does not contain $p$ and the lines associated to $l_2$, $l_3$ pass through $p$ with linearly independent tangent directions, and define $f_i:=l_0^3l_i$.
	
	Repeat now this process exchanging the roles of $p$, $q$ and $r$: in this way we obtain nine quartics $f_1,\dots,f_9$ such that their images in $\pr_{1*}(\Ocal_{\wt{\Sigma}_1}\otimes\pr_2^*\Ocal(4))(x)$ are linearly independent by construction. This shows that $\varphi(x)$ is surjective.
	
	An opportune modification of the argument above shows also that $\varphi(x)$ is surjective when the subscheme associated to $x$ is non-rectilinear and supported on two points. The surjectivity of $\varphi$ over the exceptional divisor of $\wt{H}$ is obvious.
	
	Now let $x$ be a point in $H_r$. Let $l$ be a linear form defining the unique line that contains $S$, the subscheme of $\ptwo$ associated to the point $x$ in $\wt{H}$. Let $V\subset H^0(\ptwo,\Ocal(3))$ be the vector subspace of ternary forms of degree $3$ that vanish on $S$: then $V$ has dimension $7$ and multiplication by $l$ induces an inclusion $V\hookrightarrow \ker(\varphi(x))$. This proves that $\ker(\varphi(x))$ has dimension strictly greater than $6$, thus $\varphi(x)$ cannot be surjective.
\end{proof}
The Lemma above tells us that $D_1(\varphi)$ and $\wt{H}_r$ have the same points, but for the moment we do not know anything about the scheme structure of $D_1(\varphi)$. 
\begin{lemma}\label{lemma.D_1(phi) is CM}
	The closed subscheme $D_1(\varphi)$ is a Cartier divisor.
\end{lemma}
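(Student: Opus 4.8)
The scheme $D_1(\varphi)$ is the zero locus of $\operatorname{Fitt}_0(\operatorname{coker}\varphi)$, the ideal sheaf of maximal ($9\times 9$) minors of $\varphi\colon \pr_{1*}\pr_2^*\Ocal(4)\to\pr_{1*}(\Ocal_{\wt{\Sigma}_1}\otimes\pr_2^*\Ocal(4))$, and being a Cartier divisor is a local question on $\wt{H}$. By Lemma \ref{lemma.surjectivity} the map $\varphi$ is surjective off $\wt{H}_r$, so there the ideal of minors is the unit ideal and there is nothing to check; hence I will work in a neighbourhood of $\wt{H}_r$. Note that $\wt{H}_r$ is disjoint from the exceptional divisor $E$ of $\wt{H}\to H$, since a length $3$ subscheme whose ideal is locally $(x,y)^2$ cannot lie on a line; consequently $\wt{H}$ is smooth along $\wt{H}_r$ and there $\wt{\Sigma}_1$ is the genuine first infinitesimal neighbourhood $V(I_\Sigma^2)$.

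\emph{Step 1 (one-dimensional cokernel along $\wt{H}_r$).} For $x\in\wt{H}_r$ corresponding to a length $3$ subscheme $S\subset\PP^2$, the fibre of $\ker\varphi$ is $\H^0(\PP^2,I_S^2(4))$, so $\dim\operatorname{coker}\varphi(x)=9-\bigl(15-\dim\H^0(I_S^2(4))\bigr)=\dim\H^0(I_S^2(4))-6$. Since $S$ lies on a line $\ell=\{L=0\}$ one has $I_S=(L,G)$ with $G$ a cubic form (not divisible by $L$, as $I_S$ is not the ideal of $\ell$), so $I_S^2=(L^2,LG,G^2)$; as $G^2$ has degree $6$, the degree $4$ part is $L\cdot\bigl(L\cdot\H^0(\Ocal(2))+kG\bigr)=L\cdot\H^0(I_S(3))$, and $\dim\H^0(I_S(3))=7$ from the exact sequence $0\to\Ocal(2)\to I_S(3)\to\Ocal_\ell\to 0$ (first map: multiplication by $L$). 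Hence $\dim\H^0(I_S^2(4))=7$ for every such $S$, so $\dim\operatorname{coker}\varphi(x)=1$. Together with $\operatorname{coker}\varphi=0$ off $\wt{H}_r$, this shows $\operatorname{coker}\varphi$ has fibre dimension at most $1$ everywhere, so by Nakayama it is locally cyclic: near $\wt{H}_r$ one may write $\operatorname{coker}\varphi\cong\Ocal_{\wt{H}}/\mathcal{J}$, whence $\operatorname{Fitt}_0(\operatorname{coker}\varphi)=\operatorname{Ann}(\operatorname{coker}\varphi)=\mathcal{J}$ and $D_1(\varphi)=V(\mathcal{J})$, with $\sqrt{\mathcal{J}}$ the ideal of $\wt{H}_r$.

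\emph{Step 2 (principality of $\mathcal{J}$ — the main obstacle).} It remains to show $\mathcal{J}$ is locally principal; equivalently, that $\operatorname{coker}\varphi$ is locally isomorphic to $\Ocal_{\wt{H}}/(t)$ for a single local equation $t$ of $\wt{H}_r$; equivalently again, that $D_1(\varphi)$ has no embedded components, i.e. that $\operatorname{coker}\varphi$ is Cohen--Macaulay of codimension $1$, i.e. an invertible sheaf on the Cartier divisor $\wt{H}_r=V(\det\psi)$ (for then $\operatorname{Fitt}_0$ of such a sheaf is exactly the invertible ideal of $\wt{H}_r$). At the generic point of $\wt{H}_r$ the local ring of $\wt{H}$ is a discrete valuation ring, over which Step 1 forces $\mathcal{J}$ to be principal, so $D_1(\varphi)$ and $\wt{H}_r$ already agree generically; the delicate point is to exclude an embedded component at a non-generic point of $\wt{H}_r$. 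I see two routes. Route (i): trivialise $\varphi$ in local coordinates near a point of $\wt{H}_r$, using a chart of $\hilb^3$ of the line varying over $\ptwod$ (together with the infinitesimal normal direction near $Y$), and check by hand that the ideal of $9\times 9$ minors is generated by the local equation of $\wt{H}_r$. Route (ii): exploit that $\operatorname{coker}\varphi=R^1\pr_{1*}(I_{\wt{\Sigma}_1}\otimes\pr_2^*\Ocal(4))$, use flatness of $\wt{\Sigma}_1\to\wt{H}$ and relative duality, and combine this with the explicit description over $\wt{H}_r$ — where the fibre $\H^0(I_{\wt{\Sigma}_1}(4))$ always equals $L\cdot\H^0(I_S(3))$ for the tautological linear form $L$ — to identify $\operatorname{coker}\varphi$ as a line bundle pushed forward from $\wt{H}_r$. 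Either way the problem reduces to the geometry of rectilinear length $3$ subschemes of $\PP^2$ already handled in Step 1, and the Cohen--Macaulayness of $\operatorname{coker}\varphi$ (equivalently, the absence of embedded components of $D_1(\varphi)$) is the only genuinely subtle input.
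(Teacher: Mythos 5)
Your Step 1 is correct and consistent with what the paper establishes (the fibre of the kernel at a rectilinear point is $L\cdot\H^0(I_S(3))$, of dimension $7$, so the cokernel is locally cyclic and $D_1(\varphi)=V(\mathcal{J})$ with $\mathcal{J}=\operatorname{Ann}(\operatorname{coker}\varphi)$). But the proof stops exactly where the lemma begins: knowing that $\operatorname{coker}\varphi\cong\Ocal_{\wt{H}}/\mathcal{J}$ with $V(\mathcal{J})=\wt{H}_r$ set-theoretically does \emph{not} give local principality of $\mathcal{J}$ (compare $\mathcal{J}=(t^2,ts)$ in $k[t,s]$, whose vanishing locus is the Cartier divisor $t=0$). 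Your Step 2 names this as ``the main obstacle'' and then offers two unexecuted strategies: route (i) is ``check by hand'' in local coordinates, which is not done, and route (ii) asks to ``identify $\operatorname{coker}\varphi$ as a line bundle pushed forward from $\wt{H}_r$'' --- which is precisely a restatement of what must be proved, since the fibre-dimension count of Step 1 cannot by itself rule out embedded or thickened structure at special points of $\wt{H}_r$. So the proposal is a correct reduction followed by a genuine gap, and the gap is the entire content of the lemma.

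For comparison, the paper avoids analyzing the cokernel altogether: it extends the rank~$6$ subsheaf $\ker(\varphi)|_{\wt{H}\smallsetminus\wt{H}_r}\subset W_4\otimes\Ocal$ across $\wt{H}_r$ to a rank~$6$ sub\emph{bundle} $\Vcal\subseteq\ker(\varphi)$, by taking the closure of the graph of the induced map to $\Gr(6,15)$ and proving this closure projects isomorphically to $\wt{H}$ (Lemmas \ref{lemma.limiting vector space} and \ref{lemma.extension vector bundle}; the limit fibre is identified concretely as $L^2\cdot\H^0(\PP^2,\Ocal(2))$ by a degeneration argument). Then $\varphi$ factors through the rank~$9$ quotient $\pr_{1*}\pr_2^*\Ocal(4)/\Vcal$, so $\operatorname{Fitt}_0(\operatorname{coker}\varphi)$ is generated by a single determinant of a square matrix, which is not identically zero by Lemma \ref{lemma.surjectivity}; local principality is then automatic. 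If you want to complete your argument, you need an input of comparable strength --- the degeneration analysis of the limiting $6$-plane is the step your sketch is missing.
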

The proof of the Lemma above will be in several steps. Our strategy will consist in finding a locally free subsheaf $\Vcal$ of $\pr_{1*}\pr_2^*\Ocal(4)$ of rank $6$ such that $\Vcal\subset\ker(\varphi)$. This would imply that $D_1(\varphi)$ is schematically defined as the vanishing locus of the determinant of $\overline{\varphi}$, where
$ \overline{\varphi}:\pr_{1*}\pr_2^*\Ocal(4)/\Vcal \arr \pr_{1*}(\Ocal_{\wt{\Sigma}_1}\otimes\pr_2^*\Ocal(4)) $, 
and therefore it is a Cartier divisor.

Consider the following situation:
\begin{itemize}
	\item[$(\star)$] 	Let $x$ be a closed point of $\wt{H}_r$. Let $R$ be a complete DVR, $K$ (resp. $k$) its fraction field (resp. residue field). Let $\gamma:\Spec(R)\arr \wt{H}$ be a morphism such that $\gamma(\Spec(k))=x$ and the induced morphism $\gamma_K:\Spec(K)\arr \wt{H}$ factorizes through the open subscheme $U$ parametrizing subschemes supported on distinct points. Let $\wt{\gamma}_K:\Spec(K)\arr\Gr(6,15)$ be the composition of $\gamma_K$ with the morphism $U\arr \Gr(6,15)$ induced by $\ker(\varphi)|_U$, and let $\wt{\gamma}:\Spec(R)\arr \Gr(6,14)$ be its unique extension.
\end{itemize}
Then we have:
\begin{lemma}\label{lemma.limiting vector space}
	Suppose to be in the situation ($\star$) and let $S$ be the closed subscheme of $\ptwo$ associated to $x$. Choose a linear form $l$ whose zero locus in $\ptwo$ is the unique line that contains $S$. Then $\wt{\gamma}(\Spec(k))$ is the closed point in $\Gr(6,15)$ associated to the vector subspace
	$$ V:=\left\{ l^2q\text{ }|\text{ }q\in H^0(\ptwo,\Ocal(2)) \right\} \subset H^0(\ptwo,\Ocal(4)). $$
\end{lemma}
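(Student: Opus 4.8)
The plan is to recognise $\wt{\gamma}(\Spec k)$ as the flat limit over $\Spec R$ of the $6$-dimensional subspaces $\ker(\varphi)|_{\gamma_K}$, and then to compute this limit by producing an explicit family of quartics over $\Spec R$ that degenerates onto $V$. First I would record the complete-intersection structure of $S$: since $S$ has length $3$ and lies on the unique line with defining form $l$, lifting a generator of the ideal of $S$ regarded as a degree-$3$ divisor on that line gives $I_S=(l,g)$ with $g$ a ternary cubic and $l\nmid g$ (this is the description used in the proof of Lemma \ref{lemma.surjectivity}). Consequently $\wt{\Sigma}_1$ over $x$ has ideal $(l^2,lg,g^2)$, and $(I_S^2)_4=l^2\cdot H^0(\ptwo,\Ocal(2))\oplus\langle lg\rangle$ has dimension $7$, so that $V=\{\,l^2q : q\in H^0(\ptwo,\Ocal(2))\,\}$ is a hyperplane in it, of dimension $6$. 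Next I would pass to linear algebra over $R$. We may assume $\gamma_K$ factors through the dense open locus where $\varphi$ is surjective (otherwise $\wt{\gamma}_K$ is not defined), so $\wt{\gamma}\colon\Spec R\arr\Gr(6,15)$ classifies a saturated rank-$6$ submodule $\mathcal W\subset W_4\otimes R$ with $\mathcal W\otimes_R K=\ker(\varphi)|_{\gamma_K}$ and $\wt{\gamma}(\Spec k)=\mathcal W/t\mathcal W$. Since $\gamma(\Spec R)\subseteq\wt{H}\smallsetminus Y$, the family $\wt{\Sigma}_1|_{\gamma}$ has ideal $I_{\gamma^*\Sigma}^2$ and is flat of length $9$ over $R$, hence the target of $\varphi|_{\gamma}\colon W_4\otimes R\arr\pr_{1*}(\Ocal_{\wt{\Sigma}_1}(4))|_{\gamma}$ is free of rank $9$ and $\ker(\varphi|_{\gamma})$ is a saturated rank-$6$ submodule of $W_4\otimes R$ with the same generic fibre as $\mathcal W$; as saturated submodules of a free module over a DVR are determined by their generic fibre, $\mathcal W=\ker(\varphi|_{\gamma})$, and so $\wt{\gamma}(\Spec k)=\ker(\varphi|_{\gamma})/t\,\ker(\varphi|_{\gamma})$.

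With this in hand the computation is short. The module $(I_{\gamma^*\Sigma})_2:=\ker\big(W_2\otimes R\arr\pr_{1*}(\Ocal_\Sigma(2))|_{\gamma}\big)$ is the kernel of a morphism $R^6\arr R^3$ of rank $3$ at both the generic and the special point --- every length-$3$ subscheme of $\PP^2$ imposes exactly three conditions on conics --- hence is free of rank $3$, and its reduction modulo $t$ is $(I_S)_2=l\cdot H^0(\ptwo,\Ocal(1))$. Choosing a basis $A,B,C$ of $(I_{\gamma^*\Sigma})_2$ we get $A\bmod t=la_0$, $B\bmod t=lb_0$, $C\bmod t=lc_0$ with $a_0,b_0,c_0$ a basis of $H^0(\ptwo,\Ocal(1))$. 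Each of $A^2,AB,AC,B^2,BC,C^2$ lies in $I_{\gamma^*\Sigma}^2$, hence in $\ker(\varphi|_{\gamma})=\mathcal W$, so each reduces modulo $t$ to an element of $\wt{\gamma}(\Spec k)$; these reductions are $l^2a_0^2,\ l^2a_0b_0,\ l^2a_0c_0,\ l^2b_0^2,\ l^2b_0c_0,\ l^2c_0^2$. Since the six degree-$2$ monomials in $a_0,b_0,c_0$ form a basis of $H^0(\ptwo,\Ocal(2))$, these six elements span $V$; therefore $V\subseteq\wt{\gamma}(\Spec k)$, and as both spaces have dimension $6$ they coincide.

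The delicate point --- essentially the only place requiring care rather than routine base-change bookkeeping --- is the identification $\wt{\gamma}(\Spec k)=\ker(\varphi|_{\gamma})/t\,\ker(\varphi|_{\gamma})$, which rests on the square of the ideal being saturated in degree $4$ on the fibres that occur, so that $\ker(\varphi)|_{\gamma_K}$ genuinely is the space of quartics cutting out $\wt{\Sigma}_1$ over the generic point; this holds because those fibres are local complete intersections. It is worth noting that no case analysis of the type of the generic fibre of $\gamma^*\Sigma$ is needed, and that one never has to determine which hyperplane of the $7$-dimensional space $(I_S^2)_4$ is hit: exhibiting the six products $A^2,\dots,C^2$ and reducing them modulo $t$ already gives $V\subseteq\wt{\gamma}(\Spec k)$, after which the dimension count finishes the proof.
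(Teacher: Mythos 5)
Your proof is correct, and it takes a genuinely different route from the paper's. The paper argues geometrically: it views $\Gr(6,15)$ as the space of $5$-planes in $|\Ocal(4)|$ and, after a case analysis on the support of $S$ (three reduced points, two points, one point), writes down explicit one-parameter families of quartics --- unions of two lines through the moving points with a conic through the remaining ones, or double conics --- whose flat limits are double-line-plus-conic configurations spanning the required $\PP^5$. You instead stay entirely on the algebraic side: you identify $\wt{\gamma}(\Spec k)$ with the special fibre of the saturated kernel $\ker(\varphi|_{\gamma})\subset W_4\otimes R$ (a saturated submodule of a free module over a DVR is determined by its generic fibre, and its reduction mod $t$ injects into $W_4$ because the quotient is free), observe that the degree-$2$ graded piece of the ideal of $\gamma^*\Sigma$ is a free rank-$3$ submodule commuting with base change (length-$3$ subschemes impose independent conditions on conics) whose special fibre is $l\cdot H^0(\ptwo,\Ocal(1))$, and then square a basis to exhibit six elements of $V$ in the limit, finishing with the same dimension count as the paper. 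Your approach buys the elimination of the case analysis and of the somewhat informal ``distinguish the three points via an \'etale cover'' step; the paper's approach buys an explicit geometric description of the limiting quartics (which is the picture Remark \ref{rmk.normal directions are cubic forms} is also drawing on). One small remark: the ``delicate point'' you flag about saturation of $I_S^2$ in degree $4$ is not actually needed --- you only ever use the containment $(I_{\gamma^*\Sigma}^2)_4\subseteq\ker(\varphi|_{\gamma})$, which holds without any saturation hypothesis, and the conclusion follows from the dimension count; likewise the complete-intersection description $I_S=(l,g)$ in your first paragraph is not load-bearing.
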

\begin{proof}
	It is convenient to think of $\Gr(6,15)$ as the parameter space for $5$-dimensional projective subspaces of $|\Ocal(4)|=\Pro(W_4)$. If we show that there are six quartic curves in $\wt{\gamma}(\Spec(k))$ which are union of a double line and a double conic and which span a $\Pro^5$, we are done.
	
	Suppose first that $S$, the closed subscheme of $\ptwo$ associated to $x$, is supported on three distinct points. Arguing as in the proof of Lemma \ref{lemma.finiteness of the fiber}, we can actually distinguish three $K$-points $p(t)$, $q(t)$ and $r(t)$ of $\ptwo_K$ which are the support of $\gamma_K(\Spec(K))$. Observe that by hypothesis these three points are distinct, non-rectilinear and their limit consists of the three points $p$, $q$ and $r$ of $S$. 
	
	Consider the plane quartic curve $C_t$ in $\ptwo_K$ which is the union of the lines $\ov{p(t)q(t)}$ and $\ov{p(t)r(t)}$ and of a conic $Q_t$ passing through $q(t)$ and $r(t)$. By construction, this curve $C_t$ is in the subspace of $|\Ocal(4)|$ corresponding to $\wt{\gamma}_K(\Spec(K))$. Moreover, the limit curve $C_0$ of $C_t$ is the union of the double line $\ov{pq}=\ov{qr}$ and of a conic $Q_0$ that passes through $q$ and $r$. In this way we deduce that all the quartics which are the union of the double line $\ov{pq}$ and of a conic that passes through $q$ and $r$ are contained in the subspace of $|\Ocal(4)|$ associated to $\wt{\gamma}(\Spec(k))$.
	
	Exchanging the roles of $p(t)$, $q(t)$ and $r(t)$ in the reasoning above, we obtain that the subspace associated to $\wt{\gamma}(\Spec(k))$ contains all the quartic curves that are union of the double line $\ov{pq}$ and of a conic that passes through two of the three points $p$, $q$ and $r$. It is immediate to verify that this is a $5$-dimensional projective space, thus we actually completely determined the subspace associated to $\wt{\gamma}(\Spec(k))$.
	
	Suppose now that $S$ is supported on only two points $p$ and $q=r$. The argument that we used in the case before fails to produce a subspace of dimension $5$, because it only tells us that the limit subspace contains all the quartic curves that are union of the double line and of a conic that passes through $q$. Take then a double conic that contains $p(t)$, $q(t)$ and $r(t)$: its limit will be the union the double line $\ov{pq}$ and the double of another line that is free to move in $|\Ocal(1)|$. In this way we have completely determined the subspace associated to $\wt{\gamma}(\Spec(k))$ also in this case.
	
	The last case, i.e. when $S$ is supported on a single point, is handled with the same method.
\end{proof}
Observe that $\ker(\varphi)$, once restricted to $U:=\wt{H}\setminus\wt{H}_r$, is a rank $6$ locally free subsheaf of $\pr_{1*}\pr_2^*\Ocal(4)\simeq\Ocal_U^{\oplus 15}$. This induces a morphism $U\arr \Gr(6,15)$. Let $\Gamma$ be the closure of the graph of this morphism in $\wt{H}\times \Gr(6,15)$, so that we have:
$$\xymatrix{
	\Gamma \subset \wt{H}\times\Gr(6,15)\ar[r]^{\hspace{0.5cm}q} \ar[d]^{p} & \Gr(6,15) \\
	\wt{H} & }$$
\begin{lemma}\label{lemma.extension vector bundle}
	The projection $p:\Gamma\arr\wt{H}$ is an isomorphism.
\end{lemma}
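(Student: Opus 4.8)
The plan is to imitate the proof of Proposition \ref{propos.isomorphism} almost verbatim: I would show that $p$ is proper, birational and quasi-finite, and then conclude from the normality of $\wt H$ via Zariski's Main Theorem. Properness is free, since $\Gr(6,15)$ is proper over $k$, so $\Gamma$, being closed in $\wt H\times\Gr(6,15)$, is proper over $\wt H$. For birationality, recall from Lemma \ref{lemma.surjectivity} that on $U:=\wt H\smallsetminus\wt H_r$ the sheaf $\ker(\varphi)|_U$ is locally free of rank $6$; it thus defines a morphism $U\arr\Gr(6,15)$ whose graph $\Gamma_U$ is closed in $U\times\Gr(6,15)$ ($\Gr$ being separated), so $\Gamma\cap\bigl(U\times\Gr(6,15)\bigr)=\Gamma_U$, the scheme $\Gamma=\overline{\Gamma_U}$ is integral, and $p$ restricts to an isomorphism over $U$. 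Since $\wt H$ is smooth — it is the blow-up of the smooth $6$-fold $H$ along the smooth, regularly embedded centre $Y\cong\ptwo$ — in particular normal, it suffices to check that $p$ is quasi-finite; and as $p$ is already an isomorphism over $U$, the only fibres to control are $p^{-1}(x)$ with $x\in\wt H_r$.

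So I would fix such an $x$ and an arbitrary $z\in p^{-1}(x)$, and produce a DVR-valued test arc through $z$ landing generically in the locus $U_0\subset\wt H$ of three distinct, non-collinear points. That locus is dense open, so its preimage $\Gamma_0\subset\Gamma$ is a dense open of the irreducible scheme $\Gamma$, whence $z\in\overline{\Gamma_0}$; taking an integral curve in $\Gamma$ through $z$ that meets $\Gamma_0$, normalizing it and localizing at a point over $z$ yields a complete DVR $R$ (residue field $k$, fraction field $K$) together with a morphism $\Spec R\arr\Gamma$ sending the closed point to $z$ and the generic point into $\Gamma_0$. Composing with $p$ and with $q$ produces a morphism $\gamma\colon\Spec R\arr\wt H$ exactly of the type appearing in situation $(\star)$, together with its unique lift $\wt\gamma\colon\Spec R\arr\Gr(6,15)$, and by construction $q(z)=\wt\gamma(\Spec k)$. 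Now Lemma \ref{lemma.limiting vector space} applies and identifies $\wt\gamma(\Spec k)$ with the point of $\Gr(6,15)$ attached to the subspace $V=\{\,l^{2}q : q\in\H^{0}(\ptwo,\Ocal(2))\,\}$, where $l$ is a linear form cutting out the unique line through the length-$3$ scheme associated to $x$; in particular it depends only on $x$.

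It then follows that $q$ is constant on $p^{-1}(x)$, equal to $[V]$, and since $(p,q)\colon\Gamma\arr\wt H\times\Gr(6,15)$ is an immersion, $p^{-1}(x)$ is supported at the single point $(x,[V])$, hence is finite. Thus $p$ is quasi-finite, hence finite (being also proper), hence an isomorphism onto the normal integral scheme $\wt H$, since a finite birational morphism with integral source onto a normal scheme is an isomorphism.

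The one step carrying genuine content is the path-independence of the limiting subspace, i.e.\ the application of Lemma \ref{lemma.limiting vector space}; here it plays precisely the role that Lemma \ref{lemma.finiteness of the fiber} played in the proof of Proposition \ref{propos.isomorphism}. Everything surrounding it — properness, birationality, the reduction to fibres over $\wt H_r$, the extraction of a test arc hitting an arbitrary point of such a fibre while landing generically in $U_0$, and the final Zariski's Main Theorem packaging — is routine; the only points that need a word of care are checking that $\Gamma_U$ is closed in $U\times\Gr(6,15)$ (so that $p$ really is an isomorphism over $U$ and $\Gamma$ is integral) and that the generic point of the test arc can indeed be forced into $U_0$, which is exactly what density of $\Gamma_0$ in the irreducible $\Gamma$ provides.
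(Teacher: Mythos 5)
Your proof is correct and follows essentially the same route as the paper: reduce to quasi-finiteness of the fibres over $\wt{H}_r$, use Lemma \ref{lemma.limiting vector space} to show that $q$ is constant on each such fibre (so the fibre is a single point), and conclude by Zariski's Main Theorem using normality of $\wt{H}$. You have merely spelled out the details (properness, birationality over $U$, the extraction of the test arc through an arbitrary point of the fibre) that the paper leaves implicit by referring back to the proof of Proposition \ref{propos.isomorphism}.
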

\begin{proof}
	Arguing as in the proof of Proposition \ref{propos.isomorphism}, it is enough to show that the fibers of $p$ over the points in $\wt{H}_r$ are $0$-dimensional. 
	
	Fix a point $x$ in $\wt{H}_r$: then Lemma \ref{lemma.limiting vector space} combined with the Zariski Main Theorem tells us that the fiber over $x$ has a unique generic point which is closed, thus the fiber consists of only one closed point.	
\end{proof}
\begin{proof}[Proof of Lemma \ref{lemma.D_1(phi) is CM}]
The restriction to $\Gamma$ of the tautological sheaf of $\Gr(6,15)$ is by construction an extension of $\ker(\varphi)|_U$, that thanks to Lemma \ref{lemma.extension vector bundle} can be regarded as locally free sheaf of rank $6$ on $\wt{H}$. 

Let us call this sheaf $\Vcal$: by construction we have that $\Vcal$ is contained in $\ker(\varphi)$, and therefore $D_1(\varphi)=D_1(\ov{\varphi})$, where
$$ \ov{\varphi}:\pr_{1*}\pr_2^*\Ocal(4)/\Vcal \arr \pr_{1*}(\Ocal_{\wt{\Sigma}_1}\otimes\pr_2^*\Ocal(4)) $$
This means that $D_1(\varphi)$ is schematically defined as the vanishing locus of the determinant of $\varphi$, thus it is a Cartier divisor.
\end{proof}

Let $\Lcal$ be the cokernel of the morphism $\varphi$ restricted to $D_1(\varphi)$. By Grauert's Theorem, it is immediate to verify that $\Lcal$ is actually an invertible sheaf. Recall the following well known fact on elementary modifications of locally free sheaves (see for instance \cite{Mar82}):
\begin{lemma}\label{lemma.elementary modification}
	Let $X$ be smooth scheme, $i:D\hookrightarrow X$ a Cartier divisor and $\Ecal$ a locally free sheaf on $X$. Suppose to have a surjective morphism
	$ \Ecal|_D\arr \Lcal $
	of locally free sheaves on $D$, where $\Lcal$ is an invertible sheaf. Then the kernel of the induced morphism of sheaves
	$ \Ecal\arr i_*\Lcal $
	is locally free.
\end{lemma}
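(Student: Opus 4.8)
The plan is to reduce everything to a local computation. The assertion is local on $X$, so I would first replace $X$ by the spectrum of a local ring and, since $D$ is a Cartier divisor, assume $D = V(t)$ for a single nonzerodivisor $t$ in that ring; after shrinking further I may assume $\Ecal$ is free, say $\Ecal \cong \Ocal_X^{\oplus n}$, and that the invertible sheaf $\Lcal$ on $D$ is trivial, so that $i_*\Lcal$ corresponds to the module $\Ocal_X/(t)$. The induced morphism $\Ecal \arr i_*\Lcal$ is the composite $\Ecal \twoheadrightarrow i^*\Ecal = \Ecal|_D \twoheadrightarrow \Lcal$ followed by $i_*$, hence it is surjective, and we get a short exact sequence
\[
0 \arr \Kcal \arr \Ecal \arr i_*\Lcal \arr 0
\]
with $\Kcal$ the kernel; the goal is to show $\Kcal$ is locally free of rank $n$.

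The key step is to choose a basis of $\Ecal$ adapted to the surjection. The map $\Ecal|_D \arr \Lcal$ is a surjection of free modules over the local ring $\Ocal_D = \Ocal_X/(t)$, so by Nakayama there is a basis $\ov{f}_1,\dots,\ov{f}_n$ of $\Ecal|_D$ with $\ov{f}_1 \mapsto 1$ and $\ov{f}_i \mapsto 0$ for $i \geq 2$. Lifting the corresponding change-of-basis matrix to a matrix $g$ over $\Ocal_X$, and using that $\det(g)$ reduces to a unit modulo $t$ and $t$ lies in the maximal ideal (so $\det(g)$ is itself a unit), one sees that $g$ is invertible; applying it to the standard basis of $\Ecal$ produces a basis $f_1,\dots,f_n$ of $\Ecal$ reducing to the $\ov{f}_i$. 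With respect to this basis the morphism $\Ecal \arr i_*\Lcal$ sends $\sum a_i f_i$ to $a_1 \bmod t$.

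Finally one reads off the kernel: $\Kcal = t\Ocal_X f_1 \oplus \Ocal_X f_2 \oplus \cdots \oplus \Ocal_X f_n$, which is free with basis $t f_1, f_2, \dots, f_n$, the point being that $t$ is a nonzerodivisor. Hence $\Kcal$ is locally free of rank $n$, and we are done.

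There is no serious obstacle here; the only points requiring a little care are the legitimacy of the reduction to a trivialized $\Lcal$ and free $\Ecal$, and the verification that the adapted basis on $D$ lifts to an actual basis on $X$. As an alternative one can argue cohomologically: tensoring the short exact sequence with a residue field $k(x)$ and computing $\operatorname{Tor}_1^{\Ocal_X}(i_*\Lcal, k(x)) \cong k(x)$ for $x \in D$ (from the resolution $0 \to \Ocal_X \xrightarrow{t} \Ocal_X \to i_*\Lcal \to 0$, locally) shows that $\dim_{k(x)}\Kcal\otimes k(x)$ is constantly equal to $n$, which together with $X$ being reduced forces $\Kcal$ to be locally free; but the explicit basis argument above is cleaner and avoids even invoking regularity of $X$.
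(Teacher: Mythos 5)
Your proof is correct. Note that the paper does not actually prove this lemma: it states it as a ``well known fact on elementary modifications of locally free sheaves'' and refers to \cite{Mar82}, so there is no argument in the paper to compare against; what you have written is the standard local verification that the citation implicitly relies on. Your reduction to a local ring $A$ with $D=V(t)$, $t$ a nonzerodivisor, the choice of a basis of $\Ecal|_D$ adapted to the splitting of the surjection onto the free rank-one quotient $\Lcal$, the lifting of that basis via invertibility of the lifted change-of-basis matrix (which is where you correctly use that the point lies on $D$, so $t$ is in the maximal ideal and $\det(g)$ is a unit), and the explicit kernel $t\Ocal_X f_1\oplus\Ocal_X f_2\oplus\cdots\oplus\Ocal_X f_n$ are all sound; freeness of the stalks of a coherent kernel then gives local freeness. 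Your observation that smoothness of $X$ is not needed --- only that $t$ is a nonzerodivisor --- is also correct, and the alternative $\operatorname{Tor}$/constant-fiber-rank argument you sketch is a legitimate second route, though it does require $X$ reduced, which the explicit basis argument avoids.
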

Thanks to Lemma \ref{lemma.D_1(phi) is CM} we can apply the Lemma above to the surjective morphism
$$ \pr_{1*}(\Ocal_{\wt{\Sigma}_1}\otimes\pr_2^*\Ocal(4))|_{D_1(\varphi)}\arr \Lcal. $$
In this way we obtain a locally free sheaf $\Ecal$ defined as the kernel of the morphism
$$ \pr_{1*}(\Ocal_{\wt{\Sigma}_1}\otimes\pr_2^*\Ocal(4))\arr i_*\Lcal. $$
By construction the induced morphism $\varphi':\Ocal_{\wt{H}\times\ptwo}\otimes W_4\arr \Ecal$ is now surjective, thus the kernel $\Fcal:=\ker(\varphi')$ is a locally free subsheaf of $\Ocal_{\wt{H}\times\ptwo}\otimes W_4$, which away from $D_1(\varphi)$ coincides with the kernel of $\varphi$. In this way we have proved Proposition \ref{propos.extension}.

\section{The integral Chow ring of $\Xcal_4$ and $\Mcal_3 \smallsetminus \Hcal_3$}

Consider the equivariant stratification $\ov{Z}_{\rm binod} \subset Z_4$. By Corollary \ref{corol.image in ideal} we know that the image of 
$ A^{\GL_3}_*(\ov{Z}_{\rm binod})\arr A^{\GL_3}_*(\PP(W_4)) $
is contained in $(I_{\wt{Z}},\delta_{\{ 1, 3 \}})$. 

By construction, the image of 
\[ A^{\GL_3}_*(Z_4\setminus \ov{Z}_{\rm binod}) \arr A^{\GL_3}_*(\PP(W_4)\setminus \ov{Z}_{\rm binod}) \]
is contained in the restriction of $I_{\wt{Z}}$ to the latter ring, because $\wt{Z}_4\arr Z_4$ is a Chow envelope over $Z_4\setminus \ov{Z}_{\rm binod}$ (see Definition \ref{defi:IZtilde}).

Therefore, applying Lemma \ref{basic.principle}, we deduce that the image of $A^{\GL_3}_*(Z_4)\arr A^{\GL_3}_*(\PP(W_4))$ is contained in $(I_{\wt{Z}},\delta_{\{1,3\}})$ and hence the two ideals coincide. Thus, we have just proved our first main result:
\begin{theorem}\label{main.result.quartics}
	Assume that the base field has characteristic different from 2 and 3. Then
	\[
	A^* (\Xcal_4) \simeq \ZZ[c_1,c_2,c_3,h_4]/ \left( \alpha_1,\alpha_2,\alpha_3, \delta_{\{ 1,3 \}} \right)
	\]
	where
	\begin{eqnarray*}
		\alpha_1 &=& 27 h_4 - 36 c_1,\\ 
		\alpha_2 &=& 9 h_4^2 - 6 c_1 h_4 - 24 c_2,\\
		\alpha_3 &=& h_4^3 - c_1 h_4^2 + c_2 h_4 - 28 c_3,\\
		\delta_{\{ 1,3 \}} &=& 55 h_4^3 - 220 c_1 h_4^2 + (280 c_1^2 + 40 c_2) h_4 + (224 c_3- 96 c_1^3 - 128 c_1 c_2).
	\end{eqnarray*}
\end{theorem}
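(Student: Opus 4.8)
The plan is to read the presentation off the equivariant excision sequence
\[ A^{\GL_3}_*(Z_4) \xrightarrow{\ i_*\ } A^{\GL_3}_*(\PP(W_4)) \simeq \ZZ[c_1,c_2,c_3,h_4]/(p_4(h_4)) \arr A_*(\Xcal_4) \arr 0 \]
coming from $\Xcal_4 \simeq [(\PP(W_4)\smallsetminus Z_4)/\GL_3]$. So it suffices to show that $\im(i_*)$ is exactly the ideal $(\alpha_1,\alpha_2,\alpha_3,\delta_{\{1,3\}})$; by Lemma \ref{alpha.classes.independent} this ideal coincides with $(I_{\wt Z},\delta_{\{1,3\}})$ (note that $p_4(h_4)\in I_{\wt Z}$, so dividing out by the image of $i_*$ automatically kills $p_4(h_4)$ and there is no clash with the projective bundle relation).

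One inclusion is formal. By Definition \ref{defi:IZtilde}, $I_{\wt Z}$ is the image of $i_*\pi_{1*}\colon A^{\GL_3}_*(\wt Z_4)\arr A^{\GL_3}_*(\PP(W_4))$, which factors through $A^{\GL_3}_*(Z_4)$ via the proper map $\pi_1\colon \wt Z_4\arr Z_4$; and $\delta_{\{1,3\}}=[\ov Z_{\{1,3\}}]_{\GL_3}=i_*[\ov Z_{\{1,3\}}]_{\GL_3}$ with $\ov Z_{\{1,3\}}\subseteq Z_4$ closed. Hence both generators, and so the whole ideal $(I_{\wt Z},\delta_{\{1,3\}})$, lie in $\im(i_*)$.

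For the reverse inclusion I would apply the basic principle, Lemma \ref{basic.principle}, to the ambient space $X=\PP(W_4)$ with the submodule $N:=(I_{\wt Z},\delta_{\{1,3\}})$, using the two-step equivariant stratification whose open stratum is $Z_4\smallsetminus\ov Z_{\rm binod}$ and whose closed part $\ov Z_{\rm binod}$ is further refined by the factor-wise stratification (so as to be a genuine equivariant stratification). Since the generic singular quartic is an irreducible one-nodal curve, the open stratum is dense in the irreducible hypersurface $Z_4$, so its closure is all of $Z_4$ and its boundary is exactly $\ov Z_{\rm binod}$. The two hypotheses of Lemma \ref{basic.principle} are then precisely the outputs of Sections \ref{sec.reducible} and \ref{sec.irr}: first, $i_*\bigl(A^{\GL_3}_*(\ov Z_{\rm binod})\bigr)\subseteq N$ is Corollary \ref{corol.image in ideal}; second, the image of $A^{\GL_3}_*(Z_4\smallsetminus\ov Z_{\rm binod})$ in $A^{\GL_3}_*(\PP(W_4)\smallsetminus\ov Z_{\rm binod})$ is contained in the restriction of $I_{\wt Z}$, because over $Z_4\smallsetminus\ov Z_{\rm binod}$ every quartic has a unique singular point, so $\wt Z_4\arr Z_4$ is there an isomorphism, in particular a Chow envelope, and the corresponding pushforwards factor through $I_{\wt Z}$. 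Lemma \ref{basic.principle} then yields $i_*\bigl(A^{\GL_3}_*(Z_4)\bigr)\subseteq N$.

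Putting the two inclusions together gives $\im(i_*)=(I_{\wt Z},\delta_{\{1,3\}})=(\alpha_1,\alpha_2,\alpha_3,\delta_{\{1,3\}})$, and the presentation of $A^*(\Xcal_4)$ follows from the excision sequence. I do not expect a serious obstacle at this final stage: the only point requiring care is the Chow-envelope observation over $Z_4\smallsetminus\ov Z_{\rm binod}$ (equivalently, that an irreducible quartic outside $\ov Z_{\rm binod}$ is singular at exactly one point), which is elementary; all the genuinely hard input — the reducible loci $\ov Z_{\{1,1,1,1\}}$, $\ov Z_{\{2,2\}}$, $\ov Z_{\{1,3\}}$, and the binodal and trinodal loci, via the Hilbert-scheme constructions and the localization computations — has already been established in the preceding sections, so here it is purely a matter of assembling those results through the stratification argument.
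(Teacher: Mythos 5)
Your proposal is correct and follows essentially the same route as the paper: the excision sequence, Lemma \ref{basic.principle} applied to the stratification $\ov{Z}_{\rm binod}\subset Z_4$ with $N=(I_{\wt{Z}},\delta_{\{1,3\}})$, Corollary \ref{corol.image in ideal} for the closed part, and the Chow-envelope property of $\wt{Z}_4\arr Z_4$ over $Z_4\smallsetminus\ov{Z}_{\rm binod}$ for the open part. The only cosmetic difference is that you spell out the (formal) inclusion $(I_{\wt{Z}},\delta_{\{1,3\}})\subseteq\im(i_*)$, which the paper leaves implicit; also note that over $Z_4\smallsetminus\ov{Z}_{\rm binod}$ the map $\wt{Z}_4\arr Z_4$ is only a set-theoretic bijection (the fibers over non-nodal singularities may be non-reduced), which still suffices for the Chow-envelope claim as in the paper.
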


Let $\Mcal_3\smallsetminus\Hcal_3$ be the stack of smooth, non-hyperelliptic curves of genus $3$: it is well known that this is a smooth Deligne-Mumford stack.

In \cite[Prop. 3.1.3]{DL18} it is shown that $\Mcal_3\smallsetminus\Hcal_3$ is isomorphic to the quotient stack $[(W_4\otimes\mathbb{D}\smallsetminus \Delta)/\GL_3]$, where $\mathbb{D}$ is the determinant representation of $\GL_3$ and $\Delta$ is the invariant closed subscheme of singular quartics. More explicitly, the action of $\GL_3$ is defined by the following formula:
\[ A\cdot f(X,Y,Z):= \det(A)f(A^{-1}(X,Y,Z)) \]
The equivariant morphism $W_4\otimes\mathbb{D}\smallsetminus \Delta \arr \Pro(W_4)\smallsetminus Z_4$ makes the former scheme into an equivariant $\mathbb{G}_m$-torsor over the latter, whose associated equivariant line bundle is $\Ocal(-1)\otimes\mathbb{D}$. In particular, this implies that the pullback morphism
\[ A_{\GL_3}^*(\Pro(W_4)\smallsetminus Z_4) \arr A_{\GL_3}^*(W_4\otimes\mathbb{D}\smallsetminus \Delta) \]
is surjective with kernel generated by $c_1-h_4$.

Observe that $W_4\otimes\mathbb{D}\smallsetminus \Delta$ is the total space of the $\GL_3$-torsor over $\Mcal_3\smallsetminus \Hcal_3$ associated to the restriction of the Hodge bundle. This means that the equivariant Chern classes $c_i$ correspond to the Chern classes $\lambda_i$ of the Hodge bundle. A straightforward computation implies our main result:
\begin{theorem}\label{thm.AM3}
	Over any field of characteristic $\neq 2,3$ we have:
	\[ A^*(\Mcal_3\smallsetminus \Hcal_3)\simeq \ZZ[\lambda_1,\lambda_2,\lambda_3]/(9 \lambda_1, 6(\lambda_1^2+4\lambda_2), \lambda_1\lambda_2-28\lambda_3, \lambda_1^3+28\lambda_3). \]
\end{theorem}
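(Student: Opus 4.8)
The plan is to derive Theorem~\ref{thm.AM3} formally from Theorem~\ref{main.result.quartics} together with the $\mathbb{G}_m$-torsor presentation of $\Mcal_3\smallsetminus\Hcal_3$ recalled above. Since $W_4\otimes\mathbb{D}\smallsetminus\Delta\arr\PP(W_4)\smallsetminus Z_4$ is an equivariant $\mathbb{G}_m$-torsor with associated equivariant line bundle $\Ocal(-1)\otimes\mathbb{D}$, and $c_1^{\GL_3}\bigl(\Ocal(-1)\otimes\mathbb{D}\bigr)=c_1-h_4$ (using $c_1^{\GL_3}(\mathbb{D})=c_1$ for $\mathbb{D}=\det E$), the pullback
\[
A^*(\Xcal_4)=A^*_{\GL_3}(\PP(W_4)\smallsetminus Z_4)\arr A^*_{\GL_3}(W_4\otimes\mathbb{D}\smallsetminus\Delta)=A^*(\Mcal_3\smallsetminus\Hcal_3)
\]
is surjective with kernel the ideal generated by $c_1-h_4$. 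Combining this with the presentation of Theorem~\ref{main.result.quartics} and the identification of the Chern classes $c_i$ with the Chern classes $\lambda_i$ of the Hodge bundle, we obtain
\[
A^*(\Mcal_3\smallsetminus\Hcal_3)\simeq\ZZ[\lambda_1,\lambda_2,\lambda_3]/\bigl(\ov{\alpha}_1,\ov{\alpha}_2,\ov{\alpha}_3,\ov{\delta}_{\{1,3\}}\bigr),
\]
where $\ov{(-)}$ denotes the element of $\ZZ[\lambda_1,\lambda_2,\lambda_3]$ obtained by setting $h_4=c_1=\lambda_1$ in the corresponding generator. Everything then comes down to an elementary identification of ideals in the polynomial ring.

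Carrying out the substitution produces
\[
\ov{\alpha}_1=-9\lambda_1,\quad \ov{\alpha}_2=3\lambda_1^2-24\lambda_2,\quad \ov{\alpha}_3=\lambda_1\lambda_2-28\lambda_3,\quad \ov{\delta}_{\{1,3\}}=19\lambda_1^3-88\lambda_1\lambda_2+224\lambda_3.
\]
Two of the claimed generators, $9\lambda_1$ and $\lambda_1\lambda_2-28\lambda_3$, are read off at once (up to sign), so it remains to check the two ideal identities
\[
\bigl(\ov{\alpha}_1,\ov{\alpha}_2\bigr)=\bigl(9\lambda_1,\,6(\lambda_1^2+4\lambda_2)\bigr)\quad\text{and}\quad\bigl(\ov{\alpha}_1,\ov{\alpha}_3,\ov{\delta}_{\{1,3\}}\bigr)=\bigl(9\lambda_1,\,\lambda_1\lambda_2-28\lambda_3,\,\lambda_1^3+28\lambda_3\bigr).
\]
The first follows from $72\lambda_2=-\lambda_1\ov{\alpha}_1-3\ov{\alpha}_2$, which lets one pass freely between $3\lambda_1^2-24\lambda_2$ and $6(\lambda_1^2+4\lambda_2)=2\ov{\alpha}_2+72\lambda_2$. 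For the second, one starts from $\ov{\delta}_{\{1,3\}}+88\ov{\alpha}_3=19\lambda_1^3-2240\lambda_3$ and uses $9\lambda_1^3=-\lambda_1^2\ov{\alpha}_1$ and $28\lambda_3=\lambda_1\lambda_2-\ov{\alpha}_3$: subtracting $18\lambda_1^3$ gives $\lambda_1^3-2240\lambda_3$, and the residual discrepancy $2268\lambda_3=81\cdot 28\lambda_3=-9\lambda_2\ov{\alpha}_1-81\ov{\alpha}_3$ lies in the ideal, so $\lambda_1^3+28\lambda_3$ does too; the reverse inclusion is the same computation run backwards (with $19\lambda_1^3-2240\lambda_3=19(\lambda_1^3+28\lambda_3)-2772\lambda_3$ and $2772\lambda_3=99\lambda_1\lambda_2-99\ov{\alpha}_3$). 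After renaming we obtain exactly the presentation in the statement.

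I do not expect a substantive obstacle at this stage: the only care needed is in the sign of $c_1^{\GL_3}(\mathbb{D})$, which equals $c_1$ because $\mathbb{D}=\det E$, and in verifying both inclusions in the two displayed ideal equalities. All of the genuine difficulty in computing $A^*(\Mcal_3\smallsetminus\Hcal_3)$ has already been absorbed into Theorem~\ref{main.result.quartics}, whose proof occupies the bulk of the paper.
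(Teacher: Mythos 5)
Your proposal is correct and takes essentially the same route as the paper: pass from Theorem \ref{main.result.quartics} to $A^*(\Mcal_3\smallsetminus\Hcal_3)$ by quotienting by the class $c_1-h_4$ of the $\mathbb{G}_m$-torsor, identify $c_i$ with $\lambda_i$, and simplify the resulting ideal. The paper dismisses this last step as ``a straightforward computation''; your substitutions ($\ov{\alpha}_1=-9\lambda_1$, $\ov{\alpha}_2=3\lambda_1^2-24\lambda_2$, $\ov{\alpha}_3=\lambda_1\lambda_2-28\lambda_3$, $\ov{\delta}_{\{1,3\}}=19\lambda_1^3-88\lambda_1\lambda_2+224\lambda_3$) and both directions of the two ideal identities all check out.
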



\end{document}